\documentclass[12pt]{article}
\usepackage{amssymb}
\usepackage{mathtools}
\usepackage{enumerate}
\usepackage{amsmath}
\usepackage{mathrsfs}
\usepackage{graphics}
\usepackage{graphicx}
\usepackage{xcolor}
\usepackage{pdfsync}
\usepackage{subfigure}
\usepackage[T1]{fontenc}
\usepackage{yhmath}
\usepackage{latexsym,amssymb,amsmath,amsfonts,amsthm}\usepackage{txfonts}
\numberwithin{equation}{section}
\newcommand{\R}{{\mathbb R}}
\def\dif{{\mathord{{\rm d}}}}

\newcommand{\E}{{\mathbb E}}
\newcommand{\PM}{{\mathbb P}}

\newcommand{\bae}{\begin{equation}\begin{aligned}}
\newcommand{\eae}{\end{aligned}\end{equation}}
\newcommand{\baee}{\begin{equation*}\begin{aligned}}
\newcommand{\eaee}{\end{aligned}\end{equation*}}
\newcommand{\be}{\begin{eqnarray}}
\newcommand{\ben}{\begin{eqnarray*}}
\newcommand{\en}{\end{eqnarray}}
\newcommand{\enn}{\end{eqnarray*}}

\newtheorem{theorem}{Theorem}[section]
\newtheorem{Lemma}{Lemma}[section]
\newtheorem{prp}[theorem]{Proposition}

\newtheorem{dfn}{Definition}[section]

\newtheorem{remark}{Remark}[section]

\DeclareMathOperator{\Ima}{Im}
\makeatletter
\let\@@span\span
\def\sp@n{\@@span\omit\advance\@multicnt\m@ne}
\renewcommand{\span}{span}

\begin{document}
	\renewcommand{\theequation}{\arabic{section}.\arabic{equation}}
	\begin{titlepage}
		\title{\bf On the stability of rarefaction for stochastic viscous conservation law}
		\author{Zhao Dong$^{2,3}$,\ Feimin Huang$^{2,3}$,\ Houqi Su$^{1,2,3}$  \\
			{\small $^{1 }$	Capital Normal University, Beijing, China}\\
   {\small $^2$ University of Chinese Academy of Sciences, Beijing, China}\\
    {\small $^3$ Academy of Mathematics and Systems Science, Chinese Academy of Sciences, Beijing, China}\\
			{\small {\sf dzhao@amt.ac.cn},\ {\sf fhuang@amt.ac.cn},\ {\sf marksu@amss.ac.cn} }}
		\date{}
	\end{titlepage}
	\maketitle
	\noindent\textbf{Abstract}:
	We study the asymptotic stability of rarefaction waves for one-dimensional stochastic viscous conservation laws driven by nonlinear conservative noise. In a critical scaling where stochastic energy injection and viscous dissipation compete at comparable magnitudes, standard kinetic and viscosity frameworks encounter obstructions due to regularity gaps and non-integrable profiles. To address this, we introduce a stochastic area inequality controlling accumulated energy fluctuations, a local $L^1$
contraction principle via stochastic Kružkov doubling-of-variables that yields pathwise uniqueness without global integrability, and a modified Galerkin scheme preserving the $H^2$
energy structure. Assuming local $H^2$ regularity, we prove almost sure algebraic convergence to the rarefaction wave. For sufficiently small initial perturbations, we establish global well-posedness and sharp decay estimates in expectation. The smallness condition identifies a regime where viscous dissipation dominates stochastic injection, reflecting a structural stability threshold rather than a technical artifact. Our approach extends the analytical framework for conservative SPDEs with rough fluxes.
 
 \
 
 \noindent\textbf{Keywords}: stochastic conservation laws; rarefaction wave; conservative noise; local $L^1$ contraction; stochastic area inequality; pathwise uniqueness.
 \newpage
\tableofcontents
 
\section{Introduction}

The analysis of SPDEs driven by nonlinear conservative noise has emerged as a central frontier in stochastic analysis, motivated by the need to rigorously bridge the gap between microscopic interacting particle systems and macroscopic fluid limits \cite{fehrman2023non}. Unlike additive perturbations which act as external forcing, conservative noise of the form $\nabla \cdot (\sigma(u) \circ \dot{W})$ fundamentally alters the flux structure, introducing rough, derivative-dependent fluctuations that challenge the classical theories of well-posedness, see \cite{fehrman2019well}.  While the last decade has seen breakthrough developments in kinetic solutions \cite{lions2013scalar} and pathwise regularization theories \cite{gassiat2019regularization,gassiat2020speed,gess2017long}, the stability of fundamental hydrodynamic patterns on the unbounded line remains largely unexplored. In this paper, we address the stability of the rarefaction wave—a non-integrable profile—under rough conservative noise. We identify a critical regime where the competition between stochastic energy injection and viscous dissipation mirrors the focusing heat equation, requiring a new analytical framework beyond standard kinetic or viscosity solution techniques.

To formalize this stability problem, we first recall the classical Riemann problem for the underlying deterministic system. Since the seminal work of Riemann \cite{riemann1860fortpflanzung}, the analysis of the Riemann problem \eqref{riemann}—an initial value problem with scale-invariant data—has played a key role in understanding the wave structure of hyperbolic partial differential equations.
\bae\label{riemann}
\left\{\begin{aligned}
		& \partial_t u+\partial_{x}f(u) = 0,\\
		&\left.u\right|_{t=0}= \begin{cases}u_{-}, & x<0, \\ u_{+}, & x>0,\end{cases}~ x\in \R.
	\end{aligned}\right.
\eae
Furthermore, the Riemann problem \eqref{riemann} is the standard building block in  the numerical schemes for compressible flow, for example, see \cite{menikoff1989riemann}. If $u_-<u_+$, then the Riemann solution is the rarefaction wave $u^r$, which reads 
$$
u^r(t, x)=\left\{\begin{array}{cl}
	u_{-}\,, & \frac{x}{t}<f^{\prime}\left(u_{-}\right), \\
	\left(f^{\prime}\right)^{-1}\left(\frac xt\right)\,, & f^{\prime}\left(u_{-}\right)<\frac{x}{t}<f^{\prime}\left(u_{+}\right), \\
	u_{+}\,, & \frac{x}{t}>f^{\prime}\left(u_{+}\right).
\end{array}\right.
$$

In the presence of viscosity, it is commonly conjectured that the long-time behavior of solutions is governed by the smoothed Riemann solution of the corresponding inviscid system. For the deterministic Burgers equation, Ilin and Oleinik \cite{il1958behaviour} first demonstrated that solutions converge to  $u^r$ in the $L^\infty$ norm, with Hattori and Nishihara \cite{hattori1991note} later establishing a precise algebraic decay rate of $(1+t)^{-1/2}$.

While the asymptotic stability of this profile is well-established for deterministic viscous conservation laws, it remains an interesting question whether this fundamental hydrodynamic structure survives rough, derivative-dependent stochastic perturbations. This leads us to consider the stochastic viscous conservation law on $\mathbb{R}$ driven by nonlinear conservative noise.

In this paper, we investigate the stability of this fundamental hydrodynamic structure under stochastic perturbation. We consider the stochastic viscous conservation law on $\mathbb{R}$ driven by nonlinear conservative noise
\begin{equation}\label{conservative}
\begin{cases}
du = \partial_x \big(b(u)u_x\big) dt+ \partial_x \big[- f(u)dt + \sum_{i=1}^\infty \sigma_i(u) \circ dB_i(t)\big], \\
u(0,x) = u_0(x), \, x \in \mathbb{R},\quad u_0(\pm \infty)=u_\pm, \quad u_-<u_+,
\end{cases}
\end{equation}
Here $\circ$ denotes the Stratonovich integral, $B_i$ are i.i.d. Brownian motions over a stochastic basis $\left(\Omega, \mathcal{F},\left\{\mathcal{F}_t\right\}_{t \geq 0}, \mathbb P\right)$.  \textit{The central question of this work is whether the rarefaction wave—a non-integrable profile—retains its stability in this rough stochastic environment.}

In the rest of the paper, we assume that
	\begin{enumerate}
        \item[\rm{(H1)}] { The viscosity $b(\cdot)\in C^1$ is \textit{uniformly positive}: $\mu:=\displaystyle\inf_{u\in \R} b(u)>0$.
        }
        \item[\rm{(H2)}] {The flux $f(\cdot)\in C^2$ is \textit{strictly convex}:  $\alpha^*:=\displaystyle\inf_{u\in \R}f^{\prime\prime}(u)>0$.}
		\item[\rm{(H3)}] { For any $N>0$, $\displaystyle\sum_{i=1}^\infty \|\sigma_i^\prime \|^2_{H^2(-N,N)}<\infty.	$
		}
\end{enumerate}

\subsection{Main Results}

We resolve the well-posedness and asymptotic stability of the system through a unified framework comprising three main components: stability, existence, and uniqueness.

First, we establish a general stability result utilizing the newly introduced Stochastic Area Inequality. The following theorem asserts that any sufficiently regular solution that avoids local blow-up inevitably converges to the rarefaction wave, transferring the question of asymptotic stability to one of global regularity.
\begin{theorem}\label{as L^infty decay}
    Assume {\rm{(H1)-(H3)}} hold. If $u(t)-\bar u(t)\in L^2_{{\rm{{loc}}}}\big((0,\infty),H^2(\mathbb R)\big)$ almost surely. Then for all $\epsilon>0$, we have
\baee
\|u(t,\cdot)-u^r(t,\cdot)\|_{L^\infty(\mathbb R)}=O\big(t^{-\frac14+\epsilon}\big),\ \mathbb P\text{-a.s..}
\eaee
\end{theorem}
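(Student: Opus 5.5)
The approach is the classical Il'in--Oleinik/Hattori--Nishihara energy method, adapted to the noise. Let $\bar u(t,x)$ be the smooth approximate rarefaction wave, i.e. the solution of the inviscid Burgers-type problem $\bar w_t+\bar w\bar w_x=0$ with smoothed data $\bar w(0,x)=\frac{w_-+w_+}{2}+\frac{w_+-w_-}{2}\tanh x$, $w_\pm=f'(u_\pm)$, and $\bar u=(f')^{-1}(\bar w)$. This profile satisfies the standard bounds
\[
\bar u_x>0,\qquad \|\bar u_x\|_{L^p}\le C(1+t)^{-1+1/p},\qquad \|\bar u_{xx}\|_{L^p}\le C(1+t)^{-1},\qquad \|\bar u-u^r\|_{L^\infty}\le C t^{-1}\ln(1+t).
\]
It therefore suffices to show $\|\phi(t)\|_{L^\infty}=O(t^{-1/4+\epsilon})$ a.s., where $\phi=u-\bar u$. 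By the hypothesis $\phi\in L^2_{\rm loc}H^2$, Itô's formula is legitimate for $\|\phi\|_{L^2}^2$ and $\|\phi_x\|_{L^2}^2$; to do this I would convert the Stratonovich noise to Itô form, which gives the correction $\frac12\partial_x\big(\sum_i\sigma_i'(u)^2u_x\big)$.

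\textbf{Step 1: $L^2$ identity.} The perturbation satisfies
\[
d\phi=\Big[\partial_x(b(u)u_x)-\partial_x\big(f(u)-f(\bar u)\big)+\tfrac12\partial_x\Big(\sum_i\sigma_i'(u)^2u_x\Big)\Big]dt-\partial_x(b(\bar u)\bar u_x)\,dt+\sum_i\partial_x\sigma_i(u)\,dB_i .
\]
Apply Itô to $\frac12\|\phi\|^2$. The convexity term contributes $-\int \bar u_x\,[f(u)-f(\bar u)-f'(\bar u)\phi]\le -\frac{\alpha^*}{2}\int\bar u_x\phi^2$, and the viscosity term contributes $-\mu\|\phi_x\|^2$ plus error terms of size $\|\bar u_{xx}\|_{L^1}\|\phi\|_{L^\infty}$ or $\|\bar u_x\|\,\|\phi_x\|$, which are integrable in $t$ up to logs. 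The key point is the Itô correction. Since the noise is conservative and in Stratonovich form, the quadratic variation term $\frac12\sum_i\|\partial_x\sigma_i(u)\|^2$ cancels exactly against $-\frac12\int\sum_i\sigma_i'(u)^2u_x\phi_x$ up to terms involving $\bar u_x$; this is the structural reason the noise does not destroy the $L^2$ balance. The remaining martingale $M_t=\sum_i\int_0^t\int \phi\,\partial_x\sigma_i(u)\,dx\,dB_i$ has quadratic variation bounded by $\int_0^t\|\phi\|^2\sum_i\|\sigma_i'(u)u_x\|_{L^\infty}^2\,ds$, or better by $\int_0^t(\|\phi_x\|^2+\|\bar u_x\|^2)\,C\|\phi\|_{L^\infty}^2\,ds$, using (H3) localized to the range of $u$. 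That range is a.s. bounded on compact time intervals by the $H^2$ assumption, and I would then use a maximum-principle type bound to make it uniform.

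\textbf{Step 2: stochastic area inequality and a.s. bounds.} This is where I expect the main obstacle. No expectation bound is available without smallness, so the martingale must be controlled pathwise. I would use the exponential martingale inequality, $\PM\big(\sup_{t\le T}(M_t-\frac{\lambda}{2}\langle M\rangle_t)>K\big)\le e^{-\lambda K}$, with $T=2^n$, $K=n^2$, and apply Borel--Cantelli. Then $\frac{\lambda}{2}\langle M\rangle$ is absorbed into the dissipation $\mu\int\|\phi_x\|^2$; this step is the ``area inequality'' and it controls the accumulated energy fluctuation. The outcome is
\[
\|\phi(t)\|^2+\int_0^t\big(\|\phi_x\|^2+\|\sqrt{\bar u_x}\phi\|^2\big)\,ds\le C(\omega)\,t^{\epsilon}\quad\text{a.s.}
\]
The delicate part is the absorption when $\sigma_i'$ is only locally controlled. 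There I would use the a.s. bound on $\|u\|_{L^\infty}$ coming from the $L^2$ and $H^1$ estimates via a bootstrap, which keeps the constants polynomial in $\log t$. Repeating the argument with the weight $(1+t)$ on $\|\phi_x\|^2$ (Itô on $(1+t)\|\phi_x\|^2$, using the $H^2$ regularity for the term $\phi_{xx}$ and the same martingale trick) gives
\[
(1+t)\|\phi_x(t)\|^2\le C(\omega)\,t^{\epsilon}\quad\text{a.s.},
\]
and along a dyadic sequence this gives the pointwise statement.

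\textbf{Step 3: conclusion.} By Gagliardo--Nirenberg,
\[
\|\phi\|_{L^\infty}\le\sqrt2\,\|\phi\|^{1/2}\|\phi_x\|^{1/2}\le C(\omega)\,t^{\epsilon/2}\cdot t^{-1/4+\epsilon/4},
\]
and adding the bound $\|\bar u-u^r\|_{L^\infty}=O(t^{-1}\ln t)$ yields the claimed rate. The loss of $\epsilon$ comes entirely from the Borel--Cantelli union over dyadic times in Step 2.
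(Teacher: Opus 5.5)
Your $L^2$ step (Step 1 and the first half of Step 2) is sound and is essentially the first part of Lemma \ref{as L^p decay}. The gap is the claimed bound $(1+t)\|\phi_x(t)\|^2\le C(\omega)t^{\epsilon}$.

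The cancellation you use in Step 1 is special to the $L^2$ level. When you apply It\^o's formula to $\|\phi_x\|^2$, the It\^o correction and the quadratic variation $\frac12\sum_i\|\partial_{xx}\sigma_i(u)\|^2$ only partially cancel. They leave the positive drift $\frac12\sum_i\int\sigma_i''(u)^2u_x^4\,dx$, plus harmless terms with $\bar u_{xx}$. Together with the viscous cross term $-\int b'(u)u_x^2\phi_{xx}\,dx$, this gives a contribution $C(\mu^{-1}\|b'\|^2+\|\sigma''\|^2)\|\phi_x\|_4^4$; see \eqref{quasi deriv}.

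This is a drift term, so no exponential-martingale or Borel--Cantelli argument touches it. Moreover, $\|\phi_x\|_4^4\le C\|\phi\|_\infty^2\|\phi_{xx}\|^2$ is of exactly the same order as the dissipation $\mu\|\phi_{xx}\|^2$. It can therefore be absorbed only if $\|\phi\|_\infty$ lies below the threshold $C^*(\mathfrak M)$ of \eqref{C*}, and the weight $(1+t)$ does not change this balance. That is precisely the smallness hypothesis of Theorem \ref{th2.2}, whereas Theorem \ref{as L^infty decay} assumes none. The hypothesis $\phi\in L^2_{\rm loc}H^2$ only rules out finite-time blow-up and gives no quantitative control; this is the Levine-type competition described around \eqref{regu}.

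A bootstrap on $\|\phi\|_\infty\to0$ does not rescue the argument. In your scheme the decay of $\|\phi\|_\infty$ is derived from the very derivative bound in question. Any independent route to it needs some a priori polynomial control of $\|\phi_x\|$, which is what is missing. As written, your argument proves the theorem only in the small-data regime.

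The paper avoids needing derivative decay at all. It uses only the $L^p$ energy identities, whose dissipation $\int|\phi|^{p-2}\phi_x^2$ involves no second derivatives, to prove the almost sure decay $\|\phi(t)\|_p^p\le C(2+t)^{-(p-3)/4}\widetilde C_p(t,\omega)$ for every $p>4$ (Lemma \ref{as L^p decay}). It then handles the critical term in three moves:
\begin{enumerate}
\item It writes $\|\phi_x\|_4^4\le C\|\phi_{xx}\|\|\phi_x\|^3$.
\item It interpolates with the $L^q$ bounds to recast the critical term as $\frac\mu4\|\phi_{xx}\|^2+\mathcal C_t(1+t)^{\kappa}X_t^{\alpha}$, where $X_t=\|\phi_x\|^2$, $\alpha$ is large and $\kappa$ is very negative.
\item It applies the stochastic area inequality (Lemma \ref{stochastic area}) together with $\int_0^tX_s\,ds\le C\ln(1+t)+\widetilde C_2$.
\end{enumerate}
This yields only the crude, possibly growing bound $\|\phi_x\|^2=O(t^{3/14+\epsilon})$ (Lemma \ref{as deriv L^2 decay}). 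That is enough because the final interpolation is $\|\phi\|_\infty\le C_p\|\phi\|_p^{p/(p+2)}\|\phi_x\|^{2/(p+2)}$ with $p\to\infty$, so polynomial growth of $\|\phi_x\|$ contributes nothing to the limiting exponent $-\frac14$. Your interpolation $\|\phi\|_\infty\le\sqrt2\|\phi\|^{1/2}\|\phi_x\|^{1/2}$ instead requires the sharp rate $\|\phi_x\|^2\lesssim t^{-1+\epsilon}$, which cannot be obtained without smallness.

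A minor point: since $\bar u$ solves the inviscid equation, the term $-\partial_x(b(\bar u)\bar u_x)\,dt$ should not appear in your perturbation equation.
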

This result transfers the question of asymptotic stability to one of global regularity, showing that stochastic perturbations cannot ``shake'' the solution away from the rarefaction attractor provided sufficient smoothness is maintained.

{Second}, we resolve the blow-up issue by identifying a {small data regime}. In the following theorem, we show that if the initial perturbation is sufficiently small, the solution maintains global $H^2$ regularity and sharp decay estimates in expectation
\begin{theorem}\label{th2.2}
	Assume {\rm{(H1)-(H3)}} hold. Let  $u(0,\cdot)-\bar u(0,\cdot)\in H^1(\R)$ and set
$$
	\mathfrak M\coloneqq\|u(0,\cdot)-\bar u(0,\cdot)\|_{L^\infty(\R)}+u_+-u_-.
$$
	 Then there exists a constant $C^*(\mathfrak M,\mu, b',\sigma'')>0,$ such that, if
	\baee\label{1.4}
	\left\|u(0,\cdot)-\bar u(0,\cdot)\right\|_{L^{\infty}(\mathbb{R})} \leq C^*(\mathfrak M,\mu, b',\sigma''),
	\eaee
	then for any $T>0,$ there exists a $u-\bar u\in L^2\big((0,T)\times \Omega,H^2(\mathbb R)\big)$ solves the equation \eqref{perturbation} with the following decay estimates
	\baee\label{derive-inftydecay}
 \mathbb E \| u(t,\cdot)-u^r(t,\cdot) \|_{ L^\infty (\mathbb R)}  &\leq C(1+t)^{-\frac{1}{4}}\ln^\frac{1}{2}(2+t),\\
	\E \|u_x(t,\cdot)- u^r_x(t,\cdot)\|_{L^2(\R)}&\leq C (1+t)^{-\frac12} \ln^\frac12(2+t).
	\eaee
\end{theorem}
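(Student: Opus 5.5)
We prove Theorem \ref{th2.2} in three stages: a priori estimates in expectation, construction of a global $H^2$ solution by a modified Galerkin scheme, and passage from the perturbation estimates to the stated decay rates. Throughout, $\bar u$ denotes the smooth approximate rarefaction wave, i.e.\ the solution of the Burgers-type equation $\bar w_t+\bar w\bar w_x=0$ with $\bar w(0,x)=\frac{w_++w_-}{2}+\frac{w_+-w_-}{2}\tanh x$, composed with $f'$. We use its standard properties: $\bar u_x>0$, $\|\bar u_x\|_{L^p}\le C(1+t)^{-1+1/p}$, $\|\bar u_{xx}\|_{L^p}\le C(1+t)^{-1}$, and $\|\bar u-u^r\|_{L^\infty}\le C(1+t)^{-1}$.

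We write $\phi=u-\bar u$ and apply It\^o's formula (after converting the Stratonovich noise to It\^o form) to $\|\phi\|_{L^2}^2$. Strict convexity (H2) gives the good term $\int f''\bar u_x\phi^2\ge \alpha^*\int \bar u_x\phi^2$. Positivity of $b$ (H1) gives the dissipation $\mu\|\phi_x\|_{L^2}^2$.

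The noise produces three kinds of terms:
\begin{itemize}
\item a martingale, which vanishes in expectation;
\item an It\^o correction $\frac12\sum_i\int|\partial_x\sigma_i(u)|^2\,dx$;
\item the Stratonovich drift, which is of divergence form $\frac12\partial_x\big(\sum_i\sigma_i'^2u_x\big)$ and so acts partly as extra viscosity.
\end{itemize}
The critical point is that the It\^o correction is $\frac12\int\sum_i\sigma_i'(u)^2(\phi_x+\bar u_x)^2$, which is of the same order as the viscous dissipation. After cancellation with the Stratonovich drift, the remainder involves $\sigma_i''$ and $b'$ multiplied by $\phi$ and $\phi_x$. This remainder can be absorbed by $\mu\|\phi_x\|^2$ only if $\|\phi\|_{L^\infty}$ is small.

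To guarantee this smallness we use a maximum-principle-type bound that holds a priori: $\|u\|_{L^\infty}\le \mathfrak M$-dependent constant. We then propagate $\|\phi\|_{L^\infty}\le 2C^*$ through a stopping time $\tau=\inf\{t:\|\phi\|_{L^\infty}>2C^*\}$. This produces the constant $C^*(\mathfrak M,\mu,b',\sigma'')$, and the local $H^2$ bounds on $\sigma_i'$ from (H3) enter on the bounded range $[-\mathfrak M,\mathfrak M]$. Integrating in time yields
\[
\E\|\phi\|^2_{L^2}+\E\!\int_0^t\!\big(\|\sqrt{\bar u_x}\phi\|^2+\|\phi_x\|^2\big)\le C\|\phi_0\|^2+C\!\int_0^t\|\bar u_{xx}\|_{L^1}\,ds.
\]
Since $\|\bar u_{xx}\|_{L^1}\le C(1+s)^{-1}$, the last integral is only logarithmic. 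This is the source of the $\ln(2+t)$ factor.

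Next we apply the same computation to $\|\phi_x\|^2$ and to $(1+t)\|\phi_x\|^2$, and then at the $H^2$ level. Multiplying by $(1+t)^k$ and using the previous dissipation bounds gives
\[
\E\big[(1+t)\|\phi_x\|^2\big]\le C\ln(2+t),
\]
and analogous bounds for $\phi_{xx}$ integrated in time. Since $\phi\in H^1$ only initially, the $H^2$ bound is local in time. Combining with Gagliardo--Nirenberg, $\|\phi\|_{L^\infty}\le \|\phi\|^{1/2}\|\phi_x\|^{1/2}$, and Cauchy--Schwarz in $\E$ gives
\[
\E\|\phi\|_{L^\infty}\le (\E\|\phi\|^2)^{1/4}(\E\|\phi_x\|^2)^{1/4}\le C(1+t)^{-1/4}\ln^{1/2}(2+t).
\]
Adding $\|\bar u-u^r\|_{L^\infty}\le C(1+t)^{-1}$ yields the first estimate. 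The second estimate follows from the bound on $\E\|\phi_x\|$ together with $\|\bar u_x-u^r_x\|_{L^2}$ being controlled at rate $(1+t)^{-1/2}$, the latter computed away from the corners of $u^r$ in the appropriate sense.

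For existence, we truncate the coefficients and run a Galerkin scheme on $\phi$. The scheme is modified so that the projection commutes with $\partial_x$ on a Hermite/Fourier basis on a growing interval. This ensures the $H^2$ energy identity above holds exactly at the discrete level. The uniform bounds, valid up to the stopping time, give tightness in $C([0,T];H^1_{loc})\cap L^2(0,T;H^2)$. Skorokhod--Jakubowski then yields a martingale solution. Pathwise uniqueness, via the local $L^1$ contraction, combined with Gy\"ongy--Krylov upgrades this to a probabilistically strong solution. Finally, the smallness closes the bootstrap, so $\tau\ge T$ with full probability.

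The main obstacle is the critical It\^o correction at the $H^1$ and $H^2$ levels. Its leading part, $\sum_i\sigma_i'^2\phi_{xx}^2$, must cancel exactly against the Stratonovich drift, and the lower-order cubic terms in $\phi_x$ must be absorbed using only $L^\infty$ smallness of $\phi$. I would first attempt this through a careful commutator computation that keeps the divergence structure of the noise, so that only $\sigma''$ terms survive.
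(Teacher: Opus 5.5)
\textbf{Main gap: keeping $\|\phi(t)\|_{L^\infty}$ small.} Your $H^1$-level mechanism is the right one: the $\sigma_i'^2\phi_{xx}^2$ terms cancel, and the quartic remainder is absorbed by $L^\infty$ smallness. What fails is how you keep $\|\phi(t)\|_{L^\infty}$ small.

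You only use a bound $\|u\|_{L^\infty}\le C(\mathfrak M)$. This tells you on which bounded range $b'$ and $\sigma_i''$ must be controlled, but it gives merely $\|\phi\|_{L^\infty}\lesssim\mathfrak M$. You then assert that the bootstrap on $\tau=\inf\{t:\|\phi(t)\|_{L^\infty}>2C^*\}$ closes with probability one, and nothing in your argument can do that:
\begin{itemize}
\item all your energy bounds are in expectation;
\item only $\|\phi_0\|_{L^\infty}$, not $\|\phi_0\|_{H^1}$, is small;
\item your $L^2$ bound itself grows like $\ln(2+t)$.
\end{itemize}
So interpolation such as $\|\phi\|_{L^\infty}\le\|\phi\|^{1/2}\|\phi_x\|^{1/2}$ plus moment bounds can at best estimate $\mathbb P(\tau<T)$; it can never show this probability vanishes.

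The paper's key input here is a pathwise maximum principle for the perturbation itself: $\sup_{t\le T}\|\phi(t)\|_{L^\infty}\le\|\phi_0\|_{L^\infty}$ a.s.\ (\eqref{mp} in Lemma~\ref{lmL^p}, obtained for the cut-off problems in Lemma~\ref{Lemma4.3} by letting $p\to\infty$ in the $L^p$ estimates). With it, $\|\phi_x\|_{L^4}^4\le C\|\phi\|_{L^\infty}^2\|\phi_{xx}\|^2\le C\|\phi_0\|_{L^\infty}^2\|\phi_{xx}\|^2$ for every $t$ and $\omega$. The $H^1$ inequality then closes under $\|\phi_0\|_{L^\infty}<C^*(\mathfrak M)$ (Lemma~\ref{lm2.4}), and no stopping time is needed.

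The same problem affects your existence step, since your uniform bounds hold only up to $\tau$ and Galerkin approximations have no maximum principle. The paper instead builds the smooth cut-off $\theta_R$ into $b$ and $\sigma_i$, so that $\tilde b_N'$ and $\tilde\sigma_{i,N}''$ vanish off a bounded range, and removes it at the end using the maximum principle. The rest of your scheme matches the paper: compactness, Skorokhod, pathwise uniqueness from the $L^1_{\rm loc}$ contraction, and Gy\"ongy--Krylov.

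Incidentally, no smallness is needed at the $L^2$ level. After the It\^o--Stratonovich cancellation, the remainder consists of $\frac12\sum_i\int\sigma_i'^2(\phi_x+\bar u_x)\bar u_x\,dx$ and $-\int b\,\bar u_x\phi_x\,dx$. Neither involves $b'$ or $\sigma_i''$, and both are bounded by $\varepsilon\|\phi_x\|^2+C(1+t)^{-1}$.

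\textbf{Second issue: the weighted derivative bound.} Your decay argument differs from the paper's and contains an unproved step. The paper does not prove a $(1+t)$-weighted $H^1$ bound. It obtains $\mathbb E\|\phi_x\|^2\le C(1+t)^{-1+\epsilon}$ from a differential inequality for $\mathbb E\|\phi_x\|^2$ combined with $\int_0^t\mathbb E\|\phi_x\|^2\,ds\le C\ln(2+t)$, via an area-type inequality (Lemma~\ref{td}). It then bounds $\mathbb E\|\phi\|_{L^\infty}$ by Gagliardo--Nirenberg with the $L^p$ decay of Proposition~\ref{t1} for large $p$ (Lemma~\ref{t2}).

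Your $p=2$ interpolation with Cauchy--Schwarz in $\mathbb E$ is simpler and would give exactly the $\ln^{1/2}$ factors in the statement. But it requires $\mathbb E[(1+t)\|\phi_x\|^2]\le C\ln(2+t)$, and ``using the previous dissipation bounds'' does not give this. After multiplying by $(1+t)$, you must control $(1+t)\int(f'(u)-f'(\bar u))\phi_x\phi_{xx}\,dx$, and neither available bound works:
\begin{itemize}
\item Bounding it by $\varepsilon(1+t)\|\phi_{xx}\|^2+C(1+t)\|\phi\|_{L^\infty}^2\|\phi_x\|^2$ leaves $C\|\phi_0\|_{L^\infty}^2(1+t)\|\phi_x\|^2$. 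This is the weighted quantity itself, and through Gronwall it only yields exponential growth.
\item Bounding it as in \eqref{K1} leaves $(1+t)\|\phi\|_{L^q}^{5q/(q-1)}$. By Proposition~\ref{t1}, its expectation is at best of order $(1+t)^{1-\frac{5(q-2)}{4(q-1)}}$ up to logarithms, which is not integrable.
\end{itemize}
You need an argument that genuinely controls this term under the time weight before the claimed rates follow.
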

\begin{remark}
   We emphasize that the smallness condition in Theorem {\rm{\ref{th2.2}}} is likely a physical necessity rather than a technical artifact. It is tempting to conjecture that a blow-up (where $\|\partial_{x}\phi\|_{L^2}\to\infty$) might actually enhance stability, analogous to how shock waves in deterministic conservation laws dissipate entropy via an ``infinite'' gradient mechanism. Indeed, if the viscous term $-\mu||\partial_{xx}\phi||_{L^2}^{2}$ were the sole dominant factor, the singularity would effectively drain the energy and ``reset'' the system. However, as illustrated by the toy model \eqref{regu}, the conservative noise introduces a focusing nonlinearity roughly proportional to $||\partial_{x}\phi||_{L^4}^{4}$ which competes directly with the viscous dissipation. In the deterministic setting, Levine {\rm{\cite{levine1973some}}} showed that such focusing nonlinearities lead to finite-time blow-up where the solution leaves the energy space entirely. In our stochastic context, if the injection dominates, the singularity acts as an energy source rather than a dissipative sink. Thus, our result identifies the ``basin of attraction'' where viscosity suffices to suppress this stochastic energy injection, ensuring the global persistence of the rarefaction wave.
\end{remark}

{Third}, we address the critical issue of {uniqueness}. While our existence results are derived in an $L^2$-based Sobolev framework, standard $L^2$ energy methods fail to yield uniqueness for conservative SPDEs due to the nonlinear noise. In the following theorem, we establish a new $\rm{L_{loc}^{1}}$ contraction principle.
\begin{theorem}\label{loccon}
     Assume {\rm{(H1)-(H3)}} hold and $u_0-\bar u_0$ and $v_0-\bar u_0\in H^1(\R)$. Let $u,v$ be the solutions of \eqref{conservative} with initial data $u_0,v_0$ respectively. Then for $r>0$ and almost all $t>0$,
   \baee
\mathbb E\int_{|x|<r}|u(x, t)-v(x, t)| d x   \leq  \int_{|x|<r+S t}\left|u_0(x)-v_0(x)\right| d x,
\eaee
where 
$$
     M(u_0,v_0)\coloneqq\|u_0\|_{L^\infty(\R)}\vee\|v_0\|_{L^\infty(\R)}\text{ , \,}\displaystyle S\coloneqq  \sup_{|x|\leq M(u_0,v_0)}\left|f^\prime(x)\right|.$$
\end{theorem}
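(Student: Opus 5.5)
We prove Theorem~\ref{loccon} by a stochastic Kružkov doubling-of-variables argument carried out on a truncated light cone. Since both $u$ and $v$ are strong solutions, we can work with Itô's formula directly rather than with kinetic formulations. First rewrite \eqref{conservative} in Itô form:
\[
du=\partial_x\big(b(u)u_x\big)\,dt-\partial_x f(u)\,dt+\tfrac12\sum_i\partial_x\big(\sigma_i'(u)\,\partial_x\sigma_i(u)\big)\,dt+\sum_i\partial_x\sigma_i(u)\,dB_i .
\]
Let $\eta_\delta$ be a smooth convex approximation of $|\cdot|$ with $\eta_\delta''$ supported in $[-\delta,\delta]$. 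Let $\varrho_\varepsilon$ be a mollifier. Set $w=u(t,x)-v(t,y)$ and apply Itô's formula to
\[
\iint \eta_\delta\big(u(t,x)-v(t,y)\big)\,\varrho_\varepsilon(x-y)\,\psi(t,\tfrac{x+y}{2})\,dx\,dy ,
\]
where $\psi(t,z)=\chi\big((|z|-r-S(T-t))/\kappa\big)$ is a smoothed indicator of the backward cone $\{|z|<r+S(T-t)\}$, with $\chi$ nonincreasing. The two Brownian motions are the same, so the martingale parts driven by $dB_i$ combine to $\partial_x\sigma_i(u)-\partial_y\sigma_i(v)$. These are stochastic integrals of $L^2$ integrands because $u-\bar u,\,v-\bar u\in H^2$ locally in space and $\sigma_i$ satisfies (H3) on the bounded range $|u|,|v|\le M$. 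A maximum principle, which I will take from the earlier existence part (or prove by the same truncation argument with $\eta=(\cdot)^+$), confines the solutions to this range. Hence these integrals vanish in expectation.

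The core step is the bookkeeping of the second-order terms. The Itô correction from the noise gives
\[
\tfrac12\eta_\delta''(w)\sum_i\big(\partial_x\sigma_i(u)-\partial_y\sigma_i(v)\big)^2 .
\]
The Stratonovich drift terms, after integrating by parts in $x$ and $y$, give $-\eta_\delta''(w)\,\tfrac12\sum_i\big(\sigma_i'(u)^2u_x^2+\sigma_i'(v)^2v_y^2\big)$ plus lower-order remainders. The cross term $\eta_\delta''(w)\sum_i\sigma_i'(u)\sigma_i'(v)u_xv_y$ then combines with these to give $\tfrac12\eta_\delta''(w)\sum_i\big(\sigma_i'(u)u_x-\sigma_i'(v)v_y\big)^2$. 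This cancels the Itô correction exactly. The leftover commutator terms involve $\eta_\delta''(w)\,|\sigma_i'(u)-\sigma_i'(v)|\,|u_x||v_y|$, which is $O(\delta)\sum_i\|\sigma_i''\|_{L^\infty(-M,M)}|u_x||v_y|$ on $\{|w|<\delta\}$. Together with the viscous term this yields the usual nonpositive parabolic defect
\[
-\eta_\delta''(w)\,\big(\sqrt{b(u)}\,u_x-\sqrt{b(v)}\,v_y\big)^2
\]
plus terms vanishing as $\delta\to0$, using (H1) and $b\in C^1$. This is exactly where the local $H^2$ (hence $u_x\in L^\infty_{loc}$) regularity and (H3) are used. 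I expect this cancellation to be the main obstacle. The delicate point is choosing the order of limits: first $\varepsilon\to0$ at fixed $\delta$, which requires $\nabla_{x,y}\varrho_\varepsilon$ terms to be controlled by $\eta_\delta''(w)|x-y|\,|u_x||v_y|$ and continuity in $L^2_{loc}$; then $\delta\to0$.

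After these limits one obtains
\[
\mathbb E\int|u-v|(T)\,\psi(T)\,dx\le\int|u_0-v_0|\,\psi(0)\,dx+\mathbb E\int_0^T\!\!\int|u-v|\,\partial_t\psi+q(u,v)\,\partial_x\psi+(\text{viscous})\,\partial_{xx}\psi ,
\]
where $q(u,v)=\operatorname{sgn}(u-v)\big(f(u)-f(v)\big)$ and $|q|\le S|u-v|$. Since $\partial_t\psi=S|\partial_x\psi|$ with $\partial_x\psi$ pointing outward, the term $|u-v|\partial_t\psi+q\,\partial_x\psi$ is $\le0$. There remain the viscous boundary term $\operatorname{sgn}(u-v)\big(B(u)-B(v)\big)\partial_{xx}\psi$, with $B'=b$, and the noise-drift flux term. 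Both scale like $\kappa^{-2}$ and $\kappa^{-1}$ on a layer of width $\kappa$. These are not signed, so instead of sending $\kappa\to0$ directly I would rely on the exact cancellation above holding pointwise before integrating by parts against $\psi$. I would keep them in the form $\partial_x\psi\cdot\operatorname{sgn}(w)\big(b(u)u_x-b(v)v_x\big)$, bounded by $\kappa\cdot\kappa^{-1}$ times the integral of $|\partial_x(B(u)-B(v))|$ over a layer of measure $O(\kappa)$, which tends to $0$ by local integrability of $u_x,v_x$. Letting $\kappa\to0$ and using Lebesgue points in $T$ gives the inequality for almost all $t$.
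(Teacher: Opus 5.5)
\textbf{The gap is in your last step, and the statement itself is false.}

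The step that fails is where you dispose of the second-order boundary terms. With $\psi=\chi\big((|x|-r-S(T-t))/\kappa\big)$ one has $|\partial_x\psi|\approx\kappa^{-1}$ on a layer of width $\kappa$, so
\[
\int_0^T\!\!\int_{\R}\operatorname{sgn}(w)\,\partial_x\big(B(u)-B(v)\big)\,\partial_x\psi\,dx\,dt
\]
is a layer average of $\partial_x|B(u)-B(v)|$. As $\kappa\to0$ it converges to the time integral of its trace on the lateral boundary $|x|=r+S(T-t)$, i.e.\ to the net diffusive flux of $|u-v|$ into the cone. Local integrability of $u_x,v_x$ only makes the unweighted layer integral $o(1)$ (it is $O(\kappa)$ for bounded gradients). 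That does not beat the prefactor $\kappa^{-1}$; the extra factor $\kappa$ in your ``$\kappa\cdot\kappa^{-1}$'' has no source. The same holds for the It\^o drift diffusion $\tfrac12\sum_i\partial_x(\sigma_i'(u)^2u_x)$. The pointwise cancellation does not help either: it only removes the interior $\eta_\delta''$ defect terms, whereas this boundary term comes from the divergence form of the diffusion. (Up to that point, your same-time doubling is a legitimate simplification of the paper's route. For two strong solutions driven by the same $B_i$, the cross-variation $\sum_i\partial_x\sigma_i(u)\partial_y\sigma_i(v)$ comes directly from It\^o's formula, so the time mollifier, the non-adapted integral $I_8$ and Lemma~\ref{adaptlm} are not needed.)

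This gap cannot be closed, because the inequality is false whenever $b\ge\mu>0$: a uniformly parabolic equation has no finite domain of dependence. Take $\sigma_i\equiv0$, $b\equiv1$, $f(u)=u^2/2$, $u_-=0$, $u_+=1$, $v_0=\bar u_0$ and $u_0=\bar u_0+\phi_0$, where $0\le\phi_0\le1$, $\phi_0\in C_c^\infty(\R)$, $\phi_0\not\equiv0$ and $\operatorname{supp}\phi_0\subset[10,11]$. Then $S\le2$, so for $r=1$ and every $t\in(0,1]$ the right-hand side is $0$. On the other hand, $w=u-v$ solves the linear uniformly parabolic equation $w_t+\big(\tfrac{u+v}{2}\,w\big)_x=w_{xx}$ with $w(0)\ge0$, $w(0)\not\equiv0$. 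By the strong maximum principle $w(t,\cdot)>0$ on all of $\R$, so the left-hand side is positive.

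The paper's proof has the same defect. When $I_3$ and $I_7$ are merged into one $|b^{1/2}(r)-b^{1/2}(\xi)|^2$ expression, the contribution of $(\partial_x+\partial_y)^2\eta_{\delta,\delta_0}=\rho_{\delta_0}\varrho_\delta\,\partial_y^2\psi$ is lost; in the limit it produces $\mathbb{E}\int\!\!\int|B(u)-B(v)|\,\partial_{yy}\psi$, which is not small. This is why \eqref{test} comes out as the inviscid Kato inequality.

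Correct bookkeeping gives \eqref{test} with the extra term $\mathbb{E}\int_0^T\!\!\int\big(|B(u)-B(v)|+|A(u)-A(v)|\big)\partial_{xx}\psi$, where $A'=\tfrac12\sum_i\sigma_i'^2$. Use a weight such as $\psi=e^{-\lambda\sqrt{1+x^2}}$, for which $|\psi_x|+|\psi_{xx}|\le C_\lambda\psi$. The maximum principle and Gr\"onwall then yield $\mathbb{E}\int|u-v|(t)\,\psi\,dx\le e^{Ct}\int|u_0-v_0|\,\psi\,dx$. This weighted estimate is what pathwise uniqueness in Section~\ref{sec global} actually needs.
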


\ 

\textbf{Applications:} Beyond the specific stability of rarefaction waves, the analytical framework presented here may offer insights into certain challenges in the broader theory of SPDEs. The Stochastic Area Inequality suggests a mechanism to prove asymptotic stability in "critical" regimes—where noise and dissipation scale identically—which could be applicable to stochastic reaction-diffusion equations and hydrodynamics where standard Lyapunov methods face difficulties. Moreover, the Stochastic $\rm{L_{loc}^{1}}$ Contraction in Theorem \ref{loccon} provides a consistent uniqueness criterion that extends beyond the viscous setting. As detailed in Remarks \ref{invicidremark} and \ref{rm 3.2}, this contraction principle appears robust enough to handle inviscid conservation laws and polynomial growth coefficients, serving to bridge the gap between viscous strong solutions and inviscid entropy solutions. By relaxing the reliance on global $L^1$ integrability inherent in standard kinetic formulations or HJB approaches, this tool offers an alternative perspective for analyzing uniqueness in conservative SPDEs with non-integrable data or rough fluxes. Finally, the constructive existence scheme—based on a modified Galerkin method with smooth cut-offs—provides a rigorous basis for developing structure-preserving numerical discretizations for SPDEs driven by nonlinear conservative noise, a regime where ensuring stability in finite-dimensional approximations is significant.
\subsection{Main Difficulties and Strategies}\label{sec1.1}
\subsubsection{Main Difficulties }
The stability analysis of rarefaction waves under conservative noise faces several analytical barriers.

\paragraph{The Competition Between Noise and Dissipation}
The stability analysis of rarefaction waves under conservative noise is obstructed by a core physical instability. To illustrate this mechanism, consider the simplified case where $b=1$, $f=0$, $u_{\pm}=0$, and $\sigma(u)=u^2/2$, reducing the system to a heat equation with conservative noise. A formal It\^o calculus for $\|u_x\|^2_{L^2}$ yields
\begin{equation} \label{regu}
\frac{d}{dt}\mathbb{E}\|u_x\|_{L^2}^2 = 2\mathbb{E}\|u_x\|_{L^4}^4 - 2\mathbb{E}\|u_{xx}\|_{L^2}^2.
\end{equation}
The noise-induced term $2\mathbb{E}\|u_x\|_{L^4}^4$, being critical in the Sobolev embedding, acts as a powerful energy injection into high frequencies. This creates a critical scaling regime where noise and dissipation are of the same order. The structure of $u_x$ mirrors the deterministic focusing heat equation $w_t = w_{xx} + w^3$, which implies
\begin{equation*} \label{levin}
\frac{d}{dt}\|w\|_{L^2}^2 = 2\|w\|_{L^4}^4 - 2\|w_{x}\|_{L^2}^2.
\end{equation*}
For this equation, Levine \cite{levine1973some} established a rigorous blow-up criterion: solutions must develop singularities in finite time whenever the initial data satisfies $\|w_0\|_{L^4}^4 - 2\|w_0'\|_{L^2}^2 > 0$. Since this condition is invariably satisfied for \textit{sufficiently large data} (e.g., under scaling $\lambda w_0$ for large $|\lambda|$), it suggests that the finite-time blow-up is a generic feature for the toy model \eqref{regu}.

\paragraph{Limitations of Existing Frameworks}
Recent advances in stochastic scalar conservation laws (SSCL) have successfully utilized kinetic formulations and viscosity solutions. However, while these approaches are highly effective for integrable initial data, the specific structure of the rarefaction wave introduces unique analytical challenges:

\begin{itemize}
    \item \textit{Fractional vs. Higher-Order Regularity:} The kinetic formulation approach, advanced by Lions, Perthame, and Souganidis \cite{lions2013scalar} \cite{lions2014scalar} and later Gess and Souganidis \cite{gess2017long}, relies on averaging lemmas to extract regularity. However, these methods typically yield only fractional Sobolev regularity (e.g., $W^{s,1}$ for $s < 1$). However, unlike the inviscid case (nonlinear conservative noise actually adds regularity), in the viscous setting, the expected deterministic parabolic smoothing  becomes unattainable. An inherent feature of the kinetic formulation is that the source term $\partial_{\xi}m$ is singular in velocity. To recover the solution, one must transfer this derivative onto the heat kernel, an operation that mathematically consumes one full order of the parabolic smoothing. Furthermore, the presence of the Laplacian dampens high frequencies exponentially fast, effectively suppressing the stochastic phase cancellations that would otherwise provide a boost.  To prove the asymptotic stability of the rarefaction wave, we require sharp pathwise decay estimates (e.g., $t^{-1/4}$ in $L^\infty$), which necessitate controlling the second derivative (in $H^2$) to utilize embedding theorems. We cannot bridge the gap from the $W^{s,1}$ regularity provided by averaging lemmas to the $H^2$ regularity required to close our decay estimates.
    
    \item \textit{Global Integrability and Translation Invariance:} The viscosity solution method introduced by Gassiat and Gess \cite{gassiat2019regularization} is a powerful tool for demonstrating regularization by quadratic conservative noise and $L^{\infty}$ decay. While their work successfully treats viscous equations by lifting the problem to the primitive (antiderivative) variable, adapting this transformation for non-integrable profiles presents distinct difficulties. For a solution $u$ that is asymptotically constant and non-zero at infinity, the potential $U(x,t) = \int^x u$ grows linearly as $|x| \to \infty$. Furthermore, their pathwise regularization arguments rely on translation invariance, which is broken here by the time-evolving rarefaction profile $\bar{u}(x,t)$.
\end{itemize}
A common thread in these established approaches is the reliance on \textit{global $L^{1}$ integrability}, which the infinite mass of the rarefaction wave precludes.

\paragraph{The Uniqueness of the Solution in $L^2$ Frame} In the $L^2$ frame, formally, one has
\bae\label{monott}
\frac{d}{dt}\mathbb E \|u-v\|^2_{L^2}=& \mathbb E\int_\R(u-v)^2 u_x v_x \dif x -2\mathbb E\|u_x-v_x\|_{L^2}^2.
\eae
 The first term on the right  hand side of \eqref{monott}  can not be handled due to the lack of appropriate estimate when the diffusion coefficient $\sigma$ is nonlinear (see Lions et al. \cite{lions2013scalar}). 

\subsubsection{Strategies}
To overcome these limitations, our work departs from the kinetic and HJB frameworks and relies fundamentally on \textit{stochastic analysis} via high-order energy estimates. We introduce three specific new ideas to resolve the issues above. These tools allow us to capture the interplay between noise and dissipation, bridging the gap between stability and regularity that previous methods could not cross.

\begin{enumerate}

\item \textbf{The Stochastic Area Inequality(Resolving the Blow-up):} \textit{Global Integrability Constrains Local Blow-up.} To control the critical energy injection described in (1.3), we introduce the Stochastic Area Inequality (Lemma \ref{stochastic area}). This tool shifts the analysis from a pointwise competition (checking if viscosity dominates noise at every instant) to an accumulated behavior. We prove that because the ``total area''  of the fluctuation energy is finite almost surely ($\int_0^t \|u_x\|^2 dt < \ln (1+t)$), the blow-up mechanism is effectively ``starved'' of fuel. The inequality acts as a rigorous circuit breaker, ensuring that even if stochastic fluctuations cause local spikes, the global integrability constraint forces the solution to decay asymptotically.

  \item \textbf{Stochastic $\rm{L_{loc}^{1}}$ Contraction(Resolving Uniqueness):} While our existence results are in $L^2$, standard $L^2$ methods fail to yield uniqueness for conservative SPDEs due to the nonlinear noise structure. Conversely, standard entropy methods fail due to the infinite mass of the rarefaction wave. We resolve this by constructing a stochastic test function within the Kru\v{z}kov doubling-of-variables technique. This yields a novel $\rm{L_{loc}^{1}}$ contraction principle (Theorem \ref{loccon}) that is compatible with our $L^2$ framework without requiring global  $L^1$ integrability which is often assumed in kinetic or viscosity solution  approaches.

  \item \textbf{Constructive Regularity via Galerkin Approximation(Bridging the Gap):} To bypass the ``regularity gap'' of kinetic formulations, we forego the averaging lemma approach in favor of a constructive $H^2$ method. We employ a Galerkin approximation on a truncated domain with smooth cut-off coefficients. Unlike kinetic approximations, this construction is compatible with Itô’s formula for second derivatives, allowing us to rigorously preserve the $H^2$ energy structure at the approximation level. This ensures that the formal stability estimates derived in Section 2 are mathematically justified for the limiting strong solution.
\end{enumerate}
\subsection{Related Literature}

While the methodological challenges of conservative noise were discussed in Section \ref{sec1.1}, it is worth placing our stability result within the broader context of wave phenomena in conservation laws.

The analysis of stochastic scalar conservation laws (SSCL) has seen significant development in recent years, with a particular focus on well-posedness, long-time behavior, and regularization by noise. The kinetic formulation approach, advanced by Lions, Perthame, and Souganidis \cite{lions2013scalar} \cite{lions2014scalar}, has become a standard tool for handling rough fluxes.  Debussche and Vovelle \cite{debussche2015invariant} obtained the existence and uniqueness of the invariant measure for the associated stochastic kinetic solution of the stochastic conservation laws. The work of Gess and Souganidis \cite{gess2017long} has been highly successful in studying the long-time behavior of scalar conservation laws under conservative noise on the torus $\mathbb{T}^N$. For all  $p \in(1, \infty)$, $u_0\in L^\infty(\mathbb T^N)\footnote{$\bar{u}_0:=\int_{\mathbb{T}^N} u_0(x) d x$}$ and $\theta\in (0,1]\footnote{$\theta$ is determined by the flux $f,$ see \cite{gess2017long} for detailed definition.},$ they showed the pathwise $L^1(\mathbb{T}^N)$ convergence and expectation $L^p(\mathbb{T}^N)$ convergence
\bae\label{gs Lp}
&u\left(\cdot, t ; \cdot, u_0\right) \rightarrow \bar{u}_0 \quad \text { in } L^1\left(\Omega ; L^1(\mathbb{T}^N)\right) \text { and } \mathbb{P} \text {-a.s. in } L^1(\mathbb{T}^N),\\
&\mathbb{E}\left\|u\left( t\right)-\bar{u}_0\right\|_{L^p} \lesssim C_p t^{-\frac{\theta}{3+\theta}}.
\eae
 It is worth noting that in \eqref{gs Lp}, the pathwise convergence is established in $L^1$ \textit{without an explicit rate}, while the convergence in expectation \textit{does not extend to the $L^\infty$ norm.}

Parallel to the kinetic theory, viscosity solution methods for stochastic Hamilton-Jacobi (HJB) equations have also been developed. Gassiat and Gess \cite{gassiat2019regularization} investigated the regularizing effects of nonlinear stochastic perturbations on fully nonlinear PDEs. By lifting the problem to the primitive variable and employing pathwise viscosity solution techniques, they obtained sharp $L^\infty$ bounds on the second derivative, effectively proving a "regularization by noise" result where stochasticity prevents the formation of shocks that would otherwise occur in the deterministic setting. In the one dimensional case, they showed that for the equation
$$
\begin{aligned}
\begin{cases}
    \partial_t U+\frac{1}{2}\left|\partial_x U\right|^2 \circ d B(t)  =F\left(U_x, U_{x x} \right) d t, \\
U(0)  =U_0 \in B U C(\mathbb{R}),
\end{cases}
\end{aligned}
$$
it holds that $\| U_x(t, \cdot)\|_{{\infty}} \leq C \big(\max _{0 \leq s \leq t} |B(s)|\big)^\frac{-1}{2} .$
In their  approach,  $F$ is  required to satisfy some assumptions from the theory of viscosity solution. However, the ``diffusion coefficient'' needs to be \textit{quadratic on $U_x$}, since the key technical estimates rely heavily on the specific algebraic structure of the quadratic Hamiltonian $H(p)=p^2/2.$ While in our problem \eqref{conservative}, $\sigma_i$ is essentially required to be $C^2.$

In the deterministic setting, the stability of rarefaction waves is well-established. Xin \cite{xin1990asymptotic} analyzed the asymptotic stability of planar rarefaction waves in several space dimensions. For compressible Navier-Stokes equations, Matsumura and Nishihara \cite{matsumura1985stability} established convergence rates to the rarefaction wave, and Zumbrun \cite{zumbrun2005stability} et al. developed stability theories for large-amplitude shock waves. These works elucidate that rarefaction waves are generally robust structures.

In the stochastic context, the authors \cite{dong2025large} studied the long-time behavior of the stochastic Burgers equation, investigating the formation of viscous shock waves and turbulent fluctuations. Notably, we found that under transport noise, rarefaction waves remain stable while shock waves do not. Similarly, Ryzhik and Dunlap\cite{dunlap2021viscous} considered the stochastic Burgers equation under additive conservative noise, proving convergence to a ``stochastic'' shock profile only under a dynamic shift. Our work complements these findings by establishing that rarefaction waves retain their stability even under rough conservative flux noise, provided the regularity is preserved. Regarding the long-time statistics and ergodicity, Hairer et al. \cite{hairer2016optimal} and Dong et al. \cite{dong2023ergodicity}  established polynomial mixing and optimal convergence rates to the invariant measure for conservation laws driven by space-time white noise and multiplicative colored noise, respectively.  In the $L^1$ framework, the theory of stochastic strong entropy solutions was initiated by Feng and Nualart \cite{feng2008stochastic} and further developed for various conservation and balance laws by Debussche and Vovelle \cite{debussche2010scalar}, Chen et al. \cite{chen2012nonlinear}, and Dareiotis et al. \cite{dareiotis2019entropy,dareiotis2020nonlinear}.

Finally, regarding the construction of strong solutions, our approach departs from the kinetic formulations often used for rough fluxes. Instead, we employ a constructive Galerkin approximation combined with rigorous compactness arguments. We rely on the Aubin-Lions-Simon compactness theorem \cite{aubin1963theoreme,lions1969quelques,simon1986compact} to establish the necessary tightness in time-space norms. To recover strong convergence from pathwise uniqueness, we utilize the Gyöngy-Krylov characterization \cite{gyongy1996existence}. This probabilistic framework has proved essential in establishing strong solutions for complex stochastic fluid models \cite{debussche2011local}, as well as the stochastic Euler equations \cite{glatt2014local} and compressible fluid flows \cite{breit2018stochastically}, where standard low regularity methods may fail.

\textbf{Extension to Higher Dimensions.} While our analysis is presented in the one-dimensional setting to isolate the critical stochastic difficulties, the extension to multi-dimensional planar rarefaction waves follows established energy hierarchies. As demonstrated by Huang and Xu \cite{huang2022decay,huangdecay}  for deterministic viscous conservation laws, the stability of planar waves in $\mathbb{R}^n$ relies on $L^p$ energy estimates that parallel the 1D case, provided the stochastic analysis of the rough flux—the main results of this work—is handled correctly. Thus, we focus on the 1D setting to highlight the  interaction between the conservative noise and the rarefaction profile without the algebraic complexity of vector calculus.

\subsection{Organization of the Paper}

This paper is organized to prioritize the stability mechanisms and novel uniqueness criteria before detailing the construction of solutions. The content is organized as follows:

\begin{itemize}
    \item \textit{Section 2: A Priori Estimates} \\
    This section establishes key a priori estimates by introducing the \textit{Stochastic Area Inequality} and \textit{High-Order Energy Estimates}. These tools are used to demonstrate how viscous dissipation effectively controls noise-induced energy injection, allowing for the derivation of sharp decay rates and the identification of the small data regime.

    \item \textit{Section 3: Uniqueness} \\
   The uniqueness proof (Section 3) utilizes the Kružkov doubling-of-variables technique. To streamline the presentation, we isolate the standard mollifier convergence lemmas in the Appendix, allowing the main text to focus on the novel $\rm{L_{loc}^{1}}$ contraction estimates and the handling of the viscous-stochastic competition.

    \item \textit{Section 4: Global Existence} \\
    This section provides the rigorous construction of the strong solution. It employs a cut-off and \textit{Galerkin approximation scheme}, utilizing the bounds established in Section 2 to prove both global existence and convergence.

    \item \textit{Section 5: Appendix} \\
    The appendix contains technical proofs regarding the decay estimates, Kružkov doubling-of-variables estimates and non-adapted stochastic integrals.
\end{itemize}

\section{A Priori Estimates and Asymptotic Behavior}	

\subsection{Set Up and A Priori Estimates}
We introduce some notations for use of the later context. 
First, since $u^r$ is not smooth, we use the smoothed rarefaction wave $\bar u$ to approximate $u^r$.
	\bae\label{background}
	\left\{\begin{aligned}
		& \bar u_t + \partial_x f(\bar u) = 0,\\
		&\bar u(0,x)=\bar u_0(x)=\frac{u_{+}+u_{-}}{2}+\frac{u_{+}-u_{-}}{2} \frac{e^x-e^{-x}}{e^x+e^{-x}}.
	\end{aligned}\right.
	\eae
Now that we are entering the dense technical proofs, the following abbreviations will be adopted to save space. Set $B(r)=(-r,r)$, $L^p_r\coloneqq L^p(B(r)) $ and $H^s_r \coloneqq H^s_0(B(r))$. We shall omit the natural embedding from $H^s_r$ into $H^s(\R)$ in the later context. Let $X$ be a normed space with norm $\|\cdot\|_X$ and $g=(g_i)_{i=1}^\infty: X^{\mathbb N_+ }\rightarrow\mathbb R,$ we set
	\begin{equation*}
		\|g\|_{l^p(X)}\coloneqq \left(\sum_{i=1}^\infty \| g_i\|_X^p \right)^\frac{1}{p}, \quad g\in l^p(X).
	\end{equation*}  
Adopting a minor abuse of notation, we denote $\|\cdot\|_p$ as $\|\cdot\|_{L^p(\R)}$, $\|\cdot\|$ as $\|\cdot\|_{L^2(\R)}$ (or $\|\cdot\|_{L^2_N}$ respectively), and $\langle\cdot,\cdot \rangle$ as $\langle\cdot,\cdot \rangle_{L^2(\R)}$(or $\langle\cdot,\cdot \rangle_{L^2_N}$ respectively) in the following sections. Throughout the paper, we denote by $C$ a generic positive constant that may depend on the fixed parameters of the system (the flux convexity $\alpha$, the viscosity lower bound $\mu$, the noise coefficients $\sigma$, and the boundary values $u_{\pm}$), but is independent of the time variable $t$, the approximation parameters (such as $N, k, \delta, \epsilon$ in Sections 3 and 4), and the stochastic sample $\omega$, unless otherwise specified. The value of $C$ may change from line to line. However, to rigorously establish the global existence in the presence of conservative noise, the precise dependence on the viscosity $\mu$ and the initial data size $\mathfrak{M}$ is critical. Therefore, in the stability threshold estimates (specifically Lemma \ref{lm2.4} and Theorem \ref{th2.2}), we will identify the dependencies explicitly (e.g., denoting $C^*(\mathfrak{M})$ or $C_{\mu}$) to track the competition between the viscous dissipation and the stochastic energy injection.
\

Since the Riemann initial data $u_0$ of the conservative SPDE \eqref{conservative} is not integrable, we can't solve it directly in a proper space. By convention, we work on the perturbed equation instead. Let $\phi=u-\bar u$. Then $\phi$ satisfies
$$
d \phi+\partial_{x}\left[f(\phi+\bar{u})-f(\bar u)\right] d t=\partial_{x}\left[b(\phi+\bar{u}) \partial_x(\phi+\bar{u}) \right]d t+ \partial_{x}\left[\sum_{i=1}^\infty\sigma_{i}(\phi+\bar{u}) \circ d B_{i}(t)\right],
$$
rewriting it to It\^o's form gives that
\bae\label{perturbation}
\left\{\begin{aligned}
	& d \phi+\partial_x[f(\phi+\bar{u})-f(\bar u)]d t \\
	&= \partial_{x}\left[\left(b(\phi+\bar{u})+\frac{1}{2} \sum_{i=1}^{\infty} \sigma_{i}^{\prime 2}(\phi+\bar{u})\right)(\phi+\bar{u})_{x}\right] d t+ \partial_{x}\left[\sum_{i=1}^{\infty}\sigma_{i}\left(\phi +\bar{u}\right)\cdot d B_{i}(t)\right],\\
	&\phi(0)=u(0)-\bar u(0)=:\phi_0.
\end{aligned}\right.
\eae
In the Introduction, we referred to the standard solution spaces. Henceforth, we denote the solution space for the perturbation equation \eqref{perturbation}
\begin{equation*}
	X_i(T) \coloneqq L ^2\left( [0,T]\times \Omega,\dif t \otimes d\PM , H^i(\R) \right)\bigcap {C\left( [0,T], H^{i-1}(\R) \right)}\quad i=1,2.
\end{equation*}
Before we formulate the well-posedness results of the equation \eqref{perturbation}, we give the definition of solutions.
\begin{dfn} \label{def:weak_solution}
A continuous $L^2(\mathbb{R})$-valued $\mathcal{F}_t$-adapted process $(\phi(t))_{t \in [0, T]}$ is called a \textit{(probability strong) solution} of \eqref{perturbation}, if we have $\bar{\phi} \in X_1(T)$ and for every test function $\varphi \in C_c^\infty(\mathbb{R})$, the following equality holds {\rm{a.s.}} for all $t \in [0, T]$:
\begin{equation*} \label{eq:weak_form}
\begin{aligned}
\langle \phi(t), \varphi \rangle &= \langle \phi(0), \varphi \rangle + \int_0^t \left\langle f(\bar{\phi} + \bar{u}) - f(\bar{u}), \partial_x \varphi \right\rangle ds \\
&\quad - \int_0^t \left\langle b(\bar{\phi} + \bar{u})\partial_x(\bar{\phi} + \bar{u}), \partial_x \varphi \right\rangle  ds \\
&\quad - \sum_{i=1}^\infty \int_0^t \left\langle \sigma_i(\bar{\phi} + \bar{u}), \partial_x \varphi \right\rangle \circ dB_i(s),
\end{aligned}
\end{equation*}
where  $\bar{\phi}$ is any $H^1(\mathbb{R})$-valued progressively measurable $dt \otimes P$-version of $\phi$.
\end{dfn}
\begin{remark}
    Note that while we establish the solution is ``strong'' in the probabilistic sense (adapted to the filtration), Definition \ref{def:weak_solution} corresponds to a ``weak'' solution in the analytic {\rm{(PDE)}} sense.
\end{remark}
 The following properties of $\bar u$ are basic in the subsequent sections.
	\begin{prp}[{\cite[Lemma 2.1]{matsumura1985stability}}]\label{lm2.1}
		${}$
 Let $\bar u$ be the smoothed rarefaction wave defined in \eqref{background}. Then the following hold for any $1\leq p\leq +\infty$,
		\begin{enumerate}
			\item[\rm{(i)}] {$u_-\leq \bar u\leq u_+,$ $\,\bar u_x\geq 0$,}
			\item[\rm{(ii)}] { $\|\bar u_x\|_{L^p(\R)}\leq C (1+t)^{\frac{-p+1}{p}} $,}
			\item [\rm{(iii)}]{$\|\bar u_{xx}\|_{L^p(\R)}\leq C (1+t)^{-1} $.}
		\end{enumerate}
	\end{prp}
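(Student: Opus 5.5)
The statement is Proposition \ref{lm2.1}, the standard estimates on the smoothed rarefaction wave. I would prove it by the method of characteristics, since $\bar u$ solves the inviscid equation \eqref{background} with smooth, monotone data. The data satisfy $u_-<\bar u_0<u_+$ and $\bar u_0'(x)=\frac{u_+-u_-}{2}\operatorname{sech}^2 x>0$. Along the characteristic $x=x_0+f'(\bar u_0(x_0))t$ the solution is constant, $\bar u=\bar u_0(x_0)$.

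Because $f''\ge\alpha^*>0$ and $\bar u_0'>0$, the map $x_0\mapsto x_0+f'(\bar u_0(x_0))t$ is strictly increasing with derivative $1+f''(\bar u_0)\bar u_0' t\ge1$. It is therefore a global diffeomorphism of $\R$, and $\bar u$ is smooth for all $t\ge0$ with no shocks. This immediately gives (i): $u_-\le\bar u\le u_+$, since the values are transported, and
\[
\bar u_x=\frac{\bar u_0'(x_0)}{1+f''(\bar u_0(x_0))\,\bar u_0'(x_0)\,t}\ge0 .
\]

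For (ii), the case $p=\infty$ follows from the formula above: $\bar u_x\le \min\{\bar u_0',(\alpha^* t)^{-1}\}\le C(1+t)^{-1}$. The case $p=1$ holds since $\int\bar u_x\,dx=u_+-u_-$. Interpolation, $\|\bar u_x\|_p\le\|\bar u_x\|_1^{1/p}\|\bar u_x\|_\infty^{1-1/p}$, then yields $C(1+t)^{-(p-1)/p}$.

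For (iii), I differentiate the formula in $x$ using $\partial x_0/\partial x=(1+f''\bar u_0' t)^{-1}$. With $g=\bar u_0'$ and $D=1+f''(\bar u_0)g\,t$, this gives
\[
\bar u_{xx}=\frac{g'\,D-g\,(f'''g^2t+f''g't)}{D^3}=\frac{g'-f'''g^3t}{D^3}.
\]
The obstacle is that (H2) only gives $f\in C^2$. I would handle this either by assuming $f\in C^3$ locally on $[u_-,u_+]$, or by differentiating $D$ directly as a composition, which produces the same structure.

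Two facts about the data drive the estimate. Since $|g'|=2|\tanh x_0|\,g\le 2g$, we have $|\bar u_{xx}|\le C(g+g^3t)/D^3$. On the compact range $[u_-,u_+]$ we have $D\ge 1+\alpha^* g t$. Changing variables $dx=D\,dx_0$ gives
\[
\|\bar u_{xx}\|_{1}\le C\int\frac{g+g^3t}{D^2}\,dx_0 .
\]
The $g$-part is bounded by $\int g/(1+\alpha^* g t)^2\,dx_0$. Using $g\sim e^{-2|x_0|}$, this is $\lesssim t^{-1}$ (split at $g t\sim1$), and it is bounded for small $t$. The $g^3t$ part satisfies $g^3t/D^2\le C\,g/(\alpha^{*2}t)$, whose integral is $\lesssim t^{-1}$. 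Hence $\|\bar u_{xx}\|_1\le C(1+t)^{-1}$.

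For $p=\infty$ I use $(g+g^3t)/D^3\le C\min\{1,(1+t)^{-2}\}$, since $g\le1$ and $D\ge1+\alpha^*gt$. The general case follows by interpolation between $L^1$ and $L^\infty$, giving the uniform bound $C(1+t)^{-1}$.

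The only delicate step is the $L^1$ bound on $\bar u_{xx}$. There I must use the exponential tail of $\bar u_0'$ in the characteristic coordinate to get $t^{-1}$ rather than $t^{-1}\ln t$. The split at $g t\sim1$ handles this, since $\int_{gt\ge1}g\,(gt)^{-2}\,dx_0$ and $\int_{gt<1}g\,dx_0$ are each $O(t^{-1})$.
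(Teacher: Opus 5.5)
Your characteristic argument is the standard one behind the cited Matsumura--Nishihara lemma; the paper itself only cites that lemma, whose proof is for Burgers' flux, where the $f'''$ term is absent. Parts (i), (ii) and your $L^1$ bound in (iii) are correct as written, including the use of $D^{-2}$ rather than $D^{-1}$ to avoid the logarithm.

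\textbf{The $L^\infty$ bound for $\bar u_{xx}$.} One step fails as stated: the claim $(g+g^3t)/D^3\le C\min\{1,(1+t)^{-2}\}$ is false. Only the $g^3t$ part is $O(t^{-2})$. The other term satisfies $g/D^3\le g/(1+\alpha^* g t)^3$, and this bound is maximized at $g=1/(2\alpha^* t)$, where it equals $\frac{4}{27\alpha^* t}$. Since $g$ takes every value in $(0,\frac{u_+-u_-}{2}]$, this point is reached for large $t$.

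The $t^{-1}$ rate is genuine, not an artifact of the bound. For Burgers' flux, $\bar u_{xx}=g'/(1+gt)^3$ with $|g'|=2|\tanh x_0|\,g$, and $\tanh x_0\to1$ at the point where $gt=\tfrac12$. Hence $\|\bar u_{xx}\|_{L^\infty}\approx \frac{8}{27t}$. This is consistent with (iii) having the same rate for every $p$.

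The repair is immediate. Use $g/D^3\le g/D\le\min\{g,(\alpha^* t)^{-1}\}$ and $g^3t/D^3\le\min\{g^3t,(\alpha^*)^{-3}t^{-2}\}$. Together these give $\|\bar u_{xx}\|_{L^\infty}\le C(1+t)^{-1}$. Interpolating with your $L^1$ bound then yields (iii) exactly.

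\textbf{Regularity of $f$.} Your second option, differentiating $D$ directly as a composition, does not avoid the issue. Computing $\partial_{x_0}[f''(\bar u_0(x_0))]$ still requires $f''$ to be differentiable.

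In fact (iii) can fail under (H2) alone. Suppose $f''$ is continuous but nowhere differentiable on $(u_-,u_+)$. Then $1/\bar u_x=1/g+f''(\bar u_0(x_0))\,t$ is nowhere differentiable in $x_0$. Since $x\mapsto x_0$ is a $C^1$ diffeomorphism with positive derivative, $\bar u_x(t,\cdot)$ is nowhere differentiable in $x$ for $t>0$, so $\bar u_{xx}\notin L^1_{\rm loc}$.

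So you must add $f\in C^3$ (or $f''$ Lipschitz) on $[u_-,u_+]$ as a hypothesis. This is a defect of the statement as posed, not something to argue around. Relatedly, under (H2) alone $\bar u$ is only $C^1$, not smooth.
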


We first provide a priori estimates for the solution $\phi$ of perturbation equation \eqref{perturbation}.
\begin{Lemma}\label{lmL^p}
		Let $\phi\in X_1(T)$ be the solution of \eqref{perturbation}. Then for any $T>0$ we have the following $L^p$ estimates.
		\begin{enumerate}
		    \item [\rm{1.}] {\noindent If $p=2$, then  \text { {\rm{a.s.}}}
            \begin{equation}\label{real-L^2-as}
			\begin{aligned}
				& \|\phi(t)\|^2 + \int_0^t \|\phi_x\|^2 ds + \int_0^t\int_{\mathbb{R}}\phi^2  \bar{u}_{x} d x ds\\
                \leq&   \| \phi_0 \|^2 +C \ln (1+t)+ 2 \int_0^t \int_ \mathbb R \sum_{i=1}^{\infty} \partial_x \sigma_i (\phi +\bar u) |\phi|^{p-2}  \dif x~\dif B_i(s),
			\end{aligned}
            \end{equation}
        and
		\begin{equation}\label{real-L^2}
			\begin{aligned}
				&\mathbb E \left[ \|\phi(t)\|^2 + \int_0^t \|\phi_x\|^2 ds + \int_0^t\int_{\mathbb{R}}\phi^2  \bar{u}_{x} d x ds\right]\leq   \| \phi_0 \|^2 +C \ln (1+t).
			\end{aligned}
		\end{equation}}
          \item [\rm{2.}]{If $2< p<\infty,$ then \text { {\rm{a.s.}}}
		\begin{equation}\label{L^p}
			\begin{aligned}
				& \|\phi (t)\|_p^{p} + \int_0^t \int|\phi |^{p} \bar{u}_{x} d x d s + \int_0^t\int_\R|\phi |^{p-2} \phi^{2}_{x} d x d s \\
				&\leq p C \int_0^t (1+s)^{-\frac{p}{2}} ds+ p C\int_0^t (1+s)^{-1}\|\phi \|_p^{p}ds + p \int_0^t \int_ \mathbb R \sum_{i=1}^{\infty} |\phi|^{p-2}\phi\partial_x \sigma_i (\phi +\bar u)   \dif x~\dif B_i(s).
			\end{aligned}
		\end{equation}}
         \item [\rm{3.}]{If $p=\infty$, then \text { {\rm{a.s.}}}
	\begin{equation}\label{mp}
		\begin{aligned}
			\sup_{0\le t\le T}	\|\phi(t)\|_{L^\infty(\R)} \leq  \| \phi_{0} \|_{L^\infty(\R)}.
		\end{aligned}
	\end{equation}}
		\end{enumerate}

\end{Lemma}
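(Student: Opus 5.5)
The plan is to apply It\^o's formula to the functional $\Phi_p(\phi)=\int_\R|\phi|^p\,dx$, or to a smooth convex approximation of it, along the It\^o form \eqref{perturbation}. This is justified by the regularity $\phi\in X_1(T)$, using a standard regularization argument, for example mollification in $x$ combined with a localizing cut-off, so that It\^o's formula in $L^2$ (Krylov's version for variational solutions) applies.

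Formally,
\[
d\|\phi\|_p^p=p\langle|\phi|^{p-2}\phi,d\phi\rangle+\tfrac{p(p-1)}{2}\sum_i\int|\phi|^{p-2}\bigl(\partial_x\sigma_i(\phi+\bar u)\bigr)^2dx\,dt .
\]
We integrate by parts in $x$ term by term.

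\textbf{The quadratic variation and the It\^o correction.} The key structural point is that these cancel exactly, up to terms involving $\bar u_x$. The It\^o correction contributes
\[
-p(p-1)\int|\phi|^{p-2}\phi_x\Bigl(b+\tfrac12\sum\sigma_i'^2\Bigr)(\phi_x+\bar u_x).
\]
Writing $\partial_x\sigma_i=\sigma_i'(\phi_x+\bar u_x)$, the quadratic variation term is
\[
\tfrac{p(p-1)}2\int|\phi|^{p-2}\sum\sigma_i'^2(\phi_x+\bar u_x)^2 .
\]
Combining the two leaves
\[
-p(p-1)\int|\phi|^{p-2}b\,\phi_x(\phi_x+\bar u_x)+\tfrac{p(p-1)}2\int|\phi|^{p-2}\sum\sigma_i'^2\,\bar u_x(\phi_x+\bar u_x).
\]
The noise $\phi_x^2$-terms cancel, and only the viscous dissipation $-\mu p(p-1)\int|\phi|^{p-2}\phi_x^2$ survives. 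The cross terms in $\phi_x\bar u_x$ are absorbed by Young's inequality into half of that dissipation. The resulting cost is
\[
C p\int|\phi|^{p-2}\bar u_x^2 \le C\bigl(\|\bar u_x\|_\infty\int|\phi|^{p}\bar u_x+\ldots\bigr).
\]
Alternatively, we use $|\phi|^{p-2}\le \epsilon|\phi|^p+C_\epsilon$ together with $\|\bar u_x\|_\infty\le C(1+t)^{-1}$ and $\int\bar u_x^{p/2}\lesssim (1+t)^{1-p/2}$ from Proposition \ref{lm2.1}. This gives exactly the terms $C(1+s)^{-p/2}$ and $C(1+s)^{-1}\|\phi\|_p^p$ in \eqref{L^p}. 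For $p=2$ the weight is $1$, and the cost is $\int_0^t\|\bar u_x\|^2\le C\ln(1+t)$, giving \eqref{real-L^2-as}.

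\textbf{The flux term.} Here we use convexity, as in Matsumura--Nishihara. Integrating by parts,
\[
-p\int|\phi|^{p-2}\phi\,\partial_x[f(\phi+\bar u)-f(\bar u)]
\]
becomes, via the primitive $\int_0^\phi(p-1)|s|^{p-2}(f(s+\bar u)-f(\bar u))\,ds$, a term of the form
\[
-\int\bar u_x\bigl[\ldots\bigr]\le -c\,\alpha^*\int|\phi|^p\bar u_x .
\]
This uses $f''\ge\alpha^*$ and $\bar u_x\ge0$, and it produces the good term on the left-hand side.

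\textbf{Expectation and the case $p=\infty$.} Taking expectation in the $p=2$ identity kills the martingale, since it is a true martingale by the $X_1$ integrability, and yields \eqref{real-L^2}. For $p=\infty$ we argue by a maximum principle rather than by letting $p\to\infty$ in \eqref{L^p}, since that limit is lossy. We apply the same computation to $(\phi-k)_+^2$ with $k=\|\phi_0\|_\infty$. The problem is that the noise then still contributes $\bar u_x$-terms, which do not vanish. This is the main obstacle. I would instead work with $u$ itself and the conservative Stratonovich structure, using the fact that constants solve the equation, so that $u_-\wedge \inf u_0\le u\le u_+\vee\sup u_0$. If a bound in terms of $\phi$ is truly needed, I would use a pathwise comparison or Kružkov-type argument against the sub- and supersolutions $\bar u\pm k$, exploiting $\bar u_x\ge0$ and the convexity of $f$. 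I expect this step to require the most care, in particular the rigorous justification of It\^o's formula for non-smooth convex functionals on the unbounded line.
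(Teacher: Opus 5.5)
For items 1 and 2 you follow the paper's route. You use It\^o's formula for $\|\phi\|_p^p$. The $\sum_i\sigma_i'^2\phi_x^2$ contributions cancel exactly between the It\^o correction and the quadratic variation; this is what \eqref{3.21n} shows, and it is more accurate than the ``viscosity dominates the noise'' heuristic of the paper's formal derivation. Convexity of $f$ gives the $\int|\phi|^p\bar u_x$ term, and Young plus Proposition~\ref{lm2.1} handle the remainders. The paper justifies this on the cut-off approximations of Section~\ref{sec cutoff} (Lemma~\ref{Lemma4.3}) rather than by mollification.

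Two corrections are needed:
\begin{itemize}
\item To get exactly the forcing $(1+s)^{-p/2}$ you need the $\bar u_x$-weighted split $|\phi|^{p-2}\bar u_x^2\le\frac{p-2}{p}|\phi|^p\bar u_x+\frac2p\bar u_x^{(p+2)/2}$, as in \eqref{real-I_2(p)}. Your $|\phi|^{p-2}\le\epsilon|\phi|^p+C_\epsilon$ only gives forcing $(1+s)^{-1}$.
\item (H1) and (H3) control $b$ and $\sum_i\sigma_i'^2$ only on bounded sets, so an $L^\infty$ bound on $u$ must be in hand \emph{before} these estimates. The paper gets it from the cut-off $\theta_R$.
\end{itemize}
Your computation also correctly produces the term $\|\phi_0\|_p^p$, which is missing from the printed \eqref{L^p}.

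The genuine gap is item 3, and it cannot be closed, because \eqref{mp} is false as stated. Take $\phi_0=0$. Then \eqref{mp} forces $\phi\equiv0$, so $\bar u$ would solve \eqref{perturbation}. The martingale part would force each $\sigma_i$ to be constant on $(u_-,u_+)$. The drift would then force $\partial_x\big(b(\bar u)\bar u_x\big)\equiv0$, i.e.\ $b(\bar u)\bar u_x$ constant in $x$. This is impossible, since $b\ge\mu>0$, $\bar u_x>0$ and $\bar u_x\to0$ as $|x|\to\infty$. So the claim fails even in the deterministic case $\sigma\equiv0$.

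The same obstruction defeats your comparison idea. For $k=\|\phi_0\|_\infty$ small, the convexity gain $\alpha^*k\bar u_x$ in the defect of $\bar u\pm k$ cannot dominate the sign-changing term $-\partial_x\big(b(\bar u\pm k)\bar u_x\big)$, which is of the same order as $\bar u_x$ in the tails. With noise, no deterministic function is a pathwise super- or subsolution anyway, since $\partial_x\sigma_i(\bar u\pm k)\,dB_i$ has no sign.

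The paper's own proof is exactly the lossy limit you distrust. Gronwall applied to \eqref{9.7} gives the factor $(1+t)^{pC\varepsilon^{-1}}$; the displayed bound drops the $p$, and the $p$-th root of this factor does not tend to $1$. The bound written after Gronwall is also proportional to $\|\phi_0^{(N)}\|_p^p$, although the forcing $\int_0^t(1+s)^{-p/2}ds$ enters additively.

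What your fallback does prove, correctly and pathwise, is the maximum principle for $u$. For $k\ge\sup u_0$ (hence $k\ge u_+\ge\bar u$) one has $(u-k)_+\le\phi_+\in L^2$ and
\[
d\int_\R(u-k)_+^2\,dx=-2\int_\R\mathbf{1}_{\{u>k\}}\,b(u)\,u_x^2\,dx\,dt .
\]
This holds because the $\sigma_i'^2$ terms cancel, and the flux and noise terms are exact $x$-derivatives of functions vanishing for $u\le k$. The same argument works from below. This gives $\inf u_0\le u\le\sup u_0$, hence $\|\phi(t)\|_\infty\le\|\phi_0\|_\infty+(u_+-u_-)=\mathfrak M$.

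That bound is the correct replacement for \eqref{mp}, and it suffices wherever only $|u|\le C$ is used. However, any smallness argument driven by $\|\phi\|_\infty$, as in Lemma~\ref{lm2.4}, then has to be run with $\mathfrak M$, which includes the wave strength.
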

\begin{proof}[Formal Derivation]
   We present the formal energy balance to illustrate the dissipation mechanism imposed by the strictly convex flux. The rigorous justification is performed on the Galerkin cut-off approximations in Section \ref{sec cutoff} (Lemma \ref{Lemma4.3}).

Applying It\^o's formula to the $L^p$ norm $\frac{1}{p}\|\phi(t)\|_p^p$ where $p\geq2,$ we obtain:
\begin{equation*} \label{eq:Ito_Lp}
\begin{aligned}
    \frac{1}{p} d \int_{\mathbb{R}} |\phi|^p dx = & \underbrace{-\int_{\mathbb{R}} \partial_x [f(\phi+\bar{u}) - f(\bar{u})] |\phi|^{p-2}\phi dx}_{I_{\rm{flux}}} + \underbrace{\int_{\mathbb{R}} \mathcal{L}(\phi) |\phi|^{p-2}\phi dx}_{I_{\rm{diss}}}\\
    &+ \frac{p-1}{2} \int_{\mathbb{R}} \sum_{i=1}^\infty |\sigma_i'(\phi+\bar{u}) (\phi_x + \bar{u}_x)|^2 |\phi|^{p-2} dx dt + dM_p(t),
\end{aligned}
\end{equation*}
where $\mathcal{L}(\phi)$ represents the second-order viscous terms and It\^o's correction terms, and $$dM_p(t)= \int_ \mathbb R \sum_{i=1}^{\infty} |\phi|^{p-2}\phi\partial_x \sigma_i (\phi +\bar u)   \dif x~\dif B_i(s)$$ is a local martingale.

\textbf{Step 1. The Convex Flux $I_{\rm{flux}}$.}
Using Hypothesis (H2), the strict convexity of the flux $f'' \ge \alpha$ and the rarefaction structure $\bar {u}>0$, by the integration by parts on $I_{{\rm{flux}}}$, we have 
\[
I_{\rm{flux}} = \int_{\mathbb{R}} \left( f(\phi+\bar{u}) - f(\bar{u}) - f'(\bar{u})\phi \right) \partial_x(|\phi|^{p-2}\phi) dx \le -\frac{\alpha^*}{p} \int_{\mathbb{R}} |\phi|^p \bar{u}_x dx.
\]
This term provides the crucial decay mechanism appearing in the first estimate \eqref{real-L^2-as} and \eqref{real-L^2}.

\textbf{Step 2. Viscosity $I_{\rm{diss}}$.}
The term $I_{\rm{diss}}$ combines the physical viscosity and the It\^o correction. A direct computation shows that the parabolic term behaves as
\[
I_{\rm{diss}} \approx - \int_{\mathbb{R}} \left( b(\cdot) - \frac{1}{2}\sum_{i=1}^\infty |\sigma_i'(\cdot)|^2 \right) \phi_x^2 |\phi|^{p-2} dx.
\]
Under Hypothesis (H1) and (H3), the viscosity $\mu$ dominates the noise intensity. Using Young's inequality to absorb lower-order terms, we obtain the differential inequality
\begin{equation} \label{eq:diff_ineq_Lp}
    \frac{1}{p} d\|\phi\|_p^p + C \int_{\mathbb{R}} |\phi|^{p-2}\phi_x^2 dx dt \le C(1+t)^{-1} \|\phi\|_p^p dt + C (1+t)^{-p/2} dt + dM_p(t).
\end{equation}
Integrating this inequality yields the $L^2$ and $L^p$ bounds.

\textbf{Step 3. The $L^\infty$ estimation.}
For the $L^\infty$ estimate, we observe that as $p \to \infty$, the forcing terms in \eqref{eq:diff_ineq_Lp} vanish relative to the norm (rigorously detailed in the proof of Lemma 4.1), we recover the pathwise bound
\[
\|\phi(t)\|_\infty \le \|\phi_0\|_\infty, \quad \text { {\rm{a.s.}}.}
\]
This completes the formal derivation.
\end{proof}
\begin{remark}
   This pathwise The $L^\infty$ estimation \eqref{mp} is consistent with the inviscid theory, recovering the estimates established for rough fluxes in {\rm{\cite[Proposition 2.1]{lions2013scalar}}} and {\rm{\cite[Proposition 2.2]{gess2017long}}}.
\end{remark}
\begin{Lemma}[\cite{kawashima2004lp}]\label{ii} Let $2\leq p<\infty$ and $1\leq q\leq p$. Then for any $\phi\in L^p(\R)$, it holds that
	\bae\label{ii1}
		\|\phi\|_{p}\leq C(p,q,n)\left\|\nabla(|\phi|^{\frac{p}{2}})\right\|^{\frac{2\gamma}{1+\gamma p}}\|\phi\|^{\frac{1}{1+\gamma p}}_{q},
	\eae
	 where $\gamma=(n/2)(1/q-1/p)$ and $C(p,q,n)$ is a positive constant.
\end{Lemma}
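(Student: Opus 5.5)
The plan is to reduce \eqref{ii1} to a Gagliardo--Nirenberg interpolation for the auxiliary function $w\coloneqq|\phi|^{\frac p2}$. We have $\|w\|^2=\|\phi\|_p^p$ and, with $r\coloneqq \frac{2q}{p}\in(0,2]$, $\|w\|_{L^r}^r=\|\phi\|_q^q$. So everything reduces to proving
\begin{equation*}
\|w\|\leq C\|\nabla w\|^{\theta}\|w\|_{L^r}^{1-\theta},\qquad \theta=\frac{\gamma p}{1+\gamma p}.
\end{equation*}
Granting this, raising to the power $\frac 2p$ gives $\|\phi\|_p\le C\|\nabla w\|^{\frac{2\theta}{p}}\|\phi\|_q^{1-\theta}$. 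Since $\frac{2\theta}{p}=\frac{2\gamma}{1+\gamma p}$ and $1-\theta=\frac1{1+\gamma p}$, this is exactly \eqref{ii1}. One checks that this $\theta$ is the scaling-consistent exponent: it is the solution of $\frac12=\theta\big(\frac12-\frac1n\big)+\frac{1-\theta}{r}$, which indeed simplifies to $\theta=\frac{(n/2)(p/q-1)}{(n/2)(p/q-1)+1}$. I would first argue for $\phi\in C_c^\infty$ (or $\phi\in L^q$ with $\nabla w\in L^2$, otherwise the right side is infinite) and conclude by density and Fatou.

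In the one-dimensional case $n=1$, which is the only one used in the paper, I would give a self-contained argument. For $s\coloneqq 1+\frac r2$ and any $x$, write $w^s(x)=\int_{-\infty}^x\partial_y(w^s)\,dy$. Cauchy--Schwarz then gives
\begin{equation*}
\|w\|_\infty^{s}\le s\int_{\R}w^{s-1}|w_x|\,dx\le s\|w_x\|\,\|w\|_{L^r}^{\frac r2}.
\end{equation*}
Combining this with the interpolation $\|w\|^2\le\|w\|_\infty^{2-r}\|w\|_{L^r}^r$ (valid since $r\le2$) and substituting the $L^\infty$ bound yields
\begin{equation*}
\|w\|^2\le C\|w_x\|^{\frac{2(2-r)}{2+r}}\|w\|_{L^r}^{r+\frac{r(2-r)}{2+r}}.
\end{equation*}
It remains to check the exponents. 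With $\gamma=\frac12(\frac1q-\frac1p)$ one has $\gamma p=\frac{p-q}{2q}$. Hence $\frac{2-r}{2+r}=\frac{p-q}{p+q}=\frac{\gamma p}{1+\gamma p}=\theta$, which gives the claimed inequality for $w$.

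For general $n$, I would invoke the classical Gagliardo--Nirenberg inequality for $w$ with the above $\theta\in[0,1)$. The only delicate point, which I expect to be the main obstacle, is that $r=\frac{2q}{p}$ may be below $1$, so $\|\cdot\|_{L^r}$ is only a quasi-norm. There I would use the known extension of Gagliardo--Nirenberg to $0<r<1$ (for nonnegative $w$ with $\nabla w\in L^2$). Alternatively, I would run a Nash-type argument: split $\hat w$ into low and high frequencies, bound the high part by $\|\nabla w\|$, and bound the low part via a level-set truncation of $w$ controlled by $\int w^r$, then optimize the cutoff. In $n=1$ the direct argument above avoids this issue entirely.
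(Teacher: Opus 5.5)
The paper gives no proof of this lemma. It is quoted from \cite{kawashima2004lp}, and only the one-dimensional case is used (with $q=2$, in the proof of Proposition~\ref{t1}).

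Your proposal is correct, and for $n=1$ it is a complete, self-contained proof. The following steps all check out:
\begin{itemize}
\item the reduction to $w=|\phi|^{p/2}$ with $r=2q/p$;
\item the bound $\|w\|_\infty^{s}\le s\|w_x\|\,\|w\|_{L^r}^{r/2}$ for $s=1+\frac r2$;
\item the H\"older step $\|w\|^2\le \|w\|_\infty^{2-r}\|w\|_{L^r}^r$;
\item the identity $\frac{2-r}{2+r}=\frac{p-q}{p+q}=\frac{\gamma p}{1+\gamma p}$.
\end{itemize}

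In one dimension you can drop the density/Fatou step. Whenever the right-hand side of \eqref{ii1} is finite, $w\in H^1(\R)$ is continuous and tends to $0$ at $\pm\infty$. Since $s>1$, the function $w^s$ is locally absolutely continuous with derivative $s w^{s-1}w_x$. So your computation applies to $w$ as it stands. If you do want an approximation argument, approximate $w$ rather than $\phi$, by a spatial cut-off followed by mollification. Convergence in $L^2$ on a fixed compact set implies convergence in $L^r$ there, even for $r<1$. By contrast, control of $\nabla|\phi_k|^{p/2}$ for smooth approximants $\phi_k$ of $\phi$ is not automatic.

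For general $n$, the obstacle you anticipate is not a real one. You need neither a quasi-norm Gagliardo--Nirenberg inequality nor a Nash-type splitting. Your own one-dimensional step already uses that H\"older interpolation between Lebesgue (quasi-)norms holds for all positive exponents, including $r<1$.
\begin{itemize}
\item For $n\ge 3$, this gives $\|w\|\le \|w\|_{L^r}^{1-\theta}\|w\|_{L^{2^*}}^{\theta}$ with exactly your relation $\frac12=\theta(\frac12-\frac1n)+\frac{1-\theta}{r}$. The Sobolev inequality $\|w\|_{L^{2^*}}\le C\|\nabla w\|$ then finishes.
\item For $n=2$ (in fact for any $n$), interpolate $\|w\|\le\|w\|_{L^r}^{a}\|w\|_{L^m}^{1-a}$ with some $2<m<\infty$ (and $m\le 2^*$ if $n\ge3$). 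Apply the standard $L^2$-based inequality $\|w\|_{L^m}\le C\|w\|^{1-b}\|\nabla w\|^{b}$ with $b=n(\frac12-\frac1m)$. Then absorb the factor $\|w\|^{(1-a)(1-b)}$ into the left side, which is legitimate because $\|w\|^2=\|\phi\|_p^p<\infty$.
\end{itemize}
The resulting exponents are forced by scaling to be $\theta$ and $1-\theta$. Your elementary $n=1$ argument is the special case of this scheme with $m=\infty$, carried out without any external inequality.
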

We present the decay estimate of $\mathbb E \|\phi\|_p^p$. The derivation relies on weighted energy estimates and the Burkholder-Davis-Gundy inequality. 
\begin{prp}[$L^p$ decay]\label{t1}
 Let $\phi\in X_1(T) $ be the solution of \eqref{perturbation}. Then for any $4< p< \infty$ 
	\begin{eqnarray}\label{5.20}
		\mathbb{E} \| \phi \|_p^p \le C_{p}(1 + t)^{- \frac{p-2}{4}}\ln^\frac{p}{2}(2+t).
	\end{eqnarray}
\end{prp}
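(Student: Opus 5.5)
Starting from the a.s. inequality \eqref{L^p} of Lemma \ref{lmL^p}, I take expectations. The stochastic integral is a local martingale; localize with stopping times and pass to the limit by Fatou, so it drops out. The result is
\begin{equation*}
\frac{d}{dt}\mathbb E\|\phi\|_p^p+\mathbb E\int_\R|\phi|^{p-2}\phi_x^2\,dx+\mathbb E\int_\R|\phi|^p\bar u_x\,dx\le pC(1+t)^{-\frac p2}+pC(1+t)^{-1}\mathbb E\|\phi\|_p^p .
\end{equation*}
The term $(1+t)^{-1}\mathbb E\|\phi\|_p^p$ cannot be integrated directly, since that would only give polynomial growth. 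I plan to use the bound it actually comes from in Step 2 of the formal derivation: the Itô correction involving $\bar u_x$ is of the form $\int|\phi|^{p-2}\bar u_x^2$. I will handle it with Young's inequality, using $\bar u_x\le C(1+t)^{-1}$ from Proposition \ref{lm2.1} and absorbing part of it into $\int|\phi|^p\bar u_x$, so that only the forcing $C(1+t)^{-p/2}$ survives.

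The main tool is the Gagliardo--Nirenberg inequality of Lemma \ref{ii} with $n=1$, $q=2$, so $\gamma=\frac12(\frac12-\frac1p)=\frac{p-2}{4p}$ and $1+\gamma p=\frac{p+2}{4}$:
\begin{equation*}
\|\phi\|_p^p\le C\,\|\partial_x(|\phi|^{p/2})\|^{\frac{2(p-2)}{p+2}}\|\phi\|^{\frac{4p}{p+2}} .
\end{equation*}
Since $\|\partial_x|\phi|^{p/2}\|^2\simeq\int|\phi|^{p-2}\phi_x^2$, this gives the dissipation lower bound $\int|\phi|^{p-2}\phi_x^2\ge c\,\|\phi\|_p^{p\frac{p+2}{p-2}}\|\phi\|^{-\frac{4p}{p-2}}$. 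The $L^2$ norm has to be controlled pathwise. I will use the $L^\infty$ bound \eqref{mp} together with the $L^2$ energy estimate \eqref{real-L^2}, which controls $\mathbb E\|\phi\|^2$ by $\|\phi_0\|^2+C\ln(1+t)$. This is the origin of the logarithmic factor.

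Next I run a weighted energy argument. I multiply the inequality by $(1+t)^{k}$ with $k$ large, integrate over $[0,t]$, and treat the resulting term $k(1+t)^{k-1}\mathbb E\|\phi\|_p^p$ by splitting $|\phi|^p$ via the interpolation above and Young's inequality. The part involving the dissipation is absorbed. The remainder is of order
\begin{equation*}
(1+t)^{k-1-\frac{p-2}{4}}\,\mathbb E\|\phi\|^{p}
\end{equation*}
after the Young exponents are matched: the dissipation power $\frac{p+2}{p-2}$ has conjugate $\frac{p+2}{4}$, and this produces $(1+t)^{-\frac{p-2}{4}}$ against $\|\phi\|^{p}$. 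To bound $\mathbb E\|\phi\|^p$ I need $\mathbb E\sup_s\|\phi(s)\|^{p}\lesssim \ln^{p/2}(2+t)$. I will get this by raising \eqref{real-L^2-as} to the power $p/2$ and applying BDG to the martingale term. Its quadratic variation is $\int_0^t\sum_i(\int\sigma_i'(\cdot)(\phi_x+\bar u_x)\phi)^2ds$, which by (H3) and the $L^\infty$ bound is $\lesssim\int_0^t(\|\phi_x\|^2+\int\phi^2\bar u_x)\,ds$, again controlled by the dissipation on the left. Dividing by $(1+t)^k$ then yields $C_p(1+t)^{-\frac{p-2}{4}}\ln^{p/2}(2+t)$.

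The main obstacle is the moment bound $\mathbb E\sup\|\phi\|^p\lesssim\ln^{p/2}$. It requires the BDG and absorption step to close, with the martingale's quadratic variation dominated by the dissipation integrals. The restriction $p>4$ presumably enters here, or in making the Hölder/Young pairing of $\mathbb E[X^aY^b]$ between the dissipation and $\|\phi\|^{p}$ integrable. If the direct pairing fails, my fallback is Hölder in $\omega$, combined with the pathwise $L^\infty$ bound to trade excess $L^2$ powers for constants.
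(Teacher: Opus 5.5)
Your overall scheme is the paper's: a time-weighted form of \eqref{L^p}, Lemma \ref{ii} with $q=2$, and Young with exponents $\frac{p+2}{p-2}$ and $\frac{p+2}{4}$, which leaves $(1+t)^{k-1-\frac{p-2}{4}}\|\phi\|^p$ (the paper takes $k=p/2$). The argument then closes with $\mathbb{E}\sup_{s\le t}\|\phi(s)\|^p\lesssim\ln^{p/2}(2+t)$, obtained from BDG applied to \eqref{real-L^2-as}. Your preliminary absorption of $(1+t)^{-1}\|\phi\|_p^p$ into $\int|\phi|^p\bar u_x$ is correct but not needed. After weighting, that term has the same form as the weight derivative and is removed by the same interpolation.

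\textbf{The gap.} It sits exactly at the step you flag. With only (H3) and the maximum principle, the best bound for the cross term in the quadratic variation is
\[
\sum_i\Big(\int_{\mathbb{R}}\sigma_i'(u)\,\phi\,\phi_x\,dx\Big)^2\le C\,\|\phi\|^2\,\|\phi_x\|^2 .
\]
The bound $\|\phi\|_\infty\le\|\phi_0\|_\infty$ cannot remove the factor $\|\phi\|^2$ on the whole line. Nor does $\sigma_i'(u)$ lie in $L^2(dx)$, since $u\to u_\pm$. Set
\[
A_t:=\sup_{s\le t}\|\phi(s)\|^2+\int_0^t\|\phi_x\|^2\,ds+\int_0^t\!\int_{\mathbb{R}}\phi^2\bar u_x\,dx\,ds .
\]
Then your bound reads $\langle M_2\rangle_t\lesssim\sup_{s\le t}\|\phi(s)\|^2\int_0^t\|\phi_x\|^2\,ds+\int_0^t\!\int_{\mathbb{R}}\phi^2\bar u_x\,dx\,ds$, which is quadratic in $A_t$. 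BDG therefore returns $C_p\,\mathbb{E}A_t^{p/2}$ with a constant of order one. Since (H1)--(H3) give no smallness of $\sigma'$ relative to $\mu$, the absorption does not close. Your fallback fails for the same reason: the $L^\infty$ bound does not control $L^2$ norms on $\mathbb{R}$.

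\textbf{The missing idea.} You need the cancellation coming from the conservative form of the noise, which is the computation leading to \eqref{bdg1}. First, $\int\phi\,\partial_x\sigma_k(u)\,dx=-\int\phi_x\,\sigma_k(u)\,dx$. Since $\int\sigma_k(u)u_x\,dx=\int_{u_-}^{u_+}\sigma_k(r)\,dr=\int\sigma_k(\bar u)\bar u_x\,dx$, this gives
\[
\int_{\mathbb{R}}\phi\,\partial_x\sigma_k(\phi+\bar u)\,dx=\int_{\mathbb{R}}\big[\sigma_k(\phi+\bar u)-\sigma_k(\bar u)\big]\,\bar u_x\,dx .
\]
To make this rigorous, integrate over $(-L,L)$; the boundary terms vanish as $L\to\infty$ because $\phi\in H^1$. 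Consequently
\[
\frac{d}{dt}\langle M_2\rangle_t\le C\Big(\int_{\mathbb{R}}|\phi|\bar u_x\,dx\Big)^2\le C(u_+-u_-)\int_{\mathbb{R}}\phi^2\bar u_x\,dx .
\]
The gradient drops out, and the quadratic variation is linear in the dissipation already present on the left of \eqref{real-L^2-as}. BDG and Young, after localization, then give $\mathbb{E}A_t^{p/2}\le C\ln^{p/2}(2+t)+\tfrac12\mathbb{E}A_t^{p/2}+C$. This is \eqref{bdg}, and the rest of your argument goes through. With this bound the closing step works for every $p\ge2$; in the paper, $p>4$ enters through the choice $\beta=\frac{p-4}{p-2}$ in \eqref{ep}.
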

\begin{proof}
	
	For $L^p$ decay \eqref{5.20}, multiplying \eqref{L^p} by  $(2+t)^\frac{p}{2}$, we obtain  
	\begin{eqnarray} \label{5.21}
		\begin{array}{lll}
			&&\displaystyle d(2+t)^\frac{p}{2} \| \phi \|_ {p} ^p + (2+t)^\frac{p}{2}  \int_\mathbb R \left(|\phi|^p \bar u_x + |\phi|^{p-2} \phi_x^2\right) \dif x ~\dif t  \\
			&\leq& \displaystyle C_1 \dif t  + C_1(2+t)^{\frac{p}{2}-1}\| \phi \|_ {p} ^p \dif t  + (2+t)^{\frac{p}{2}}\dif M_p(t),
		\end{array}
	\end{eqnarray}
    where
    \bae\label{Mp def}
M_p(t):=p \int_0^t \int_ \mathbb R \sum_{i=1}^{\infty} \partial_x \sigma_i (\phi +\bar u) |\phi|^{p-2}\phi  \dif x~\dif B_i(s).
    \eae
	Thanks to the inequality \eqref{ii1}, we have 
	\begin{eqnarray*}\label{gn2}
		\begin{array}{lll}
			\displaystyle	 (2+t)^{\frac{p}{2}-1}\| \phi \|_ {p}^p &\leq & \displaystyle C_p (2+t)^{\frac{p}{2}-1} \|  \partial_x(\phi^{\frac{p}{2}})  \|^{\frac{2p-4}{p+2}} \| \phi \|^{\frac{4p}{p+2}}\\
			&\leq & \displaystyle \frac12 (2+t)^\frac{p}{2} \int_\mathbb R |\phi|^{p-2} \phi_x^2 \dif x  + C_p (2+t)^{\frac{p-2}4} \| \phi \|^p.
		\end{array}
	\end{eqnarray*}
	Integrating (\ref{5.21}) over $[0,t]$ gives that  
	\begin{equation} \label{5.22}
		(2+t)^\frac{p}{2} \| \phi(t) \|_ {p} ^p  \leq \| \phi_0 \|_ {p} ^p+ C_1t  + C_1 \int_0^t (2+s)^{\frac{p-2}4} \| \phi \|^p ds +  \int_0^t (2+s)^\frac{p}{2} \dif M_p(s).
	\end{equation}
	Taking expectation on both sides of \eqref{5.22}, we have 
	\begin{equation} \label{5.22new}
		\mathbb E\| \phi(t) \|_ {p} ^p  \leq (2+t)^{-\frac{p}{2}}\| \phi_0 \|_ {p} ^p+  C(2+t)^{\frac{2-p}{2} } + C (2+t)^{-\frac{p}{2}} \int_0^t (2+s)^{\frac{p-2}4} \mathbb E\| \phi \|^p ds.
	\end{equation}

Next we shall use the Burkholder-Davis-Gundy inequality and the decay property $\bar{u}_x\le \frac{C}{2+t}$ to estimate $\mathbb E\|\phi\|^p$ for any $p>2$.

Note that $\|\phi\|^2$ is a semi-martingale, the Burkholder-Davis-Gundy inequality and \eqref{real-L^2-as} yield that 
\begin{equation*}
	\begin{aligned}
		\mathbb E\|\phi\|^{p} &\le \mathbb E(\sup_{0\le s\le t}\|\phi\|^2)^{\frac{p}{2}} \le  C\left(\ln^{\frac{p}{2}} (2+t)+
		\mathbb E\left[ \int_0^t  \sum_{k}\left(
		\int_{\mathbb R} \phi \sigma_{k}(\phi+\bar{u})_{x} d x \right)^2 d s
		\right]^{\frac{p}{4}}\right) .
	\end{aligned}
\end{equation*}
Direct calculation gives
\baee\label{preBDG}
		&\sum_{k}\Big(\int_{\mathbb R} \phi \sigma_{k}(\phi+\bar{u})_{x} d x\Big)^{2}=\sum_{k}\Big(\int_{\mathbb R} \phi_{x} \sigma_{k}(\phi+\bar{u}) d x\Big)^{2}\\
		&=\sum_{k}\left(\int_{\mathbb R}(\phi+\bar{u})_{x} \sigma_{k}(\phi+\bar{u}) d x-\int_{\mathbb R} \bar{u}_{x} \sigma_{k}(\phi+\bar{u}) d x\right)^{2}\\
		&=\sum_{k}\left\{\int_{\mathbb R}(\phi+\bar{u})_{x} \sigma_{k}(\phi+\bar{u}) d x-\int_{\mathbb R} \bar{u}_{x} \sigma_{k}(\bar{u}) d x-\int_{\mathbb R}\left[\sigma_{k}(\phi+\bar{u})-\sigma_{k}(\bar{u}) \right] \bar{u}_{x} d x\right\}^{2}\\
		&=\sum_{k}\left[\int_{\mathbb R}\left[\sigma_{k}(\phi+\bar{u})-\sigma_{k}(\bar{u})\right] \bar{u}_{x} d x\right]^{2}=\sum_{k}\left(\int_{\R}\left(\int_{\bar{u}}^{\bar{u}+\phi} \sigma_{k}^{\prime}(y) d y\right) \bar{u}_{x} d x\right)^{2}\\
		&\leq \sum_{k}\Big(\int_{\mathbb R}\|\sigma_k^\prime\|_{L^\infty(\R)}|\phi| \bar{u}_{x} d x\Big)^{2}  \leq 2 \left\|\sigma^{\prime}\right\|^{2}_{l^2(H^{1}_M)}\left(\int_{\mathbb R}|\phi| \bar{u}_{x} d x\right)^{2}.
\eaee
Thus we have
\begin{equation}\label{bdg1}
	\begin{aligned}
		\mathbb E\|\phi\|^{p} &\le \mathbb E(\sup_{0\le s\le t}\|\phi\|^2)^{\frac{p}{2}} \le  C\left(\ln^{\frac{p}{2}} (2+t)+
		\mathbb E\left[ \int_0^t  \sum_{k}\left(
		\int_{\mathbb R} \phi \sigma_{k}(\phi+\bar{u})_{x} d x \right)^2 d s
		\right]^{\frac{p}{4}}\right) \\
		&\le  C\left(\ln^{\frac{p}{2}} (2+t)+2 \left\|\sigma^{\prime}\right\|^{2}_{l^2(H^{1}_M)}
	\mathbb E\left[ \int_0^t\left(\int_{\mathbb R}|\phi| \bar{u}_{x} d x\right)^{2}
		ds\right]^{\frac{p}{4}}\right).
	\end{aligned}
\end{equation}
 Setting $\beta=\frac{p-4}{p-2}\in(0,1)$, by Young's inequality, we compute that
\begin{equation}\label{ep}
	\begin{aligned}
		& \mathbb{E}\left[\int_{0}^{t}\left(\int_R |\phi| \bar u_{x} d x\right)^{2} d s\right]^{\frac{p}{4}} 
		= \mathbb{E}\left[\int_{0}^{t}\left(\int_R |\phi|^{\beta} (|\phi|^{1-\beta}\bar  u_{x}^{\frac12})\bar  u_{x}^{\frac12} d x\right)^{2}ds\right]^{\frac{p}{4}}\\
		\le&\mathbb E \left[\int_{0}^{t}\left(\int_R |\phi|^{2(1-\beta)} \bar u_{x} d x\right)\left\|\bar u_{x}\right\|_{\frac{1}{1-\beta}}\|\phi\|^{2 \beta}d s\right]^{\frac{p}{4}}\\
		\leq & \mathbb E \left\{\int_{0}^{t}\left[\int_{\mathbb R}|\phi|^{2(1-\beta)} \bar u_{x} d x\right]^{\frac{1}{1-\beta}} d s+C\left[\int_{0}^{t}(2+s)^{-1} \| \phi\| ^{2} d s\right]^{\frac{\beta p}{4-(1-\beta)p}}\right\}\\
		\leq& C\mathbb{E} \int_{0}^{t}\int_{\mathbb R}|\phi|^2 \bar{u}_{x} d s+C\E\left[\int_{0}^{t}(2+s)^{-1} \| \phi\| ^{2} d s\right]^{\frac p2}\\
		\leq& C\ln(2+t)+C_p+\frac{1}{2} \mathbb{E} \sup _{0 \leq s \leq t}\|\phi\|^p .
	\end{aligned}
\end{equation}
  Substituting \eqref{ep} into \eqref{bdg1}, we have
\begin{equation}\label{bdg}
	\begin{aligned}
		\mathbb E \sup _{0 \leq s \leq t}\|\phi\|^{p} &\leq C_p \ln^\frac{p}{2}(2+t).
	\end{aligned}
\end{equation}
Substituting \eqref{bdg} into \eqref{5.22new} gives \eqref{5.20}. Therefore, the proof is complete. 
\end{proof}

\subsection{High-Order Energy Estimates}
We define the local stability threshold function \bae\label{C*}
C^*(R) = C^*(R,\mu, b',\sigma''):= C_0 \sqrt{\frac{\mu^2}{\|b'\|_{L^\infty_R}^2 + \mu  \|\sigma''\|_{l^2(L^\infty_R)}^2 }},
\eae
where $C_0$ is a universal constant.
\begin{Lemma}[$H^2$ regularity]\label{lm2.4}
 Let $\phi\in X_2(T)$ be the solution of  \eqref{perturbation} and  $\mathfrak M=\|\phi_0\|_\infty+u_+-u_-$.  If 
	\bae\label{small data treshhold}
	  \|\phi_0\|_{\infty} < C^*(\mathfrak M),
	\eae
	then 
\bae\label{H^2}
\E \sup_{0 \leq s \leq T}\left\|\phi (s)\right\|_{H^1(\R)}^{2}+\E\int_0^T\left\|\phi_{x}\right\|_{H^1(\R)}^2 ds\leq \left\|\phi_0\right\|_{H^1(\R)}^{2}+C\ln (2+T).
\eae
\end{Lemma}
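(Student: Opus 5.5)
We propose to apply It\^o's formula to $\|\phi_x\|^2$ and combine the result with the $L^2$ estimate \eqref{real-L^2-as}--\eqref{real-L^2} of Lemma \ref{lmL^p}. Formally, differentiating \eqref{perturbation} in $x$ and pairing with $\phi_x$ (equivalently, pairing \eqref{perturbation} with $-\phi_{xx}$) gives
\begin{equation*}
\begin{aligned}
\tfrac12 d\|\phi_x\|^2 ={}& \big\langle \partial_x[f(\phi+\bar u)-f(\bar u)],\phi_{xx}\big\rangle dt - \Big\langle \big(b+\tfrac12\textstyle\sum_i\sigma_i'^2\big)(\phi+\bar u)_x,\ \phi_{xxx}\Big\rangle dt\\
&+\tfrac12\textstyle\sum_i\|\partial_{xx}\sigma_i(\phi+\bar u)\|^2dt-\sum_i\langle \partial_x\sigma_i(\phi+\bar u),\phi_{xxx}\rangle dB_i .
\end{aligned}
\end{equation*}
After one integration by parts, the viscous term yields $-\int (b+\frac12\sum\sigma_i'^2)\phi_{xx}^2$. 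The main part of the It\^o correction is $\frac12\sum\int \sigma_i'^2\phi_{xx}^2$. Its Stratonovich counterpart cancels exactly against the $\frac12\sum\sigma_i'^2$ term, leaving $-\int b\,\phi_{xx}^2\le -\mu\|\phi_{xx}\|^2$.

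The remaining terms are of three kinds. The first is the cross term from $b'(\cdot)(\phi+\bar u)_x^2\phi_{xx}$. The second is the noise injection, the It\^o correction remainder $\frac12\sum\int(\sigma_i''(\phi+\bar u)(\phi+\bar u)_x^2)^2 + 2\sigma_i'\sigma_i''(\cdot)\cdots$, which is of order $\|\sigma''\|^2_{l^2(L^\infty_{\mathfrak M})}\|\phi_x\|_4^4$ plus products with $\bar u_x$. The third is the flux term together with the lower-order $\bar u$ terms. All coefficients are evaluated at $|\phi+\bar u|\le \mathfrak M$, which is where the maximum principle \eqref{mp} is used. The $\bar u$ terms are controlled by Proposition \ref{lm2.1} and contribute $C(1+t)^{-1}$-type integrable or logarithmic quantities. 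The flux term is bounded by $\frac{\mu}{8}\|\phi_{xx}\|^2+C\|\phi_x\|^2+C\|\bar u_x\|^2$, and the $\int_0^t\|\phi_x\|^2$ piece is already controlled by \eqref{real-L^2}.

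The key step, and the main obstacle, is the critical quartic term $\|\phi_x\|_4^4$, both from $b'$ (via Young's inequality, $\|b'\|_\infty\|\phi_x\|_4^2\|\phi_{xx}\|\le \frac\mu8\|\phi_{xx}\|^2+\frac{C\|b'\|^2}{\mu}\|\phi_x\|_4^4$) and from the noise. Here I would use the one-dimensional Gagliardo--Nirenberg-type bound obtained by integrating by parts: $\int\phi_x^4=-3\int\phi\,\phi_x^2\phi_{xx}$. This gives $\|\phi_x\|_4^4\le 3\|\phi\|_\infty\|\phi_x\|_4^2\|\phi_{xx}\|$, hence
\begin{equation*}
\|\phi_x\|_4^4\le 9\|\phi\|_\infty^2\|\phi_{xx}\|^2\le 9\|\phi_0\|_\infty^2\|\phi_{xx}\|^2
\end{equation*}
by \eqref{mp}. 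Collecting terms, the total quartic contribution is bounded by
\begin{equation*}
C\Big(\frac{\|b'\|^2_{L^\infty_{\mathfrak M}}}{\mu}+\|\sigma''\|^2_{l^2(L^\infty_{\mathfrak M})}\Big)\|\phi_0\|_\infty^2\|\phi_{xx}\|^2 .
\end{equation*}
Under the threshold \eqref{small data treshhold}, with $C^*$ as in \eqref{C*} and $C_0$ chosen small, this is at most $\frac{\mu}{4}\|\phi_{xx}\|^2$ and can be absorbed. This is exactly why $C^*$ has the stated form. Care is needed in tracking every term of the It\^o correction that carries $\phi_x^2\phi_{xx}$ or $\phi_x^4$ so that only $\sigma''$ and $b'$ appear.

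Finally, integrating in time yields
\begin{equation*}
\|\phi_x(t)\|^2+\frac{\mu}{2}\int_0^t\|\phi_{xx}\|^2\le\|\phi_{0,x}\|^2+C\ln(2+t)+C\int_0^t\|\phi_x\|^2+\mathcal M(t),
\end{equation*}
where $\mathcal M$ is the martingale above. Adding the $L^2$ estimate handles the $\int\|\phi_x\|^2$ term. For the supremum in expectation, I would apply BDG to $\mathcal M$. After rewriting its integrand as $\sum_i\langle\partial_{xx}\sigma_i(\phi+\bar u),\phi_x\rangle$, the quadratic variation is bounded by $C\sup_s\|\phi_x\|^2\int_0^t(\|\phi_{xx}\|^2+\|\phi_x\|_4^4+\|\bar u_x\|^2+\|\bar u_{xx}\|^2)$. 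Young's inequality then absorbs $\frac12\E\sup\|\phi_x\|^2$, using the bound already obtained on $\E\int\|\phi_{xx}\|^2$. The martingale in \eqref{real-L^2-as} is treated the same way, as in \eqref{bdg}. This gives \eqref{H^2}. Rigor would be supplied on the Galerkin cut-off approximations of Section 4, as was done for Lemma \ref{lmL^p}.
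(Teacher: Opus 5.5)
Your proposal is correct and follows the paper's route: It\^o's formula for $\|\phi_x\|^2$, cancellation of the $\frac12\sum_i\sigma_i'^2\phi_{xx}^2$ terms, the bound $\|\phi_x\|_4^4\le C\|\phi\|_\infty^2\|\phi_{xx}\|^2$ combined with the maximum principle \eqref{mp}, and absorption of the $b'$- and $\sigma''$-quartic terms under the threshold \eqref{small data treshhold}. The only differences are minor and slightly simplify the argument: you bound the flux remainder using boundedness of $f'$ on the range of $u$ together with the $L^2$ estimate \eqref{real-L^2}, instead of the paper's $\|\phi\|_q^{5q/(q-1)}$ interpolation plus the $L^p$ decay of Proposition \ref{t1}, and you make explicit the BDG step for $\mathbb E\sup_s\|\phi_x(s)\|^2$, which the paper leaves implicit.
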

\begin{proof}
  Since  $\phi\in L^2_{\rm{loc}}\big((0,\infty),H^2(\mathbb R)\big),$  differentiating with respect to $x$ on both sides of \eqref{perturbation} and multiplying $\phi_x$. Applying It\^o's formula, we get
    \bae\label{4.71}
 \frac{1}{2} d \phi_x^2 =&-\phi_x \partial_{x x}^2[f(\phi+\bar{u})-f(\bar{u})] d t \\
&+\phi_x \partial_{x x}^2\big(b(u)(\phi+\bar{u})_x\big)dt+\frac{1}{2} \phi_x \sum_{i=1}^\infty \partial_{x x}^2\left[\sigma_i^{\prime 2}(u)(\phi+\bar{u})_x\right]dt \\
&+\frac{1}{2} \sum_{i=1}^\infty\left[\left(\sigma_i^{\prime}(u)\left(\phi_{x x}+\bar{u}_{x x}\right)+\sigma_i^{\prime \prime}(u)\left(\phi_x+\bar{u}_x\right)^2\right]^2 d t\right. \\
&+\sum_{i=1}^\infty \phi_x\left[\left(\sigma_i^{\prime}(u)\left(\phi_{x x}+\bar{u}_{x x}\right)+\sigma_i^{\prime \prime}(u)\left(\phi_x+\bar{u}_x\right)^2\right] d B_i(t)\right. \\
:=&\left(K_1+K_2+\sum_{i=1}^\infty K_3^i+\sum_{i=1}^\infty K_4^i\right) d t+d M{(t)},\\
    \eae
where 
\baee
M(t)=\sum_{i=1}^{\infty} \int_0^t \int_{\mathbb{R}} \phi_x \partial_{x x}^2 \sigma_i(u) d x d B_i(s).
\eaee
In the following, we estimate $K_1, K_2, K_3^i,K_4^i$ and $M(t)$. First, we have
\baee
K_1= & -\phi_x \partial_{x x}^2(f(u)-f(\bar{u})) \\
= & -\phi_x \partial_x\left(f^{\prime}(u) \phi_x\right)+\phi_x \partial_x\left[\left(f^{\prime}(u)-f^{\prime}(\bar{u})\right) \bar{u}_x\right] \\
= & -\partial_x\left[\frac{1}{2} f^{\prime}(\bar{u}) \phi_x^2+\left(f^{\prime}(u)-f^{\prime}(\bar{u})\right) \phi_x^2\right] \\
& -\frac{1}{2} f^{\prime \prime}(\bar{u}) \bar{u}_x \phi_x^2+\left[f^{\prime}(u)-f^{\prime}(\bar{u})\right] \phi_x \phi_{x x}.
\eaee
Then
\baee
\int_{\mathbb{R}} K_1 d x\leq -\frac{1}{2} \int f^{\prime \prime}(\bar{u}) \bar{u}_x \phi_x^2 d x+C \int\left|\phi \phi_x \phi_{x x}\right| d x.
\eaee
For $q>3$, using H\"older's inequality, Gagliardo–Nirenberg's interpolation inequality and Young's inequality, one has
\bae\label{K1}
& \int_{\mathbb{R}} K_1 d x\leq \int_{\mathbb R}\left|\phi \phi_x \phi_{x x}\right| d x \leq\|\phi\|_q\left\|\phi_x\right\|_\frac{2 q}{q-2}\left\|\phi_{x x}\right\| \\
& \leq C\|\phi\|_q^{\frac{5 q}{3q+2}}\left\|\phi_{x x}\right\| ^\frac{4 q+6}{3 q+2}  \leq C\|\phi\|_q^{\frac{5 q}{q-1}}+\varepsilon\left\|\phi_{x x}\right\|^2.
\eae
Next,
\bae\label{K2}
\int_{\mathbb{R}} K_2 d x= & \int_{\mathbb{R}} \phi_x \partial_{x x}^2\big(b(u)(\phi+\bar{u})_x\big) d x \\
= & -\int_{\mathbb{R}} \phi_{x x}\left(b^{\prime}(u)\left(\phi_{x}+ \bar{u}_x\right)^2+b(u)\left(\phi_{x x}+\bar{u}_{x x}\right)\right) d x \\
\leq &\, \|b^{\prime}\|_{L^\infty_{\mathfrak M}}\int_{\mathbb{R}}|\phi_{xx}|\left( \phi_x^2+ \bar{u}_x^2\right) d x 
-\int_{\R} b(u) \phi_{x x}^2 d x+\int_\R b(u)\big(\frac14 \phi_{x x}^2+C \bar{u}_{x x}^2\big) d x \\
\leq &\,\frac \mu2 \|\phi_{xx}\|^2+ C\frac{\|b^{\prime}\|^2_{L^\infty_{\mathfrak M}}}{\mu}(\|\phi_{xx}\|^2+\|\bar u_x\|_4^4) -\frac34 \mu \|\phi_{x x}\|^2 +C _{u_0, b} \| \bar{u}_{x x} \|^2 \\
\leq &- \frac{\mu}{4}\left\|\phi_{x x}\right\|^2+C\frac{\|b^{\prime}\|^2_{L^\infty_{\mathfrak M}}}{\mu}\left\|\phi_x\right\|_4^4+C _{u_0, b}\| \bar{u}_x \|_4^4 +\frac{\mu}{4}\left\|\phi_{x x}\right\|^2+C _{u_0, b} \| \bar{u}_{x x} \|^2.
\eae
For $K_3^i$ and $K_4^i$, we have
\bae\label{K3i}
2 K_3^i :=& \phi_x \partial_{x x}^2\left[\sigma_i^{\prime 2}(u)\left(\phi_x{+} \bar{u}_x\right]\right.=-\sigma_i^{\prime 2}(u) \phi_{x x}\left(\phi_{x x}+\bar{u}_{x x}\right) -2 \sigma_i^{\prime}(u) \sigma_i^{\prime \prime}(u) \phi_{x x}\left(\phi_x+\bar{u}_x\right)^2 \\
\eae
and
\bae\label{K4i}
2 K_4^i :=& {\left[\left(\sigma_i^{\prime}(u)\left(\phi_{x x}+\bar{u}_{x x}\right)+\sigma_i^{\prime \prime}(u)\left(\phi_x+\bar{u}_x\right)^2\right]^2\right.} \\
=& \sigma_i^{\prime 2}(u)\left(\phi_{x x}^2+\bar{u}_{x x}^2+2 \phi_{x x} \bar{u}_{x x}\right)+\left(\sigma_i^{\prime \prime}(u)\right)^2\left(\phi_x+\bar{u}_x\right)^4 \\
&+2 \sigma_i^{\prime}(u) \sigma_i^{\prime \prime}(n)\left(\phi_{x x}+\bar{u}_{x x}\right)\left(\phi_x+\bar{u}_x\right)^2 \\
=& \sigma_i^{\prime 2}(u) \phi_{x x}\left(\phi_{x x}+\bar{u}_{x x}\right)+2 \sigma_i^{\prime}(u) \sigma_i^{\prime \prime}(u) \phi_{x x}\left(\phi_x+\bar{u}_x\right)^2 \\
&+\left(\sigma_i^{\prime \prime}(u)\right)^2\left(\phi_x+\bar{u}_x\right)^4+(\sigma_i^{\prime }(u))^2\left(\bar{u}_{x x}^2+\phi_{x x} \bar{u}_{x x}\right) \\
&+2 \sigma_i^{\prime}(u) \sigma_i^{\prime \prime}(u) \bar{u}_{x x}\left(\phi_x+\bar{u}_x\right)^2.
\eae
Using \eqref{K3i} and \eqref{K4i}, one has
\bae\label{K34}
&2\sum_{i=1}^\infty \int_{\mathbb{R}} (K_3^i+K_4^i) d x \\
= &\sum_{i=1}^\infty \int_{\mathbb{R}}\left(\sigma_i^{\prime \prime}(u)\right)^2\left(\phi_x+\bar{u}_x\right)^4+(\sigma_{i}^{\prime }(u))^2\left(\bar{u}_{x x}^2+\phi_{x x} \bar{u}_{x x}\right) d x \\
& +\sum_{i=1}^\infty\int_{\mathbb{R}}\left(2 \sigma_i^{\prime}(u) \bar{u}_{x x}\right)\left(\sigma_i^{\prime \prime}(u)\left(\phi_x+\bar{u}_x\right)^2\right) d x \\
\leq & C \sum_{i=1}^\infty\int_{\mathbb{R}}\left(\sigma_i^{\prime \prime}(u)\right)^2\left(\phi_x^4+\bar{u}_x^4\right) d x+\sum_{i=1}^\infty\int_{\mathbb{R}}\left(\sigma^{\prime}_i(u)\right)^2\left(\varepsilon \phi_{x x}^2+C_{\varepsilon} \bar{u}_{x x}^2\right) d x \\
\leq & C \|\sigma''\|^2_{l^2(L^\infty_{\mathfrak M})} \left\|\phi_x\right\|_4^4+C_{u_0, \sigma}\left\|\bar{u}_x\right\|_4^4+\frac{\mu}{8}\left\|\phi_{x x}\right\|^2+C_{u_0, \sigma,b}\left\|\bar{u}_{x x}\right\|^2.
\eae

Finally, substituting \eqref{K1}, \eqref{K2} and\eqref{K34} into \eqref{4.71}, we conclude  that
\bae\label{quasi deriv}
d\left\|\phi_x\right\|^2 \leq & -\frac{\mu}{8}\left\|\phi_{xx}\right\|^2 d t+ C\big({\mu}^{-1}{\|b^{\prime}\|^2_{L^\infty_{\mathfrak M}}}+ \|\sigma''\|^2_{l^2(L^\infty_{\mathfrak M})}\big)\left\|\phi_x\right\|^4_4 d t+C\|\phi\|_q^{\frac{5 q}{q-1}} d t \\
& +C_{ u_0, b,\sigma}\left(\| \bar{u}_x\|_4^4+\| \bar{u}_{x x} \|^2\right) d t+2 d M_t.
\eae
Notice that, for all $0\leq t\leq T,$ one has
\bae\label{pre Quadra}
\langle M\rangle_t=&\int_0^t\sum_{i=1}^{\infty}\Big(\int_{\mathbb{R}} \phi_x \partial_{x x}^2 \sigma_i(u) d x  \Big)^2 ds=\int_0^t\sum_{i=1}^{\infty}\Big[\int_{\mathbb{R}} \phi_{xx}  \sigma_i^\prime(u)(\bar u_x+ \phi_x) d x  \Big]^2ds \\
\leq & C_{u_0,\sigma}\int_0^t(\|\bar u_x\|^2+\|\phi_x\|^2) \|\phi_{xx}\|^2ds,\quad {\rm{a.s.}},
\eae
where the last inequality is followed from {\rm{(H3)}} and \eqref{mp}. Using Proposition \ref{lm2.1}, \eqref{pre Quadra} and $\phi\in X_2(T)$, we conclude that $M(t)$ is a continuous local martingale. 

Utilizing the Gagliardo-Nirenberg interpolation inequality, we have
\begin{equation} \label{eq:GN_interpolation}
    \|\phi_x\|_{4}^4 \leq C \|\phi\|_{\infty}^2 \|\phi_{xx}\|^2, \quad {\rm{a.s.}}
\end{equation}
Crucially, the pathwise maximum principle \eqref{mp} established in Lemma \ref{lmL^p}  holds independently of the higher-order regularity. Therefore, almost surely for all $t \ge 0$, we have $||\phi(t)||_{L^{\infty}}\le||\phi_{0}||_{L^{\infty}}\le\mathfrak{M}$. Substituting \eqref{eq:GN_interpolation} into \eqref{quasi deriv} and utilizing this a priori $L^\infty$ bound yields
\bae\label{middle deriv}
    d\|\phi_x\|^2 + \left( \frac{\mu}{8} - C\big({\mu}^{-1}{\|b^{\prime}\|^2_{L^\infty_{\mathfrak M}}}+ \|\sigma''\|^2_{l^2(L^\infty_{\mathfrak M})}\big) \mathfrak M^2 \right) \|\phi_{xx}\|^2 dt \leq \mathcal{R}(t) dt + 2 dM_t,\quad {\rm{a.s.}}
\eae
where $\mathcal{R}(t) = C \|\phi\|_q^{\frac{5q}{q-1}} + C_{u_0, b, \sigma} (\|\bar{u}_x\|_4^4 + \|\bar{u}_{xx}\|^2)$ represents the integrable remainder terms.

From the condition \eqref{small data treshhold}
\[
    \|\phi_0\|_{\infty} < C_0 \sqrt{\frac{\mu^2}{{\|b^{\prime}\|^2_{L^\infty_{\mathfrak M}}}+ \mu\|\sigma''\|^2_{l^2(L^\infty_{\mathfrak M})}}}\,,
\]
 the coefficient of the dissipation term in \eqref{middle deriv} is positive $$c_0 := \frac{\mu}{8} - C\big({\mu}^{-1}{\|b^{\prime}\|^2_{L^\infty_{\mathfrak M}}}+ \|\sigma''\|^2_{l^2(L^\infty_{\mathfrak M})}\big) \mathfrak M^2 > 0.$$ \eqref{middle deriv} simplifies to
\[
    d\|\phi_x\|^2 + c_0 \|\phi_{xx}\|^2 dt \leq \mathcal{R}(t) dt + 2 dM_t.
\]

Finally, integrating the above inequality on $[0,t]\times\Omega$ with $q$ large enough and using Proposition \ref{lm2.1}, \ref{t1} and Lemma \ref{lmL^p}, we complete the proof.
\end{proof}

\subsection{The Stochastic Area Inequality}
The area inequality is a key technique for proving the decay rate. For the sake of clarity, we will first present the description of the deterministic case, and the proof will be placed in the Appendix \ref{appen area}.

   \begin{Lemma}\label{dfn area}
   Assume $X(t)\in C^1(\mathbb R_+,\mathbb R_+)$ and satisfies that
    $$
\left\{\begin{array}{l}
\dot{X_t} \leq X_t^\alpha+t^{-\beta}, \\
\int_0^t X_s d s \leq C (1+t)^p,
\end{array}\right.
$$
where $\alpha>2$ and $0\leq p<(\alpha-2) \big( \frac{\beta}{\alpha}\wedge \frac{1}{\alpha-1} \big)$. Then we have the sharp decay estimate $$X(t)=O\big(t^{-( \frac{\beta}{\alpha}\wedge \frac{1}{\alpha-1} )}\big).$$
\end{Lemma}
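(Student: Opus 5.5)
The plan is a contradiction argument on dyadic windows $[t_0/2,t_0]$. Set $\gamma\coloneqq\frac{\beta}{\alpha}\wedge\frac{1}{\alpha-1}$. Suppose that at some large time $t_0$ we have $X(t_0)=A\ge t_0^{-\gamma}$. We then show that $\int_{t_0/2}^{t_0}X_s\,ds$ is bounded below by a power of $t_0$ strictly larger than $p$, which contradicts the area bound $\int_0^{t_0}X_s\,ds\le C(1+t_0)^p$ once $t_0$ is large. Since $X$ is continuous, it is bounded on compact time intervals, so this gives $X(t)\le t^{-\gamma}$ for $t\ge T_0$ and hence $X(t)=O(t^{-\gamma})$. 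The key point is that the differential inequality only limits how fast $X$ can \emph{increase}. A large value at $t_0$ therefore forces either a long stretch where $X$ stays large, or a steep upward crossing. A steep crossing is still slow enough to carry a definite amount of area.

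\textbf{Step 1: case split.} There are two cases.

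Case (a): $X\ge A/2$ on all of $[t_0/2,t_0]$. Then the area over the window is at least $At_0/4\ge t_0^{1-\gamma}/4$.

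Case (b): there is $s_1\in[t_0/2,t_0)$ with $X(s_1)<A/2$. Let $\tau_1$ be the first time after $s_1$ with $X(\tau_1)=A$; it exists because $X(t_0)=A$, and $\tau_1\le t_0$. Let $\tau_0$ be the last time before $\tau_1$ with $X(\tau_0)=A/2$. Then $X\in[A/2,A]$ on $[\tau_0,\tau_1]\subset[t_0/2,t_0]$. Integrating $\dot X\le X^\alpha+t^{-\beta}$ over this interval gives
\[
\frac A2\le(\tau_1-\tau_0)\bigl(A^\alpha+(t_0/2)^{-\beta}\bigr),
\]
so the interval has length at least $A/\bigl(2(A^\alpha+(t_0/2)^{-\beta})\bigr)$. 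Since $X\ge A/2$ on it, the area is at least $A^2/\bigl(4(A^\alpha+(t_0/2)^{-\beta})\bigr)$.

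\textbf{Step 2: compare $A^\alpha$ with $t_0^{-\beta}$.} Because $\alpha\gamma\le\beta$ and $t_0\ge1$, we have
\[
A^\alpha\ge t_0^{-\alpha\gamma}\ge t_0^{-\beta}\ge 2^{-\beta}(t_0/2)^{-\beta}.
\]
So the denominator in case (b) is at most $C_\beta A^\alpha$, and the area is at least $c\,A^{2-\alpha}$. This lower bound decreases in $A$, so I cannot simply insert $A\ge t_0^{-\gamma}$ into it. Instead I use that $X$ has not hit $A$ on $[s_1,\tau_1)$. Replacing $\tau_1$ by the first time $X$ reaches $A'\coloneqq t_0^{-\gamma}\le A$, the same computation with $A'$ in place of $A$ applies, and Step 2 still holds because $A'^{\alpha}=t_0^{-\alpha\gamma}\ge t_0^{-\beta}$. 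This gives area at least $c\,t_0^{\gamma(\alpha-2)}$. Likewise, in case (a) it suffices to use the threshold $A'/2$ in place of $A/2$. I expect this bookkeeping — choosing the crossing level $t_0^{-\gamma}$ rather than $A$ so that the power is monotone in the right direction — to be the main point needing care.

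\textbf{Step 3: exponents.} The hypothesis gives $p<(\alpha-2)\gamma$, which beats case (b). For case (a) we need $1-\gamma>p$. This holds because $(\alpha-2)\gamma\le1-\gamma$ is equivalent to $\gamma(\alpha-1)\le1$, which is true by the definition of $\gamma$. In both cases the area over $[t_0/2,t_0]$ exceeds $C(1+t_0)^p$ once $t_0\ge T_0$ for a suitable $T_0$. That is the required contradiction, which yields $X(t)\le t^{-\gamma}$ for all $t\ge T_0$.
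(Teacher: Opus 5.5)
Your proof is correct, and it takes a genuinely different, more elementary route than the paper.

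\textbf{The paper's route.} It selects bad times $t_n$ with $X_{t_n}=n\,t_n^{-\beta/\alpha}$ and solves the majorant ODE $\dot{\tilde X}=2\tilde X^{\alpha}$ backward from $t_n$. The comparison principle then gives $X\ge\tilde X$ on the interval $[s_n,t_n]$ where $\tilde X\ge t^{-\beta/\alpha}$, so that $t^{-\beta}\le X^{\alpha}$ there. Integrating the explicit blow-up profile yields $\int_{s_n}^{t_n}X\ge c_\alpha t_n^{(\alpha-2)\beta/\alpha}$. This is done for $r=(\alpha-1)\beta/\alpha<1$, and the regime $r\ge1$ is reduced to it by lowering $\beta$ to some $\tilde\beta$.

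\textbf{Your route.} You work at the single level $t_0^{-\gamma}$. You use only the crude speed bound $\dot X\le(1+2^{\beta})t_0^{-\alpha\gamma}$ while $X$ climbs from $\tfrac12 t_0^{-\gamma}$ to $t_0^{-\gamma}$ inside $[t_0/2,t_0]$. The alternative case is that $X$ stays above $\tfrac12 t_0^{-\gamma}$ on the whole window.

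\textbf{What your route buys.} It avoids any comparison principle or explicit integration, and it handles both regimes at once, since it only uses $\alpha\gamma\le\beta$ and $(\alpha-1)\gamma\le1$. In particular it gives the endpoint rate $t^{-1/(\alpha-1)}$ when $r\ge1$. By contrast, the paper's reduction to a $\tilde\beta$ with $r(\tilde\beta)<1$ is only sketched and, as written, yields $O(t^{-\tilde\beta/\alpha})$, losing an $\epsilon$ in the exponent. Your argument also produces the explicit eventual bound $X(t)<t^{-\gamma}$. Rerunning it at level $K t_0^{-\gamma}$ for arbitrary $K>0$ even shows $t^{\gamma}X(t)\to0$. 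The paper's comparison argument follows the extremal backward profile and so gives explicit constants, but that precision is not needed for the stated conclusion.

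\textbf{Write-up points.} Run the case split at level $\tfrac12 t_0^{-\gamma}$ from the start, instead of first at $A/2$ and then patching. Also note that $0\le p<(\alpha-2)\gamma$ forces $\beta>0$, which you use in $t^{-\beta}\le(t_0/2)^{-\beta}$.
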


\ 

    For the stochastic case, assume  $X_t, Y_t\geq0$ are $\mathcal F_t$ adapted stochastic processes that satisfy
	\begin{eqnarray}\label{int-dif}
		\left\{\begin{array}{l}
			d X_t \leq-Y_t d t+ \mathcal C_t (1+t)^{\kappa} X^\alpha_t d t+\mathcal C_t (1+t)^{-\beta} d t+d M_t, \\
			\int_0^t X_s d s \leq \mathcal C_t (1+t)^p+\mathcal C_t,
		\end{array}\right.
	\end{eqnarray}
	where $\alpha>1$, $p\geq 0$ and $\kappa,\beta\in \mathbb R$. $ M_t$ is an $\mathcal F_t$ adapted continuous local martingale and  $\mathcal C_t=\mathcal C_t(\omega)$ is a positive increasing $\mathcal F_t$-adapted stochastic process. Furthermore, we assume that
    \begin{enumerate}
        \item {There exist $\zeta\in \mathbb R$ and $\theta\in(0,1)$ such that
    \bae\label{young para1}
     X_t\leq& \,\mathcal C_t^\theta  (1+t)^{\zeta\theta} Y_t^{1-\theta}.
	\eae}
    \item {  There exist $\xi_j\in \mathbb R,$ $\theta^\prime_j\in[0,1]$ and $\gamma_j>0$, for $j=1,\cdots,\mathfrak n$ such that
    \bae\label{young para2}
	\frac{d}{dt}\langle M\rangle_t \leq& \,C\sum_{j=1}^\mathfrak n (1+t)^{\xi_j }X_t^{\gamma_j\theta^\prime_j} Y_t^{1-\theta^\prime_j} .
	\eae}
    \end{enumerate}
    
   In the following, if $\theta_j^\prime> 0$, we set  $\psi_j:=\gamma_j-1/\theta^\prime_j-1$, $\xi := \max \{ \xi_j : \theta'_j = 0 \}$ and\footnote{In \eqref{rho^*}, we set $\min \emptyset=+\infty.$}
  \bae\label{rho^*}
  \bar\rho :=(1-p)\wedge \Big(\beta\wedge\big(\frac1{\theta}-\zeta\big)-1\Big)\wedge (-\xi)\wedge \min\Big\{\frac{\theta^\prime_j+\xi_j}{\theta^\prime_j \psi_j}:\theta^\prime_j>0\text{ and }\psi_j<0\Big\}.
  \eae

\begin{Lemma}[Stochastic area inequality]\label{stochastic area}
 Let $X_t$ satisfies \eqref{int-dif}-\eqref{young para2}, assume that  
  \begin{enumerate}
      \item[\rm{(A1) }]{ $
\xi_j < \theta^\prime_j(\bar\rho\psi_j -1), \text{ if } \theta^\prime_j>0 \text{ and }\psi_j\geq 0,\, j=1,\cdots, \mathfrak n.$}
      \item[\rm{(A2) }]{  $
\kappa<(\alpha-1)\bar\rho-1.$}
      \item[\rm{(A3) }]{$
\displaystyle\lim_{R \to +\infty} \mathbb P\big(\displaystyle\lim_{t\rightarrow+\infty}C_{t}(\omega) > R\big) = 0.$}  \end{enumerate}
Then for any $\rho<\bar \rho$, almost surely, we have
$X_t =O\big(t^{-\rho}\big).$
\end{Lemma}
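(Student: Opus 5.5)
Fix $\rho<\bar\rho$ and $\delta>0$ small with $\rho+\delta<\bar\rho$. The first step is localization: by (A3), it suffices to prove $X_t=O(t^{-\rho})$ almost surely on each event $\Omega_R:=\{\lim_{t\to\infty}\mathcal C_t\le R\}$, since $\mathbb P(\bigcup_R\Omega_R)=1$. On $\Omega_R$ we may replace $\mathcal C_t$ by the constant $R$ in \eqref{int-dif}--\eqref{young para1}. Formally this is done by stopping at $\inf\{t:\mathcal C_t>R\}$, which preserves the local martingale structure of $M$. The rest of the argument is then a stochastic version of the deterministic Lemma \ref{dfn area}, run on dyadic blocks $I_n=[2^n,2^{n+1}]$.

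\textbf{Step 1: good starting times.} The area bound $\int_0^t X_s\,ds\le R(1+t)^p+R$ gives, in each block $I_{n-1}$, a (random, but obtainable as a measurable choice) time $s_n\in I_{n-1}$ with $X_{s_n}\le C R\,2^{-n(1-p)}\le C2^{-n\rho}$, since $\rho<1-p$. Starting from $s_n$, I will control $X$ on the whole window $[s_n,2^{n+1}]\supset I_n$. To do this, introduce the stopping time $\tau_n=\inf\{t\ge s_n: X_t>K2^{-n\rho}\}$ for a large constant $K$. On $[s_n,\tau_n\wedge 2^{n+1}]$, integrating \eqref{int-dif} gives
\[
X_t+\int_{s_n}^t Y\,ds\le C2^{-n\rho}+R\,2^{n(\kappa+1)}(K2^{-n\rho})^{\alpha}+R\,2^{n(1-\beta)}+M_t-M_{s_n}.
\]
By (A2), the nonlinear term is $o(2^{-n\rho})$, because $\kappa+1-\alpha\rho<-\rho$ for $\rho$ close to $\bar\rho$. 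The forcing term satisfies $1-\beta\le-\bar\rho<-\rho$. Where the constraint $\frac1\theta-\zeta$ in $\bar\rho$ is active, I will use \eqref{young para1} to turn $-Y$ into the superlinear damping $-cR^{-\theta/(1-\theta)}(1+t)^{-\zeta\theta/(1-\theta)}X^{1/(1-\theta)}$. This is the analogue of the $\beta\wedge\frac1{\alpha-1}$ rate in Lemma \ref{dfn area}, and it allows a weaker initial bound to be propagated.

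\textbf{Step 2: the martingale, which I expect to be the main obstacle.} I will use the exponential martingale inequality
\[
\mathbb P\Big(\sup_{t\ge s}\big(M_t-M_s-\tfrac{\lambda}{2}(\langle M\rangle_t-\langle M\rangle_s)\big)>a\Big)\le e^{-\lambda a}
\]
with $a=2^{-n\rho}$ and $\lambda=2^{n(\rho+\delta)}$, so that the failure probabilities $e^{-2^{n\delta}}$ are summable. It then remains to show that $\frac\lambda2\,d\langle M\rangle_t\le \frac12Y_t\,dt+o(2^{-n\rho})\,dt/2^n$ before $\tau_n$, so that the correction is absorbed by the dissipation $\int Y$. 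I will bound each term of \eqref{young para2} using $X\le K2^{-n\rho}$:
\begin{itemize}
\item If $\theta'_j=0$, the term is at most $\lambda 2^{n\xi_j}(K2^{-n\rho})^{\gamma_j}$, and its integral over $I_n$ is $o(2^{-n\rho})$ because $\rho<-\xi$ (taking $\delta$ small).
\item If $\theta'_j>0$, Young's inequality with exponents $1/\theta'_j$ and $1/(1-\theta'_j)$ gives
\[
\lambda t^{\xi_j}X^{\gamma_j\theta'_j}Y^{1-\theta'_j}\le \tfrac1{2\mathfrak n}Y+C\big(\lambda t^{\xi_j}\big)^{1/\theta'_j}X^{\gamma_j}.
\]
After integrating over $I_n$, the last term is $O\big(2^{n[(\rho+\delta+\xi_j)/\theta'_j+1-\gamma_j\rho]}\big)$. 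Requiring this to be $o(2^{-n\rho})$ is exactly $\rho\psi_j>(\theta'_j+\xi_j)/\theta'_j+O(\delta)$. When $\psi_j<0$ this is the last constraint in \eqref{rho^*}, and when $\psi_j\ge0$ it is (A1).
\end{itemize}
The delicate point is that $\lambda$ depends on $n$ while the exponent bookkeeping must hold uniformly; this is why strict inequalities and the margin $\delta$ are needed.

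\textbf{Step 3: closing the argument.} Outside the exceptional events, whose probabilities are summable, the right-hand side of the integrated inequality is at most $(C+o(1))2^{-n\rho}<K2^{-n\rho}$ for $K$ large and $n\ge n_0(\omega)$. By continuity of $X$, this forces $\tau_n>2^{n+1}$. Borel--Cantelli then shows that almost surely, for all large $n$, $\sup_{I_n}X\le K2^{-n\rho}$, that is, $X_t=O(t^{-\rho})$ on $\Omega_R$. Letting $R\to\infty$ completes the proof.
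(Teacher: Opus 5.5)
Your argument is correct in substance, and it takes a genuinely different route from the paper's. Two steps need repair, described below.

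\textbf{Comparison with the paper.} The paper works with the weighted process $\bar X_t=t^{\rho}X_t$. It absorbs the resulting term $\rho t^{\rho-1}X_t$ into the dissipation via \eqref{young para1}; this is the only source of the constraint $\rho<\frac1\theta-\zeta-1$ in $\bar\rho$. It then adds and subtracts a state-dependent compensator $\frac12 t^{y}\bar X_t^{w}t^{2\rho}\,d\langle M\rangle_t$, with exponents $y,w,z$ tuned in Lemma~\ref{Coe lm}. Finally it runs a single up-crossing argument: after $T=\delta^{-z}$ the area bound forces $\bar X$ below $\delta$, an exponential supermartingale shows that a return to $2\delta$ has probability at most $\exp\{-C\delta^{-h}\}$, and letting $\delta\downarrow0$ gives $t^{\rho}X_t\to0$ a.s.

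You instead restart in every dyadic block from a fresh small value supplied by the area bound. You work with unweighted thresholds $K2^{-n\rho}$ and a deterministic $\lambda_n=2^{n(\rho+\delta)}$, absorb $\lambda_n\,d\langle M\rangle$ directly into $Y$, and finish with Borel--Cantelli. Because each window has length comparable to its starting time, no weight-derivative term ever appears. What your route buys:
\begin{itemize}
\item It never uses \eqref{young para1} or the constraint $\rho<\frac1\theta-\zeta-1$, so your remark about turning $-Y$ into superlinear damping can simply be dropped.
\item It avoids the parameter bookkeeping of Lemma~\ref{Coe lm}.
\item It never compares the state-dependent compensator with a deterministic one on the crossing interval. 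In \eqref{delta prob} that comparison goes in the needed direction only when $w=0$, because $\bar X_t\le 2\delta$ bounds $\bar X_t^{w}$ from above, not from below.
\end{itemize}
In exchange, the paper's route yields $o(t^{-\rho})$ in one sweep rather than $O(t^{-\rho})$; this is immaterial since $\rho<\bar\rho$ is arbitrary. You should state explicitly that it suffices to treat $\rho$ close to $\bar\rho$, by monotonicity of $O(t^{-\rho})$ in $\rho$. This matters because (A2), and (A1) with $\psi_j>0$, are lower bounds on $\rho$.

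\textbf{First repair: the $\theta'_j=0$ term.} Your bullet for this case misreads \eqref{young para2}. When $\theta'_j=0$ the term is $(1+t)^{\xi_j}Y_t$, not $(1+t)^{\xi_j}X_t^{\gamma_j}$, so the bound $X\le K2^{-n\rho}$ cannot control it. The correct step is absorption: on $[2^{n-1},2^{n+1}]$ and for $n$ large, $\lambda_n C(1+t)^{\xi_j}Y_t\le\frac{1}{2\mathfrak n}Y_t$. This holds because $\rho+\delta<\bar\rho\le-\xi\le-\xi_j$, and it is where $\rho<-\xi$ is really used.

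\textbf{Second repair: the starting time $s_n$.} The exponential martingale inequality cannot be started at a time obtained by a measurable selection, such as an argmin over $I_{n-1}$. Such a time anticipates the path, so $\exp\{\lambda(M_t-M_{s_n})-\frac{\lambda^2}{2}(\langle M\rangle_t-\langle M\rangle_{s_n})\}$ need not be a supermartingale. Starting instead at the deterministic time $2^{n-1}$ does not help: it leaves $\langle M\rangle$ on $[2^{n-1},s_n]$ uncontrolled, since $X$ is not yet bounded there. Use the hitting time $s_n:=\inf\{t\ge2^{n-1}:X_t\le2^{-n\rho}\}$, which is a stopping time. On $\Omega_R$, the area bound and continuity give $s_n\le2^n$ for $n\ge n_0(R)$, because $\rho<1-p$. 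With this choice your Steps 1--3 go through, for example with $K=3$.
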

	\begin{proof}
        Define the transformed process $ \bar X_t = t^{\rho} X_t$. By It\^o's formula, we have
$$
d \bar X_t = t^{\rho} dX_t + \rho t^{\rho-1} X_t  dt.
$$
Substituting \eqref{int-dif} into the above  inequality, we obtain
\bae\label{Xt transform}
d\bar X_t &\le t^{\rho}\big[- Y_t  + \mathcal C_t t^\kappa(t^{-\rho} \bar X_t)^{\alpha}  + \mathcal C_t t^{-\beta} \big] dt + \rho t^{\rho-1}X_t \, dt + t^{\rho} dM_t \\
       &= -t^{\rho} Y_t \, dt + \mathcal C_tt^{\kappa-(\alpha-1)\rho} \bar X_t^{\alpha} \, dt + \mathcal C_t t^{\rho-\beta} \, dt + \rho t^{\rho-1}X_t \, dt + t^{\rho} dM_t.
\eae
In the sequel, thanks to Assumption {\rm{(A3)}}, we adopt the convention that $\mathcal C_t$ denotes a generic positive process which may change from line to line. Specifically, we absorb all multiplicative constants or adding fixed constant into $\mathcal C_t$ without explicit relabeling. Using \eqref{young para1} and Young's inequality, it holds that
\bae\label{t-1 Xt}
t^{\rho-1}X_t\leq&\, t^{\rho-1}\mathcal C_t^\theta  (1+t)^{\zeta\theta} Y_t^{1-\theta}\\
=& \,\big(t^{\theta \rho-1}\mathcal C_t^\theta  (1+t)^{\zeta\theta}\big)\big(t^\rho Y_t\big)^{1-\theta}\\
=& \, \mathcal C_t t^{\rho-1/\theta} (1+t)^{\zeta}+\frac1{4\rho} t^\rho Y_t\\
\leq&\,  \mathcal C_t t^{\rho-1/\theta+\zeta} +\frac1{4\rho} t^\rho Y_t, \quad \text{for $t\geq1.$}
\eae
Using \eqref{young para2}, for any $y\in\mathbb R,$ $w\geq0$ and $t>1$, a direct computation shows 
\bae\label{qd weighted}
        &  t^{y}\bar{X}_t^{w} t^{2 \rho} \frac{d}{dt}\langle M\rangle_t \\
\leq&\, C t^{y} \bar{X}_t^{w} t^{2 \rho} \sum_{j=1}^\mathfrak n t^{\xi_j }X_t^{\gamma_j\theta^\prime_j} Y_t^{1-\theta^\prime_j}  \\
= &\, C \sum_{j=1}^\mathfrak n t^{y} \bar{X}_t^{w} t^{\rho(1+\theta_j^\prime)} t^{\xi_j }t^{-\rho \gamma_j \theta_j^\prime} \bar{X}_t^{\theta_j^\prime{\gamma_j}} (t^\rho Y_t)^{1-\theta_j^\prime} \\
= &\, C\sum_{j,\theta_j^\prime> 0}   t^{y+\rho+\xi_j+\rho\theta_j^\prime(1-\gamma_j)} \bar{X}_t^{w+\theta_j^\prime\gamma_j}(t^\rho Y_t)^{1-\theta^\prime_j} + C\sum_{j,\theta_j^\prime= 0}   t^{y+\rho+\xi_j} \bar{X}_t^{w}t^\rho Y_t \\
\leq&\,C \sum_{j,\theta_j^\prime> 0} t^{(y+\rho+\xi_j)/\theta^\prime_j+\rho(1-\gamma_j)} \bar{X}_t^{w/\theta^\prime_j+\gamma_j}+\frac12 t^\rho Y_t + C  t^{y+\rho+\xi} \bar{X}_t^{w}t^\rho Y_t,
    \eae
     where the last inequality follows from Young’s inequality.

Substituting \eqref{t-1 Xt} and \eqref{qd weighted} into \eqref{Xt transform}, one has
\bae\label{middle Xt}
d\bar X_t\leq& -\frac34 t^{\rho} Y_t \, dt + \mathcal C_tt^{\kappa-(\alpha-1)\rho} \bar X_t^{\alpha} \, dt + \mathcal C_t (t^{\rho-\beta}+t^{\rho-1/\theta+\zeta}) \, dt  \\
&+\frac12t^{y}\bar{X}_t^{w} t^{2 \rho} d\langle M\rangle_t-\frac12 t^{y}\bar{X}_t^{w} t^{2 \rho} d\langle M\rangle_t + t^{\rho} dM_t\\
\leq& -\frac14 t^{\rho} Y_t \, dt + \mathcal C_tt^{\kappa-(\alpha-1)\rho} \bar X_t^{\alpha} \, dt +\mathcal C_t (t^{\rho-\beta}+t^{\rho-1/\theta+\zeta}) \, dt \\
& +C \sum_{j,\theta_j^\prime> 0} t^{(y+\rho+\xi_j)/\theta^\prime_j+\rho(1-\gamma_j)} \bar{X}_t^{w/\theta^\prime_j+\gamma_j}dt+ C  t^{y+\rho+\xi} \bar{X}_t^{w}t^\rho Y_tdt-\frac12 t^{y}\bar{X}_t^{w} t^{2 \rho} d\langle M\rangle_t + t^{\rho} dM_t\\
\le& \big(-\frac14+ C  t^{y+\rho+\xi} \bar{X}_t^{w}  \big) t^{\rho} Y_tdt+\mathcal C_t \sum_{j\in \Theta} t^{-a_j} \bar X_t^{\alpha_j} \, dt + \mathcal C_t t^{-\mathfrak b} \, dt-\frac12 t^{y}\bar{X}_t^{w} t^{2 \rho} d\langle M\rangle_t+ t^{\rho} dM_t,
\eae
where in the above,  we set
\bae\label{eq:ab_def}
\Theta:=&\{0\}\cup\{j:\theta_j^\prime> 0\},\\
a_0 :=& \,(\alpha-1)\rho-\kappa,\\
\alpha_0:=&\,\alpha,\\
a_j :=&\, \psi_j\rho-(y+\xi_j)/\theta^\prime_j,\quad \theta_j^\prime> 0,\\
\alpha_j:=&\,w/\theta^\prime_j+\gamma_j,\qquad \qquad \theta_j^\prime> 0,\\  
\mathfrak b := &\,\beta \wedge(1/\theta-\zeta)- \rho,\\
\mathfrak c:=&\,y+\rho+\xi.
\eae
Then \eqref{middle Xt} becomes
\bae\label{final barX}
d \bar{X}_t \leq & \,\big(-\frac14+ C  t^{\mathfrak c} \bar{X}_t^{w}  \big) t^{\rho} Y_tdt+ \mathcal C_t \sum_{j\in \Theta} t^{-a_j} \bar X_t^{\alpha_j} \, dt + \mathcal C_t t^{-\mathfrak b} \, dt+d N_t,
\eae
where
$$
d N_t:=-\frac12 t^{y}\bar{X}_t^{w} t^{2 \rho} d\langle M\rangle_t+ t^{\rho} dM_t.
$$

For any $\delta,z>0$, define $T =T(z,\delta) :=\delta^{-z}$
and the stopping times
$$
\begin{aligned}
& \tau_T:=\inf \left\{t \geq T, \bar{X}_t \leq \delta\right\}, \\
& \bar{\tau}_T:=\inf \left\{t \geq \tau_T, \bar{X}_t \geq 2 \delta\right\}.\\
\end{aligned}
$$
From the second line of  \eqref{int-dif} , combined with the condition $p < 1 - \rho$ and Assumption {\rm{(A3)}} , it follows that $\tau_T < +\infty$ {\rm{a.s.}}
Moreover, it is easy to check that 
\bae\label{A subset}
\left\{\overline{\lim_{t \rightarrow \infty}  }\bar{X}_t>2\delta\right\} \subseteq\left\{\bar{\tau}_T<\infty\right\}.
\eae

For any $\lambda\in \mathbb R$, define the process $ L_\lambda(t) $ for all $ t \geq 0 $ by
\[
L_\lambda(t) :=
\begin{cases}
1, & \text{if } t < \tau_T, \\
\exp\left\{ -\dfrac{1}{2} \lambda^2 \displaystyle\int_{\tau_T}^t r^{2\rho} \, d\langle M \rangle_r + \lambda \displaystyle\int_{\tau_T}^t r^\rho \, dM_r \right\}, & \text{if } t \geq \tau_T.
\end{cases}
\]
Since $M_t$ is a local martingale and $\tau_T$ is almost surely finite, one has $L_\lambda(t)$ is an $\mathscr F_t$ nonnegative local   martingale, thus an $\mathscr F_t$ nonnegative  supermartingale. By Doob's supermartingale inequality, we conclude that
\bae\label{Doob sup}
\mathbb{P}\Big(\big\{\sup_{\tau_T\leq t\leq \bar\tau_T}L_{\lambda}(t) \geq \exp\{\delta \lambda/3\} \big\}\Big)\leq \exp\{-\delta \lambda/3\},\quad \forall \lambda\in\mathbb R.
\eae
Set $\lambda_*:=T^{y}(2 \delta)^{\frac{\alpha-\psi}{2}}.$ Note that $\langle M\rangle_t$ is increasing,  one has
\begin{equation}\label{delta prob}
\begin{aligned}
&\mathbb{P}\Big( \int_{\tau_T}^{\bar\tau_T} dN_t >\frac{1}{3}\delta \Big)\\
& =\mathbb{P}\Big(\int_{\tau_T}^{\bar\tau_T}\big(-\frac{1}{2} t^{y} \bar{X}^{w }_t t^{2 \rho} d \langle M\rangle_t+t^\rho d M_t\big)>\frac{1}{3} \delta\Big) \\
& \leq\mathbb{P}\Big(\int_{\tau_T}^{\bar\tau_T}\big(-\frac{1}{2} T^{y}(2 \delta)^{w} t^{2 \rho} d \langle M\rangle_t+t^\rho d M_t\big)>\frac{1}{3} \delta\Big) \\
& = \mathbb{P}\big(\big\{L_{\lambda_*}(\bar \tau_T) \geq \exp \{\delta \lambda_*/3\}\big\}\big) \leq \exp \left\{-\frac{1}{3} \delta \lambda_*\right\} \\
& = \exp \left\{-CT^{y} \delta^{w+1}\right\}=:\exp \left\{-C \delta^{-h}\right\},
\end{aligned}
\end{equation}
where the last inequality follows from \eqref{Doob sup} and $h:=yz-w-1.$

On the other hand, using \eqref{final barX} and \eqref{ab>1} on the event $\{\bar{\tau}_T<\infty\},$ one has
\begin{equation}\label{delta split}
\begin{aligned}
 \delta=&\int_{{\tau}_T}^{\bar{\tau}_T} d \bar X_t \\
 \leq& \int_{\tau_T}^{\bar{\tau}_T} \big(-\frac14+ C  t^{\mathfrak c} \bar{X}_t^{w}  \big) t^{\rho} Y_tdt
 +\int_{\tau_T}^{\bar{\tau}_T}\mathcal C_t \sum_{j\in \Theta} t^{-a_j} \bar X_t^{\alpha_j} d t+\int_{\tau_T}^{\bar{\tau}_T} \mathcal C_t t^{-\mathfrak b} d t+\int_{\tau_T}^{\bar{\tau}_T} d N_t \\
 \leq& \int_{\tau_T}^{\bar{\tau}_T} \big(-\frac14+ C  T^{\mathfrak c} \delta^{w}  \big) t^{\rho} Y_tdt+C_{\bar \tau_T} \sum_{j\in \Theta}\frac{(2 \delta)^{\alpha_j}}{a_j-1} \tau_T^{1-a_j}+\frac{ C_{\bar \tau_T}}{\mathfrak b-1} \tau_T^{1-\mathfrak b}+\int_{\tau_T}^{\bar{\tau}_T} d N_t\\
  \leq& \int_{\tau_T}^{\bar{\tau}_T} \big(-\frac14+ C   \delta^{-\mathfrak cz+w}  \big) t^{\rho} Y_tdt+C_{\bar \tau_T} \sum_{j\in \Theta}\frac{(2 \delta)^{\alpha_j}}{a_j-1} T^{1-a_j}+\frac{ C_{\bar \tau_T}}{\mathfrak b-1} T^{1-\mathfrak b}+\int_{\tau_T}^{\bar{\tau}_T} d N_t\\
\leq& \int_{\tau_T}^{\bar{\tau}_T} \big(-\frac14+ C   \delta^{-\mathfrak cz+w}  \big) t^{\rho} Y_tdt+\lim_{t\rightarrow+\infty}C_{t}\sum_{j\in \Theta} \delta^{\alpha_j+(a_j-1)z} +{\displaystyle\lim_{t\rightarrow+\infty}C_{t}} \delta^{(\mathfrak b-1)z} +\int_{\tau_T}^{\bar{\tau}_T} d N_t.
\end{aligned}
\end{equation}

Finally, for any $\epsilon>0$, using Assumption {\rm{(A3)}}, we can take $M>0$ such that
\bae\label{sm prob final}
\mathbb{P}\left\{\lim_{t\rightarrow+\infty}C_{t}\geq M\right\}\leq \frac12 \epsilon.
\eae
Rearranging \eqref{delta split}, using Lemma \ref{Coe lm} and the negativity of the drift term on the event $
\left\{\bar{\tau}_T<\infty\right\} \cap\left\{\displaystyle\lim_{t\rightarrow+\infty}C_{t}<M\right\},$ one has
$$
\begin{aligned}
\int_{\tau_T}^{\bar{\tau}_T} d N_t  \geq&\, \delta- \frac{2^\alpha M}{a-1} \delta^{\alpha+(a-1)z} -\frac{M}{\mathfrak b-1} \delta^{(\mathfrak b-1)z}+\int_{\tau_T}^{\bar{\tau}_T} \big(\frac14- C   \delta^{-\mathfrak cz+w}  \big) t^{\rho} Y_tdt \\
>& \,\delta-\frac{1}{3} \delta-\frac{1}{3} \delta +0 = \frac{1}{3} \delta,
\end{aligned}
$$
where the second inequality follows from \eqref{ab>1}, \eqref{odta1} and \eqref{odta2} by taking $\delta$ small enough.
Thus 
\bae\label{1/3delta sbset}
\left\{\bar{\tau}_T<\infty\right\} \cap\left\{\displaystyle\lim_{t\rightarrow+\infty}C_{t}<M\right\}\subseteq \left\{\int_{{\tau}_T}^{\bar{\tau}_T} d N_t>\frac{1}{3} \delta\right\}, \text{ for $\delta$ small enough.}
\eae
Using \eqref{A subset} and \eqref{1/3delta sbset}, we conclude that
$$\left\{\overline{\lim_{t \rightarrow \infty}  }\bar{X}_t>2\delta\right\} \subseteq \left\{\bar{\tau}_T<\infty\right\}\subseteq \left\{\int_{{\tau}_T}^{\bar{\tau}_T} d N_t>\frac{1}{3} \delta\right\} \bigcup \left\{\displaystyle\lim_{t\rightarrow+\infty}C_{t}<M\right\}^c.$$
Then
$$
\begin{aligned}
&\mathbb{P}\left(\overline{\lim_{t \rightarrow \infty}  }\bar{X}_t>2\delta \right) \leq \mathbb{P}\left\{\bar{\tau}_T<\infty\right\} \\
& \leq \mathbb{P}\left\{\int_{{\tau}_T}^{\bar{\tau}_T} d N_r>\frac{1}{3} \delta\right\}+\mathbb{P}\left\{\lim_{t\rightarrow+\infty}C_{t}<M\right\}^c \\
& \leq\exp \left\{-C \delta^{-h}\right\}+\frac12 \epsilon\leq \epsilon
\end{aligned}
$$
where the last inequality follows by choosing $\delta$ sufficiently small according to \eqref{delta prob} and \eqref{sm prob}, in conjunction with \eqref{sm prob final}. Specifically, for every $\epsilon > 0$, there exists $\delta_\epsilon > 0$ such that for all $\delta < \delta_\epsilon$, one has

\bae\label{delta sm}
\mathbb{P}\left(\overline{\lim_{t \rightarrow \infty}  }\bar{X}_t>2\delta \right)\leq \epsilon.
\eae
Notice that
\bae\label{osc class}
\left\{\overline{\lim_{t \rightarrow \infty}  }\bar{X}_t>0\right\} \subseteq\bigcap_{\delta>0}\left\{\overline{\lim_{t \rightarrow \infty}  }\bar{X}_t>2\delta\right\}.
\eae
Combining \eqref{delta sm} and \eqref{osc class}, one has $\mathbb P\left(\overline{\displaystyle\lim_{t \rightarrow \infty}  }\bar{X}_t>0 \right) =0,$ i.e., $$\lim_{t\rightarrow+\infty}\bar X_t=\lim_{t\rightarrow+\infty}t^\rho X_t=0, \,{\rm{a.s.}}.$$
The proof is complete.
\end{proof}

\subsection{Decay Estimates}
\begin{Lemma}\label{as L^p decay}
There exists a positive increasing $\mathcal F_t$-adapted stochastic process $\widetilde C_2(t,\omega)$   and constant $c=c(\alpha^*,\sigma,u_\pm,\mathfrak M)>0$, such that
    \bae\label{4-12}
	\left\|\phi(t)\right\|^{2} + \int_0^t\left\| \phi_{x} \right\|^2 d s \leq \left\|\phi_0\right\|^{2}+ C\ln(1+t)+\widetilde C_2(t,\omega), \quad {\rm{a.s.}}
    \eae
    with the limit  $\widetilde C_2(\omega):=\displaystyle\lim_{t\rightarrow\infty}\widetilde C_2(t,\omega)$ has exponential moment, i.e., $\mathbb E e^{c \tilde C_2(\omega)}<\infty.$

    Moreover, for any $p>4$, there exists a positive increasing $\mathcal F_t$-adapted stochastic process $\widetilde C_p(t,\omega)$ and constant $c_p=c_p(\alpha^*,\sigma,u_\pm,\mathfrak M,\|\phi_0\|,\|\phi_0\|_p)>0$, such that
    \baee\label{as lp final}
\| \phi(t) \|_ {p} ^p  \leq C (2+t)^{\frac{-p+3}{4}}\widetilde C_{p}(t,\omega),\quad {\rm{a.s.}}
    \eaee
    where the limit  $\widetilde C_p(\omega):=\displaystyle\lim_{t\rightarrow\infty}\widetilde C_p(t,\omega)$ satisfies $\mathbb E \exp\big\{c_p \widetilde C_{p}(\omega)^{\frac{2}{p}}\big\}<\infty.$
\end{Lemma}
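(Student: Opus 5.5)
The plan is to start from the pathwise inequalities \eqref{real-L^2-as} and \eqref{L^p} in Lemma \ref{lmL^p}. In each, the only term that is not deterministic is the local martingale. I will control that martingale pathwise with an exponential martingale inequality, using its quadratic variation to pay for itself.

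\textbf{The $L^2$ estimate.} Write $M_2(t)$ for the martingale in \eqref{real-L^2-as}. By the computation \eqref{preBDG} and (H3) together with \eqref{mp},
\[
\frac{d}{dt}\langle M_2\rangle_t\le C\Big(\int_\R|\phi|\bar u_x\,dx\Big)^2\le C\|\bar u_x\|_1\int_\R\phi^2\bar u_x\,dx\le C_1\int_\R\phi^2\bar u_x\,dx,
\]
where the middle step is Cauchy--Schwarz and the last uses Proposition \ref{lm2.1}.

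The dissipative term $\int_0^t\int\phi^2\bar u_x$ on the left of \eqref{real-L^2-as} therefore dominates $\langle M_2\rangle_t/C_1$. I will keep half of it and write
\[
M_2(t)-\tfrac12\int_0^t\int_\R\phi^2\bar u_x\le M_2(t)-\tfrac{c}{2}\langle M_2\rangle_t ,\qquad c:=1/C_1 .
\]
Then I set
\[
\widetilde C_2(t,\omega):=\sup_{s\le t}\Big(M_2(s)-\tfrac c2\langle M_2\rangle_s\Big)^+ ,
\]
which is positive, increasing and adapted. Since $\exp\{cM_2-\tfrac{c^2}{2}\langle M_2\rangle\}$ is a nonnegative supermartingale, Doob's maximal inequality gives
\[
\mathbb P\big(\widetilde C_2(\infty)>R\big)\le e^{-cR}.
\]
Integrating this tail bound shows $\mathbb E e^{c'\widetilde C_2}<\infty$ for any $c'<c$, which yields \eqref{4-12}.

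\textbf{The $L^p$ estimate.} The starting point is the weighted integral inequality \eqref{5.22} from the proof of Proposition \ref{t1}:
\[
(2+t)^{p/2}\|\phi\|_p^p\le \|\phi_0\|_p^p+C_1t+C\int_0^t(2+s)^{\frac{p-2}4}\|\phi\|^p\,ds+\int_0^t(2+s)^{p/2}\,dM_p .
\]
I will not take expectations. Instead I will keep a fraction of the weighted dissipation $(2+s)^{p/2}\int(|\phi|^p\bar u_x+|\phi|^{p-2}\phi_x^2)$ on the left.

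For the third term on the right, I bound $\|\phi(s)\|^p$ pathwise by $\big(\|\phi_0\|^2+C\ln(2+s)+\widetilde C_2\big)^{p/2}$ using \eqref{4-12}. This gives a contribution of order $(2+t)^{\frac{p+2}{4}}\ln^{p/2}(2+t)\,\widetilde C_2^{\,p/2}$.

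For the martingale term, the same calculation as \eqref{preBDG}, with $|\phi|^{p-2}\phi$ in place of $\phi$, gives
\[
\frac{d}{dt}\langle M_p\rangle\le C\Big(\int_\R|\phi|^{p-1}\bar u_x\Big)^2\le C\int_\R|\phi|^{p}\bar u_x\int_\R|\phi|^{p-2}\bar u_x .
\]
The second factor $\int|\phi|^{p-2}\bar u_x$ is bounded by \eqref{mp}. The quadratic variation of $\int(2+s)^{p/2}dM_p$ is then at most $C(2+s)^{p/2}$ times the retained dissipation. I will absorb it with the same exponential-martingale trick, but with a time-dependent weight: run it on dyadic blocks $[2^k,2^{k+1}]$, apply the supermartingale bound on each block, and sum the tails with Borel--Cantelli. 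This produces an adapted increasing process $\widetilde C_p'$ with Gaussian-type tails, and the leftover is of size $(2+t)^{p/2}$ times a random constant.

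Dividing by $(2+t)^{p/2}$ and collecting terms, the log factors and the extra $(2+t)^{1/4}$ lost in the block summation are absorbed into the exponent $\frac{-p+3}{4}$ (against the sharper $\frac{-p+2}{4}$). I then define $\widetilde C_p$ as a combination of $\widetilde C_2^{\,p/2}$, $\widetilde C_p'$ and the initial norms. Because $\widetilde C_2$ has an exponential moment, $\widetilde C_p^{2/p}$ does as well, which gives the stated integrability.

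\textbf{Main obstacle.} The delicate step is the martingale in the $L^p$ estimate. There the weight $(2+s)^{p/2}$ and the boundedness of $\int|\phi|^{p-2}\bar u_x$ must be balanced so that the quadratic variation is absorbed by the dissipation with only polynomial loss. My first attempt would be the dyadic-block exponential inequality described above, choosing the exponential parameter on each block proportional to $(2+2^k)^{-p/2}$ times a power of $2^{k/4}$.
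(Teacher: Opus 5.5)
Your $L^2$ step is correct and is essentially the paper's argument: the quadratic variation of $M_2$ is dominated by $\int\phi^2\bar u_x$ (Cauchy--Schwarz against the finite measure $\bar u_x\,dx$), and then the exponential supermartingale is applied. The gap is the martingale in the $L^p$ step, and it is not a technicality.

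\textbf{Why the $L^p$ martingale step fails.} You bound $\int|\phi|^{p-2}\bar u_x$ only by a constant via \eqref{mp}. Then $N(t)=\int_0^t(2+s)^{p/2}dM_p$ satisfies $d\langle N\rangle\le C(2+s)^{p/2}\,dD$, where $D$ is the retained weighted dissipation.
\begin{itemize}
\item To absorb $\tfrac{\lambda}{2}\langle N\rangle$ into $\epsilon D$ on $[2^k,2^{k+1}]$, you need $\lambda_k\lesssim 2^{-kp/2}$.
\item A summable tail $e^{-\lambda_k R_k}$ then forces thresholds $R_k\gtrsim 2^{kp/2}$.
\item The leftover is therefore of order $(2+t)^{p/2}$, as you say yourself.
\end{itemize}
After dividing by $(2+t)^{p/2}$, this is $O(1)$. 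The martingale then gives no decay at all, so it cannot be ``absorbed into the exponent $\frac{-p+3}{4}$''. That exponent allows a numerator of $(2+t)^{\frac{p+3}{4}}$, which your leftover exceeds by $(2+t)^{\frac{p-3}{4}}$, not by $(2+t)^{1/4}$. Taking $\lambda_k\sim 2^{-kp/2}2^{k/4}$ does not help either: the compensator then exceeds the dissipation by the factor $2^{k/4}$ and cannot be absorbed. The heuristic is self-consistent: $\langle N\rangle_t\lesssim(2+t)^{p/2}D$ and $D\lesssim A+N$ give $D\lesssim A+(2+t)^{p/4}D^{1/2}$, hence only $D\lesssim A+(2+t)^{p/2}$. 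Bounding the second factor by \eqref{mp} throws away exactly the decay you need. A pathwise version would have to keep it, e.g.\ $\int|\phi|^{p-2}\bar u_x\le C(1+s)^{-1}\|\phi\|_{p-2}^{p-2}$ together with decay of lower norms, and use a smaller weight.

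\textbf{What the paper does instead.} It does not absorb the martingale pathwise.
\begin{itemize}
\item It uses the weight $(2+t)^{\gamma}$ with $\gamma=\frac{p-3}{4}$. This makes the Gagliardo--Nirenberg remainder $(2+s)^{-5/4}\|\phi\|^p$, whose time integral is bounded by $C_p(1+\widetilde C_2^{p/2})$ using \eqref{4-12}.
\item The weighted martingale $N_p$ then only needs to be \emph{bounded}, and this comes from moments rather than dissipation. One has $[N_p]_t\le Cp^2\int_0^t(2+s)^{2\gamma}\int|\phi|^{2p-2}\bar u_x\,dx\,ds$.
\item Apply H\"older in time, then $\int|\phi|^{r}\bar u_x\le C(1+s)^{-1}\|\phi\|_r^r$, then the expectation decay \eqref{5.20} of Proposition \ref{t1} with $r=2q(p-1)$. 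This gives $\mathbb E[N_p]_\infty^q\le (Cp^2)^q$ uniformly in time.
\item BDG then yields $\mathbb E\sup_t|N_p|^{2q}\le(Cqp^2)^q$, i.e.\ sub-Gaussian tails for $\bar C_p=\sup_t|N_p|$, and one sets $\widetilde C_p=\widetilde C_2^{p/2}+\bar C_p$.
\end{itemize}
The missing ingredient in your proposal is this decay input into the quadratic variation. The exponent $\frac{-p+3}{4}$ comes from the choice of $\gamma$, not from a loss in block summation.
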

\begin{proof}
Throughout this proof, we denote by $C$ (or $C_p$) a generic positive constant depending on the parameters listed in the lemma statement ($\alpha, \sigma, u_{\pm}, \mathfrak{M}$), which may vary from line to line.

By Lemma \ref{lmL^p}, one has
\begin{equation}
    \begin{aligned}\label{L2 expon bdd}
    &\left\|\phi(t)\right\|^{2} +\alpha^* \int_0^t\int_{\mathbb R}\phi ^{2} \bar{u}_{x} \dif x ~ \dif s + 2\mu\int_0^t\left\| \phi _{x} \right\| ^2 \dif s\\
    \leq& \left\|\phi _0\right\|^{2}+C\ln (1+t)+2\int_{0}^{t} \int_{\mathbb R} \sum_{i=1}^{\infty} \partial_x\sigma_{i}\left( \bar{u}+\phi\right) \phi \dif x~ \dif B_i(s).
    \end{aligned}
\end{equation}
For any $i\in \mathbb N_+$, notice that
    \bae\label{expo mat contr}
		&\partial_{x}\sigma_i\left( \bar{u}+\phi\right)\left|\phi\right|^{p-2} \phi\\	
            =&\partial_{x}\sigma_i\left( \bar{u}\right)\left|\phi\right|^{p-2} \phi+\partial_{x}\left[\sigma_i\left(\bar{u}+\phi\right)-\sigma_i\left( \bar{u}\right)\right]\left|\phi\right|^{p-2} \phi\\
            =&\sigma_i^\prime(\bar u)\bar u_x \left|\phi\right|^{p-2} \phi +(p-1)\bar u_x \int_{0}^{\phi}\left(\sigma_i^{\prime}\left( \bar{u}+y\right)-\sigma_i^{\prime}\left( \bar{u}\right)\right)|y|^{p-2} d y \\
			&+\partial_{x}\left[\left|\phi\right|^{p-2} \phi\big(\sigma_i \left(\bar {u}+\phi\right)-\sigma_i \left( \bar{u}\right)\big)\right]+\partial_{x}\left[-(p-1) \int_{0}^{\phi}\left(\sigma_i \left( \bar{u}+y\right)-\sigma_i \left( \bar{u}\right)\right)|y|^{p-2} d y\right].
	\eae
Recall \eqref{Mp def} for the martingale term in \eqref{L2 expon bdd}.
Setting $p = 2$ in \eqref{expo mat contr} and substituting the result into the inequality above, we observe that the last two terms on the RHS of \eqref{expo mat contr} cancel out upon integration over $\mathbb{R}$. Thus, we conclude that

\bae\label{qv}
\frac{d}{dt}[M_2]_t\leq& \sum_{i=1}^{\infty}\Big[ \int_\R\big(\sigma_i^\prime(\bar u)\bar u_x  \phi +\bar u_x \int_{0}^{\phi}\left(\sigma_i^{\prime}\left( \bar{u}+y\right)-\sigma_i^{\prime}\left( \bar{u}\right)\right) d y\big)dx \Big]^2 \\
\leq& \sum_{i=1}^{\infty}\Big[ \int_\R\big(\|\sigma^\prime_i\|_{L^\infty_\mathfrak M} |\phi|\bar u_x   +2\|\sigma^\prime_i\|_{L^\infty_\mathfrak M}|\phi|\bar u_x \big)dx \Big]^2 \\
\leq&\, C \|\sigma^\prime\|_{L^\infty_\mathfrak M} ^2\Big(\int_{\mathbb R}|\phi|  \bar{u}_{x} \dif x \Big)^2 \leq C\|\sigma^\prime\|_{L^\infty_{\mathfrak M}} ^2 \int_0^t\int_{\mathbb R}\phi ^{2} \bar{u}_{x} \dif x .\\
\eae
In the last inequality, we used Jensen’s inequality with respect to the measure $\bar{u}_x dx$, noting that it has finite mass $u_+ - u_-$. We choose $c = c(\alpha^*, \sigma, u_\pm) > 0$ sufficiently small such that
$$c[M_2]_t-\alpha^* \int_0^t\int_{\mathbb R}\phi ^{2} \bar{u}_{x} \dif x= (c C-\alpha^*)\|\sigma^\prime\|_{L^\infty_{\mathfrak M}} ^2 \int_0^t\int_{\mathbb R}\phi ^{2} \bar{u}_{x} \dif x ~ \dif s\leq 0.$$
Substituting \eqref{qv} and the above inequality into \eqref{L2 expon bdd}, we obtain
    \begin{equation}\label{4-16}
    \begin{aligned}
    \left\|\phi(t)\right\|^{2}  + 2\mu\int_0^t\left\| \phi _{x} \right\| ^2 \dif s
    \leq \left\|\phi _0\right\|^{2}+C\ln (1+t)-c[M_2]_t+M_2(t).
    \end{aligned}
\end{equation}
It follows from \eqref{qv} that $M_2(t)$ is an $L^2$-martingale. Applying the standard exponential martingale inequality, we obtain

\bae\label{expo bd of K}\mathbb P\big(\sup_ {t\geq 0}(-c[M_2]_t+M_2(t))>R\big)\leq e^{-2c R},\quad \forall R>0.\eae
Combining \eqref{4-16} and \eqref{expo bd of K}, we obtain \eqref{4-12} by defining $\tilde{C}_2(t, \omega) := \sup_{0\le s\le t} (- c[M_2]_s + M_2(s))$.


For $p > 4$, setting $\gamma = (p - 3)/4$ and following the arguments in  \eqref{5.22}, we have

\begin{equation}\label{as Lp}
		(2+t)^\gamma \| \phi(t) \|_ {p} ^p  \leq  C\| \phi_0 \|_ {p} ^p +C(2+t)^{\gamma-\frac p2} + C \int_0^t (2+s)^{\gamma-\frac{p+2}{4}} \| \phi \|^p \dif s +  N_{p}(t).
	\end{equation}
Similar to the arguments in \eqref{expo mat contr}, we define
    \bae\label{N_p}
&N_{p}(t):=p\int_0^t (2+s)^\gamma \int_ \mathbb R \sum_{i=1}^{\infty} \partial_x \sigma_i (\phi +\bar u) |\phi|^{p-2}\phi  \dif x~\dif B_i(s)\\
=&p\sum_{i=1}^{\infty}\int_0^t (2+s)^\gamma \int_ \mathbb R \Big[\sigma_i^\prime(\bar u)\bar u_x \left|\phi\right|^{p-2} \phi +(p-1)\bar u_x \int_{0}^{\phi}\left(\sigma_i^{\prime}\left( \bar{u}+y\right)-\sigma_i^{\prime}\left( \bar{u}\right)\right)|y|^{p-2} d y\Big] \dif x~\dif B_i(s).
    \eae
     Specifically, this choice of $\gamma$ ensures that the exponent in the third factor in the RHS of \eqref{as Lp} satisfies $-5/4 < -1$. Combining this with \eqref{4-12}, we have
     \begin{eqnarray}\label{def int upbdd}
         \begin{aligned}
             \int_0^t (2+s)^{\gamma-\frac{p+2}{4}} \| \phi \|^p \dif s
\leq \int_0^t (2+s)^{-5/4} [C\ln(1+s)+\tilde C_2(s)]^p \dif s
\leq  C_p \big(1+\tilde C_2(t)^p\big).
         \end{aligned}
     \end{eqnarray}
Using the similar argument in \eqref{qv} on \eqref{N_p}, one has
\baee
[N_{p}]_t=&\,p^2\sum_{i=1}^{\infty}\int_0^t (2+s)^{2\gamma} \Big(\int_ \mathbb R  \partial_x \sigma_i (\phi +\bar u) |\phi|^{p-2}\phi  \dif x\Big)^2\dif s\\
\leq&\,  Cp^2\int_0^t (2+s)^{2\gamma} \Big(\int_ \mathbb R |\phi|^{p-1}\bar u_x\dif x\Big)^2\dif s\\
\leq&\,  Cp^2\int_0^t (2+s)^{2\gamma} \int_ \mathbb R |\phi|^{2p-2}\bar u_x\dif x~\dif s.\\
\eaee
Let $q > 1$ and $q' = \frac{q}{q-1}$ be its conjugate exponent. Set $\beta = \frac{(4\gamma+p+1)q-3}{4q}$, a direct computation  for $N_p(t)$ yields

\baee
 \mathbb E[N_{p}]_t^q\leq&\, C^q p^{2q} \mathbb E\Big(\int_0^t (2+s)^{2\gamma-\beta}(2+s)^{\beta}  \int_ \mathbb R |\phi|^{2p-2}\bar u_x\dif x~\dif s\Big)^q\\
 \leq&\, C^q p^{2q}\Big(\int_0^t (2+s)^{2\gamma q^\prime-\beta q^\prime} \dif s\Big)^{\frac{q}{q^\prime}}\mathbb E\int_0^t (2+s)^{\beta q} \Big(\int_ \mathbb R |\phi|^{2p-2}\bar u_x\dif x\Big)^q\dif s\\
 \leq&\, C^q p^{2q}\int_0^t (2+s)^{\beta q} \mathbb E\int_ \mathbb R |\phi|^{2q(p-1)}\bar u_x\dif x~\dif s\\
 \leq&\, C^q p^{2q}\int_0^t (2+s)^{\beta q-1} \mathbb E\|\phi\|^{2q(p-1)}_{2q(p-1)}\dif s\leq C^q p^{2q},
\eaee
where the last inequality follows from \eqref{5.20}. Thus, $N_p(t)$ is an $L^2$-uniformly integrable martingale, and we define
\bae\label{4-21}
\bar C_{p}(t,\omega):=\sup_{0\leq s\leq t}|N_{p}(s)|,\quad \bar C_{p}(\omega):=\lim_{t\rightarrow\infty}\bar C_{p}(t,\omega).\eae
By Fatou's lemma and the Burkholder-Davis-Gundy inequality, for any $q>1$, we have
\bae\label{4-22}
\mathbb E \bar C_{p}(\omega)^{2q}\leq & \lim_{t\rightarrow\infty}\mathbb E \sup_{0\leq s\leq t}|N_{p}(s)|^{2q}\leq (C q)^q\lim_{t\rightarrow\infty}\mathbb E [N_{p}]_t^q\leq  (C q p^2 )^q.
\eae
The growth of moments implies that $\bar{C}_p(\omega)$ exhibits sub-Gaussian behavior, ensuring that $\mathbb{E} \exp(c \bar{C}_p^2) < \infty$ for some $c > 0$


Substituting \eqref{4-12}, \eqref{def int upbdd}, \eqref{4-21} and \eqref{4-22} into \eqref{as Lp}, we obtain
\baee
 \| \phi(t) \|_ {p} ^p  \leq&\,  C(2+t)^{-\frac p2} + C (\phi_0)(2+t)^{\frac{-p+3}{4}}\big(\widetilde C_2(t,\omega)^\frac{p}{2} +  \bar C_{p}(t,\omega)\big)\\
 \leq&\,C (\phi_0)(2+t)^{\frac{-p+3}{4}}\widetilde C_{p}(t,\omega),
\eaee
where we define $\widetilde C_{p}(t,\omega)=\tilde C_2(t,\omega)^\frac{p}{2} +  \bar C_{p}(t,\omega).$ Note that $\widetilde C_{p} $ satisfies the required integrability estimates due to \eqref{expo bd of K} and \eqref{4-22}. This completes the proof.
\end{proof}
\begin{Lemma}\label{as deriv L^2 decay}
If $\phi\in L^2_{\rm{loc}}\big((0,\infty),H^2(\mathbb R)\big)$ almost surely. Then for all $\epsilon>0$, we have
\baee
\mathbb P\Big(\|\phi_x\|^2(t)=O\big({(1+t)^{\frac{3}{14}+\epsilon}}\big)\Big)=1.
\eaee
\end{Lemma}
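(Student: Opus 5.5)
The plan is to apply the stochastic area inequality, Lemma \ref{stochastic area}, with $X_t:=\|\phi_x(t)\|^2$ and $Y_t:=\|\phi_{xx}(t)\|^2$. I will aim for the decay $X_t=O(t^{-\rho})$ for every $\rho<\bar\rho$, with target rate $\bar\rho=3/14$. The exponent in the statement is presumably meant with a minus sign. Either way, a decay estimate implies the bound as literally written, since that bound is weaker. The hypothesis $\phi\in L^2_{\rm loc}((0,\infty),H^2)$ a.s. is exactly what makes the It\^o computation \eqref{4.71} meaningful. It also makes $M$ in \eqref{pre Quadra} a continuous local martingale. No smallness is assumed, so I cannot absorb $\|\phi_x\|_4^4$ as in Lemma \ref{lm2.4}. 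That term must instead be kept as the nonlinear injection $\mathcal C_t(1+t)^\kappa X_t^\alpha$ in \eqref{int-dif}.

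\textbf{Step 1: the differential inequality.} Start from \eqref{quasi deriv}. Gagliardo--Nirenberg gives $\|\phi_x\|_4^4\le C\|\phi_x\|^3\|\phi_{xx}\|\le \varepsilon Y_t+CX_t^3$. A more favourable split is also available, $\|\phi_x\|_4^4\le C\|\phi\|_{p}^{a}X_t^{b}Y_t^{c}$ with $c<1$. It lets the almost sure $L^p$ decay of Lemma \ref{as L^p decay} produce a negative power $(1+t)^{\kappa}$ with a random constant. The remainder $\mathcal R$ is handled the same way. By Lemma \ref{as L^p decay}, $\|\phi\|_q^{5q/(q-1)}\le \widetilde C_q(t,\omega)\,(2+t)^{-\frac{5(q-3)}{4(q-1)}}$, so this term decays almost like $t^{-5/4}$ for large $q$. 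By Proposition \ref{lm2.1}, $\|\bar u_x\|_4^4+\|\bar u_{xx}\|^2\le C(1+t)^{-2}$. This gives the first line of \eqref{int-dif} with $\beta$ close to $5/4$. The random factors $\widetilde C_q,\widetilde C_2$ go into an increasing process $\mathcal C_t$. Their limits have (sub)exponential moments, which gives (A3).

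\textbf{Step 2: the remaining hypotheses of Lemma \ref{stochastic area}.} The area bound $\int_0^tX_s\,ds\le C\ln(1+t)+\widetilde C_2(t,\omega)$ follows from \eqref{4-12}, so $p$ can be taken arbitrarily small. For \eqref{young para1}, integrate by parts to get $X_t\le\|\phi\|\,\|\phi_{xx}\|$. Then \eqref{4-12} gives $X_t\le \mathcal C_t^{1/2}(1+t)^{\epsilon}Y_t^{1/2}$, so $\theta=1/2$ and $\zeta=2\epsilon$. For \eqref{young para2}, I will not use the crude bound in \eqref{pre Quadra}. Its $XY$ term is not of the admissible form $X^{\gamma\theta'}Y^{1-\theta'}$. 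Instead, move the derivative off $\phi_{xx}$ in $\int\phi_{xx}\sigma_i'(u)\phi_x\,dx$, which gives $-\frac12\int\sigma_i''(u)(\phi_x+\bar u_x)\phi_x^2\,dx$. Combined with the $\bar u_x$ cross term and $\|\phi_x\|_3^3\le C\|\phi_x\|^{5/2}\|\phi_{xx}\|^{1/2}$, this should yield
$$\frac{d}{dt}\langle M\rangle_t\le C\big((1+t)^{-1}Y_t+X_t^{5/2}Y_t^{1/2}+\ldots\big).$$
Here the first term has $\theta'=0$ and $\xi=-1$. The second has $\theta'=1/2$ and $\gamma=5$, so $\psi=2\ge0$ and (A1) is a constraint on $\bar\rho$. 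The dots stand for lower-order terms of the same type.

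\textbf{Step 3: the main obstacle.} With these parameters, $\bar\rho$ in \eqref{rho^*} is the minimum of roughly $1$, $\tfrac14$ (from $\beta$ and $1/\theta-\zeta$), and $1$ (from $-\xi$). The difficulty is making (A1) and especially (A2), $\kappa<(\alpha-1)\bar\rho-1$, hold simultaneously. The naive choice $\alpha=3$, $\kappa=0$ fails. I will therefore first optimize the interpolation of $\|\phi_x\|_4^4$ and of the quadratic variation against the almost sure decay of $\|\phi\|_p$ from Lemma \ref{as L^p decay}. This trades powers of $X$ for negative powers of $t$. The balance between the gain in $\kappa$ and the loss in $\beta$ and $\theta$ should force the admissible rate down to $\bar\rho=3/14$. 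If some exponent is borderline, I will sacrifice an $\epsilon$ in $q$ or $p$, which is the source of the $\epsilon$ in the statement. Once (A1)--(A3) are checked, Lemma \ref{stochastic area} gives $t^{\rho}X_t\to0$ a.s. for each $\rho<3/14$, which proves the lemma.
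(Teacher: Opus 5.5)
Your proposal has a genuine gap: the explicit choice in Step 2 already violates the hypotheses of Lemma \ref{stochastic area}, so the argument does not close even for the weaker bound as literally stated. For the term $X_t^{5/2}Y_t^{1/2}$ you have $\theta'=\frac12$ and $\gamma=5$, hence $\psi=\gamma-1/\theta'-1=2\ge 0$, with no time weight, $\xi=0$. Condition (A1) then reads $0<\frac12(2\bar\rho-1)$, i.e.\ $\bar\rho>\frac12$. But by the definition of $\bar\rho$ you have $\bar\rho\le\beta-1<\frac14$, since in your Step 1 the remainder $\|\phi\|_q^{5q/(q-1)}$ only decays like $t^{-\beta}$ with $\beta<\frac54$. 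Here (A1) is a lower bound on $\bar\rho$, so aiming for a weaker rate does not rescue it.

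What is missing is a negative time weight on the cubic part of $\frac{d}{dt}\langle M\rangle_t$. You get it by interpolating that part against the almost sure $L^q$ decay of Lemma \ref{as L^p decay}. The paper does this in \eqref{mid SVCL quadra}--\eqref{mid para1}: it writes $\|\phi_x\|_3^6\le C\|\phi_x\|^{5-y}\|\phi_x\|^{y}\|\phi_{xx}\|$ and interpolates $\|\phi_x\|^y$ between $\|\phi\|_q$ and $\|\phi_{xx}\|$. This gives $\xi_2=\frac{(3-q)y}{6q+4}<0$ with $\psi_2<0$. You gesture at this in Step 3, but you never write down the resulting exponents or check (A1).

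Step 3 then asserts the rate instead of deriving it, and with the wrong sign. In the paper, the constraints coming from $p$, $\beta$, $\theta$, $\zeta$ and $\xi$ are all about $\frac14$ or larger. (A2) is arranged by fixing $q$ large and letting $z\nearrow z^*$, so that $\kappa\to-\infty$. None of these produces $3/14$, so your guess that it comes from balancing $\kappa$ against $\beta$ and $\theta$ is not what happens. The binding term is the quadratic-variation contribution $j=2$: as $q\to\infty$ and $y\nearrow 5$, one gets $\frac{\theta_2'+\xi_2}{\theta_2'\psi_2}\to-\frac{3}{14}$ (see \eqref{nega ceiling}). Hence $\bar\rho=-\frac{3}{14}$, and the conclusion is the growth bound $\|\phi_x\|^2=O(t^{3/14+\epsilon})$, exactly as stated.

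The statement is meant literally. In the proof of Theorem \ref{as L^infty decay} only this growth is needed, because $\|\phi_x\|$ enters $\|\phi\|_\infty$ with the vanishing power $\frac{2}{p+2}$. Your claimed decay at rate $3/14$ is strictly stronger than what the paper proves, and nothing in your outline supports it. To turn the proposal into a proof you must fix the interpolations and list $(\theta_j',\gamma_j,\xi_j,\psi_j)$, $\alpha$, $\kappa$, $\beta$, $\theta$, $\zeta$. Then compute $\bar\rho$ from its definition and verify (A1)--(A3).
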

\begin{proof}
We denote by $C$ a generic positive constant depending on the initial data $u_0$, the noise coefficients $\sigma$, and the viscosity parameters $b, \mu$. 
We use $C_q$ to denote constants depending additionally on the parameter $q$ from the Gagliardo-Nirenberg estimates. Substituting Proposition \ref{lm2.1} and Lemma \ref{as L^p decay} into \eqref{quasi deriv} yields 
\bae\label{pre area}
d\left\|\phi_x\right\|^2 \leq & -\frac{\mu}{2}\left\|\phi_{xx}\right\|^2 d t+ C\left\|\phi_x\right\|^4_4 d t+\widetilde C_q(t)^{\frac{5}{q-1}} (1+t)^{-\frac{5 }{4}\frac{q-3}{q-1}} d t \\
& +C(1+t)^{-2} d t+2 d M_t.
\eae
Notice that
\baee
&\frac{d}{dt}\langle M\rangle_t=\sum_{i=1}^{\infty}\Big(\int_{\mathbb{R}} \phi_x \partial_{x x}^2 \sigma_i(u) d x  \Big)^2 \\
=&\sum_{i=1}^{\infty}\Big[\int_{\mathbb{R}} \phi_{xx}  \sigma_i^\prime(u)(\bar u_x+ \phi_x) d x  \Big]^2 \\
=&\sum_{i=1}^{\infty}\Big(\int_{\mathbb{R}} \phi_{xx}  \sigma_i^\prime(u)\bar u_x d x +\int_{\mathbb{R}} \phi_x\phi_{xx}  \sigma_i^\prime(u)  d x  \Big)^2 \\
\leq&\,C\sum_{i=1}^{\infty}\Big(\int_{\mathbb{R}} \phi_{xx}  \sigma_i^\prime(u)\bar u_x d x  \Big)^2 +C\sum_{i=1}^{\infty}\Big(\int_{\mathbb{R}} \phi_x\phi_{xx}  \sigma_i^\prime(u)  d x  \Big)^2 \\
\leq&\,C\sum_{i=1}^{\infty}\int_{\mathbb{R}} |\phi_{xx}  \sigma_i^\prime(u)|^2\bar u_x d x +C\sum_{i=1}^{\infty}\Big(\frac12\int_{\mathbb{R}} \phi_x^2  \sigma_i^{\prime\prime}(u)(\phi_x+\bar u_x)  d x  \Big)^2 \\
\leq&\,C\|\sigma^\prime\|^2_{l^2(L^\infty_\mathfrak M)}\int_{\mathbb{R}} |\phi_{xx}  |^2\bar u_x d x +C\|\sigma^{\prime\prime}\|^2_{l^2(L^\infty_\mathfrak M)}\Big(\int_{\mathbb{R}} |\phi_x|^3  d x  \Big)^2 +C\|\sigma^{\prime\prime}\|^2_{l^2(L^\infty_\mathfrak M)}\Big(\int_{\mathbb{R}} \phi_x^2\bar u_x  d x  \Big)^2,
\eaee
here, the first inequality follows from Jensen's inequality with respect to the  measure $\bar u_x dx$ (which has finite mass $u_+-u_-$), and the last inequality follows from (H3). Applying Hölder’s inequality, we obtain
\bae\label{mid SVCL quadra}
&\frac{d}{dt}\langle M\rangle_t\\
\leq&\,C\Big( \|\bar u_x \|^2_{\infty}\|\phi_{xx}\|^2   +\| \phi_x\|_3^6 +\|\phi_x\|^4_{4}\|\bar u_x\| ^2   \Big) \\
\leq&\,C\Big( \|\bar u_x \|^2_{\infty}\|\phi_{xx}\|^2   +\| \phi_x\|^5\|\phi_{xx}\|  +\|\phi_x\|^{3}\|\phi_{xx}\|\cdot\|\bar u_x\|^2   \Big) \\
\leq&\,C\Big( (1+t)^{-2}\|\phi_{xx}\|^2   +\| \phi_x\|^{5-y}\|\phi_x\|^y\|\phi_{xx}\| +(1+t)^{-2/p}\|\phi_x\|^{2+2/p}\|\phi_{xx}\|^{2-2/p}   \Big),\\
\leq&\,C\Big( (1+t)^{-2}\|\phi_{xx}\|^2  +C_q\| \phi_x\|^{5-y}  \|\phi(t)\|_q^{\frac{2qy}{3q+2}}\|\phi_{xx}(t)\|^{\frac{(q+2)y}{3q+2}+1} +(1+t)^{-1}\|\phi_x\|^{3}\|\phi_{xx}\|   \Big),\\
\eae
 the last three lines follow from the Gagliardo–Nirenberg interpolation inequality
 and Proposition \ref{lm2.1} with $y\in[3/2,5)$.


Substituting Lemma \ref{as L^p decay} into \eqref{mid SVCL quadra}, for $y\in[3/2,5)$ and $q>3$, we obtain 
\bae\label{SVCL quadra}
&\frac{d}{dt}\langle M\rangle_t\leq C\Big( (1+t)^{-2}\|\phi_{xx}\|^2  +\tilde C_q(t)^{\frac{2y}{3q+2}}(1+t)^{\frac{(-q+3)y}{6q+4}}\| \phi_x\|^{5-y}  \|\phi_{xx}(t)\|^{\frac{(q+2)y}{3q+2}+1} +(1+t)^{-1}\|\phi_x\|^{3}\|\phi_{xx}\|  \Big),\\
\eae
In view of \eqref{SVCL quadra} and  that $\phi\in L^2_{\rm{loc}}\big((0,\infty),H^2(\mathbb R)\big)$ almost surely, we conclude that $M(t)$ is a continuous local martingale.  By setting
\baee
X_t=\|\phi_x(t)\|^2,Y_t=\|\phi_{xx}(t)\|^2, \quad t\geq0,
\eaee
\eqref{SVCL quadra} becomes
\bae\label{quadra svcl}
\frac{d}{dt}\langle M\rangle_t\leq&\,C\sum_{j=1}^3 (1+t)^{\xi_j }X_t^{\gamma_j\theta^\prime_j} Y_t^{1-\theta^\prime_j},
\eae
with coefficients
\bae\label{mid para1}
\gamma_1=& 0,\,\xi_1=-2 ,\,\theta^\prime_1=0 ,\quad \gamma_3=3,\,\xi_3=-1,\,\theta^\prime_3=\frac{1}{2},\\
\gamma_2=&\frac{(5-y)(3q+2)}{(q+2)y+3q+2} ,\,\xi_2=\frac{(-q+3)y}{6q+4} ,\ \ \,\, \theta^\prime_2=\frac{(q+2)y+3q+2}{6q+4}.\\
\eae
Here $y\in[3/2,5)$ and $q>3$. Let $\theta_j^\prime> 0$ and recall that $\psi_j=\gamma_j-1/\theta^\prime_j-1$, it is easy to check that
\bae\label{psi<0}
\psi_2= 2\frac{3q+2-2(q+1)y}{3q+2+(q+2)y}<0,\quad \psi_3=0.\\
\eae
We now apply the Stochastic Area Inequality (Lemma \ref{stochastic area}) to \eqref{pre area} and \eqref{SVCL quadra} by checking Assumptions {\rm{(A1) }-\rm{(A3)}}.

\,
\textbf{Assumption {\rm{(A1) }}:} 
It is directly from \eqref{psi<0} and \eqref{mid para1}.

\,
\textbf{Assumption {\rm{(A2) }}:} To maximize $\bar{\rho}$ in \eqref{rho^*}, we take $y \nearrow 5$ and let $q \nearrow  +\infty$, then

\bae\label{nega ceiling}
\displaystyle\lim_{y \nearrow 5}\displaystyle\lim_{q \nearrow +\infty}\frac{\theta^\prime_2+\xi_2}{\theta^\prime_2 \psi_2}=\displaystyle\lim_{y \nearrow 5}\displaystyle\lim_{q \nearrow +\infty} \frac{2+3q+5y}{4+6q-4y-4qy}=\frac{-3}{14}.\eae
For $q>3$, applying the Gagliardo–Nirenberg interpolation inequality and Lemma \ref{as L^p decay}, we obtain
\bae\label{theta value}
X_t= \|\phi_x(t)\|^2\leq C_q \|\phi(t)\|_q^{\frac{4q}{3q+2}}\|\phi_{xx}(t)\|^{\frac{2q+4}{3q+2}}
\leq   \widetilde C_q(t)^\frac{4}{3q+2} (1+t)^{\frac{-q+3}{q}\frac{q}{3q+2}}Y_t^{1-\frac{q}{3q+2}}.
\eae
Taking $0<z<3,$ by a similar argument, we obtain 
\bae\label{alpha value}
 &\left\|\phi_x\right\|_4^4 \leq C\left\|\phi_{xx}\right\|\left\|\phi_x\right\|^3\\
=&\,C\left\|\phi_{xx}\right\|\left\|\phi_x\right\|^{3-z}\left\|\phi_x\right\|^{z}\\
\leq&\, C_q\left\|\phi_{xx}\right\|\left\|\phi_x\right\|^{3-z} \|\phi\|_q^{\frac{2qz}{3q+2}}\|\phi_{xx}\|^{\frac{(q+2)z}{3q+2}} \\
\leq&\, C_q \mathcal C_t  \left\|\phi_{xx}\right\|^{1+\frac{(q+2)z}{3q+2}}(1+t)^{\frac{(-q+3)z}{6q+4}}\left\|\phi_x\right\|^{3-z}  \\
\leq&\, \frac14 \mu \left\|\phi_{xx}\right\|^2+CC_q\|\phi\|_q^{\frac{4q}{(q+2)(3-z)-4}}\left\|\phi_x\right\|^{\frac{2(3q+2)(3-z)}{(q+2)(3-z)-4}}\\
\leq&\, \frac14 \mu \left\|\phi_{xx}\right\|^2+C\widetilde C_q(t)^{\frac{4}{(q+2)(3-z)-4}} (1+t)^{\frac{-q+3}{(q+2)(3-z)-4}}\left\|\phi_x\right\|^{\frac{2(3q+2)(3-z)}{(q+2)(3-z)-4}}.
\eae
Substituting \eqref{theta value} and \eqref{alpha value} into \eqref{pre area}, we obtain
\bae\label{nonl}
d\left\|\phi_x\right\|^2 \leq  -\frac{\mu}{4}Y_t d t+\mathcal C_t (1+t)^\kappa X_t^\alpha d t+\mathcal C_t (1+t)^{-\beta} d t +2 d M_t.\\
\eae
where in the above $$\alpha=\alpha(z, q) := \frac{(3q+2)(3-z)}{(q+2)(3-z)-4}, \quad \quad \kappa=\kappa(z, q) := \frac{-q+3}{(q+2)(3-z)-4}.$$ Our goal is to find the maximal $\bar \rho$ such that Assumption {\rm{(A2) }} is valid. To achieve this, we need to keep $\alpha$ close to $3$. We achieve this by examining the behavior of  $\alpha$ and $\kappa$ near the singularity of the denominator. Let $D=D(z, q) := (q+2)(3-z) - 4$, the root  is given by
$z^* = 3 - \frac{4}{q+2}.$ We choose $q$ sufficiently large (specifically $q > 3$) such that $0 < z^* < 3$. As  $z$ approaches $z^*$ from below ($z \nearrow z^*$),   $D(z,q)$ tends to $0^+$. Consequently
$$
\lim_{z \nearrow z^*} \kappa(z, q) = -\infty,\quad \lim_{z \nearrow z^*} \alpha(z, q) = +\infty.
$$
However, comparing the asymptotic scaling of $\kappa$ and $\alpha$ with respect to    $D$, we observe that $\kappa \sim -q/D$ and $\alpha \sim 3/D$ as $z \nearrow z^*$ (ignoring lower order terms). For the stability condition $\kappa < (\alpha-1)\bar{\rho} - 1$ to hold, it suffices to have
$$
-\frac{q}{D} \ll \frac{3\bar{\rho}}{D}.
$$
This is guaranteed by choosing $q$ sufficiently large. Therefore, by first fixing $q$ large enough and then choosing $z$ sufficiently close to $z^*$, we ensure that $\kappa$ is sufficiently negative to satisfy Assumption {{\rm{(A2) }}}. 

\,

\textbf{Assumption {{\rm{(A3) }}}:} For any positive $\epsilon$ small enough, using similar arguments, we can choose  $\zeta,\theta,\beta$ as follows
\bae\label{para2}
-1-\epsilon>\zeta>-1,\frac13-\epsilon<\theta<\frac13, \frac54-\epsilon<\beta<\frac54,
\eae
and 
\baee
\mathcal C_t= C\big(1+ \widetilde C_{q_0}(t)^{-\kappa/q_0}\big),\quad q_0 \text{ large enough.}
\eaee
 Assumption  {\rm{(A3) }} follows directly from Lemma \ref{as L^p decay}. 
 
Finally, by \eqref{quadra svcl}, \eqref{mid para1}, \eqref{nega ceiling}, \eqref{nonl} and \eqref{para2}, we conclude that $\bar \rho=\frac{-3}{14},$ which implies $X_t=O(t^{\frac{3}{14}+\epsilon}),$ for any $\epsilon>0.$ The proof is complete. 
\end{proof}
\begin{remark}
    We highlight a distinct mechanism here compared to the global existence arguments in Lemma 2.3. While Lemma 2.3 imposes a smallness condition on $\|\phi_0\|_\infty$ to ensure that the viscous dissipation $\mu\|\phi_{xx}\|^2$ dominates the nonlinear noise injection at $t = 0$, the present asymptotic analysis relies on the decay established in Lemma 2.6. Since $\|\phi(t)\|_q$ decays algebraically in time almost surely, the coefficients of the destabilizing terms in (2.70) vanish as $t \to \infty$. Consequently, for sufficiently large time, the viscous dissipation eventually dominates the stochastic energy injection regardless of the initial data size, thereby justifying the stability analysis under the non-blow-up assumption.
\end{remark}
\begin{proof}[\textbf{Proof of Theorem \ref{as L^infty decay}}]
      Let $\phi(t,x) = u(t,x) - \bar u(t,x)$ denote the perturbation of the smoothed rarefaction wave. By the Gagliardo--Nirenberg interpolation inequality, for any $p > 2$, we have
\begin{equation} \label{eq:GN_infty}
    \|\phi(t)\|_{\infty} \le C_p \|\phi(t)\|_{p}^{\frac{p}{p+2}} \| \phi_x(t)\|^{\frac{2}{p+2}}.
\end{equation}
We estimate the terms on the right-hand side using the almost sure decay results obtained from the a priori estimates

\begin{enumerate}
    \item \textbf{$L^p$ decay:} From Lemma \ref{as L^p decay}, for sufficiently large $p$, there exists a random variable $\tilde{C}_p(\omega)$ such that almost surely
    \begin{equation*}
        \|\phi(t)\|_{p}^p \le \tilde{C}_p (1+t)^{-\frac{p-3}{4} + \delta_1}.
    \end{equation*}
    Taking the $p$-th root yields
    \begin{equation} \label{eq:Lp_decay}
        \|\phi(t)\|_{p} \le \tilde{C}_p^{\frac{1}{p}} (1+t)^{-\frac{1}{4} + \frac{3}{4p} + \frac{\delta_1}{p}}.
    \end{equation}

    \item \textbf{Derivative estimates:} From Lemma \ref{as deriv L^2 decay}, under the non-blow-up assumption $u - \bar u \in L_{\rm{loc}}^2 H^2$, the derivative decays almost surely as
    \begin{equation} \label{eq:deriv_decay}
        \|\phi_x(t)\| \le C (1+t)^{\frac{3}{28} + \frac{\epsilon_0}{2}}.
    \end{equation}
\end{enumerate}

Substituting \eqref{eq:Lp_decay} and \eqref{eq:deriv_decay} into \eqref{eq:GN_infty}, we have
\baee
\|\phi(t)\|_{\infty} \le C_p \tilde C_p^{1/p}(1+t)^{\frac{p}{p+2} \big(-\frac{1}{4} + \frac{3}{4p}\big) + \frac{2}{p+2} \big(-\frac{1}{8}\big) + O\big(\frac{1}{p}\big)}.
\eaee
As $p \to \infty$, the exponent of $(1+t)$ goes to $ -\frac{1}{4}$. Therefore, for any given $\epsilon > 0$, we can choose $p$ sufficiently large such that the decay rate is bounded by $-\frac{1}{4} + \epsilon$.

Finally, by the triangle inequality,
\[
    \|u(t) - u^r(t)\|_{\infty} \le \|\phi(t)\|_{\infty} + \|\bar{u}(t) - u^r(t)\|_{\infty}.
\]
Since the smoothed profile $\bar{u}$ converges to the exact rarefaction wave $u^r$ at the faster algebraic rate of $(1+t)^{-1/2}$, the stochastic decay dominates. Thus
\[
    \|u(t) - u^r(t)\|_{\infty} = O(t^{-\frac{1}{4} + \epsilon}), \quad \mathbb{P}\text{-a.s.}
\]
This completes the proof.
\end{proof}

\begin{Lemma}[Derivative decay]\label{td}
	
	Under the assumptions of Lemma {\rm{\ref{lm2.4}}}, we obtain the derivative estimate
	\bae\label{5.13}
	\E \|\phi_x\|^2\leq C (1+t)^{-1+\epsilon}, \quad \forall \epsilon>0.
	\eae
\end{Lemma}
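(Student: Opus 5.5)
The plan is to run a time-weighted version of the $H^1$ energy inequality obtained in the proof of Lemma \ref{lm2.4}. Under the smallness condition \eqref{small data treshhold}, that proof gives
\[
d\|\phi_x\|^2 + c_0\|\phi_{xx}\|^2\,dt \le \mathcal R(t)\,dt + 2\,dM_t ,
\]
where
\[
\mathcal R(t)=C\|\phi\|_q^{\frac{5q}{q-1}}+C\big(\|\bar u_x\|_4^4+\|\bar u_{xx}\|^2\big)
\]
and $c_0>0$. I multiply this by $(1+t)^{\gamma}$ with $\gamma=1-\epsilon'$ for a small $\epsilon'>0$. By It\^o's product rule,
\[
d\big[(1+t)^\gamma\|\phi_x\|^2\big] + c_0(1+t)^\gamma\|\phi_{xx}\|^2dt \le \gamma(1+t)^{\gamma-1}\|\phi_x\|^2dt + (1+t)^\gamma\mathcal R(t)\,dt + 2(1+t)^\gamma dM_t .
\]
Next I integrate over $[0,t]$ and take expectations. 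The stochastic integral has zero mean after a standard localization. This uses \eqref{pre Quadra}, $\phi\in X_2(T)$, and Fatou's lemma to remove the stopping times.

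I handle the three drift contributions separately.
\begin{enumerate}
\item For the profile terms, Proposition \ref{lm2.1} gives $\|\bar u_x\|_4^4\le C(1+t)^{-3}$ and $\|\bar u_{xx}\|^2\le C(1+t)^{-1}$. The second bound, integrated against the weight, produces at most $C(1+t)^{\gamma}$. I expect this to be the source of the loss $\epsilon$. Since $\bar u_{xx}$ in fact satisfies $\|\bar u_{xx}\|^2\lesssim(1+t)^{-2}$ for the tanh profile, a sharper bound may be available, but the crude bound already suffices after dividing by $(1+t)^\gamma$ if I take $\gamma$ close to $1$ and track carefully. If it does not, I replace it with the stated sharper estimate for $\bar u_{xx}$.
\item For the nonlinear remainder $\|\phi\|_q^{5q/(q-1)}$, I interpolate with H\"older's inequality between $\mathbb E\|\phi\|_q^q$ and $\mathbb E\|\phi\|_{q'}^{q'}$ for a larger $q'$, both controlled by Proposition \ref{t1}. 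Since $5q/(q-1)>5$ and each $L^q$ norm decays roughly like $(1+t)^{-1/4}$ up to logarithms, the expectation decays at least like $(1+t)^{-5/4+\epsilon}$. Hence $\int_0^t(1+s)^\gamma\,\mathbb E\,\mathcal R\,ds\le C(1+t)^{\gamma-1/4+\epsilon}$, which is harmless.
\item The weight-derivative term $\gamma\int_0^t(1+s)^{\gamma-1}\mathbb E\|\phi_x\|^2ds$ is the main obstacle. With $\gamma=1$ it is bounded only by $C\ln(2+t)$, using \eqref{real-L^2} or \eqref{H^2}; with $\gamma<1$ that route gives nothing better. I would first try interpolation:
\[
\|\phi_x\|^2\le \|\phi\|\,\|\phi_{xx}\|\le \eta(1+s)\|\phi_{xx}\|^2+C_\eta(1+s)^{-1}\|\phi\|^2 .
\]
The first piece, $\eta(1+s)^\gamma\|\phi_{xx}\|^2$, is absorbed into the dissipation $c_0(1+s)^\gamma\|\phi_{xx}\|^2$. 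The second piece is $C(1+s)^{\gamma-2}\mathbb E\|\phi\|^2$, and since $\mathbb E\|\phi\|^2\le C\ln(2+s)$ by \eqref{real-L^2}, it integrates to at most $C(1+t)^{\gamma-1}\ln(2+t)\le C$.
\end{enumerate}

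Collecting these bounds gives
\[
(1+t)^{\gamma}\,\mathbb E\|\phi_x\|^2\le \|\phi_{0x}\|^2+C(1+t)^{\epsilon}+C\ln(2+t).
\]
Dividing by $(1+t)^\gamma$ and choosing $\gamma$ and $\epsilon'$ suitably gives \eqref{5.13} for any $\epsilon>0$. The delicate points are the profile term $\|\bar u_{xx}\|^2$ and the justification of the vanishing martingale expectation; the absorption via interpolation is the step I would verify most carefully.
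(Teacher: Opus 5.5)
\paragraph{The gap: the nonlinear remainder in item 2.}
The failing step is item 2, not the weight term. By your own computation, $\int_0^t(1+s)^\gamma\,\E\mathcal R(s)\,ds$ is of size $(1+t)^{\gamma-\frac14+\epsilon}$. After dividing by $(1+t)^\gamma$, this contributes $(1+t)^{-\frac14+\epsilon}$ to $\E\|\phi_x\|^2$. So this term is the dominant one, not a harmless one. The collected bound with $C(1+t)^{\epsilon}$ therefore does not follow from your three items.

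No choice of $q$ or $\gamma$ repairs this. Proposition \ref{t1} only gives $(\E\|\phi\|_q^q)^{1/q}\lesssim(1+t)^{-\frac14+\frac1{2q}}$ up to logarithms. Hence $\E\|\phi\|_q^{5q/(q-1)}$ decays at best like $(1+t)^{-5/4}$. A drift bound of that size only yields $(1+t)^{-\frac14+\epsilon}$, whether you use time weights or an averaging argument. The remainder $\mathcal R$ of Lemma \ref{lm2.4} was built to give uniform $H^1$ bounds, and it cannot be recycled for the sharp rate.

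\paragraph{What is needed, and what the paper uses.}
You need an $H^1$ energy inequality whose forcing is $O((1+t)^{-2+\epsilon})$ in expectation. The paper's proof starts from exactly such an inequality:
$\frac{d}{dt}\E\|\phi_x\|^2\le C(1+t)^{-2}\ln(1+t)+C(1+t)^{-2}\E\|\phi_x\|^2$.
Here the only $\phi$-dependent forcing is $C(1+t)^{-2}\|\phi\|_{H^1}^2$. The paper combines this with $\int_0^t\E\|\phi_x\|^2\,ds\le C\ln(2+t)$ from \eqref{real-L^2}, via the area inequality of \cite{dong2025large}. That argument amounts to averaging $X(t)\le X(s)+\int_s^t h$ over $s\in[t/2,t]$.

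So you would have to re-estimate the nonlinear flux contribution $K_1$ so that it does not generate the slowly decaying $L^q$ forcing. Once the forcing is $O((1+t)^{-2+\epsilon})$, your weighted scheme with $\gamma<1-\epsilon$ also closes.

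\paragraph{Two smaller points.}
The weight term you call the main obstacle is in fact trivial. Since $(1+s)^{\gamma-1}\le 1$, \eqref{real-L^2} gives $\gamma\int_0^t(1+s)^{\gamma-1}\E\|\phi_x\|^2\,ds\le C\ln(2+t)$. After division this is $(1+t)^{-\gamma}\ln(2+t)$, which is already the target rate. This integrated information is exactly what the paper exploits. Your interpolation detour is correct but unnecessary.

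Also, Proposition \ref{lm2.1}(iii) with $p=2$ gives $\|\bar u_{xx}\|^2\le C(1+t)^{-2}$, not $(1+t)^{-1}$. Contrary to your remark, the cruder bound you quote would not suffice, since it contributes $O(1)$ after dividing by $(1+t)^\gamma$.
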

\begin{proof}
	We formally derive the decay estimate here, noting that the rigorous justification on the Galerkin approximations is provided in Lemma \ref{truncated-derivative-estimate}. Applying It\^o's formula on $\|\phi_x\|^2,$ we obtain
	\baee\label{deriv}
	& \E \ \left\| \phi_{x}(T)  \right\|^2+\E \int_0^T\|\phi_{  xx}\|^2\dif t   \leq \|\phi_x(0)\|^2+C+  C \E\int_0^T (1+t)^{-2}\left\| \phi  \right\|_{\ H^1(\R)} ^2\dif t .
	\eaee
	In particular, utilizing \eqref{real-L^2}, we have
	\baee
	\frac{d}{\dif t } \E \|\phi_x\|^2\leq C (1+t)^{-2} \ln(1+t)+ C (1+t)^{-2}\E \|\phi_x\|^2. 
	\eaee
	Combining the above inequality with \eqref{real-L^2}, \eqref{H^2}, and the area inequality \cite[Theorem 1.2]{dong2025large}, we conclude that
	\baee
	\E \|\phi_x\|^2\leq C (1+t)^{-1+\epsilon}, \quad \forall \epsilon>0.
	\eaee
 This proves \eqref{5.13}. 
\end{proof}
From  Proposition \ref{t1} and Lemma \ref{td}, we obtain the following $L^\infty$ decay estimate.
\begin{Lemma}[$L^\infty$ decay]\label{t2}
Assume that the conditions of Lemma {\rm{\ref{lm2.4}}} hold. Let $\phi\in X_2(T)$ be the solution of {\rm{(\ref{perturbation})}}. Then  
\begin{eqnarray*}
\mathbb E \| \phi \|_{\infty }  \leq  C(2+t)^{-\frac{1}{4}+\epsilon}, \quad \forall \epsilon>0.
\end{eqnarray*}
\end{Lemma}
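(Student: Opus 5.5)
The proof will use the one-dimensional Gagliardo--Nirenberg inequality \eqref{eq:GN_infty}, exactly as in the proof of Theorem \ref{as L^infty decay}. This time we take expectations and feed in the decay rates in mean, Proposition \ref{t1} and Lemma \ref{td}, instead of the pathwise ones. For $p>4$ large, \eqref{eq:GN_infty} gives pointwise in $\omega$ and $t$
\[
\|\phi(t)\|_\infty \le C_p \|\phi(t)\|_p^{\frac{p}{p+2}}\,\|\phi_x(t)\|^{\frac{2}{p+2}}.
\]
We then take expectations and apply H\"older's inequality with the conjugate exponents $\frac{p+2}{p}$ and $\frac{p+2}{2}$:
\[
\mathbb E\|\phi\|_\infty \le C_p\big(\mathbb E\|\phi\|_p\big)^{\frac{p}{p+2}}\big(\mathbb E\|\phi_x\|\big)^{\frac{2}{p+2}}.
\]

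Next we bound each factor by Jensen's inequality. From Proposition \ref{t1}, $\mathbb E\|\phi\|_p\le (\mathbb E\|\phi\|_p^p)^{1/p}\le C_p(1+t)^{-\frac{p-2}{4p}}\ln^{1/2}(2+t)$. From Lemma \ref{td}, which is available because the hypotheses of Lemma \ref{lm2.4} are assumed, $\mathbb E\|\phi_x\|\le(\mathbb E\|\phi_x\|^2)^{1/2}\le C(1+t)^{-\frac12+\frac{\epsilon'}{2}}$. Inserting both bounds, the total exponent of $(1+t)$ is
\[
-\frac{p-2}{4(p+2)}-\frac{1-\epsilon'}{p+2}=-\frac{p+2-\epsilon'\cdot 4/4\cdot 1}{4(p+2)}\approx -\frac14+\frac{\epsilon'}{p+2}.
\]
Indeed, $-(p-2)/4-1 = -(p+2)/4$, so the exponent is exactly $-\frac14+\frac{\epsilon'}{p+2}$. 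The logarithm enters only through $\ln^{\frac{p}{2(p+2)}}(2+t)\le C_\epsilon(2+t)^{\epsilon/2}$. Choosing $\epsilon'$ small absorbs everything into $(2+t)^{-\frac14+\epsilon}$.

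The only point needing care is that $C$ must not depend on $t$, since $p$ is fixed once $\epsilon$ is given. Proposition \ref{t1} requires $\phi\in X_1(T)$, which is implied by $\phi\in X_2(T)$. The argument is routine, and the main step is just the exponent bookkeeping above. It also seems possible to drop the $\epsilon$ in the derivative estimate and recover the sharper $\ln^{1/2}$ form stated in Theorem \ref{th2.2}, but that is not needed here.
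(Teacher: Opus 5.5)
Your proposal is correct and matches the paper's proof: the paper likewise applies the Gagliardo--Nirenberg bound $\|\phi\|_\infty\le C_p\|\phi\|_p^{p/(p+2)}\|\phi_x\|^{2/(p+2)}$, combines H\"older and Jensen into $C_p(\mathbb E\|\phi\|_p^p)^{1/(p+2)}(\mathbb E\|\phi_x\|^2)^{1/(p+2)}$, and then inserts Proposition~\ref{t1} and Lemma~\ref{td}. Your explicit bookkeeping, which shows the exponent is exactly $-\frac14+\frac{\epsilon'}{p+2}$ so that no largeness of $p$ beyond $p>4$ is needed, slightly sharpens the paper's ``$p$ sufficiently large''; just correct the garbled intermediate expression, which should read $-\frac{p+2-4\epsilon'}{4(p+2)}$.
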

\begin{proof}
By the Gagliardo–Nirenberg interpolation inequality, we have that
\begin{eqnarray*}
\| \phi \|_{\infty } \leq C_p  \| \phi \| _{p } ^{\frac{p}{p+2}} \| \phi_x  \| ^\frac{2}{p+2},
\end{eqnarray*}
which yields
\begin{eqnarray*}
\begin{array}{lll}
\displaystyle \mathbb E \| \phi \|_{\infty } &\leq& C_p \mathbb E \Big(\| \phi \|_ {p}^{\frac{p}{p+2}} \| \phi_x \|^\frac{2}{p+2}\Big) \leq C_p \left(\mathbb E \| \phi \|_ {p}^p\right) ^{\frac{1}{p+2}}\Big(\mathbb E \| \phi_x \|^2\Big)^{\frac{1}{p+2}}\\
&\leq&   \displaystyle C(2+t)^{-\frac{1}{4}+\epsilon}
\end{array}
\end{eqnarray*}
provided that $p$ is chosen sufficiently large.  

\end{proof}

\section{Uniqueness via Stochastic $\rm{L_{loc}^{1}}$ Contraction}

The proof of uniqueness employs Kružkov's method of doubling variables, adapted to the stochastic setting.  By choosing a suitable stochastic test function, we  establish the  $L^1_{\rm{loc}}(\R)$ contraction \textit{without requiring $L^1(\R)$ initial data.} 
\subsection{The Locally Stochastic Kružkov Estimate}\label{subsec kruz}
\begin{Lemma}[$L^1_{\rm{loc}}$ contraction]\label{lockruz}
	 For $T>0$, let $u,v$ be two solutions of equation \eqref{conservative} in $X_1(T)$ with initial data $u_0 - \bar{u}_0, v_0 - \bar{u}_0 \in H^1(\mathbb{R})$ and let 
$S:={\|f^\prime(u_0)\|_{\infty}\vee\|f^\prime(v_0)\|_{\infty}}.$ 
Then for all $r>0$ and almost all $0<t<T$ we have
   \bae\label{locL1}
\mathbb E\int_{|x|<r}|u(x, t)-v(x, t)| d x   &\leq \mathbb E \int_{|x|<r+S t}\left|u_0(x)-v_0(x)\right| d x.
\eae
 
\end{Lemma}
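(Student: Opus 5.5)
We prove \eqref{locL1} by a stochastic version of Kružkov's doubling of variables, run on the two strong solutions $u,v\in X_1(T)$. The key device is a space–time cut-off supported in the backward cone $\{|x|<r+S(t-s)\}$.

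\textbf{Step 1: entropy balance for a regularized absolute value.} Let $\eta_\delta$ be a smooth convex approximation of $|\cdot|$ with $\eta_\delta''$ supported in $(-\delta,\delta)$, and let $\rho_\varepsilon$ be a spatial mollifier. We apply Itô's formula to
\[
\int\!\!\int \eta_\delta\big(u(t,x)-v(t,y)\big)\,\rho_\varepsilon(x-y)\,\psi\big(t,\tfrac{x+y}{2}\big)\,dx\,dy ,
\]
where $\psi\ge0$ is a smooth cut-off. A convenient choice is $\psi(s,z)=\chi_\kappa\big(|z|-r-S(t-s)\big)$, with $\chi_\kappa$ a smoothed decreasing Heaviside function. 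This is legitimate because $u,v$ are $H^1$-valued semimartingales in the weak sense of Definition~\ref{def:weak_solution} and are bounded, since by \eqref{mp} $\|u\|_\infty,\|v\|_\infty\le M$.

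Here we use that $u$ and $v$ are driven by the \emph{same} Brownian motions. This is the role of the paper's ``stochastic test function'': the variable $y$ is frozen to the process $v(t,y)$, which is adapted, so no anticipating integrals appear. The appendix on non-adapted integrals is only needed if one instead doubles time.

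\textbf{Step 2: identify and control the terms.} Itô's formula produces four groups.

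\emph{Flux terms.} The Kružkov flux $q(u,v)=\mathrm{sgn}(u-v)\big(f(u)-f(v)\big)$ paired with $\partial_z\psi$ is bounded by $S\,|u-v|\,|\partial_z\psi|$. This is exactly cancelled by $\partial_s\psi=S\chi_\kappa'\le0$, the finite-speed-of-propagation argument. The constant $S$ is determined by the maximum principle \eqref{mp}.

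\emph{Viscous terms.} The terms $\partial_x\big(b(u)u_x\big)$ and $\partial_y\big(b(v)v_y\big)$ give a negative term $-\int \eta_\delta''(u-v)\,\big|B(u)_x-B(v)_y\big|^2\rho_\varepsilon\psi$, with $B'=b$, up to errors. These errors involve $\eta_\delta''\,(b(u)-b(v))$, which is $O(\delta)$ on the support of $\eta_\delta''$, and derivatives of $\rho_\varepsilon$ and $\psi$. We handle them by the standard mollifier lemmas from the Appendix.

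\emph{Itô correction and cross variation.} The Itô correction $\tfrac12\sum_i\big(\sigma_i'(u)u_x\big)^2$ combines with $\tfrac12\sum_i\big(\sigma_i'(v)v_y\big)^2$ and the cross-variation term $-\sum_i\sigma_i'(u)u_x\,\sigma_i'(v)v_y$. Together they form
\[
\tfrac12\,\eta_\delta''(u-v)\sum_i\big(\sigma_i(u)_x-\sigma_i(v)_y\big)^2\rho_\varepsilon\psi ,
\]
plus terms involving $\rho_\varepsilon'$. Writing $\sigma_i(u)_x-\sigma_i(v)_y=\sigma_i'(u)(u_x-v_y)+\big(\sigma_i'(u)-\sigma_i'(v)\big)v_y$, the first part is absorbed by the viscous dissipation when $\mu$ dominates. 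More precisely, the Itô-form viscosity already contains $\tfrac12\sum_i\sigma_i'^2$, so it matches this part exactly. The second part is $O(\delta^2)\,\eta_\delta''\,|v_y|^2$, which tends to $0$ as $\delta\to0$ by (H3) and $v\in L^2H^1$.

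\emph{Martingale term.} This is a true martingale, because the integrands are in $L^2$ by (H3) and the $X_1$ bounds. It therefore vanishes under $\mathbb E$.

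\textbf{Step 3: pass to the limits.} We let $\delta\to0$, then $\varepsilon\to0$ (Lebesgue points, continuity in $L^2$), then $\kappa\to0$, for almost every $t$. This gives \eqref{locL1}.

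\textbf{Main obstacle.} The hardest step is the limit $\varepsilon\to0$ in the terms where $\partial_x\rho_\varepsilon$ multiplies $\sigma_i(u)-\sigma_i(v)$ or $b$-terms. Here the noise cross term, in the form $\eta_\delta''\,\sigma_i'(u)\sigma_i'(v)\,u_xv_y$ after integration by parts, must be shown to combine with the viscous term into a sign-definite square rather than leaving an $\varepsilon^{-1}$ commutator.

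To handle this, we first take $\varepsilon\to0$ with $\delta$ fixed, using $u,v\in L^2H^1$, so that the whole expression reduces to the diagonal $x=y$ and becomes a single-variable computation. On the diagonal $u_x-v_x$ appears, and the noise contributions cancel algebraically with the Itô correction, leaving
\[
-\eta_\delta''(u-v)\Big[b\,|u_x-v_x|^2-O\big(|u-v|\,|u_x-v_x|\,|v_x|\big)\Big].
\]
Young's inequality together with $|u-v|<\delta$ on $\operatorname{supp}\eta_\delta''$ then makes this $o(1)$ as $\delta\to0$.
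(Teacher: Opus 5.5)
\textbf{The gap.} In Step 2 you treat the second-order terms in which derivatives land on the cut-off $\psi$ as errors that the mollifier lemmas remove. They do not vanish, and this is where the argument breaks. Carry out your own diagonal computation, i.e.\ It\^o's formula for $\int\eta_\delta(u-v)\psi\,dx$, which is legitimate for $H^1$-valued solutions. Letting $\delta\to0$ gives, for nonnegative $\psi$ compactly supported in $x$,
\[
\mathbb E\!\int|u-v|(t)\,\psi(t)\,dx\le\int|u_0-v_0|\,\psi(0)\,dx+\mathbb E\!\int_0^t\!\!\int\Big(|u-v|\,\partial_s\psi+q(u,v)\,\partial_x\psi+|A(u)-A(v)|\,\partial_x^2\psi\Big)\,dx\,ds,
\]
where $A'=b+\tfrac12\sum_i\sigma_i'^2\ge\mu$ is the It\^o-form viscosity.

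The last term is a genuine limit, not an error. The factor $\partial_x^2\psi$ has no sign, and as $\kappa\to0$ the term becomes the diffusive flux of $|A(u)-A(v)|$ through the boundary of the cone. The good term $|u-v|\,\partial_s\psi=S\chi_\kappa'|u-v|$ only controls $S|u-v|$ on that boundary, which is exactly what cancels the convective flux $q$. So the finite-speed-of-propagation step fails.

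No ordering of limits can rescue it, because \eqref{locL1} is false when $\mu>0$. Take $\sigma_i\equiv0$, $b\equiv1$, $f(u)=u^2/2$, $v_0=\bar u_0$ and $u_0=\bar u_0+\phi_0$ with $0\le\phi_0\in C_c^\infty((10,11))$, $\phi_0\not\equiv0$. Then $w=u-v$ solves the linear parabolic equation $w_t+\big(\tfrac{u+v}{2}\,w\big)_x=w_{xx}$ with $w(0)\ge0$. By the strong maximum principle $w>0$ everywhere for $t>0$. Hence, for $r=1$ and $0<t<9/S$, the left-hand side of \eqref{locL1} is positive while the right-hand side vanishes.

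\textbf{Comparison with the paper, and what survives.} The paper's proof does not supply the missing idea; it has the same hole. In its bound for $I_3+I_7$, the piece $\rho_{\delta_0}\varrho_\delta\,\partial_y^2\psi$ of $\partial_y^2\eta_{\delta,\delta_0}$ multiplies only $[b,\varepsilon](v,u)$. It therefore never acquires the factor $\int_v^u\int_v^u\beta_\varepsilon''(\xi-r)|b^{1/2}(r)-b^{1/2}(\xi)|^2\,dr\,d\xi$. It converges to $\mathbb E\int|B(u)-B(v)|\,\partial_y^2\psi$ with $B'=b$, not to something of order $\varepsilon^2$. Likewise, rewriting $I_6$ purely in terms of $\partial_{xy}\eta_{\delta,\delta_0}$ drops the corresponding $\sigma$-term. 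Inequality \eqref{test} is missing exactly $\mathbb E\int|A(u)-A(v)|\,\partial_x^2\psi$.

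Your diagonal computation is correct. It is also simpler than the paper's time doubling, since no non-adapted integrals arise. Two corrections to how you present it:
\begin{itemize}
\item Collapsing to the diagonal means sending $\varepsilon\to0$ before $\delta\to0$, the opposite of the order in your Step 3.
\item Before collapsing, the It\^o viscosity and the It\^o correction leave the $O(1)$ cross term $-\eta_\delta''\sum_i\sigma_i'(u)\sigma_i'(v)u_xv_y$. So the exact matching you claim in Step 2 holds only on the diagonal.
\end{itemize}

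What you can prove uses a fixed weight such as $\psi=e^{-\sqrt{1+x^2}}$, for which $|\psi_x|+|\psi_{xx}|\le2\psi$. The Kato inequality above then gives $\mathbb E\int|u-v|(t)\,\psi\,dx\le e^{Ct}\int|u_0-v_0|\,\psi\,dx$. This still avoids global $L^1$ integrability, and it is all that the uniqueness step in the proof of Theorem \ref{th2.2} needs. A cone estimate of the form \eqref{locL1} would have to carry an extra tail term.
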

The proof of Lemma \ref{lockruz} follows the strategy in \cite{feng2008stochastic}, and \cite{biswas2014stochastic},  which relies on the stochastic Kružkov estimate. We present some necessary technical preparations before proceeding to the proof.

Let $\rho$ and $\varrho$ be the standard mollifiers on $\mathbb{R}$ such that $\operatorname{supp}\,\rho \subset[-1,0]$ and $\operatorname{supp}\,\varrho=[-1,1]$. For $\delta>0$ and $\delta_0>0$, let 

$$\rho_{\delta_0}(r)=\frac{1}{\delta_0} \rho\left(\frac{r}{\delta_0}\right)\quad {\rm{and}} \quad \varrho_\delta(x)=\frac{1}{\delta} \varrho\left(\frac{x}{\delta}\right).$$

For a non-negative test function $\psi \in C_c^{1,2}\left([0, \infty) \times \R\right)$ and $\delta, \delta_0>0$, define
\bae\label{mol}
\eta_{\delta, \delta_0}(t, x ; s, y)=\rho_{\delta_0}(t-s) \varrho_\delta(x-y) \psi(s, y) .
\eae
Note that $\rho_{\delta_0}(t-s) \neq 0$ only if $s-\delta_0 < t < s$, and therefore $\eta_{\delta, \delta_0}(t, x ; s, y)=0$ outside $s-\delta_0 < t < s$.
Let $\beta: \mathbb{R} \rightarrow \mathbb{R}$ be a non negative smooth function satisfying
$$
\beta(0)=0, \quad \beta(-r)=\beta(r), \quad \beta^{\prime}(-r)=-\beta^{\prime}(r), \quad \beta^{\prime \prime} \geq 0,
$$
and
$$
\beta^{\prime}(r)=\left\{\begin{array}{l}
-1, \quad \text { when } r \leq-1; \\
\in[-1,1], \quad \text { when }|r|<1; \\
+1, \quad \text { when } r \geq 1.
\end{array}\right.
$$
For any $\varepsilon>0$, define $\beta_\varepsilon: \mathbb{R} \rightarrow \mathbb{R}$ by
$$
\beta_\varepsilon(r)=\varepsilon \beta\left(\frac{r}{\varepsilon}\right).
$$
Define $ \Pi_T=[0,T]\times\R$ and introduce the notation
$$
[g](r)\coloneqq\int_0^r g( s) d s,~[g, \varepsilon](r, a)\coloneqq\left[g \beta_\varepsilon^{\prime}(\cdot-a)\right](r),~f_\varepsilon(r,a):=\int_a^r \beta_\varepsilon^\prime(s-a) f^\prime(s)ds.
$$

We now outline the overall strategy for the subsequent $\rm{L_{loc}^{1}}$ estimates. Let $u,v$ be two solutions of \eqref{conservative} such that $u-u_0, v-v_0\in X_1(T)$. Using the doubling of variables method, we can apply the stochastic Kružkov estimate to approximate $\mathbb E\|u(s)-v(s)\|_{L^1_{\rm{loc}}}$ by  
\bae\label{molimit}
\int_{\Pi_T} \int_{\Pi_T} \beta_\varepsilon\left(u(t, x)-v(s, y)\right) \eta_{\delta,\delta_0}(t,x,s,y)\, d x d y d tds.
\eae
Since $u,v$ are solutions of \eqref{conservative} and $u-v\in X_1(T)$, applying It\^o's formula to \eqref{molimit} combined with the It\^o product rule and integration over $\R$ yields
\baee
& -\int_{\mathbb{R}} \beta_{\varepsilon}(u(t, x)-v(s, y)) \partial_t \eta_{\delta , \delta_0} d x \\
=& \int_{\mathbb{R}} \beta_{\varepsilon}^{\prime}(u(t, x)-v(s, y)) \partial_x(b(u)u_x) \eta_{\delta, \delta_0} d x d t \\
&+\sum_k\int_{\mathbb{R}} \beta_{\varepsilon}^{\prime}(u(t, x)-v(s, y)) \partial_x\left(\frac{1}{2} \sigma_k^{\prime 2}(u) u_x+f(u)\right) \eta_{\delta , \delta_0} d x d t \\
& +\frac{1}{2}\sum_k \int_{\mathbb{R}} \beta_{\varepsilon}^{\prime \prime}(u(t, x)-v(s, y)) \sigma_k^{\prime 2}(u) u_x^2 \eta_{\delta, \delta_0} d x d t\\
&+\sum_k\int_{\mathbb{R}} \beta_{\varepsilon}^{\prime}(u(t, x)-v(s, y)) \partial_x \sigma_k(u) \eta_{\delta, \delta_0} d x d B_k(t) \\
 =&-\int_{\mathbb{R}} \beta_{\varepsilon}^{\prime}(u(t, x)-v\left(s, y\right) b(u) u_x \partial_x \eta_{\delta , \delta_0} d x d t  -\int_{\mathbb{R}} \beta_{\varepsilon}^{\prime \prime}(u(t, x)-v(s, y)) b(u) u_x^2 \eta_{\delta , \delta_0} d x d t \\
& +\int_{\mathbb{R}} \beta_{\varepsilon}^{\prime}(u(t, x)-v(s, y)) f^{\prime}(u) u_x \eta_{\delta, \delta_0} d x d t  -\frac{1}{2}\sum_k \int_{\mathbb{R}} \beta_{\varepsilon}^{\prime}(u(t, x)-v(s, y)) \sigma_k^{\prime 2}(u) u_x \partial_x \eta_{\delta, \delta_0} d x d t \\
& +\sum_k\int_{\mathbb{R}} \beta_{\varepsilon}^{\prime}(u(t, x)-v(s, y)) \partial_x \sigma_k(u) \eta_{\delta, \delta_0} d x d B_k(t) \\
 =&\int_{\mathbb{R}}\left[b, \varepsilon\right](u, v) \partial_x^2 \eta_{\delta, \delta_0} d x d t-\int_{\mathbb{R}}f_\varepsilon(u, v) \partial_x \eta_{\delta, \delta_0} d x d t +\frac{1}{2} \sum_k\int_{\mathbb{R}}\left[\sigma_k^{\prime 2}, \varepsilon\right](u, v) \partial_{x}^2 \eta_{\delta, \delta_0} d x d t \\
& -\int_{\mathbb{R}} \beta_{\varepsilon}^{\prime \prime}(u-v) b(u) u_x^2 \eta_{\delta, \delta_0} d x d t -\sum_k\int_{\mathbb{R}}\left[\sigma_k^{\prime}, \varepsilon\right](u, v) \partial_x \eta_{\delta, \delta_0} d x d B_k(t).
\eaee
Integrating the drift term by parts, we observe that the resulting $\beta_\epsilon''$ term cancels exactly with the It\^o correction $\frac{1}{2}\beta_\epsilon'' \sigma_k'^2 u_x^2$. Thus, we obtain
\bae\label{kruz2.5}
& -\int_{\Pi_T}\int_\R \beta_{\varepsilon}\left(u_0(x)-v(s, y)\right) \eta_{\delta, \delta_0}\dif x dyds-\int_{\Pi_T}\int_{\Pi_T}\beta_{\varepsilon}(u(t,x)-v(s,y)) \partial_t \eta_{\delta, \delta_0} \dif x~\dif t dyds \\
 =&\int_{\Pi_T}\int_{\Pi_T}\left[b, \varepsilon\right](u, v) \partial_x^2 \eta_{\delta, \delta_0}\dif x~\dif t dyds-\int_{\Pi_T}\int_{\Pi_T}f_\varepsilon(u, v) \partial_x \eta_{\delta, \delta_0}\dif x~\dif t dy ds  \\
&+\frac{1}{2} \sum_k\int_{\Pi_T}\int_{\Pi_T}\left[\sigma_k^{\prime 2}, \varepsilon\right](u, v) \partial_x^2 \eta_{\delta, \delta_0}\dif x~\dif t  dy ds -\int_{\Pi_T}\int_{\Pi_T}\beta_{\varepsilon}^{\prime \prime}(u-v) b(u) u_x^2 \eta_{\delta, \delta_0}\dif x~\dif t  dy ds  \\
& -\left.\sum_k\int_0^T \int_{\mathbb{R}} \int_{\Pi_T}\left[\sigma_k^{\prime}, \varepsilon\right](u, a) \partial_x \eta_{\delta, \delta_0} \dif x  d B_k(t) \right|_{a=v(s, y)}   d y d s. 
\eae
Exchanging $u(t, x)$ and $v(s, y)$ yields that
\bae\label{kruz2.6}
\begin{aligned}
& -\int_{\Pi_T}\int_\R \beta_{\varepsilon}\left(v_0(y)-u(t, x)\right) \eta_{\delta , \delta_0}dy\dif x~\dif t -\int_{\Pi_T}\int_{\Pi_T} \beta_{\varepsilon}\left(v(s, y)-u(t, x)) \partial_s \eta_{\delta, \delta_0}\right. dy ds \dif x~\dif t  \\
 =& \int_{\Pi_T}\int_{\Pi_T} \left[b, \varepsilon\right](v, u) \partial_y^2 \eta_{\delta, \delta_0}dy ds \dif x~\dif t -\int_{\Pi_T}\int_{\Pi_T} f_\varepsilon(v, u) \partial_y \eta_{\delta, \delta_0}dy ds \dif x~\dif t  \\
& +\frac{1}{2}\sum_k \int_{\Pi_T}\int_{\Pi_T} \left[\sigma_k^{\prime 2}, \varepsilon\right](v, u) \partial_y^2 \eta_{\delta, \delta_0}dy ds \dif x~\dif t -\int_{\Pi_T}\int_{\Pi_T} \beta_{\varepsilon}^{\prime \prime}(v-u) b(v) v_y^2 \eta_{\delta, \delta_0} dy ds \dif x~\dif t \\
& -\sum_k\int_{\Pi_T}\int_{\Pi_T} \left[\sigma_k^{\prime}, \varepsilon\right](v, a) \partial_y \eta_{\delta, \delta_0} d y d B_k(s) d x  d t.  \\
\end{aligned}
\eae
By adding \eqref{kruz2.5} and \eqref{kruz2.6}, we arrive at
\bae\label{kruz}
0=&\mathbb{E}\int_{\Pi_T} \int_{\mathbb{R}} \beta_{\varepsilon}\left(u_0(x)-v(s, y)\right) \psi(s, y) \rho_{\delta_0}(-s) \varrho_\delta(x-y) d x d y d s \\
& +\mathbb{E} \int_{\Pi_T} \int_{\Pi_T} \beta_{\varepsilon}(v(s, y)-u(t, x)) \partial_s \psi(s, y) \rho_{\delta_0}(t-s) \varrho_\delta(x-y) d y d s d x d t \\
& +\mathbb{E} \int_{\Pi_T} \int_{\Pi_T}\left(\left[b, \varepsilon\right](u, v) \partial_x^2 \eta_{\delta, \delta_0}+\left[b, \varepsilon\right](v, u) \partial_y^2 \eta_{\delta, \delta_0}\right) d y d s d x d t \\
& -\mathbb{E} \int_{\Pi_T} \int_{\Pi_T}\left(f_\varepsilon(u, v) \partial_x \varrho_\delta(x-y)+f_\varepsilon(v, u) \partial_y \varrho_\delta(x-y)\right) \rho_{\delta_0} \psi d y d s d x d t \\
& -\mathbb{E} \int_{\Pi_T} \int_{\Pi_T}f_\varepsilon(v, u) \varrho_\delta\rho_{\delta_0} \partial_y \psi d y d s d x d t \\
& +\frac{1}{2}\sum_k \mathbb{E} \int_{\Pi_T} \int_{\Pi_T}\left(\left[\sigma_k^{\prime 2}, \varepsilon\right](u, v) \partial_x^2 \eta_{\delta, \delta_0}+\left[\sigma_k^{\prime 2}, \varepsilon\right](v, u) \partial_y^2 \eta_{\delta, \delta_0}\right) d y d s d x d t \\
& -\mathbb{E} \int_{\Pi_T} \int_{\Pi_T} \beta_{\varepsilon}^{\prime \prime}(u-v)\left(b(u) u_x^2+b(v) v_y^2\right) \eta_{\delta, \delta_0} d y d s d x d t \\
& +\left.\sum_k\mathbb{E} \int_{\Pi_T} \int_{\mathbb{R}} \int_0^T\left[\sigma_k^{\prime}, \varepsilon\right](u, a) \partial_y \eta_{\delta, \delta_0} d B_k(t)\right|_{a=v(s, y)} d y \\
 =:&I_1+I_2+I_3+I_4+I_5+I_6+I_7+I_8.
\eae
The terms $I_1, I_2, I_4,$ and $I_5$ correspond to the standard transport and temporal error estimates inherent to the doubling-of-variables method in the stochastic setting. As the convergence of these terms largely follows the entropy framework established in \cite{biswas2014stochastic,dareiotis2019entropy,dareiotis2020nonlinear}, we relegate their detailed proofs to Appendix \ref{app kruz}.   We focus the main text on the viscous dissipation terms $I_3, I_7$ and the stochastic correction $I_6,I_8$, where the interaction between the nonlinear noise and the viscous regularization requires specific adaptation.  Note that since typically $u$ and $v$ do not vanish at infinity, global $L^1$ estimates are not directly applicable. However, the test functions possess compact support in the spatial variables.
\begin{prp}\label{prp3.1}
     Let $I_1, I_2, I_4,$ and $I_5$ be the terms defined in \eqref{kruz}. Then
     \begin{enumerate}[{\textup{1}.}]
         \item { $\displaystyle\lim _{(\delta,\varepsilon)\rightarrow (0,0) }  \lim_{\delta_0 \rightarrow 0 } I_1 = \mathbb{E} \int_{\mathbb{R}} |u_0(x) - v_0(x)| \psi(0,x) \, dx,$}
         \item [{\textup{2}.}]{ $\displaystyle\lim _{(\delta,\varepsilon)\rightarrow (0,0) }  \lim_{\delta_0 \rightarrow 0 }  I_2 = \mathbb{E} \int_{\Pi_T} |v(s,x) - u(s,x)| \partial_s \psi(s,x) \, dx ds,$}
         \item[{\textup{3}.}]{$\displaystyle\lim _{\left(\varepsilon, \delta^{-1} \varepsilon, \delta_0\right) \rightarrow(0,0,0)}  I_4 = 0,$}
         \item [{\textup{4}.}]{ $\displaystyle\lim _{(\epsilon,\delta)\rightarrow (0,0) }  \lim_{\delta_0 \rightarrow 0 }  I_5 = \mathbb{E} \int_{\Pi_T} \operatorname{sign}(u-v)(f(u)-f(v)) \partial_x \psi \, dx dt$.}
     \end{enumerate}
  \end{prp}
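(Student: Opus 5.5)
We treat the four limits separately, in each case reducing to a standard property of mollifiers applied to functions that are locally integrable, continuous in time with values in $L^1_{\rm loc}$, and uniformly bounded. The facts we rely on are: the maximum principle \eqref{mp} from Lemma \ref{lmL^p}, which gives $|u|,|v|\le M(u_0,v_0)$ a.s.; the fact that $u-\bar u,\ v-\bar u\in X_1(T)$, so that $u,v\in C([0,T];L^2_{\rm loc})$; and the compact support of $\psi$ in $y$. This support lets us work on a fixed ball $B(R)$, which is how we avoid global $L^1$ integrability. We also use $|\beta_\varepsilon(r)-|r||\le C\varepsilon$ and $|\beta'_\varepsilon|\le1$.

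\textbf{Limits 1 and 2.} For $I_1$ we send $\delta_0\to0$ first. Since $\operatorname{supp}\rho\subset[-1,0]$, the kernel $\rho_{\delta_0}(-s)$ concentrates at $s=0^+$ and has total mass one on $[0,\infty)$. Strong continuity of $s\mapsto v(s)$ in $L^1(B(R))$ at $s=0$, together with boundedness of $\beta'_\varepsilon$ (so that $\beta_\varepsilon$ is 1-Lipschitz), gives the limit $\mathbb E\iint\beta_\varepsilon(u_0(x)-v_0(y))\psi(0,y)\varrho_\delta(x-y)\,dx\,dy$. We then let $(\delta,\varepsilon)\to0$. We replace $\beta_\varepsilon$ by $|\cdot|$ at cost $C\varepsilon|B(R)|$, and we use $L^1_{\rm loc}$ continuity of translations of $v_0$ in $\varrho_\delta\ast$. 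For $I_2$ the argument is the same: $\beta_\varepsilon(v(s,y)-u(t,x))$ with $t\in(s-\delta_0,s)$ is compared with $\beta_\varepsilon(v(s,y)-u(s,x))$ through the modulus of continuity $\sup_{|t-s|<\delta_0}\|u(t)-u(s)\|_{L^1(B(R+1))}$, which tends to $0$ by dominated convergence in $\omega$ (bounded by $2M|B(R+1)|$). The $(x,y)$ doubling is then removed as for $I_1$.

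\textbf{Limit 4.} We write $f_\varepsilon(v,u)\to\operatorname{sign}(v-u)(f(v)-f(u))$ uniformly on $[-M,M]^2$, with error $\le C\varepsilon\sup_{|s|\le M}|f'|$. After the $\delta_0$ limit we remove the spatial doubling exactly as above, using that $f_\varepsilon$ is Lipschitz in each argument with constant $\le S$. The sign convention $\operatorname{sign}(u-v)(f(u)-f(v))$ is symmetric, which reconciles the orientation.

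\textbf{Limit 3, the main obstacle.} This is $I_4$, the term in which the derivative falls on $\varrho_\delta$. We use $\partial_y\varrho_\delta(x-y)=-\partial_x\varrho_\delta(x-y)$ to write
\[
I_4=-\mathbb E\iint\big(f_\varepsilon(u,v)-f_\varepsilon(v,u)\big)\partial_x\varrho_\delta(x-y)\rho_{\delta_0}\psi .
\]
The key point is that $f_\varepsilon(a,b)-f_\varepsilon(b,a)$ is small, of order $\varepsilon$ only where $|a-b|\lesssim\varepsilon$. Note that $f_\varepsilon(a,b)=\int_b^a\beta'_\varepsilon(s-b)f'(s)\,ds$, and for $|a-b|\ge\varepsilon$ both integrals equal $\operatorname{sign}(a-b)(f(a)-f(b))$ up to $O(\varepsilon)$ contributions near the endpoints. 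Hence $|f_\varepsilon(u,v)-f_\varepsilon(v,u)|\le C\varepsilon$ uniformly for bounded arguments. Combining this with $\int|\partial_x\varrho_\delta|\,dx\le C\delta^{-1}$ and the compact support of $\psi$ gives $|I_4|\le C\varepsilon\delta^{-1}$ uniformly in $\delta_0$, which yields the stated joint limit.

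The delicate step is verifying this endpoint cancellation precisely; in particular the even/odd symmetry of $\beta$ has to give $\beta'_\varepsilon(s-b)$ versus $\beta'_\varepsilon(s-a)$ differing only on two $\varepsilon$-intervals. If a uniform bound fails, we would instead Taylor-expand $f'$ around the midpoint and integrate by parts to move the $x$-derivative onto $u$, using $u_x\in L^2$. That route gives a bound of order $\varepsilon\delta^{-1/2}$, which is still sufficient.
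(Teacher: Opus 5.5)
Your proposal is correct and follows essentially the same route as the paper. For $I_1,I_2,I_5$, the $\delta_0$-limits rest on $L^1_{\rm loc}$ time continuity plus the maximum principle (the paper's Lemma \ref{weakL^1cont} and the Biswas et al.\ arguments), and the doubling is removed via $|\beta_\varepsilon(r)-|r||\le C\varepsilon$ and continuity of translations. For $I_4$, the paper uses exactly your bound $|f_\varepsilon(u,v)-f_\varepsilon(v,u)|\le C\varepsilon\sup_{|z|\le M}|f'(z)|$ (its estimate \eqref{f1}) against $\|\partial_x\varrho_\delta\|_{L^1}\le C\delta^{-1}$.

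The endpoint cancellation you flag does hold. Use oddness of $\beta'$ to write $f_\varepsilon(b,a)=\int_b^a\beta_\varepsilon'(a-s)f'(s)\,ds$; for $|a-b|\ge2\varepsilon$ this integrand and $\beta'_\varepsilon(s-b)$ differ only on two intervals of length $\varepsilon$. So your Taylor-expansion fallback is unnecessary.
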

\begin{proof}
    See Appendix \ref{app kruz}, Lemmas \ref{lm5.3} to \ref{lm56}.
\end{proof}


\begin{Lemma}
    \baee
 &\limsup _{(\varepsilon, \varepsilon \delta^{-1},\delta) \rightarrow(0,0,0)} \lim_{\delta_0 \downarrow 0}( I_3 +I_7)=0 .
    \eaee
\end{Lemma}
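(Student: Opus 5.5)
The argument follows the classical viscous Kružkov scheme (Carrillo, and Feng--Nualart in the stochastic case). The idea is to cast $I_3$ as a perfect square with a favourable sign plus small remainders, and let $I_7$ absorb that square. Since $\eta_{\delta,\delta_0}=\rho_{\delta_0}(t-s)\varrho_\delta(x-y)\psi(s,y)$, we have $\partial_x\varrho_\delta(x-y)=-\partial_y\varrho_\delta(x-y)$. I would first expand
\[
\partial_x^2\eta_{\delta,\delta_0}=\rho_{\delta_0}\psi\,\partial_x^2\varrho_\delta,\qquad
\partial_y^2\eta_{\delta,\delta_0}=\rho_{\delta_0}\big(\psi\,\partial_x^2\varrho_\delta-2\partial_x\varrho_\delta\,\partial_y\psi+\varrho_\delta\,\partial_y^2\psi\big).
\]
The terms carrying derivatives of $\psi$ are treated as in Appendix~\ref{app kruz}. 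After $\delta_0\to0$ and then $(\varepsilon,\delta)\to0$, the $\partial_y^2\psi$ term converges to $\mathbb E\int\!\!\int |[b](u)-[b](v)|\,\partial_x^2\psi$-type expressions. The cross term $\partial_x\varrho_\delta\,\partial_y\psi$ is integrated by parts back onto $x$, producing $\mathbb E\int \beta'_\varepsilon(u-v)\,b(u)u_x\,\partial_y\psi\,\varrho_\delta$. These contributions are lower order and vanish in the combined limit: they either cancel the matching first-order pieces or go to zero because $\psi$ will later be chosen to localize. I would keep this part brief and cite the mollifier lemmas of the Appendix.

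The core step concerns the term with $\psi\,\partial_x^2\varrho_\delta$. I write $[b,\varepsilon](u,v)+[b,\varepsilon](v,u)$ and integrate by parts once in $x$ and once in $y$. Using $\partial_x\partial_y\varrho_\delta=-\partial_x^2\varrho_\delta$ and the identities $\partial_x[b,\varepsilon](u,v)=\beta'_\varepsilon(u-v)b(u)u_x$ and $\partial_y[b,\varepsilon](v,u)=\beta'_\varepsilon(v-u)b(v)v_y$, I expect this part of $I_3$ to reduce, up to $O(\varepsilon)$ errors, to
\[
\mathbb E\int\!\!\int \beta''_\varepsilon(u-v)\,\sqrt{b(u)}\,u_x\sqrt{b(v)}\,v_y\;2\,\eta_{\delta,\delta_0}\,dx\,dy\,dt\,ds .
\]
Adding $I_7$ then gives
\[
-\mathbb E\int\!\!\int\beta''_\varepsilon(u-v)\big(\sqrt{b(u)}u_x-\sqrt{b(v)}v_y\big)^2\eta_{\delta,\delta_0}\le0 ,
\]
plus remainders. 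These remainders come from replacing $b(u)$ by $\sqrt{b(u)b(v)}$. They are bounded by $\beta''_\varepsilon(u-v)|b(u)-b(v)|\,|u_x||v_y|\,\eta$, which is at most $C\|b'\|_{L^\infty_M}\varepsilon\cdot\varepsilon^{-1}\mathbf 1_{|u-v|<\varepsilon}|u_x||v_y|\,\eta$, and then by Cauchy--Schwarz. Here one needs $u_x,v_y\in L^2$ locally, which holds since $u-\bar u,\,v-\bar u\in X_1(T)$, and the maximum principle \eqref{mp} keeps $b'$ evaluated on a bounded set. Because $\mathbf 1_{|u-v|<\varepsilon}$ times $\int\varrho_\delta(x-y)|v_y|\,dy$ tends to zero by dominated convergence, the remainder vanishes in the regime $\varepsilon\to0$, $\varepsilon\delta^{-1}\to0$, $\delta\to0$.

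The main obstacle is the bookkeeping between $\beta''_\varepsilon$ and $\partial_x\varrho_\delta$, since $\|\partial_x\varrho_\delta\|_{L^1}\sim\delta^{-1}$. The cross terms in which one derivative falls on $\varrho_\delta$ and the other produces $\beta'_\varepsilon$ must be controlled by $|[b,\varepsilon](u,v)-[b](u)+[b](v)|\lesssim\varepsilon$. This yields errors of size $C\varepsilon\delta^{-1}$ (and $C\varepsilon\delta^{-2}$ before the integration by parts). That is why I would perform the two integrations by parts before estimating anything, so that only $\varepsilon\delta^{-1}$ appears, matching the stated limit regime. The limit $\delta_0\to0$ is harmless: since $\rho_{\delta_0}$ is supported in $[-1,0]$, it simply identifies $t=s$ for the continuous-in-time $L^2_{\rm loc}$ processes.
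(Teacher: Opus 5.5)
\textbf{Gap: the terms carrying derivatives of $\psi$.}

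Normalise
$[b,\varepsilon](u,v)=\int_v^u\beta_\varepsilon'(r-v)b(r)\,dr$ and $[b,\varepsilon](v,u)=\int_u^v\beta_\varepsilon'(r-u)b(r)\,dr$. This leaves $I_3$ unchanged, because the differences from the paper's versions depend only on $(s,y)$, resp.\ $(t,x)$.

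With this choice, the pieces carrying one derivative of $\psi$ do cancel up to $O(\varepsilon)$. These are the $-2\rho_{\delta_0}\partial_x\varrho_\delta\,\partial_y\psi$ part of $\partial_y^2\eta_{\delta,\delta_0}$ and the $\partial_y\psi$ terms created by your double integration by parts.

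Your completion of the square is also fine, with one correction. The mixed derivative actually produces $\beta_\varepsilon''(u-v)(b(u)+b(v))u_xv_y$. Since $b(u)+b(v)-2\sqrt{b(u)b(v)}=(\sqrt{b(u)}-\sqrt{b(v)})^2\le C\varepsilon^2$ on $\operatorname{supp}\beta_\varepsilon''$, this remainder is $O(\varepsilon)$ by Young's inequality. That is sharper than your linear bound through $|b(u)-b(v)|$, whose vanishing you only assert.

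What has no partner is the term $\mathbb{E}\int\!\!\int[b,\varepsilon](v,u)\,\rho_{\delta_0}\varrho_\delta\,\partial_y^2\psi$. As you note yourself, it converges to $\mathbb{E}\int_{\Pi_T}|[b](u)-[b](v)|\,\partial_x^2\psi\,dx\,dt$.
\begin{itemize}
\item It is not lower order: it is the viscous term of Kato's inequality.
\item ``$\psi$ will later be chosen to localize'' does not remove it. The lemma is about a fixed $\psi$, and for the cone $\psi_\varepsilon$ used afterwards, $\partial_x^2\psi_\varepsilon$ is a measure on the lateral boundary.
\end{itemize}
Completed correctly, your argument gives
\[
\limsup_{(\varepsilon,\varepsilon\delta^{-1},\delta)\to(0,0,0)}\,\lim_{\delta_0\downarrow0}(I_3+I_7)\le\mathbb{E}\int_{\Pi_T}\big|[b](u)-[b](v)\big|\,\partial_x^2\psi\,dx\,dt ,
\]
not $0$.

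\textbf{The gap cannot be closed: for $x$-dependent $\psi$ the statement is false.}

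Take $\sigma_k\equiv0$, two smooth solutions $u,v$, and $0\le\psi\not\equiv0$ supported in an open set where $u>v$ and $\partial_x^2([b](u)-[b](v))>0$.
\begin{itemize}
\item For small $\varepsilon,\delta,\delta_0$ one has $u(t,x)-v(s,y)>\varepsilon$ on $\operatorname{supp}\eta_{\delta,\delta_0}$, so $I_7=0$.
\item On the same set, $[b,\varepsilon](u,v)=[b](u)+c(v)$ and $[b,\varepsilon](v,u)=-[b](v)+c'(u)$. The $c$-terms integrate to zero against $\partial_x^2\eta_{\delta,\delta_0}$, resp.\ $\partial_y^2\eta_{\delta,\delta_0}$.
\item Hence $I_3\to\int_{\Pi_T}\partial_x^2([b](u)-[b](v))\,\psi\,dx\,dt>0$.
\end{itemize}

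The paper's own proof has the same defect. It passes from $I_3+I_7$ to $\mathbb{E}\int\!\!\int|\Delta\eta_{\delta,\delta_0}|\int_v^u\int_v^u\beta_\varepsilon''(\xi-r)|b^{1/2}(r)-b^{1/2}(\xi)|^2$. That step is legitimate only for the part in which both spatial derivatives fall on $\varrho_\delta(x-y)$, as in a torus setting with a space-independent test function. The $\partial_y\psi$ and $\partial_y^2\psi$ pieces are simply absorbed into $|\Delta\eta_{\delta,\delta_0}|$ without justification.

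Consequently \eqref{test} is missing the term $\mathbb{E}\int_{\Pi_T}|[b](u)-[b](v)|\,\partial_x^2\psi$. The cone argument in the proof of Lemma~\ref{lockruz} cannot absorb this term, since viscous equations propagate with infinite speed.
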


\begin{proof}
Following the arguments as in \cite[(4.6)-(4.10)]{dareiotis2019entropy} and \cite{dareiotis2020nonlinear}, we have
\bae\label{IBPC1C2}
I_3=\mathbb{E} \int_{\Pi_T} \int_{\Pi_T}\left(\left[b, \varepsilon\right](u, v) \partial_x^2 \eta_{\delta, \delta_0}+\left[b, \varepsilon\right](v, u) \partial_y^2 \eta_{\delta, \delta_0}\right) d y d s d x d t=:C_1+C_2,
\eae
where
    \baee\label{C1}
C_1= & -\mathbb{E} \int_{\Pi_T}\int_\R \partial_{x}^2 \eta_{\delta,\delta_0} \int_{v}^u \beta_\varepsilon^{\prime}(\zeta-v) b(\zeta)\, d \zeta \dif x  dy\dif t  \\
= & -\mathbb{E} \int_{\Pi_T}\int_\R \partial_{x}^2 \eta_{\delta,\delta_0} \int_{v}^u \int_{v}^\zeta \beta_\varepsilon^{\prime \prime}(\zeta-r) b(\zeta)\, d r d \zeta \dif x  dy\dif t  \\
= & -\mathbb{E} \int_{v \leq u} \partial_{x}^2 \eta_{\delta,\delta_0} \int_{v}^u \int_{v}^u I_{r \leq \zeta} \beta_\varepsilon^{\prime \prime}(\zeta-r) b(\zeta) \,d r d \zeta  \\
& -\mathbb{E} \int_{v \geq u} \partial_{x}^2 \eta_{\delta,\delta_0} \int_u^{v} \int_u^{v} I_{r \geq \zeta} \beta_\varepsilon^{\prime \prime}(\zeta-r) b(\zeta)\, d r d \zeta
    \eaee
and
\bae\label{C2}
C_2=& -\mathbb{E} \int_{u \leq v} \partial_{y}^2 \eta_{\delta,\delta_0} \int_{u}^v \int_{u}^v I_{r \leq \zeta} \beta_\varepsilon^{\prime \prime}(r-\zeta) b(r) \,d r d \zeta \\ 
& -\mathbb{E} \int_{u \geq v} \partial_{y}^2 \eta_{\delta,\delta_0} \int_v^{u} \int_v^{u} I_{r \geq \zeta} \beta_\varepsilon^{\prime \prime}(r-\zeta) b(r) \,d r d \zeta .
\eae
Since $\beta^{\prime\prime}$ is even, it follows from \eqref{IBPC1C2} and \eqref{C2} that
\baee
I_3\leq \mathbb E \int_{\Pi_T} \int_{\Pi_T} |\Delta \eta_{\delta, \delta_0}| \int_v^u \int_v^u \beta_{\varepsilon}^{\prime \prime}(\xi-r)(b(r)+b(\zeta)) \,d r d\xi d y d s d x d t.
\eaee
Due to the convexity of $\beta_\varepsilon$,
\baee
I_7&=-\mathbb{E} \int_{\Pi_T} \int_{\Pi_T} \beta_{\varepsilon}^{\prime \prime}(u-v)\left(b(u) u_x^2+b(v) v_y^2\right) \eta_{\delta, \delta_0} \,d y d s d x d t\\
&\leq -2\mathbb{E} \int_{\Pi_T} \int_{\Pi_T} \beta_{\varepsilon}^{\prime \prime}(u-v)\left(b(u) b(v)\right)^\frac12 | u_x v_y| \eta_{\delta, \delta_0} \,d y d s d x d t.
\eaee
Thus,
\baee
I_3+I_7\leq   \mathbb{E} \int_{\Pi_T} \int_{\Pi_T} |\Delta \eta_{\delta, \delta_0}| \int_v^u \int_v^u \beta_{\varepsilon}^{\prime \prime}(\xi-r)\left|b^{\frac{1}{2}}(r)-b^{\frac{1}{2}}(\xi)\right|^2 d r d\xi d y d s d x d t.
\eaee
Note that, by \eqref{mp}, there exists a constant $\mathfrak{M} > 0$ (depending on the initial data $u_0, v_0$ and the rarefaction profile) such that almost surely $|u(t,x)|,|v(t,x)| \le \mathfrak{M}$. Then
\baee
& \int_v^u \int_v^u \beta_{\varepsilon}^{\prime \prime}(\xi-r)\left|b^{\frac{1}{2}}(r)-b^{\frac{1}{2}}(\xi)\right|^2 d r d\xi \\
= & \int_v^u \int_v^u \beta_{\varepsilon}^{\prime \prime}(\xi-r) I_{\{| r-\xi|<\varepsilon,| \xi |<2 \varepsilon\}}\left|b^{\frac{1}{2}}(r)-b^{\frac{1}{2}}(\xi)\right|^2 d r d\xi \\
\leq & \int_v^u \int_v^u \beta_{\varepsilon}^{\prime\prime}(\xi-r) \varepsilon^2 \sup _{z \in[\xi-\varepsilon, \xi+\varepsilon]}\left|(b^{\frac{1}{2}}(z))^\prime\right|^2 d r d\xi \\
\leq &\, 2K_{b, \mathfrak{M}}^2 \varepsilon^2|u-v|, 
\eaee
where the last equality follows from  {\rm{(H1)}} with 
$$K_{b, \mathfrak{M}} := \sup_{|z| \le \mathfrak{M} + 1} \left|  (b^{1/2}(z))' \right|\le \frac{\|b'\|_{L^\infty_{\mathfrak{M}+1}}}{2\sqrt{\mu}}<\infty$$
combined with the estimate
\baee
\left|\int_v^u \int_v^u \beta_{\varepsilon}^{\prime\prime}(\xi-r)  d r d\xi\right|\leq 2|u-v|.
\eaee
Finally, we have
\baee
I_3+I_7  \leq&\, C \varepsilon^2 \mathbb{E} \int_{\Pi_T} \int_{\Pi_T} |\Delta \eta_{\delta, \delta_0}|\cdot |u(t,x)-v(s,y)| \psi\, d x d y d s d t \\
 \leq&\, C  \varepsilon^2\delta^{-2} \mathbb{E} \int_{\Pi_T} \int_{\Pi_T} \rho_{\delta_0}(t-s)  \delta^{-1} \left| \varrho^{\prime \prime}\left(\frac{x-y}{\delta}\right) \right||u(t,x)-v(s,y)| \psi(s, y)\, d x d y d s d t \\
 &+C  \varepsilon^2\mathbb{E} \int_{\Pi_T} \int_{\Pi_T} \rho_{\delta_0}(t-s) \delta^{-1} \left|  \varrho\left(\frac{x-y}{\delta}\right) \right||u(t,x)-v(s,y)| \cdot|\psi^{\prime \prime}(s, y)|\, d x d y d s d t .\\
\eaee
Using the same reasoning as before and letting $\delta_0 \downarrow 0$, we have
\baee
&\lim_{\delta_0\rightarrow0}(I_3+I_7) \\
\leq&\, C  \varepsilon^2\delta^{-2} \mathbb{E} \int_{\R} \int_{\Pi_T}  \delta^{-1} \left| \varrho^{\prime \prime}\left(\frac{x-y}{\delta}\right) \right||u(s,x)-v(s,y)| \psi(s, y) \,d x d y d s  \\
 &+C  \varepsilon^2\mathbb{E} \int_{\R} \int_{\Pi_T}  \delta^{-1} \left|  \varrho\left(\frac{x-y}{\delta}\right) \right||u(s,x)-v(s,y)|\cdot |\psi^{\prime \prime}(s, y)| \,d x d y d s\\
 \leq&\, C  \varepsilon^2\delta^{-2} \sup_{0\leq s \leq T}\mathbb E\left(\|u(s)\|+\|v(s)\|\right)+C  \varepsilon^2 \sup_{0\leq s \leq T}\mathbb E\left(\|u(s)\|+\|v(s)\|\right).
\eaee
Hence the Lemma follows.
\end{proof}
We defer the proofs of the next two lemmas to Appendix  \ref{app kruz}. First, we need a similar version of the stochastic strong entropy solution \cite[Lemma 4.25]{feng2008stochastic} to handle the non-adaptedness  of the stochastic integral $I_8$.
\begin{Lemma}\label{adaptlm}
    For each $T>0$, there exists a deterministic function $A\left(\delta, \delta_0\right)$ such that
\bae\label{adapt11}
&I_8= \sum_k \mathbb E\left[\left.\int_0^T \int_\R \int_0^T \int_\R \partial_x \sigma_k( {u}(r, x)) \beta^{\prime}({u}(r, x)-v) \eta_{\delta, \delta_0}(r, x, s, y) d x d B_k(r)\right|_{v=v(s, y)} d y d s\right] \\
& \leq-\sum_k\mathbb {E}  \int_{\Pi_T} \int_{\Pi_T}  \left[\sigma_k^{\prime} \beta^{\prime \prime}(\cdot-v(t,y))\right](u(t,x))  \partial_y \sigma_k(v(t,y)) \partial_x \eta_{\delta , \delta_0} d t \dif x  dy ds+A\left(\delta, \delta_0\right) .
\eae
Furthermore, for fixed $\delta, \psi$ and $\beta$, it holds that
\bae\label{adapt}
\lim _{\delta_0 \rightarrow 0} A\left(\delta, \delta_0\right)=0.
\eae
\end{Lemma}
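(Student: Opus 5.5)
The plan is the Feng--Nualart device for non-adapted stochastic integrals. The integrand of $I_8$ is evaluated at $v=v(s,y)$, and since $\eta_{\delta,\delta_0}$ vanishes unless $s-\delta_0<t<s$, the value $v(s,y)$ is $\mathcal F_s$-measurable and generally not $\mathcal F_t$-measurable for $t<s$. So $I_8$ has no zero mean. The idea is to replace $v(s,y)$ by its value at an earlier time and to expand the difference with It\^o's formula in the $v$-variable.

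\textbf{Step 1 (freezing the parameter).} For fixed $(s,y)$, introduce
\[
G(a):=\sum_k\int_0^T\int_\R \partial_x\sigma_k(u(r,x))\,\beta'(u(r,x)-a)\,\eta_{\delta,\delta_0}(r,x,s,y)\,dx\,dB_k(r).
\]
The support of $\rho_{\delta_0}(r-s)$ forces $r\in(s-\delta_0,s)$, so in $G(v(s,y))$ I will write
\[
v(s,y)=v(s-\delta_0,y)+\big(v(s,y)-v(s-\delta_0,y)\big).
\]
Since $v(s-\delta_0,y)$ is $\mathcal F_{s-\delta_0}$-measurable, the term $\mathbb E\,G(v(s-\delta_0,y))$ vanishes by the usual adaptedness argument: approximate by simple functions in $a$ and use that the stochastic integral over $(s-\delta_0,s)$ has zero conditional expectation given $\mathcal F_{s-\delta_0}$.

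\textbf{Step 2 (It\^o expansion of the increment).} I will apply It\^o's formula to $\theta\mapsto G(v(\theta,y))$ for $\theta\in[s-\delta_0,s]$, in the sense of the It\^o--Wentzell formula, with $v$ solving \eqref{conservative} in It\^o form. The martingale part has zero expectation after pairing. The drift part contributes terms of the form
\[
\mathbb E\int_{s-\delta_0}^s \partial_a G(v(\theta,y))\,(\text{drift of }v)\,d\theta \quad\text{and}\quad \mathbb E\int_{s-\delta_0}^s \partial_a^2G\,(\partial_y\sigma_k(v))^2\,d\theta.
\]
The cross-variation term is the one that survives:
\[
\sum_k \mathbb E\int \partial_x\sigma_k(u(r,x))\,(-\beta''(u(r,x)-v(r,y)))\,\partial_y\sigma_k(v(r,y))\,\eta_{\delta,\delta_0}\,dx\,dr .
\]
I will integrate by parts in $x$, writing $\partial_x\sigma_k(u)\beta''(u-v)=\partial_x[\sigma_k'\beta''(\cdot-v)](u)$. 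This produces exactly the leading term on the right of \eqref{adapt11}, with $-\partial_x\eta_{\delta,\delta_0}$, after renaming the time variable. I will obtain the inequality rather than an equality by discarding or bounding the sign-definite pieces. All remaining contributions are collected into $A(\delta,\delta_0)$.

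\textbf{Step 3 (bounding $A$).} Each remainder term is an integral over a time window of length $\delta_0$ against $\rho_{\delta_0}$. Its integrand is controlled by the $\delta$-derivatives of $\varrho_\delta$ and $\psi$ together with bounds on $\beta',\beta''$. It also involves the $X_1(T)$ norms of $u$ and $v$, the $L^\infty$ bound \eqref{mp} (so that $\sigma_k$ and its derivatives are evaluated on a compact set), and (H3). I expect each term to be $O(\delta_0^{1/2})\,C(\delta,\psi,\beta)$, using the Cauchy--Schwarz and BDG inequalities on the stochastic pieces, so that \eqref{adapt} holds for fixed $\delta,\psi,\beta$.

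\textbf{Main obstacle.} The hardest point is justifying the It\^o--Wentzell expansion when $G$ is a random field built from a stochastic integral. This requires enough regularity of $a\mapsto G(a)$, namely $C^2$ with moment bounds. Those bounds follow from the smoothness of $\beta$ and the compact support of $\eta$, via Kolmogorov-type estimates. A further difficulty is that $v$ only lies in $H^1$ in space, so the second-order terms in $\partial_y$ must be handled by moving derivatives onto $\varrho_\delta$.
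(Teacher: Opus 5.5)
Your strategy is the Feng--Nualart device the paper also uses, but the paper implements the expansion without ever composing a random field with $v$. The paper proceeds in four moves:
\begin{enumerate}
\item It writes $I_8$ as the $l\to0$ limit of $\mathbb E\int J[\beta',\partial_x\eta_{\delta,\delta_0}](s,y,a)\,\rho_l(v(s,y)-a)\,da\,dy\,ds$; after integration by parts in $x$, this $J$ is your $G$.
\item It subtracts $\rho_l(v(s-\delta_0,y)-a)$, which has zero mean against $J$.
\item It applies the ordinary It\^o formula to the deterministic function $\rho_l(v(\tau,y)-a)$, as in \eqref{6..15}, writes the increment as $-\partial_a[\cdots]$, and integrates by parts in $a$ onto $J$.
\item It computes $\mathbb E[J\cdot\int_{s-\delta_0}^s\cdots dB_k]$ by the It\^o isometry.
\end{enumerate}
That product is $B^l$, your cross-variation term. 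The terms $A_1^l,A_2^l,A_3^l$ are your remainders; they are controlled by BDG and $W^{1,4}\hookrightarrow L^\infty$ bounds on $J$ (Lemma~\ref{J}) and, for the part quadratic in $v_y$, by a modulus-of-continuity bound in time (Lemma~\ref{A2}). This route needs only the classical It\^o formula and an isometry, and the $a$-regularity of the field enters only through integration by parts and sup-norm bounds. So it bypasses exactly the It\^o--Wentzell justification you single out as the main obstacle, at the price of an auxiliary mollifier and an extra limit. Your route produces the cross-variation in one stroke, but it needs an It\^o--Wentzell formula for a random field evaluated at the point values $v(\theta,y)$ of an SPDE solution.

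\textbf{Steps 1 and 2 must be reconciled.} As written, they use two different objects. The $G$ frozen in Step 1 is the full integral, which is $\mathcal F_s$-measurable but not $\mathcal F_\theta$-measurable for $\theta<s$. For this $G$:
\begin{itemize}
\item the expansion of $\theta\mapsto G(v(\theta,y))$ is anticipating;
\item $\int G'(v)\,\partial_y\sigma_k(v)\,dB_k$ is not an It\^o integral and need not have zero mean (its mean is precisely the term you want);
\item It\^o--Wentzell for a $\theta$-independent field gives no cross-variation at all.
\end{itemize}
Use instead the running field $G_\theta(a)=\sum_k\int_0^\theta\int_\R\partial_x\sigma_k(u)\beta'(u-a)\eta_{\delta,\delta_0}\,dx\,dB_k(r)$, which vanishes for $\theta\le s-\delta_0$. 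Then Step 1 is automatic, and It\^o--Wentzell on $[s-\delta_0,s]$ is the whole argument.

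\textbf{The quadratic viscous remainder in Step 3.} Integrating the viscous drift by parts in $y$ leaves $-\partial_a^2G_\theta(y,v)\,b(v)\,v_y^2$. This term cannot be moved onto $\varrho_\delta$. It requires a bound on $\partial_a^2G_\theta$ uniform in $(s,y,a,\theta)$, together with the energy of $v_y$ on the window, as in the paper's treatment of $I_2$.

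\textbf{Signs.} Nothing sign-definite is discarded; the argument gives an identity. If you carry the signs ($\partial_a\beta'(u-a)=-\beta''(u-a)$, then integration by parts in $x$), the leading term comes with $+\partial_x\eta_{\delta,\delta_0}$. That is what the paper's proof actually obtains for $\lim_{l\to0}B^l$. It is also what Lemma~\ref{molast} needs in \eqref{K} to produce $|\sigma_k'(r)-\sigma_k'(\tilde r)|^2$. So do not claim agreement with the minus sign displayed in \eqref{adapt11}.
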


\begin{Lemma}\label{molast}
    \baee
 &\lim _{(\delta, \varepsilon \delta^{-2}) \rightarrow(0,0)} \lim_{\delta_0\rightarrow0}(I_6 +I_8) =0 .
    \eaee
\end{Lemma}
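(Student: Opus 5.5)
We will show that the Itô-correction term $I_6$ and the non-adapted stochastic term $I_8$ (after Lemma \ref{adaptlm}) combine, to leading order, into a perfect square in $\sigma_k'$, so that only commutator errors remain. First we use Lemma \ref{adaptlm} to replace $I_8$, up to $A(\delta,\delta_0)$, by
\[
-\sum_k\mathbb E\int_{\Pi_T}\int_{\Pi_T}\big[\sigma_k'\beta_\varepsilon''(\cdot-v(t,y))\big](u(t,x))\,\partial_y\sigma_k(v(t,y))\,\partial_x\eta_{\delta,\delta_0}.
\]
Then we let $\delta_0\to0$. By \eqref{adapt}, $A\to0$. The time mollifier collapses $s$ onto $t$ in both $I_6$ and the remaining part of $I_8$; this uses continuity in time of $u,v$ in $L^2$, the bound \eqref{mp}, and dominated convergence, exactly as for $I_1,I_2$ in Proposition \ref{prp3.1}.

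Next we rewrite $I_6$ in a comparable form. In $[\sigma_k'^2,\varepsilon](u,v)\partial_x^2\eta$ we integrate by parts once in $x$. In $[\sigma_k'^2,\varepsilon](v,u)\partial_y^2\eta$ we integrate by parts once in $y$. Since $\partial_y\varrho_\delta(x-y)=-\partial_x\varrho_\delta(x-y)$, the leading parts can be expressed through the mixed derivative $\partial_x\partial_y\eta$, plus terms where derivatives fall on $\psi$. The same manipulation applied to the $I_8$ term, integrating by parts in $x$ and then in $y$, produces an expression with $\sigma_k'(u)u_x\,\sigma_k'(v)v_y\,\beta_\varepsilon''(u-v)$.

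Adding the two, the $\beta_\varepsilon''$-weighted pieces should assemble into
\[
-\tfrac12\sum_k\mathbb E\iint \beta_\varepsilon''(u-v)\big(\sigma_k'(u)u_x-\sigma_k'(v)v_y\big)^2\eta\ \le 0 .
\]
This is the stochastic analogue of the $b^{1/2}$ square used for $I_3+I_7$. What is left are terms of the form
\[
\iint_v^u\!\!\int_v^u\beta_\varepsilon''(\xi-r)\,|\sigma_k'(r)-\sigma_k'(\xi)|^2\,|\partial_x^2\eta|,
\]
handled as in the previous lemma. On $|\xi-r|<\varepsilon$, (H3) and \eqref{mp} give $|\sigma_k'(r)-\sigma_k'(\xi)|\le\varepsilon\|\sigma_k''\|_{L^\infty_{\mathfrak M+1}}$. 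The resulting bound is $C\varepsilon^2\delta^{-2}\,\mathbb E\|u-v\|_{L^1(\mathrm{supp}\,\psi)}$, up to terms with $\psi'$ or $\psi''$ that carry at most $\delta^{-1}$. The sum over $k$ converges by (H3).

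There is also a cross term: one derivative of $\psi$ times $[\sigma_k'^2,\varepsilon]$ or $[\sigma_k'\beta_\varepsilon'',\cdot]$. Because $\beta_\varepsilon''$ is supported where $|u-v|<\varepsilon$ and integrates to at most $2$, this term is bounded by $C\varepsilon\,\mathbb E\iint|\sigma'(u)u_x|\,\varrho_\delta|\psi_y|$. Since $u_x\in L^2$ locally from $X_1(T)$, this goes to zero as $\varepsilon\to0$. Everything therefore vanishes in the regime $(\delta,\varepsilon\delta^{-2})\to(0,0)$.

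The main obstacle is the bookkeeping in the $I_8$ reformulation. One has to check that the kernel $[\sigma_k'\beta_\varepsilon''(\cdot-v)](u)$, after integrating by parts, yields exactly $\sigma_k'(u)u_x\,\beta_\varepsilon''(u-v)$. The difficulty is that $v$ depends on $y$, so $\partial_y$ falls both on $v$ inside $\beta_\varepsilon''$ and on $\eta$. The sign and factor $\tfrac12$ must match $I_6$ for the cross product to complete the square. I would first verify this matching formally with smooth $u,v$, and then justify it using $u,v\in L^2(\Omega\times[0,T];H^1)$.
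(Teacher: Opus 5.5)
\textbf{Verdict.} The proposal has two genuine gaps. The central identity is false, and the $\partial_y\psi$-terms you expect to be $O(\varepsilon)$ do not vanish at all. Neither the paper's argument nor the statement itself survives the second point.

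\textbf{The claimed gradient square does not exist.} The square $-\tfrac12\beta_\varepsilon''(u-v)\big(\sigma_k'(u)u_x-\sigma_k'(v)v_y\big)^2\eta$ would need the diagonal terms $-\tfrac12\beta_\varepsilon''\sigma_k'(u)^2u_x^2$ and $-\tfrac12\beta_\varepsilon''\sigma_k'(v)^2v_y^2$. These were already cancelled against the It\^o correction when \eqref{kruz2.5} was derived, so $I_6$ depends only on the values of $u,v$. Carry out your own manipulation: integrate once by parts in $x$, use $\partial_x\eta=-\partial_y\eta+\rho_{\delta_0}\varrho_\delta\,\partial_y\psi$, integrate once by parts in $y$, and treat the other half symmetrically. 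This gives
\[
I_6=\frac12\sum_k\mathbb{E}\iint\beta_\varepsilon''(u-v)\big(\sigma_k'(u)^2+\sigma_k'(v)^2\big)u_xv_y\,\eta+\frac12\sum_k\mathbb{E}\iint\beta_\varepsilon'(u-v)\big(\sigma_k'(v)^2v_y-\sigma_k'(u)^2u_x\big)\rho_{\delta_0}\varrho_\delta\,\partial_y\psi .
\]
The covariation must enter as $-\sum_k\mathbb{E}\iint\beta_\varepsilon''(u-v)\sigma_k'(u)u_x\,\sigma_k'(v)v_y\,\eta$. This is the opposite of what \eqref{adapt11} gives as printed after integrating by parts in $x$; with the printed sign you get $(\sigma_k'(u)+\sigma_k'(v))^2$ and nothing cancels. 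So the $\beta_\varepsilon''$-pieces sum to $\tfrac12\sum_k\beta_\varepsilon''(u-v)\,u_xv_y\,(\sigma_k'(u)-\sigma_k'(v))^2\eta$. That is a square in the coefficients times the indefinite product $u_xv_y$, not a sign-definite square in the gradients.

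Nothing can be discarded. This term is, up to normalisation, exactly the paper's kinetic square $-\sum_k\mathbb{E}\iint\partial_{xy}\eta\int_v^u\int_v^u\beta_\varepsilon''(r-\tilde r)|\sigma_k'(r)-\sigma_k'(\tilde r)|^2$, before the derivatives are moved onto $\eta$. It is small, as you say. Your Lipschitz bound gives $C\varepsilon^2\delta^{-2}$, and in gradient form it is even $\le C\varepsilon\,\mathbb{E}\int_0^T\|u_x(s)\|\,\|v_x(s)\|\,ds$ by Young's convolution inequality. This part of your plan works once the false square is dropped.

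\textbf{The $\partial_y\psi$-terms do not vanish.} They carry $\beta_\varepsilon'$, not $\beta_\varepsilon''$, so there is no factor $\varepsilon$ to gain, and your bound by $C\varepsilon\,\mathbb{E}\iint|\sigma'(u)u_x|\varrho_\delta|\psi_y|$ is unjustified. As $\delta_0\to0$ and then $(\delta,\varepsilon\delta^{-2})\to(0,0)$, they converge to
\[
-\frac12\sum_k\mathbb{E}\int_{\Pi_T}\partial_y|S_k(u)-S_k(v)|\,\partial_y\psi=\frac12\sum_k\mathbb{E}\int_{\Pi_T}|S_k(u)-S_k(v)|\,\partial_y^2\psi,\qquad S_k(a):=\int_0^a(\sigma_k')^2 .
\]
This is not zero in general. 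For linear noise $\sigma_1(u)=cu$, which (H3) allows, the kinetic square vanishes identically. The limit is then $\tfrac{c^2}{2}\mathbb{E}\int|u-v|\,\partial_y^2\psi$, which vanishes for every $\psi$ only if $u=v$. As a sanity check, with $b=f=0$ the explicit solution $u=u_0(x+cB_t)$ gives exactly $\tfrac{d}{dt}\mathbb{E}\int|u-v|\psi=\tfrac{c^2}{2}\mathbb{E}\int|u-v|\psi''$.

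So the stated limit can hold only for $\psi$ independent of $y$, where $\partial_x\eta=-\partial_y\eta$ exactly. For general $\psi(s,y)$ the extra term must be kept in the limiting inequality. The paper's proof has the same gap: its representation \eqref{I_6} writes $I_6$ purely through $\partial_{xy}\eta$. That tacitly replaces $\partial_x^2\eta$ and $\partial_y^2\eta$ by $-\partial_{xy}\eta$ and drops precisely these terms.
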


\subsection{The Stochastic $\rm{L^1_{{loc}}}$ Contraction}
\begin{proof}[\textbf{Proof of Lemma \ref{lockruz}}]
By Proposition \ref{prp3.1} and Lemmas \ref{mol} to \ref{molast}, letting $\delta_0 \downarrow 0$ and setting $\delta=\varepsilon^{\frac{1}{3}} $ as $\varepsilon \downarrow 0$, we have
    \begin{equation}\label{test}
    \begin{aligned}
&\mathbb E\left[\int_{\R}\left|u_0(x)-v_0(x)\right| \psi(0, x) d x\right]+\mathbb E\left[\int_{\Pi_T}|u(t, x)-v(t, x)| \partial_t \psi(t, x) d t d x\right] \\
+&\mathbb E\left[\int_{\Pi_T} \operatorname{sign}(u-v) f(u(t, x)-v(t, x))  \partial_x \psi(t, x) d t d x \right] \geq 0.
     \end{aligned}
     \end{equation}
Set     $$
    S:= \max\big\{\|f^\prime(u(t))\|_{L^\infty(\R)},\|f^\prime(v(t))\|_{L^\infty(\R)}\big\}.
     $$ 
     Using  \eqref{mp}, we have
     \baee
\left| f(u(t,x))-f(v(t,x))\right| \leq S |u(t,x)-v(t,x)|.
     \eaee
Let $t_0\in(0,T)$, $r>0$ and $\varepsilon>0$ be small. Choose the test function $\psi_\varepsilon(t,x)=\omega_\varepsilon(t)\chi_\varepsilon(t,x)$,
     $$
\omega_\varepsilon(t)= \begin{cases}1, & \text{if } 0 \leq t<t_0;\\ \varepsilon^{-1}(t_0-t)+1, & \text{if } t_0 \leq t<t_0+\varepsilon; \\ 0, & \text{if } t_0+\varepsilon \leq t<\infty,\end{cases}
$$
and
$$
\chi_\varepsilon(t, x)= \begin{cases}1, &\text{if } |x|-r-S(t_0-t)<0; \\ \varepsilon^{-1}\left[r+S(t_0-t)-|x|\right]+1, &\text{if } 0 \leq|x|-r-S(t_0-t)<\varepsilon; \\ 0,&\text{if } |x|-r-S(t_0-t) \geq \varepsilon.\end{cases}
$$
Let $\psi=\psi_\varepsilon$ in \eqref{test}. From the approach in \cite[Theorem 6.2.3]{dafermos2005hyperbolic}, we have
\begin{equation*}
\begin{aligned}
&\frac{1}{\varepsilon} \mathbb E\int_{t_0}^{t_0+\varepsilon} \int_{|x|<r}|u(t, x)-v(t, x)| d x d t+\frac{1}{\varepsilon} \mathbb E\int_{0}^{t_0} \int_{r+S(t_0-t)<x<r+S(t_0-t)+\varepsilon}S|u(t, x)-v(t, x)| d x d t\\
=& -\frac{1}{\varepsilon} \mathbb E\int_0^{t_0} \int_{r+S(t_0-t)<x<r+S(t_0-t)+\varepsilon}\operatorname{sign}(x)\operatorname{sign}(u-v)\left(f(u)-f(v)\right)  d x d t \\
& +\int_{|x|<r+S t_0}\left|u_0(x)-v_0(x)\right| d x+\mathcal B(\varepsilon),
\end{aligned}
\end{equation*}
where
\baee
\mathcal B(\varepsilon) \leq &  \int_{r+S t_0 \leq |x|\leq r+S t_0+\varepsilon} \left|u_0(x)-v_0(x)\right| d x+\frac1 \varepsilon \mathbb E \left[  \int_{t_0}^{t_0+\varepsilon} \int_{r \leq |x|\leq r+\varepsilon} |u(t, x)-v(t, x)| d x d t\right ]\\
&+\frac1 \varepsilon\mathbb E \left[ \int_{t_0}^{t_0+\varepsilon} \int_{r+S(t_0-t) \leq |x|\leq r+S(t_0-t)+\varepsilon} |u(t, x)-v(t, x)| d x d t\right ]\\
&+\frac{1}{\varepsilon} \mathbb E \left[\int_{t_0}^{t_0+\varepsilon} \int_{r+S(t_0-t)<x<r+S(t_0-t)+\varepsilon}\left|f(u(t,x))-f(v(t,x))\right|  d x d t\right]\\
\leq& \varepsilon^\frac12 \|u_0-v_0\|+ 2 \varepsilon^{-\frac12} \mathbb E \left[  \int_{t_0}^{t_0+\varepsilon} \|u(t)-v(t)\|  d t\right ]+\varepsilon^{-\frac12} \mathbb E \left[ S_{t_0+\varepsilon} \int_{t_0}^{t_0+\varepsilon}  \|u(t)-v(t)\|  d t\right ]\\
\leq& \varepsilon^\frac12 \|u_0-v_0\|+ (2+ S) \mathbb E \left[  \int_{t_0}^{t_0+\varepsilon} \|u(t)-v(t)\|^2  d t\right ]^{\frac12}\\
=&O(\varepsilon),
\eaee
where in the last equality we have used \eqref{mp}. Thus we have
\begin{equation}\label{lockr}
\begin{aligned}
&\frac{1}{\varepsilon} \mathbb E\int_{t_0}^{t_0+\varepsilon} \int_{|x|<r}|u(t, x)-v(t, x)| d x d t  \leq \int_{|x|<r+S t_0}\left|u_0(x)-v_0(x)\right| d x \\
& -\frac{1}{\varepsilon} \mathbb E\int_0^{t_0} \int_{r+S(t_0-t)<x<r+S(t_0-t)+\varepsilon}\left[S |u-v|+\operatorname{sign}(x)\operatorname{sign}(u-v)\left(f(u)-f(v)\right)  \right] d x d t+O(\varepsilon).
\end{aligned}
\end{equation}
Let
$$
A(s)=\mathbb E\left[\int_{|x|<r}|u(s, x)-v(s, x)| d x\right].
$$
Then $A \in L_{\text {loc }}^1([0, \infty))$. It follows that any right Lebesgue point of $A(t)$ is also a right Lebesgue point of
$$
A_\varepsilon(s)=\mathbb E\left[\int_{\mathbb{R}} \psi_\varepsilon(s,x)|u(s, x)-v(s, x)| d x\right].
$$
Let $t_0$ be a right Lebesgue point of $A$. Letting $\epsilon \downarrow 0$ in \eqref{lockr}, we obtain
$$
\mathbb E\int_{|x|<r}|u( t_0,x)-v( t_0,x)| d x   \leq \mathbb E\int_{|x|<r+S t_0}\left|u_0(x)-v_0(x)\right| d x.
$$
This completes the proof of Lemma \ref{lockruz}.
\end{proof}

\begin{proof}[\textbf{Proof of Theorem \ref{loccon}}]
   The result follows immediately from Lemma \ref{lockruz} and \eqref{mp}.
\end{proof}
\begin{remark}\label{invicidremark}
     We point out that this $L^1_{\rm{loc}}$ contraction remains valid for the inviscid case. As shown in {\rm{\cite[Prop 3.7]{gassiat2019regularization}}}, they showed that for $\sigma(u)=f(u)=\frac12 u^2$ and $\mu=0$ with $u_0\in W^{1,\infty}(\R)\bigcap L^\infty(\R)$,
     $$\|u\|_{L^\infty(\R)}(t)\leq  \sqrt{2}\|u_0\|_{L^1(\R)}^\frac{1}{2}\left( \sup _{0 \leq s \leq t} |L(s)| \right)^{-\frac12}, $$
     where $L(t)$ is an Ornstein-Uhlenbeck process starting from $\|u_0^\prime\|_{L^\infty(\R)}^{-1}$. 
\end{remark}
\begin{remark}\label{rm 3.2}
   The $L^1_{\rm{loc}}$ contraction also holds for the inviscid case, i.e., $b=0$ in equation \eqref{conservative} with the coefficients satisfying that
   \begin{enumerate}
        \item[\rm{($\widetilde{\rm{H1}}$)}] { $f,f^\prime$ and $f^{\prime\prime}$ admit at most polynomial growth, and}
		\item[\rm{($\widetilde{\rm{H2}}$)}] { all $\sigma_k^\prime$ have at most polynomial growth, and there exist $\gamma>0$ and $\lambda_k$ such that 
  \bae\label{noisepoly}
&|\sigma_k^\prime(x)|\leq \lambda_k (1+|x|^\gamma), ~\sum_k \lambda_k ^4&<\infty, 
  \eae
  \bae\label{noise-holder}
&\|\sigma^\prime\|_{l^2(C^{1/2}(\R))}^2=\sum_k \|\sigma_k^\prime\|_{C^{1/2}(\R)}^2<\infty.
  \eae
  
		}
  \end{enumerate}
 \eqref{noisepoly} and \eqref{noise-holder} are used in the proof of the inviscid version of Lemmas {\rm{\ref{adaptlm}}} and {\rm{\ref{molast}}}, respectively. 
  
Let {\rm{($\widetilde{\rm{H1}}$)}} and {\rm{($\widetilde{\rm{H2}}$)}} hold. Assume $T>0$ and $u,v$ are two entropy solutions such that at least one is a stochastic strong entropy solution (see \cite{feng2008stochastic}) satisfying
 \baee
\mathbb E \sup_{0\leq t \leq T}  \left[ \|u\|_{p}^p \right]\leq C_p(T)<\infty,\,\text{ for all $2\leq p< \infty.$}
 \eaee
Then the $L^1_{\rm{loc}}$ contraction \eqref{locL1} holds by similar arguments.
\end{remark}

\section{Global Existence of Strong Solutions}\label{sec global}
\subsection{Cut-off Approximation}\label{sec cutoff}
We first solve a sequence of cut-off initial value problems in the bounded domain $(-N, N)$.
For any $N \ge 1$, we truncate the coefficients and initial data as follows

\begin{enumerate}
    \item $f_N \in C^2$, $\text{supp } f_N \subseteq [-N+\frac{1}{2}, N-\frac{1}{2}]$, $\text{supp}(f_N - f) \subseteq [-N+1, N-1]^c$,
    \[
    \lim_{N\to\infty} f_N' = f' \text{ in } L^\infty_{\text{loc}}(\mathbb{R}), \quad \lim_{N\to\infty} f_N'' = f'' \text{ in } L^\infty_{\text{loc}}(\mathbb{R}), \quad \inf_{-N \le x \le N} f_N'' \ge \alpha.
    \]
    \item $\sigma_{i,N} \in C^2$, $\text{supp } \sigma_{i,N} \subseteq [-N+\frac{1}{2}, N-\frac{1}{2}]$, $\text{supp}(\sigma_{i,N} - \sigma_i) \subseteq [-N+1, N-1]^c$,
    \[
    \lim_{N\to\infty} \sigma_N' = \sigma' \text{ in } l^2(H^1_{\text{loc}}(\mathbb{R})).
    \]
    \item $\phi_0^{(N)} \in H_0^1(-N, N)$, $\|\phi_0^{(N)}\|_{L^\infty} \le \|\phi_0\|_{L^\infty(\mathbb{R})}$, $\displaystyle\lim_{N\to\infty} \phi_0^{(N)} = \phi_0$ in $H^1(\mathbb{R})$.
\end{enumerate}

To ensure the global well-posedness of the Galerkin approximations and derive uniform derivative estimates, we introduce a smooth cut-off function $\theta_R \in C^\infty(\mathbb{R})$ associated with a threshold $R > \|\phi_0\|_{L^\infty(\mathbb{R})} + (u_+ - u_-)$ (to be determined in Lemma \ref{lm2.4}), satisfying
\[
\theta_R(s) = \begin{cases} s & \text{if } |s| \le R, \\ 2R\text{sign}(s) & \text{if } |s| \ge 2R, \end{cases}
\]
where $|\theta_R'| \le 1$ and $|\theta_R''| \le C/R$. We define the modified coefficients in terms of the unknown $\phi^{(N)}$ as follows
\begin{equation} \label{eq:cutoff_coeffs}
\tilde{b}_N(v) := b_N(\theta_R(v+\bar{u})), \quad \tilde{\sigma}_{i,N}(v) := \sigma_{i,N}(\theta_R(v+\bar{u})).
\end{equation}
Note that the derivatives $\tilde{b}_N'$ and $\tilde{\sigma}_{i,N}''$ are compactly supported in $\{v : |v+\bar{u}| \le 2R\}$.

We consider the following {quasilinear} stochastic parabolic equation on the interval $(-N, N)$ with Dirichlet boundary conditions
\begin{equation} \label{d1.2}
\begin{cases}
d\phi^{(N)} + \partial_x \left[ f_N(\phi^{(N)}+\bar{u}) - f_N(\bar{u}) \right] dt \\
\quad = \partial_x \left[ \left( \tilde{b}_N(\phi^{(N)}) + \frac{1}{2}\sum_{i=1}^N (\tilde{\sigma}_{i,N}')^2(\phi^{(N)}) \right) (\partial_x \phi^{(N)} + \partial_x \bar{u}) \right] dt \\
\quad \quad + \sum_{i=1}^N \partial_x \left[ \tilde{\sigma}_{i,N}(\phi^{(N)}) \right] dB_i(t), \\
\phi^{(N)}(t, \pm N) = 0, \quad \phi^{(N)}(0) = \phi_0^{(N)}.
\end{cases}
\end{equation}
In this approximation scheme, we truncate the noise sum to $N$ terms and restrict the spatial domain to $(-N, N)$. Note that, under Hypothesis (H3), the tail of the stochastic series vanishes asymptotically. Thus, considering the diagonal sequence $N \to \infty$ suffices to capture the full stochastic dynamics.

We define the solution space $$X_i^{(N)}(T) := L^2\big(\Omega; L^2(0,T; H_0^i(-N,N))\big) \cap L^2\big(\Omega; C([0,T]; H_0^{i-1}(-N,N))\big),i\in \mathbb N.$$

Next we give the $L^p$ estimates which include a useful locally maximal principle.
	\begin{Lemma}\label{Lemma4.3}
		Let $\phi^{(N)}\in X_1^{(N)}(T)$ be the solution of \eqref{d1.2}. Then there exists a constant $C > 0$, independent of the cut-off parameter $N$, such that,
  \begin{enumerate}
      \item [{\rm{1.}}] {If $p=2,$ then
			\begin{equation}\label{8.18}
		\begin{aligned}
			& \mathbb E\left[\left\|\phi^{(N)}(t)\right\|_{L^{2} _N}^{2} + \int_0^t\int_{B(N)}\left|\phi^{(N)}\right|^{2} \bar{u}_{x} d x ~ d s + \int_0^t\left\| \phi^{(N)}_{x} \right\|_{L_N^2}^2 d s\right] \leq \left\|\phi^{(N)}_0\right\|_{L^{2} _N}^{2}+ C\ln(1+t).
		\end{aligned}
	\end{equation}}
     \item [{\rm{2.}}] {If $p>2,$ then
		\begin{equation}\label{8.17}
			\begin{aligned}
				& \left\|\phi^{(N)}(t)\right\|_{L^{p} _N}^{p} + \int_0^t \int_{B(N)}\left|\phi^{(N)}\right|^{p} \bar{u}_{x} d x~ d s + \int_0^t\int_{B(N)}\left|\phi^{(N)}\right|^{p-2} |\phi^{(N)}_{x}|^2 d x~ d s \\
				&\leq \left\|\phi_0^{(N)}\right\|_{L^{p} _N}^{p}+ C \int_0^t (1+s)^{-\frac{p}{2}} ds+p C\int_0^t (1+s)^{-1}\left\|\phi^{(N)}\right\|_{L^{p} _N}^{p}ds + p dM_p^{(N)}(t),\quad {\rm{a.s.}}.
			\end{aligned}
		\end{equation}}
        \item [{\rm{3.}}] {If $p=\infty,$ then
		\begin{equation*}\label{r8.11}
			\begin{aligned}
			\sup_{0\le t\le T}	\left\|\phi^{(N)}(t)\right\|_{L^\infty _N} \leq  \| \phi_{0} \|_{L^\infty (\R)}, \quad \text { {\rm{a.s.}}.}
			\end{aligned}
		\end{equation*}}
  \end{enumerate}

	\end{Lemma}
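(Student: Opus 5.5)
The plan is to run It\^o's formula on the Galerkin/cut-off approximation \eqref{d1.2}. There every manipulation is legitimate: the domain is bounded, the boundary condition is homogeneous Dirichlet, only $N$ Brownian motions appear, and the coefficients $\tilde b_N,\tilde\sigma_{i,N}$ are globally Lipschitz with compactly supported derivatives. The formal computation of Lemma \ref{lmL^p} is then reproduced with constants independent of $N$. All boundary terms from integration by parts vanish because $\phi^{(N)}(t,\pm N)=0$. The cut-off $\theta_R$ only changes the coefficients on $\{|\phi^{(N)}+\bar u|>R\}$, and part 3 will show this set is empty a posteriori.

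\textbf{Parts 1 and 2.} Apply It\^o's formula to $\frac1p\int_{B(N)}|\phi^{(N)}|^p\,dx$ with $p\ge 2$, first replacing $|\cdot|^p$ by a $C^2$ function with bounded second derivative and then passing to the limit.
\begin{itemize}
\item \emph{Flux term.} Write it as $\int (f_N(\phi+\bar u)-f_N(\bar u)-f_N'(\bar u)\phi)\,\partial_x(|\phi|^{p-2}\phi)$ plus a term containing $\bar u_x$. Convexity (H2), in the form $\inf_{[-N,N]} f_N''\ge\alpha^*$, then yields the good term $-\frac{\alpha^*}{p}\int|\phi|^p\bar u_x$.
\item \emph{Viscous and It\^o-correction terms.} The $\frac12\sum(\tilde\sigma_{i,N}')^2$ part of the drift cancels exactly against the It\^o correction $\frac{p-1}{2}\sum|\partial_x\tilde\sigma_{i,N}|^2|\phi|^{p-2}$, up to cross terms containing $\bar u_x$. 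What survives is $-(p-1)\int \tilde b_N|\phi|^{p-2}\phi_x^2\le -(p-1)\mu\int|\phi|^{p-2}\phi_x^2$.
\item \emph{Remainder.} The terms $\int \tilde b_N \bar u_x\,\partial_x(|\phi|^{p-2}\phi)$ and the $\sigma$-cross terms are bounded by Young's inequality together with Proposition \ref{lm2.1}. This produces $C(1+t)^{-p/2}+pC(1+t)^{-1}\|\phi\|_p^p$ plus an absorbed fraction of the dissipation.
\end{itemize}
This gives \eqref{8.17}. For $p=2$ the stochastic integral is a true martingale, since the quadratic-variation bound \eqref{qv} uses only $\bar u_x$ and $\sigma'$. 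Taking expectations, and using $\int_0^t(1+s)^{-1}ds=\ln(1+t)$ with the $\bar u_x$-weighted term kept on the left, gives \eqref{8.18}.

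\textbf{Part 3.} Here I would not pass to the limit $p\to\infty$ directly in \eqref{8.17}: the Gronwall factor $e^{pC\ln(1+t)}$ does not disappear after taking $p$-th roots. Instead I would use a Stampacchia level-set argument. Set $k=\|\phi_0\|_{L^\infty(\R)}$, which dominates $\|\phi_0^{(N)}\|_\infty$. Apply It\^o's formula to $\int_{B(N)}\eta\big((\phi^{(N)}-k)_+\big)dx$, with $\eta$ a smooth convex approximation of $r\mapsto r_+^2$; the negative part $(\phi^{(N)}+k)_-$ is handled symmetrically. The $\bar u$-dependent sources are then rewritten using the equation $\bar u_t+f(\bar u)_x=0$. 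On the set $\{\phi>k\}$, monotonicity of $f'$ and $\bar u_x\ge 0$ should turn the flux and the viscous forcing $\partial_x(\tilde b\,\bar u_x)$ into a nonpositive contribution plus terms absorbed by $\mu\int\eta''\,|\partial_x(\phi-k)_+|^2$. The Stratonovich structure again cancels the It\^o correction against the $\frac12\sigma'^2$ drift. The aim is the pathwise inequality $\int\eta((\phi^{(N)}(t)-k)_+)\le \text{(local martingale)}$, where the left side vanishes at $t=0$. A nonnegative local supermartingale that starts at $0$ vanishes identically, so $(\phi^{(N)}-k)_+\equiv0$ almost surely for all $t$.

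The main obstacle is this last sign argument. The viscous forcing $\partial_x(\tilde b_N(u)\bar u_x)$ tested against $(\phi-k)_+$ has no obvious sign. I would first try to exploit that $\bar u_x$ is concentrated where $\bar u$ is strictly between $u_\pm$, and combine it with the good term $-\alpha^*\int(\phi-k)_+\phi\,\bar u_x$. If a residual source remains, I would fall back on the route the formal proof indicates: apply the $p$-th-root argument to \eqref{8.17} with the martingale controlled via BDG, and check that the residual sources vanish relative to the norm as $p\to\infty$.
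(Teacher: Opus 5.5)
Your parts 1 and 2 follow the paper's computation and are fine: It\^o's formula for $\frac1p\int|\phi^{(N)}|^p$, the convexity term, cancellation of the It\^o correction against the $\frac12\sum(\tilde\sigma_{i,N}')^2$ drift up to $\bar u_x$-cross terms, and Young's inequality on those cross terms.

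\textbf{Part 3 is false as stated, so it cannot be closed.} The obstruction you isolated is a genuine source, not a technicality. Take $\sigma_i\equiv0$, $b\equiv1$ (so $\tilde b_N\equiv1$), $f(u)=u^2/2$ and $\phi_0=\phi_0^{(N)}=0$; all of these are admissible. Part 3 would force $\phi^{(N)}\equiv0$. Then \eqref{d1.2} reduces to $0=\partial_x(\tilde b_N\bar u_x)=\bar u_{xx}$ on $(-N,N)$, which fails for the $\tanh$ profile already at $t=0$. The same example shows \eqref{mp} fails as written. The forcing $\partial_x\big[(\tilde b_N+\frac12\sum_i(\tilde\sigma_{i,N}')^2)\bar u_x\big]$ is exactly what your computation for $(\phi^{(N)}-k)_+$ leaves over. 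Near the level set the convexity term degenerates like $(\phi^{(N)}-k)_+\bar u_x$, while the source stays of size $\bar u_x^2$, so no absorption makes it nonpositive.

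\textbf{Your fallback is essentially the paper's proof, and it breaks where you suspected.} After \eqref{9.7} the paper writes the Gr\"onwall bound as $C\varepsilon^{-1}(1+t)^{C\varepsilon^{-1}}\|\phi_0^{(N)}\|_{L^p_N}^p\ln(1+t)$. This drops the factor $p$ in the exponent: multiplying \eqref{9.7} by $p$ gives $(1+t)^{pC\varepsilon^{-1}}$, and \eqref{3.21n} carries a further factor $p-1$. It also discards the additive source $\int_0^t(1+s)^{-p/2}ds$; the resulting bound vanishes when $\phi_0=0$, which contradicts the example. Once these are restored, the $p$-th root leaves a factor $(1+t)^{C\varepsilon^{-1}}$ and an $O(1)$ profile-dependent term. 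So letting $p\to\infty$ cannot produce $\|\phi_0\|_\infty$.

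\textbf{What is true, and what your level-set method proves cleanly, is the maximum principle for the full solution $u^{(N)}=\phi^{(N)}+\bar u$.}

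Take $N$ so large that $f_N=f$ on $[u_-,u_+]$. Then the term $\partial_x f_N(\bar u)$ in \eqref{d1.2} cancels against $\bar u_t=-\partial_x f(\bar u)$. Hence $u^{(N)}$ satisfies a purely conservative equation:
\begin{itemize}
\item drift $\partial_x\big[(\tilde b_N+\frac12\sum_i(\tilde\sigma_{i,N}')^2)\partial_x u^{(N)}\big]-\partial_x f_N(u^{(N)})$;
\item noise $\partial_x[\sigma_{i,N}(\theta_R(u^{(N)}))]\,dB_i$;
\item boundary values $u^{(N)}(t,\pm N)=\bar u(t,\pm N)\in[u_-,u_+]$.
\end{itemize}

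Take $k=u_++\|\phi_0\|_\infty\ge\sup u^{(N)}_0$ and a smooth convex $\Phi$ vanishing on $(-\infty,k]$. Since $u^{(N)}\le k$ at $x=\pm N$, the flux term $\int\Phi'(u^{(N)})\partial_x f_N(u^{(N)})\,dx$ and every noise term $\int\Phi'(u^{(N)})\partial_x[\tilde\sigma_{i,N}]\,dx$ are integrals of exact $x$-derivatives vanishing at the boundary. The stochastic integral is therefore identically zero, not merely a local martingale. The It\^o correction cancels the $\frac12\sum(\tilde\sigma_{i,N}')^2$ drift exactly, because $\tilde\sigma_{i,N}'$ is the $u$-derivative of $\sigma_{i,N}\circ\theta_R$. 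What remains is
\[
d\int\Phi(u^{(N)})\,dx\le-\int\tilde b_N\Phi''(u^{(N)})|\partial_x u^{(N)}|^2\,dx\,dt\le0
\]
pathwise. The symmetric argument at the level $u_--\|\phi_0\|_\infty$ gives $\sup_t\|\phi^{(N)}(t)\|_{L^\infty_N}\le\|\phi_0\|_\infty+(u_+-u_-)=\mathfrak M$ almost surely.

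This bound, not $\|\phi_0\|_\infty$, is the correct replacement for part 3 and the one to use downstream. For your a posteriori removal of $\theta_R$, it requires $R\ge\max(|u_-|,|u_+|)+\|\phi_0\|_\infty$.
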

	\begin{proof}
		Multiplying both sides of \eqref{d1.2} by $\left|\phi^{(N)}\right|^{p-2}\phi^{(N)}$ and  integrating over $(-N,N)$, we obtain
	\begin{equation}\label{8.12}
		\begin{aligned}
             & \int_{B(N)} \left|\phi^{(N)}\right|^{p-2} \phi^{(N)}d \phi^{(N)}  d x\\
=&-\int_{B(N)} \partial_{x}\left[f_N\left(\bar{u}+\phi^{(N)}\right)-f_N\left( \bar{u}\right)\right]\left|\phi^{(N)}\right|^{p-2} \phi^{(N)} d x~ d t \\
			&+ \int_{B(N)} \partial_{x}\left[\left (\tilde b_N+\frac{1}{2}\sum_{i=1}^{N}\tilde \sigma_{i,N}^{\prime 2}\right) \partial_x\left(\phi^{(N)}+\bar u\right)\right]\left|\phi^{(N)}\right|^{p-2}\phi^{(N)} \dif x~\dif t +dM_p^{(N)}(t)\\
			=& -\int_{B(N)} \partial_{x}\left[f_N\left(\bar{u}+\phi^{(N)}\right)-f_N\left( \bar{u}\right)\right]\left|\phi^{(N)}\right|^{p-2} \phi^{(N)} d x~ d t \\
			&-(p-1)\int_{B(N)} \left[\left (\tilde b_N+\frac{1}{2}\sum_{i=1}^{N}\tilde \sigma_{i,N}^{\prime 2}\right) \left(\phi_x^{(N)}+\bar u_x\right)\right]\left|\phi^{(N)}\right|^{p-2}\phi^{(N)}_x \dif x~\dif t +dM_p^{(N)}(t),\\
		\end{aligned}
	\end{equation}
	where $$M_p^{(N)}(t):=\int_{0}^{t} \int_{B(N)} \sum_{i=1}^{N} \tilde \sigma_{i,N}^{\prime}\left(\phi^{(N)}_{x}+\bar{u}_{x}\right)\left|\phi^{(N)}\right|^{p-2} \phi^{(N)} d x d B_i(t).$$
 First, we estimate the right-hand side of the last equality in \eqref{8.12}. Using integration by parts, we have
	\baee
			&\partial_{x}\left[f_N\left(\bar{u}+\phi^{(N)}\right)-f_N\left( \bar{u}\right)\right]\left|\phi^{(N)}\right|^{p-2} \phi^{(N)}\\
            =&(p-1)\bar u_x \int_{0}^{\phi^{(N)}}\left(f_N^{\prime}\left( \bar{u}+y\right)-f_N^{\prime}\left( \bar{u}\right)\right)|y|^{p-2} d y \\
			&+\partial_{x}\left[\left|\phi^{(N)}\right|^{p-2} \phi^{(N)}\left(f_N \left(\bar {u}+\phi^{(N)}\right)-f_N \left( \bar{u}\right)\right)\right]\\
            &+\partial_{x}\left[-(p-1) \int_{0}^{\phi^{(N)}}\left(f_N \left( \bar{u}+y\right)-f_N \left( \bar{u}\right)\right)|y|^{p-2} d y\right].
		\eaee
	By $\rm{(H2)}$, it has
	\begin{equation}\label{f_N}
		\begin{aligned}
			\int_{B(N)}\int_{0}^{\phi^{(N)}}\left(f_N ^{\prime}\left( \bar{u}+y\right)-f_N ^{\prime}\left(\bar {u}\right)\right)|y|^{p-2} d y  \bar{u}_{x} d x \geq \frac\alpha p \int_{B(N)}\left|\phi^{(N)}\right|^{p} \bar u_x \dif x .
		\end{aligned}
	\end{equation}
 Substituting \eqref{f_N} and the It\^o formula
	\begin{equation*}
 \begin{aligned}
		&\frac{1}{p}   d \int_{B(N)}\left|\phi^{(N)}\right|^{p} d x\\
  =&\int_{B(N)}\phi^{(N)}\left|\phi^{(N)}\right|^{p-2} d \phi^{(N)} d x+\frac{(p-1)}{2}\sum_{i=1}^N \int_{B(N)}\tilde \sigma_{i,N}^{\prime2}\left|\phi^{(N)}\right|^{p-2} \left(\phi^{(N)}_{x}+\bar u_x\right)^2  d x\dif t 
   \end{aligned}
	\end{equation*}
 into \eqref{8.12}, by cancellation and $\rm{(H3)}$, we get
	\begin{equation}\label{3.21n}
		\begin{aligned}
			& \frac{1}{p} d \int_{B(N)}\left|\phi^{(N)}\right|^{p} d x+\frac{(p-1)\alpha} p \int_{B(N)}\left|\phi^{(N)}\right|^{p} \bar u_x \dif x~\dif t  +(p-1)\mu \int_{B(N)}  \left|\phi^{(N)}\right|^{p-2} |\phi^{(N)}_{x}|^2 d x d t \\
			\leq &-\frac{p-1}{2} \sum_{i=1}^{N}\int_{B(N)} \tilde \sigma_{i,N}^{\prime2}\left| \phi^{(N)}\right|^{p-2}(\phi^{(N)}_{x}+\bar u_x) \bar{u}_{x}\dif x~\dif t \\
   &-(p-1) \int_{B(N)}\tilde  b_N \left|\phi^{(N)}\right|^{p-2} \phi^{(N)}_{x} \bar{u}_{x} d x d t+dM_p(t)\\
   =:& -\frac{p-1}{2}I(p)\dif t -(p-1)J(p)\dif t +dM_p(t).
		\end{aligned}
	\end{equation}
\textbf{Case 1. $\bf{p=2}$}, first we estimate $I(2)$
\bae\label{L^2 I}
I(2)=&\int_{B(N)}\sum_{i=1}^{N}\left(\tilde \sigma_{i,N}^{\prime}\right)^{2}  (\phi^{(N)}_{x}+\bar u_x) \bar{u}_{x} d x\leq \| \sigma^\prime\|^2_{l^2(L^\infty_R)}\int_{B(N)} \left|\phi^{(N)}_{x}+\bar u_x\right| \bar{u}_{x} d x\\
\leq& \varepsilon \left\|\phi^{(N)}_{x}\right\|^2+ C\|\bar u_x\|^2\leq \varepsilon \left\|\phi^{(N)}_{x}\right\|^2+ C(1+t)^{-1}.\\
\eae
Next, $J(2)$ is controlled as follow
	\begin{equation}\label{L^2 b_N}
		\begin{aligned}
&J(2)=	- \int_{B(N)}\tilde b_N \phi^{(N)}_{x} \bar{u}_{x} d x\leq   \|b\|_{L^\infty_R}\int_{B(N)} \left|\phi^{(N)}_{x} \right|\bar{u}_{x} d x \\
&\leq \varepsilon \left\| \phi^{(N)}_x  \right\|^2+C\|\bar u_x\|^2\leq  \varepsilon \left\| \phi^{(N)}_x  \right\|^2+C(1+t)^{-1}.
		\end{aligned}
	\end{equation}
Substituting \eqref{L^2 I} and \eqref{L^2 b_N} into \eqref{3.21n} and integrating over $[0,t]$ yields 
 \begin{equation}\label{cutoffL^2 orbit}
		\begin{aligned}
			&\left\|\phi^{(N)}(t)\right\|_{L^{2} _N}^{2} + \int_0^t\int_{B(N)}\left|\phi^{(N)}\right|^{2} \bar{u}_{x} d x ~ d s + \int_0^t\left\| \phi^{(N)}_{x} \right\|_{L_N^2}^2 d s \leq \left\|\phi^{(N)}_0\right\|_{L^{2} _N}^{2}+ C\ln(1+t)+2M_2^{(N)}(t).
		\end{aligned}
	\end{equation}
 Taking expectation of both sides of \eqref{cutoffL^2 orbit}, we obtain \eqref{8.18}.
 
\textbf{Case 2. $\bf{p>2}$}	, first divide $I(p)$ as follows
	\begin{equation*}
		\begin{aligned}
			I(p)=& \int_{B(N)} \sum_{i=1}^{N}\left(\tilde \sigma_{i,N}^{\prime}\right)^{2} \left|\phi^{(N)}\right|^{p-2} (\phi^{(N)}_{x}+\bar u_x) \bar{u}_{x} d x \\
			=&  \sum_{i=1}^{N} \int_{B(N)} \left(\tilde \sigma_{i,N}^{\prime}\right)^{2} \left|\phi^{(N)}\right|^{p-2} \phi^{(N)}_{x}\bar{u}_{x} d x+\int_{B(N)} \sum_{i=1}^{N}\left(\tilde \sigma_{i,N}^{\prime}\right)^{2} \left|\phi^{(N)}\right|^{p-2}  \bar{u}_{x}^2 d x \\
			=:& I_{1}(p)+I_{2}(p).
		\end{aligned}
	\end{equation*}
\bae\label{2.20nn}
I_1(p)=&\int_{B(N)}\sum_{i=1}^{N}\left(\tilde \sigma_{i,N}^{\prime}\right)^{2} \left|\phi^{(N)}\right|^{p-2} \phi^{(N)}_{x}\bar{u}_{x} d x\leq \| \sigma^\prime\|^2_{l^2(L^\infty_R)}\int_{B(N)} \left|\phi^{(N)}\right|^{p-2} \left|\phi^{(N)}_{x}\right| \bar{u}_{x} d x\\
\leq& C \left\| \sigma^\prime\right\|^2_{l^2(H^1_R)}\int_{B(N)}\left(\left|\phi^{(N)}\right|^\frac{p-2}2 \left| \phi^{(N)}_x  \right|\right)\left(\left| \phi^{(N)}  \right|^{\frac{p-2}2} \bar{u}_{x}^{\frac{p-2}{2p}}\right) \bar u_x^{\frac{p+2}{2p}} d x \\
	&\leq {\varepsilon} \int_{B(N)}\left|\phi^{(N)}\right|^{p-2} |\phi^{(N)}_{x}|^2 d x+ \varepsilon^{-1}\|\bar{u}_{x}\|_\infty\int_{B(N)}\left|\phi^{(N)}\right|^{p}  d x+p^{-1} C\| \bar{u}_{x}\|^{\frac{1}{2} p+1}_{\frac{1}{2} p+1} d x \\
			& \leq {\varepsilon} \int_{B(N)}\left|\phi^{(N)}\right|^{p-2} |\phi^{(N)}_{x}|^2 d x+ C(1+t)^{-1}\varepsilon^{-1}\left\|\phi^{(N)}\right\|_{L^p_N}^{p} +p^{-1} C(1+t)^{-\frac p2},
\eae
where the last inequality follows from Young's inequality. Next
\bae\label{real-I_2(p)}
I_2(p) &=\int_{B(N)}  \sum_{i=1}^{N}\left(\tilde \sigma_{i,N}^{\prime}\right)^{2} \left|\phi^{(N)}\right|^{p-2}  \bar{u}_{x}^2 d x \leq  C \left\| \sigma^\prime\right\|^2_{l^2(H^1_R)}\int_{B(N)}  \left|\phi^{(N)}\right|^{p-2}  \bar{u}_{x}^2 d x\\
&\leq  C\int_{B(N)}  \left|\phi^{(N)}\right|^{p-2} \bar{u}_{x}^\frac{p-2}{p} \bar{u}_{x}^\frac{p+2}{p} d x \leq C \|\bar{u}_{x}\|_\infty  \int_{B(N)}\left|\phi^{(N)}\right|^{p}   \dif x  + C\|\bar u_x\|_{\frac{p+2}{2}}^\frac{p+2}{2} \\
&\leq \varepsilon (1+t)^{-1} \left\|\phi^{(N)}\right\|^{p}_{L^p_N}   + C\varepsilon^{-1}(1+t)^{-\frac p2} .
\eae
Combining \eqref{2.20nn} and \eqref{real-I_2(p)} yields
\bae\label{real I(p)}
I(p)\leq& \varepsilon \int_{B(N)} \left|  \phi^{(N)}\right|^{p-2}|\phi_x^{(N)}|^2\dif x  +C\varepsilon (1+t)^{-1} \left\|\phi^{(N)}\right\|^{p}_{L^p_N}+C \varepsilon^{-1}  (1+t)^{-\frac p2}.
\eae
Similarly, we can estimate
	\begin{equation}\label{b_N}
		\begin{aligned}
&	J(p)=\int_{B(N)}b_N\left|\phi^{(N)}\right|^{p-2} \phi^{(N)}_{x} \bar{u}_{x} d x\leq   \|b\|_{L^\infty_R}\int_{B(N)}\left|\phi^{(N)}\right|^{p-2} \left|\phi^{(N)}_{x} \right|\bar{u}_{x} d x \\
			& \leq \varepsilon \int_{B(N)}\left|\phi^{(N)}\right|^{p-2} |\phi^{(N)}_{x}|^2 d x+ C(1+t)^{-1}\varepsilon^{-1}\left\|\phi^{(N)}\right\|_{L^p_N}^{p} +p^{-1} C(1+t)^{-\frac p2}.
		\end{aligned}
	\end{equation}
Substituting \eqref{real I(p)} and \eqref{b_N} into \eqref{3.21n}, we obtain
	\begin{equation}
		\begin{aligned}\label{9.7}
			&\frac{1}{p} d\left\|\phi^{(N)}\right\|_{L^p_N}^{p}+\frac{p-1}{p}\alpha \int_{B(N)}\left|\phi^{(N)}\right|^{p} \bar{u}_{x} d x d t\\
   &+(p-1)\left(\mu -  2\varepsilon\right) \int_{B(N)}\left|\phi^{(N)}\right|^{p-2} |\phi^{(N)}_{x}|^2 d x d t \\
		 \leq&   C {\varepsilon}^{-1}(1+t)^{-1}\left\|\phi^{(N)}\right\|_{L^p_N}^{p}\dif t  +C\varepsilon^{-1}(1+t)^{\frac{-p}{2}}\dif t  + d M_p^{(N)}(t).
		\end{aligned}
	\end{equation}
Let $0<\varepsilon<\frac \mu2.$ Integrating \eqref{9.7} over $[0,t]$, we obtain \eqref{8.17}. Then, taking the expectation and applying the Gr\"{o}nwall inequality, we conclude that 
	\begin{equation}
		\begin{aligned}
			\mathbb E \left\|\phi^{(N)}(t)\right\|_{L^{p} _N}^{p} &\leq 
 C\varepsilon^{-1} e^{C \varepsilon^{-1}ln(1+t)}\left\|\phi^{(N)}_0\right\|_{L^{p} _N}^{p}\ln (1+t) \\
&=
 C\varepsilon^{-1} (1+t)^{C \varepsilon^{-1}}\left\|\phi^{(N)}_0\right\|_{L^{p} _N}^{p}\ln (1+t).
		\end{aligned}
	\end{equation}
 Fixing $n<p$, we get
	\begin{equation*}
 \begin{aligned}
		&\mathbb{E}\left\|\phi^{(N)}(t)\right\|_{L^p _N}^{n}\leq\left(\mathbb{E}\left\|\phi^{(N)}\right\|_{L^p _N}^{p}\right)^{\frac{n}{p}}\leq\left[ C\varepsilon^{-1} (1+t)^{C \varepsilon^{-1}}\left\|\phi^{(N)}_0\right\|_{L^{p} _N}^{p}\ln (1+t)\right]^{\frac{n}{p}}.
  \end{aligned}
	\end{equation*}
	Let $p\rightarrow +\infty$. Then it follows from Fatou's Lemma that
	\begin{equation*}
		\mathbb E  \left\|\phi^{(N)} (t)\right\|_{L^\infty _N}^{n} \leq \left\| \phi_{0}^{(N)} \right\|_{L^\infty _N}^{n}\leq \left\| \phi_{0} \right\|_{L^\infty (\R)}^{n}.
	\end{equation*}
  This implies
	\begin{equation}\label{3.28mp}
		\left(\mathbb E \left\| \phi^{(N)}(t)  \right\|_{L^\infty _N}^{n}\right)^{\frac{1}{n}} \leq  \left\| \phi_{0}\right \|_{\infty } \quad \text { for all }n.
	\end{equation}
	Letting $n\rightarrow +\infty$ in \eqref{3.28mp}, we establish the maximum principle
	\begin{equation}
		\left\|\phi^{(N)}(t)\right\|_{L^\infty _N} \leq  \| \phi_{0} \|_{\infty}, \quad \text { {\rm{a.s.}}}.
	\end{equation}
 This completes the proof. 
	\end{proof}

\subsection{The Galerkin Approximation}
In this section, we establish higher regularity for the solution  by the  Galerkin approximation $\tilde\phi^{(N)}$.

Take $\{e_k\}_{k=1}^\infty$ to an orthonormal basis of $L^2_N$ and $\rm{\span}\{e_1,\cdots,e_k,\cdots\}$ is dense in $ H^{2}_N$. Define $P_k^N:H^{-2}_N\rightarrow H^{2}_N$ by 
$$P_k^N u\coloneqq\sum_{j=1}^{k}\langle u, e_j \rangle e_j,\quad k\geq1.$$

Let $R > ||\phi_0||_\infty \,+\, (u_+ - u_-)$ be the threshold \eqref{small data treshhold} from  Lemma \ref{lm2.4}. Recall $\theta_R $ and the modified coefficients in \eqref{eq:cutoff_coeffs}. A key property is that the derivatives $\tilde{b}_N'(u)$ and $\tilde{\sigma}_{i,N}''(u)$ are compactly supported. They vanish identically when $|u| \ge 2R$.

For $k\geq 1$, let $H_k^N\coloneqq\Ima \left(P_k^N\big|_{L^2_N}\right)$ and consider the following SDEs in $H_N^2$
\bae\label{galerkin}
\begin{cases}
d\tilde{\phi}_k^{(N)} = \tilde{A}_{k,N} (\tilde{\phi}_k^{(N)}) dt + \tilde{B}_{k,N} (\tilde{\phi}_k^{(N)}) dt + \tilde{\Sigma}_{k,N} (\tilde{\phi}_k^{(N)}) dQ_N(t), \\
\tilde{\phi}_k^{(N)}(0) = P_k^N \phi_0^{(N)}.
\end{cases} 
\eae
For $v \in H_{k}$,
\baee	
\tilde{A}_{k,N}(v) :=& P_k^N \partial_x \left[ \Big(\tilde{b}_N(v+\bar{u}) + \frac{1}{2}\sum \tilde{\sigma}_{i,N}'^2(v+\bar{u})\Big) (v_x + \bar{u}_x) \right] ,\\
\tilde{B}_{k,N}(v) :=& - P_k^N \partial_x \left[ f_N(v+\bar u) - f_N(\bar u) \right],\\
\tilde{\Sigma}_{k,N}(v) dQ_N(t) :=& \sum_{i=1}^{N} P_k^N \partial_x \left[ \tilde{\sigma}_{i,N}(v+\bar u) \right] dB_i(t),
\eaee
see \eqref{eq:cutoff_coeffs} for the detailed definitions of the coefficients. Since the coefficients $\tilde{b}_N$ and $\tilde{\sigma}_{i,N}$ involve the smooth cut-off $\theta_R$, the coefficients of the system of SDEs \eqref{galerkin} are globally Lipschitz continuous in the finite-dimensional space $H_k^N$. Consequently, for each fixed $k$ and $N$, there exists a unique global strong solution $\tilde{\phi}_k^{(N)}$.
\begin{Lemma}\label{lm1.2}
	Fix $T>0$. Let $\phi_k^{(N)}$ be the solution of the SDEs \eqref{galerkin}. Then we have $$\mathbb{E}\left(\| \tilde{\phi}_{k}^{(N)}(T)\|^2 + \int_{0}^{T}\|\partial_{x} \tilde{\phi}_{k}^{(N)}\|^2 dt\right) \le \|\phi_{0}\|^2 + C\ln(1+T) + C,$$
    where $C$ is a constant independent of the Galerkin dimension $k$ and the cut-off parameter $N$.
\end{Lemma}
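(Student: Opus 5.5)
The plan is to apply the finite-dimensional It\^o formula to $\|\tilde\phi_k^{(N)}\|^2$. Because $\tilde\phi_k^{(N)}\in H_k^N$ and $P_k^N$ is the $L^2_N$-orthogonal projection, the projections disappear when we pair with $\tilde\phi_k^{(N)}$: we have $\langle P_k^N g,\tilde\phi_k^{(N)}\rangle=\langle g,\tilde\phi_k^{(N)}\rangle$. This lets us repeat the computation of Lemma \ref{Lemma4.3} with $p=2$, keeping track of one new term. In the Itô correction, $\sum_i\|P_k^N\partial_x[\tilde\sigma_{i,N}(\tilde\phi_k^{(N)}+\bar u)]\|^2$ is no longer an exact integral. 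However, $\|P_k^N g\|\le\|g\|$, so it is bounded above by $\sum_i\|\tilde\sigma_{i,N}'(\tilde\phi_k^{(N)}_x+\bar u_x)\|^2$.

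The drift then splits into three pieces, and the inequality only goes in the favourable direction.

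\emph{Flux.} We integrate by parts on $(-N,N)$; the boundary terms vanish by the Dirichlet condition. Rewriting as in Step 1 of Lemma \ref{lmL^p} and using the convexity $f_N''\ge\alpha$ (where $f_N$ is convex on the relevant range) gives the good term $-\frac{\alpha}{2}\int\phi^2\bar u_x$. Outside that range we must check that $f_N$ is unchanged there, or else absorb the discrepancy using $|f_N''|\le C$ together with $\int\bar u_x=u_+-u_-$.

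\emph{Viscous part.} This gives $-\int(\tilde b_N+\frac12\sum\tilde\sigma_{i,N}'^2)(\phi_x+\bar u_x)\phi_x$.

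\emph{Itô correction.} This is at most $\frac12\sum\int\tilde\sigma_{i,N}'^2(\phi_x+\bar u_x)^2$.

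Combining the second and third pieces, the $\sigma$-contributions reduce exactly to $\frac12\sum\int\tilde\sigma_{i,N}'^2(\phi_x+\bar u_x)\bar u_x$. This is $I(2)$ in \eqref{L^2 I}, and it is bounded by $\varepsilon\|\phi_x\|^2+C(1+t)^{-1}$ using (H3) and the fact that $\tilde\sigma'_{i,N}$ is bounded by $\|\sigma'\|_{l^2(L^\infty_{2R})}$ thanks to the cutoff $\theta_R$. The remaining viscous term is $-\int\tilde b_N\phi_x^2-\int\tilde b_N\phi_x\bar u_x$, which is at most $-\mu\|\phi_x\|^2+\varepsilon\|\phi_x\|^2+C\|\bar u_x\|^2$. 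Here $\tilde b_N\ge\mu$ provided $b_N$ keeps the lower bound $\mu$; we will arrange this in the construction, for instance by taking $b_N=b$ on $[-2R,2R]$. By Proposition \ref{lm2.1}, $\|\bar u_x\|^2\le C(1+t)^{-1}$.

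Choosing $\varepsilon<\mu/4$ and integrating in time gives
\[\|\tilde\phi_k^{(N)}(T)\|^2+\mu\int_0^T\|\partial_x\tilde\phi_k^{(N)}\|^2dt\le\|P_k^N\phi_0^{(N)}\|^2+C\ln(1+T)+2\mathcal M_k(T).\]
The martingale $\mathcal M_k$ has quadratic variation bounded by $C\int_0^T(\|\phi\|^2\,\|\phi_x+\bar u_x\|^2)\,dt$, where $\|\phi\|^2\|\phi_x+\bar u_x\|^2$ is finite in finite dimensions since all norms are equivalent on $H_k^N$. We localize with stopping times $\tau_n=\inf\{t:\|\tilde\phi_k^{(N)}\|\ge n\}$, take expectations so that the martingale term vanishes, and let $n\to\infty$ using Fatou's lemma and monotone convergence. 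Finally $\|P_k^N\phi_0^{(N)}\|\le\|\phi_0^{(N)}\|\le\|\phi_0\|+C$, since $\phi_0^{(N)}\to\phi_0$ in $H^1$. This gives the claim, with the extra constant $C$ absorbing this initial-data discrepancy.

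The main obstacle is that the pointwise maximum principle is lost at the Galerkin level, so we cannot use $|\phi|\le\mathfrak M$ to control the coefficients. This is precisely why the cutoff $\theta_R$ is present: it makes every coefficient bound uniform, with constants depending only on $R$ and not on $k$ or $N$. The other delicate point is checking that the flux term keeps its sign without an $L^\infty$ bound, which relies on $f_N''\ge\alpha$ holding on $[-N,N]$ (or globally after a suitable choice of $f_N$). If this fails, we will fall back on a crude bound $|\int(f_N(\phi+\bar u)-f_N(\bar u)-f_N'(\bar u)\phi)\phi_x|\le C\|\phi_x\|\,\|\phi\bar u_x^{1/2}\|$ combined with Gronwall's inequality. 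That fallback would, however, give a worse constant than the logarithmic one stated.
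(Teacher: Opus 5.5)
Your proposal is correct and follows the paper's proof, which is only the one-line instruction to redo the $p=2$ computation of Lemma~\ref{Lemma4.3} after pairing with $\tilde\phi_k^{(N)}$. Your two observations are exactly what ``taking the projection'' leaves implicit: the projection drops out of the drift pairings, and the projected It\^o correction can only help, since $\|P_k^N g\|\le\|g\|$.

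The one point to tighten is the flux term. At the Galerkin level there is no $L^\infty$ bound, so convexity of $f_N$ on $[-N,N]$ alone is not enough, and neither is absorbing the defect with $\int\bar u_x\,dx=u_+-u_-$. The defect is quadratic in $\phi$, and Gronwall with $\|\bar u_x\|_\infty\le C(1+t)^{-1}$ gives only polynomial growth in $T$. You should simply choose $f_N$ with $f_N''\ge 0$ on all of $\mathbb R$; the paper tacitly needs this too. This suffices here because the good term $\int\phi^2\bar u_x$ is not part of the claim.
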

\begin{proof}
	It follows similarly by taking the projection $P_k^N$ in the $L^2$ estimates in Lemma \ref{Lemma4.3}.
\end{proof}
%
\begin{Lemma}[Derivative estimate]\label{truncated-derivative-estimate}
	Fix $T>0$. Let $\tilde\phi_k^{(N)}$ be the solution of the SDEs \eqref{galerkin}. Assume that the cut-off width $R$ satisfies $R > \mathfrak{M}$ and the stability condition holds for this $R$, i.e., $\|\phi_0\|_\infty < C^*(R)$, as in Lemma {\rm{\ref{lm2.4}}}. Then  we have
    \baee\label{induc galerkin}
    \mathbb{E}\left(\|\partial_x \tilde{\phi}_{k}^{(N)}(T)\|^2 + \int_{0}^{T}\|\partial_{xx} \tilde{\phi}_{k}^{(N)}\|^2 dt\right) \le \|\phi_{0}\|_{H^{1}}^2 + C \ln(1+T) + C.
    \eaee
\end{Lemma}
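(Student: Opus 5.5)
The approach is to redo the formal $H^1$ computation of Lemma \ref{lm2.4} at the level of the finite-dimensional system \eqref{galerkin}. There It\^o's formula is fully rigorous, since $\tilde\phi_k^{(N)}$ is a finite-dimensional diffusion with globally Lipschitz coefficients.

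\textbf{Itô expansion.} Apply It\^o's formula to the functional $\|\partial_x v\|^2=\langle -\partial_{xx}v,v\rangle$ on $H_k^N$. Choose the basis $\{e_j\}$ to be the Dirichlet eigenfunctions of $-\partial_{xx}$ on $(-N,N)$. Then $P_k^N$ commutes with $\partial_{xx}$ and is self-adjoint. Hence every drift pairing $\langle -\partial_{xx}\tilde\phi_k^{(N)},P_k^N F\rangle$ reduces to $\langle -\partial_{xx}\tilde\phi_k^{(N)},F\rangle$. The It\^o correction $\sum_i\|\partial_x P_k^N\partial_x[\tilde\sigma_{i,N}]\|^2$ is bounded by $\sum_i\|\partial_{xx}\tilde\sigma_{i,N}(\tilde\phi_k^{(N)}+\bar u)\|^2$, because projection onto low eigenmodes is a contraction in $H^1_0$.

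\textbf{Term-by-term estimates.} With these reductions the terms are exactly $K_1,K_2,K_3^i,K_4^i$ of \eqref{4.71}, with $b,\sigma_i,f$ replaced by the cut-off coefficients. Boundary terms vanish because $\tilde\phi_k^{(N)}\in H^2_N$ and the eigenfunctions satisfy the Dirichlet conditions. One caveat: $\bar u_x$ does not vanish at $\pm N$, so the pairings involving $\bar u$ must be kept in the form $\langle \tilde\phi_{xx},\cdot\rangle$ and not integrated by parts further. I then bound these terms as in \eqref{K1}, \eqref{K2} and \eqref{K34}. The cut-off $\theta_R$ guarantees $\|\tilde b_N'\|_\infty\lesssim\|b'\|_{L^\infty_{2R}}$ and $\|\tilde\sigma_N''\|_{l^2(L^\infty)}\lesssim\|\sigma''\|_{l^2(L^\infty_{2R})}+R^{-1}\|\sigma'\|$, uniformly in $k$ and $N$.

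\textbf{Main obstacle: the critical term.} This is the step where I expect the real difficulty. The critical term $C(\mu^{-1}\|b'\|^2+\|\sigma''\|^2)\|\partial_x\tilde\phi_k^{(N)}\|_4^4$ was absorbed in Lemma \ref{lm2.4} through \eqref{eq:GN_interpolation} together with the maximum principle \eqref{mp}. For the Galerkin solution, however, no pathwise $L^\infty$ bound is available, since the projection destroys the maximum principle. To get around this, I would use that the dangerous coefficients are supported where $|\tilde\phi+\bar u|\le 2R$. I would then apply Gagliardo--Nirenberg only on that region, with $\|\tilde\phi\|_{L^\infty}$ effectively replaced by $\sim R$ there. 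Equivalently, one can bound $\|\tilde\phi_x\|_4^4\le C\|\tilde\phi\|_\infty^2\|\tilde\phi_{xx}\|^2$ where the coefficient is active and absorb it via the threshold $C^*(R)$, so that the dissipation coefficient $c_0>0$ as in \eqref{middle deriv}. If that localization fails, the fallback is a stopping-time argument. Let $\tau_R=\inf\{t:\|\tilde\phi_k^{(N)}\|_\infty\ge R\}$ and close the estimate on $[0,\tau_R]$. Since $\tilde\phi_k^{(N)}\to\phi^{(N)}$ as $k\to\infty$ in $C([0,T];H^1_0)\subset C([0,T];L^\infty)$ in probability, and Lemma \ref{Lemma4.3} gives $\|\phi^{(N)}\|_\infty\le\|\phi_0\|_\infty<R$, we get $\mathbb P(\tau_R<T)\to0$. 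This suffices for the $k$-uniform limit needed later.

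\textbf{Closing the estimate.} The lower-order remainder $C\|\tilde\phi\|_q^{5q/(q-1)}$ is handled by interpolating between $L^2$ and $\dot H^1$ and using Lemma \ref{lm1.2}, together with Proposition \ref{lm2.1} for $\|\bar u_x\|_4^4+\|\bar u_{xx}\|^2\lesssim(1+t)^{-2}$. The martingale term has zero expectation, since its quadratic variation is finite by the bound \eqref{pre Quadra} in finite dimension. Integrating over $[0,T]$ and taking expectations then yields \eqref{induc galerkin}, with $\|P_k^N\phi_0^{(N)}\|_{H^1}\le\|\phi_0\|_{H^1}$ for the initial data.
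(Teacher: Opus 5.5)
Your overall plan is the paper's: It\^o's formula for $\|\partial_x\tilde\phi\|^2$ on the finite-dimensional system, the term-by-term bounds of Lemma \ref{lm2.4}, and localization on the support of the cut-off coefficients. However, two steps fail; write $\tilde\phi=\tilde\phi_k^{(N)}$.

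\textbf{The critical quartic term is not closed.} Your option (a) is exactly the paper's ``localized Gagliardo--Nirenberg integration by parts'' $\int\eta\,(\partial_x\tilde\phi)^4=-\int\tilde\phi\,\partial_x[\eta\,(\partial_x\tilde\phi)^3]$ with $\eta=\tilde b_N'$. It does not work without a global $L^\infty$ bound.
\begin{itemize}
\item The integration by parts also produces $\int\tilde\phi\,\eta'\,(\partial_x\tilde\phi+\bar u_x)(\partial_x\tilde\phi)^3$. This is of the same order as the left side, and its coefficient $\tilde\phi\,\eta'$ is not small. Moreover, $\eta'$ requires $b\in C^2$.
\item In fact no bound $\int_{\{|\tilde\phi+\bar u|\le 2R\}}(\partial_x\tilde\phi)^4\,dx\le C(R)\|\partial_{xx}\tilde\phi\|^2$ holds on $H_k^N$. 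Take $\tilde\phi=A\sin(\omega(x+N))$ with $\omega=m\pi/(2N)$, $m\le k$, and $A\gg R+|u_-|+|u_+|$. Each of the $\sim m$ monotone pieces crosses the band over a length $\sim R/(A\omega)$ at slope $\sim A\omega$. So the left side is $\sim NRA^3\omega^4$, while $\|\partial_{xx}\tilde\phi\|^2\sim NA^2\omega^4$. Large $A$ cannot be ruled out, because the Galerkin flow has no maximum principle.
\item Your fallback (b) is circular. The convergence $\tilde\phi_k^{(N)}\to\phi^{(N)}$ of Proposition \ref{prop:conv_galerkin} rests on the tightness in Lemma \ref{lem:tightness_k}, which uses the very $H^2$ bound you are proving. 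Lemma \ref{Lemma4.3} presupposes that $\phi^{(N)}$ already exists. That convergence is also only in $L^2(0,T;H^1_N)$, which does not control $\sup_t\|\cdot\|_\infty$.
\item Even granting all this, you would get a bound up to $\tau_R$ plus $\mathbb P(\tau_R<T)\to0$. That is weaker than the stated unstopped expectation bound. In addition, the absorption on $[0,\tau_R]$ needs $R\lesssim C^*(R)$, not $\|\phi_0\|_\infty<C^*(R)$.
\end{itemize}
What is missing is an approximation that provides a pathwise $L^\infty$ bound before the $H^1$ estimate is run. An existence argument for the cut-off problem that is independent of this lemma would also serve.

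\textbf{The It\^o correction is not uniform in $k$.} You noticed the boundary issue for $\bar u_x$ in the drift pairings, but it bites in the It\^o correction.
\begin{itemize}
\item For the Dirichlet eigenbasis, $\|\partial_xP_k^Ng\|\le\|\partial_xg\|$ holds for all $k$ only when $g\in H^1_0(-N,N)$.
\item Here $g_i=\partial_x[\tilde\sigma_{i,N}(\tilde\phi+\bar u)]$ equals $\tilde\sigma_{i,N}'(\bar u)\,(\partial_x\tilde\phi+\bar u_x)$ at $x=\pm N$. This is generically nonzero, since $\bar u_x>0$ and nothing fixes $\partial_x\tilde\phi$ at the boundary.
\item Integrating by parts twice, $\sqrt{\lambda_j}\,\langle g_i,e_j\rangle$ contains the boundary contribution $N^{-1/2}\big(g_i(-N)-(-1)^jg_i(N)\big)$, which does not decay in $j$. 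Hence $\|\partial_xP_k^Ng_i\|^2=\sum_{j\le k}\lambda_j\langle g_i,e_j\rangle^2$ grows linearly in $k$.
\item Lemma \ref{lm2.4} relies on an exact cancellation with the drift $-\frac12\sum_i\int(\tilde\sigma_{i,N}')^2(\partial_{xx}\tilde\phi)^2\,dx$. That cancellation now leaves a positive remainder of order $k$.
\end{itemize}
This is the familiar boundary incompatibility of Dirichlet SPDEs whose noise does not vanish on the boundary, which in general destroys $H^2$ regularity up to $\pm N$. It has to be removed in the truncation itself, e.g.\ by localizing the noise coefficients away from $\pm N$, not by the choice of basis. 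The paper's $J^1$--$J^5$ are written as if $P_k^N$ were absent, so they do not address it either.

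Finally, the remainder $C\|\tilde\phi\|_q^{5q/(q-1)}$ has degree larger than $5$ in $\tilde\phi$. The second-moment bound of Lemma \ref{lm1.2} therefore cannot close it.
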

\begin{proof}
We apply Itô's formula to  $\frac{1}{2}||\partial_x v||^2$ for the process $\tilde{\phi}_k^{(N)}$. This is equivalent to testing the equation (4.23) against the test function $-\partial_{xx}\tilde{\phi}_k^{(N)} \in H_k^N$. We obtain the energy identity
\begin{equation*}\label{eq:energy_ident}
    \frac{1}{2}d||\partial_{x}\tilde\phi_{k}^{(N)}||^{2} + \int_{B(N)}\tilde  b_N(\tilde\phi_{k}^{(N)}+\bar u) (\partial_{xx}\tilde\phi_{k}^{(N)})^2 dx dt = \sum_{j=1}^5 J^j dt + d\mathcal  M^{(N)}_k(t),
\end{equation*}
where the terms $J^1$ through $J^5$ correspond to the It\^o correction and the projections of the drift terms
\baee
    J^1 &:= \frac{1}{2}\sum_{i=1}^{N}||\tilde \sigma_{i,N}''(\partial_{x}\tilde\phi_{k}^{(N)}+\bar{u}_{x})^2||^{2}, \\
    J^2 &:= \int_{B(N)}\left(\frac{1}{2}\sum_{i=1}^{N}\tilde \sigma_{i,N}'^2-\tilde b_{N}\right)\partial_{xx}\tilde\phi_{k}^{(N)}\bar{u}_{xx}dx, \\
    J^3 &:= \int_{B(N)}\left(\sum_{i=1}^{N}\tilde \sigma_{i,N}'\tilde \sigma_{i,N}''\bar{u}_{xx}-\tilde b_{N}'\partial_{xx}\tilde\phi_{k}^{(N)}\right)(\partial_{x}\tilde\phi_{k}^{(N)}+\bar{u}_{x})^2dx, \\
    J^4 &:= \frac{1}{2}\sum_{i=1}^{N}\int_{B(N)}\tilde \sigma_{i,N}'^2\bar{u}_{xx}^{2}dx, \\
    J^5 &:= -\langle B_{N}(\tilde \phi_{k}^{(N)}),\partial_{xx}\tilde\phi_{k}^{(N)}\rangle.
\eaee
Here we use the simplified notations $\tilde b_N = \tilde b_N(\tilde\phi^{(N)}_k+\bar{u})$ and $\tilde\sigma_{i,N} =\tilde \sigma_{i,N}(\tilde\phi^{(N)}_k+\bar{u})$. We estimate these terms individually.

\textbf{Estimate of $J^4$.}
\bae
J^4 = \frac{1}{2}\sum_{i=1}^{N}\int_{B(N)}\tilde \sigma_{i,N}'^2\bar{u}_{xx}^{2}dx\leq C \|\tilde{\sigma}_N'\|_\infty^2 (1+t)^{-2} 
\eae
by Proposition \ref{lm2.1}.

\textbf{Estimate of $J^2$ (Viscosity Perturbation).}
By Young's inequality and the uniform bounds on coefficients
\[
    J^2 \le \epsilon \int_{B(N)} \tilde b_N ( \partial_{xx}\tilde\phi_{k}^{(N)})^2 dx + C||b||_{L^\infty}||\bar{u}_{xx}||^2 \le \epsilon \|\partial_{xx}\tilde\phi_{k}^{(N)}\|^2 + C(1+t)^{-2}.
\]

\textbf{Estimate of $J^3$.}
We expand the product
\baee
    J^3 &= -\int_{B(N)}\tilde b_N' \partial_{xx}\tilde\phi_{k}^{(N)} (\partial_{x}\tilde\phi_{k}^{(N)} + \bar{u}_x)^2 dx + \mathcal{R}_1(t),
\eaee
where $\mathcal{R}_1(t)$ collects lower-order terms involving $\bar{u}_{xx}$.
The most singular term is $$J^3_{\rm{crit}} := \int \tilde{b}_N'(\tilde{\phi}_k^{(N)}) (\partial_x \tilde{\phi}_k^{(N)})^2 \partial_{xx} \tilde{\phi}_k^{(N)} dx.$$
Recall that $\tilde{b}_N(u) = b_N(\theta_R(u))$. Its derivative $\eta(u) := \tilde{b}_N'(u)$ vanishes for $|u| > 2R$. Thus, the integrand is supported strictly on the set where $|\tilde{\phi}_k^{(N)}| \le 2R$. We apply the localized Gagliardo-Nirenberg integration by parts
\begin{equation*}
    \begin{aligned}
        &\int \eta(\tilde{\phi}_k^{(N)}) (\partial_x \tilde{\phi}_k^{(N)})^4 dx = - \int \tilde{\phi}_k^{(N)} \partial_x \left[ \eta(\tilde{\phi}_k^{(N)}) (\partial_x \tilde{\phi}_k^{(N)})^3 \right] dx\\
=& - \int \tilde{\phi}_k^{(N)} \big[ \eta'(\tilde{\phi}_k^{(N)}) (\partial_x \tilde{\phi}_k^{(N)})^4 + 3\eta(\tilde{\phi}_k^{(N)}) (\partial_x \tilde{\phi}_k^{(N)})^2 \partial_{xx} \tilde{\phi}_k^{(N)} \big] dx.
    \end{aligned}
\end{equation*}
Then, rearranging and estimating yields $$\left| \int \eta(\tilde{\phi}_k^{(N)}) (\partial_x \tilde{\phi}_k^{(N)})^4 dx \right| \le C \sup_{x \in \text{supp}(\eta)} |\tilde{\phi}_k^{(N)}(x)| \cdot \int |\partial_x \tilde{\phi}_k^{(N)}|^2 |\partial_{xx} \tilde{\phi}_k^{(N)}| dx.$$
Since the support condition ensures $|\tilde{\phi}_k^{(N)}| \le 2R$ wherever the integral is non-zero, it holds that
\baee
\int\eta(\tilde{\phi}_k^{(N)})   (\partial_x \tilde{\phi}_k^{(N)})^4 dx \le C R^2 \|\partial_{xx} \tilde{\phi}_k^{(N)}\|^2.
\eaee
Substituting this back into the estimate for $J^3_{\rm{crit}}$
\baee
|J^3| \le& \frac{\mu}{4} \|\partial_{xx} \tilde{\phi}_k^{(N)}\|^2 + C \frac{\|\tilde{b}_N'\|_\infty^2}{\mu} \int_{\{|\tilde{\phi}_k^{(N)}| \le 2R\}} (\partial_x \tilde{\phi}_k^{(N)})^4 dx + \mathcal{R}_1(t)\\
\le&  \big( \frac{\mu}{4} + C \frac{R^2}{\mu} \big) \|\partial_{xx} \tilde{\phi}_k^{(N)}\|^2 + C(1+t)^{-2}.
\eaee
\textbf{Estimate of $J^1$ (Noise Correction).}
Similar to $J^3$ $$J^1 = \frac{1}{2} \sum_{i=1}^N \left\| \tilde{\sigma}_{i,N}''(\tilde{\phi}_{k}^{(N)}+\bar u) (\partial_x \tilde{\phi}_{k}^{(N)}+\bar u_x)(\partial_x \tilde{\phi}_{k}^{(N)}+\bar u_x) \right\|^2.$$
 Since $\tilde{\sigma}''(u)$ is supported on $|u| \le 2R$, directly applying the Localized Gagliardo-Nirenberg inequality, we get that 
\baee
J^1_{\rm{crit}} \le&\, C \|\tilde{\sigma}_N''\|_\infty^2 \int_{|\tilde{\phi}_{k}^{(N)}+\bar u|\le 2R} \left( (\partial_x \tilde{\phi}_{k}^{(N)})^4 + \bar u_x^4\right) dx\\
\le& \, C R^2 \|\tilde{\sigma}_N''\|_\infty^2  \|\partial_{xx} \tilde{\phi}_{k}^{(N)}\|^2 +C(1+t)^{-2}.
\eaee
This term does not carry a $\mu^{-1}$ factor as it does not originate from a cross-term with the viscous dissipation.

\textbf{Estimate of $J^5$ (Flux Remainder).}
Using integration by parts and the convexity of $f_N$, we decompose $J^5 = J^5_1 + J^5_2 + J^5_3:$
\baee
    J^5 &= -\int_{B(N)} \frac{1}{2} f_N''(\bar{u})\bar{u}_x (\partial_{x}\tilde \phi_{k}^{(N)})^2 dx  + \int_{B(N)} [f_N'(\bar{u}+\tilde \phi_k^{(N)}) - f_N'(\bar{u})]\bar{u}_x  \partial_{xx}\tilde\phi_{k}^{(N)} dx \\
        &\quad + \int_{B(N)} [f_N'(\bar{u}+\tilde \phi_k^{(N)}) - f_N'(\bar{u})]\partial_{x}\tilde \phi_{k}^{(N)} \partial_{xx}\tilde\phi_{k}^{(N)} dx = J^5_1 + J^5_2 + J^5_3.
\eaee
Since $f''_N \ge \alpha > 0$ and $\bar{u}_x > 0$, the first term is negative and aids stability (or is dropped) as in the proof of Lemma \ref{lm2.4}. The remaining terms are bounded by
\[
    J^5 \le \epsilon || \partial_{xx}\tilde\phi_{k}^{(N)}||^2 + C(1+t)^{-2}||\tilde \phi_k^{(N)}||^2 + C||\partial_{x}\tilde \phi_{k}^{(N)}||^2.
\]
By Lemma \ref{lm1.2}, the term $||\tilde{\phi}_k^{(N)}||^2$ is uniformly bounded, allowing us to absorb it into the remainder.

Summing all estimates and substituting back into the energy identity
$$d\|\partial_x \tilde{\phi}_{k}^{(N)}\|^2 + \Big( \mu - \frac{\mu}{4} - C R^2 \big( {\mu}^{-1}{\|\tilde{b}_N'\|_\infty^2} + \|\tilde{\sigma}_N''\|_\infty^2 \big) \Big) \|\partial_{xx} \tilde{\phi}_{k}^{(N)}\|^2 dt \le \mathcal{R}_1(t) dt + d\mathcal  M^{(N)}_k(t).$$
Let $c_{\rm{diss}} := \frac{3\mu}{4} - C R^2 (\mu^{-1}\|\tilde{b}_N'\|_\infty^2 + \|\tilde{\sigma}_N''\|_\infty^2)$. By the small data assumption \eqref{small data treshhold}, we ensure $c_{\rm{diss}} > 0$. Integrating over $[0, T]$, taking expectations, and applying Gr\"{o}nwall's inequality, it holds that
$$\mathbb{E}\|\partial_x \tilde{\phi}_{k}^{(N)}(T)\|^2 + c_{\rm{diss}} \mathbb{E}\int_0^T \|\partial_{xx} \tilde{\phi}_{k}^{(N)}\|^2 dt \le \|\phi_0\|_{H^1}^2 + C \int_0^T \mathbb{E}\|\partial_x \tilde{\phi}_{k}^{(N)}\|^2 dt + C \ln(1+T).$$ 
This completes the proof.
\end{proof}

\subsection{Convergence and existence of solution}
In this subsection, we address the convergence of the Galerkin approximations $\tilde{\phi}_k^{(N)}$ to the solution of the cut-off equation \eqref{eq:cutoff_coeffs} as $k \to \infty$. Since the coefficients $\tilde{b}_N$ and $\tilde{\sigma}_{i,N}$ depend on the solution, the operator is quasilinear. To pass the limit in the term $\tilde{b}_N(\tilde{\phi}_k^{(N)})\partial_x \tilde{\phi}_k^{(N)}$, we require strong convergence in $L^2(0,T; L^2_N)$ or $L^2(0,T; H^1_N)$.

We employ the compactness method via the Aubin-Lions-Simon theorem to establish tightness, and the Gy\"ongy-Krylov framework \cite{gyongy1996existence} to establish convergence in probability.

\begin{Lemma}[Time Regularity and Tightness] \label{lem:tightness_k}
Let $\tilde{\phi}_k^{(N)}$ be the solution to the Galerkin system \eqref{galerkin}. For fixed $N$, the laws of the sequence $\{\tilde{\phi}_k^{(N)}\}_{k \geq 1}$ are tight in the space $\mathcal{X}_N := L^2(0, T; H_N^1)$.
\end{Lemma}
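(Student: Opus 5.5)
We prove tightness through the standard Aubin--Lions--Simon route: combine the uniform spatial bounds already available with a uniform fractional time-regularity bound in a weaker space. For fixed $N$ we use the compact embeddings
\[
H^2_N \hookrightarrow\hookrightarrow H^1_N \hookrightarrow H^{-1}_N ,
\]
which hold because $B(N)$ is bounded. By Aubin--Lions--Simon (in the fractional form of Flandoli--Gatarek), for $\alpha\in(0,\frac12)$ and $p\ge 2$ with $\alpha p>1$, the set
\[
K_L:=\Big\{v:\ \|v\|_{L^2(0,T;H^2_N)}+\|v\|_{W^{\alpha,p}(0,T;H^{-1}_N)}\le L\Big\}
\]
is relatively compact in $\mathcal X_N=L^2(0,T;H^1_N)$. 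By Chebyshev's inequality, tightness therefore reduces to showing
\[
\sup_k \mathbb E\|\tilde\phi_k^{(N)}\|_{L^2(0,T;H^2_N)}^2<\infty \quad\text{and}\quad \sup_k \mathbb E\|\tilde\phi_k^{(N)}\|_{W^{\alpha,p}(0,T;H^{-1}_N)}<\infty .
\]

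\textbf{Spatial bound.} The first estimate follows directly from Lemma \ref{lm1.2} together with Lemma \ref{truncated-derivative-estimate}, and both are uniform in $k$.

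\textbf{Time regularity.} We write
\[
\tilde\phi_k^{(N)}(t)=P_k^N\phi_0^{(N)}+\int_0^t\big(\tilde A_{k,N}+\tilde B_{k,N}\big)\,ds+\int_0^t\tilde\Sigma_{k,N}\,dQ_N(s)
\]
and treat the three pieces separately.
\begin{itemize}
\item The initial datum is constant in time and bounded in $L^2_N$.
\item For the drift, $P_k^N$ is an orthogonal projection, hence a contraction on $L^2_N$. The truncated coefficients $\tilde b_N$ and $\tilde\sigma_{i,N}'^2$ are bounded thanks to the cut-off $\theta_R$. So $\|\tilde A_{k,N}\|_{H^{-1}_N}\le C(\|\tilde\phi_k^{(N)}\|_{H^1_N}+\|\bar u_x\|)$. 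Using the maximum principle of Lemma \ref{Lemma4.3} (or directly the boundedness of $f_N'$), we get $\|\tilde B_{k,N}\|_{H^{-1}_N}\le C\|\tilde\phi_k^{(N)}\|$. By Lemma \ref{lm1.2}, the drift integral is then bounded in $\mathbb E\|\cdot\|_{W^{1,2}(0,T;H^{-1}_N)}$, which embeds into $W^{\alpha,2}$.
\item For the stochastic integral, we use $\|\partial_x\tilde\sigma_{i,N}\|_{H^{-1}_N}\le\|\tilde\sigma_{i,N}\|_{L^2_N}\le C_N$. This holds because $\tilde\sigma_{i,N}$ is bounded, there are only $N$ noises, and the domain is bounded. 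The Flandoli--Gatarek lemma (BDG plus the Garsia-type fractional estimate) then gives
\[
\mathbb E\Big\|\int_0^\cdot\tilde\Sigma_{k,N}\,dQ_N\Big\|^p_{W^{\alpha,p}(0,T;H^{-1}_N)}\le C\,\mathbb E\int_0^T\|\tilde\Sigma_{k,N}\|^p_{L_2(\mathbb R^N,H^{-1}_N)}\,dt\le C_{N,T}
\]
for all $p$ and all $\alpha<\frac12$.
\end{itemize}
Choosing $p>1/\alpha$ then closes the time-regularity bound.

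\textbf{Main obstacle.} The delicate point is the uniform $H^2$ bound. It relies on Lemma \ref{truncated-derivative-estimate}, which requires the threshold condition on $R$. That bound is what upgrades compactness from $L^2(0,T;L^2_N)$ to $L^2(0,T;H^1_N)$, and this stronger compactness is needed to pass to the limit in the quasilinear term $\tilde b_N\partial_x\tilde\phi$. A secondary concern is checking that the drift bound in $H^{-1}_N$ does not need the gradient in $L^\infty$. The cut-off makes every coefficient bounded, so the bound stays linear in $\|\tilde\phi\|_{H^1_N}$, and the $L^2$-in-time control is sufficient.
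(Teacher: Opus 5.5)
Your proof follows the same approach as the paper, with one justification to repair, which needs a change of basis.

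\textbf{Comparison with the paper.} The skeleton is identical: uniform $L^2(0,T;H^2_N)$ bounds from Lemmas \ref{lm1.2} and \ref{truncated-derivative-estimate}, a Flandoli--Gatarek fractional time estimate, the Aubin--Lions--Simon embedding into $\mathcal{X}_N$, and Chebyshev. The only real difference is the weak space. The paper bounds the drift in $L^2(\Omega;L^2(0,T;L^2_N))$ and proves $W^{\alpha,2}(0,T;L^2_N)$ regularity, whereas you measure time regularity in $H^{-1}_N$.

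Your choice is more economical. In $H^{-1}_N$ the drift is linear in $\|\tilde\phi_k^{(N)}\|_{H^1_N}$, and (given the fix below) the noise is bounded uniformly in $(\omega,t,k)$. So the time estimate needs only $\mathbb{E}\int_0^T\|\tilde\phi_k^{(N)}\|_{H^1_N}^2\,dt$. The paper's $L^2_N$ bound needs the full $H^2$ estimate. It also needs control of the quadratic term produced by differentiating the coefficient $\tilde b_N(\cdot)$, essentially $\int_0^T\|\partial_x\tilde\phi_k^{(N)}\|_{L^4}^4\,dt$, which the available second-moment bounds do not give directly. In exchange, the paper's route only ever uses that $P_k^N$ is a contraction on $L^2_N$.

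\textbf{The step to repair.} All your $H^{-1}_N$ bounds on the projected drift and noise require $\sup_k\|P_k^N\|_{\mathcal{L}(H^{-1}_N)}<\infty$. This does not follow from $P_k^N$ being an $L^2_N$-orthogonal projection. By duality, $\|P_k^N\|_{\mathcal{L}(H^{-1}_N)}=\|P_k^N\|_{\mathcal{L}(H^1_N)}$. For a general orthonormal basis with span dense in $H^2_N$, which is all the paper stipulates, this norm can be unbounded in $k$. For example, let two consecutive basis elements be $(\varphi_a\pm\varphi_b)/\sqrt2$, with $\varphi_a,\varphi_b$ Dirichlet eigenfunctions and $\lambda_b\gg\lambda_a$. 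The projection containing the first element but not the second has $\mathcal{L}(H^1_N)$-norm at least of order $\sqrt{\lambda_b/\lambda_a}$.

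The fix is to take $\{e_j\}$ to be the Dirichlet eigenfunctions of $-\partial_{xx}$ on $(-N,N)$. Then $P_k^N$ commutes with $(-\partial_{xx})^{-1/2}$ and is a contraction on $H^{-1}_N$. This costs nothing, because the paper's proof of Lemma \ref{truncated-derivative-estimate}, on which you rely, already uses $-\partial_{xx}\tilde\phi_k^{(N)}\in H_k^N$, which presupposes this basis.

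\textbf{Minor points.} First, the maximum principle of Lemma \ref{Lemma4.3} is proved for $\phi^{(N)}$, not for the Galerkin iterates. The flux bound must therefore come from the boundedness of $f_N'$, which was your second option. Second, the condition $\alpha p>1$ is unnecessary for compactness in $L^2(0,T;H^1_N)$. If you keep it, the embedding your set $K_L$ actually needs is $W^{1,2}(0,T;H^{-1}_N)\hookrightarrow W^{\alpha,p}(0,T;H^{-1}_N)$. This holds for every $p<\infty$ when $\alpha<\tfrac12$, not just the embedding into $W^{\alpha,2}$ that you wrote.
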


\begin{proof}
By Lemma 4.3, we have the following uniform bound
\begin{equation*} \label{eq:uniform_bound_k}
\sup_{k \in \mathbb{N}} \mathbb{E} \left[ \sup_{t \in [0,T]} \|\tilde{\phi}_k^{(N)}(t)\|_{H^1}^2 + \int_0^T \|\tilde{\phi}_k^{(N)}(t)\|_{H^2}^2 dt \right] \le C_N.
\end{equation*}
To apply the compactness theorem, we need estimates on the time derivative. The equation \eqref{galerkin} can be written in integral form:
\begin{equation*}
    \tilde{\phi}_k^{(N)}(t) = P_k^N \phi_0^{(N)} + \int_0^t \big[\tilde{A}_{k,N}(\tilde{\phi}_k^{(N)}(s)) + \tilde{B}_{k,N}(\tilde{\phi}_k^{(N)}(s))\big] ds + \int_0^t \tilde{\Sigma}_{k,N}(\tilde{\phi}_k^{(N)}(s)) dQ_N(s).
\end{equation*}
Since $\tilde{\phi}_k^{(N)}$ is uniformly bounded in $L^2\big(\Omega; L^2(0,T; H^2)\big)$ (by Lemmas \ref{lm1.2} and \ref{truncated-derivative-estimate}), and the coefficients are bounded due to the smooth cut-off $\theta_R$, it follows that the drift term is uniformly bounded in $L^2(\Omega; L^2(0,T; L^2_N))$. The diffusion term is a square-integrable martingale in $H^1$. 
By standard fractional Sobolev estimates for the It\^o processes (see, e.g., Flandoli and Gatarek \cite{flandoli1995martingale}), for any $\alpha \in (0, 1/2)$, there exists a constant $C_N$ such that
\begin{equation*}
    \mathbb{E} \| \tilde{\phi}_k^{(N)} \|_{W^{\alpha, 2}(0, T; L^2_N)}^2 \le C_N.
\end{equation*}
We invoke the Aubin-Lions-Simon compact embedding \cite{aubin1963theoreme,lions1969quelques,simon1986compact}
$$
L^2(0, T; H^2_N) \cap W^{\alpha, 2}(0, T; L^2_N) \overset{c}{\hookrightarrow} L^2(0, T; H_N^1).
$$
Combining this with Chebyshev's inequality, we conclude that the laws of $\{\tilde{\phi}_k^{(N)}\}_{k \geq 1}$ are tight in $\mathcal{X}_N$.
\end{proof}

\begin{prp}[Convergence of the Galerkin approximations] \label{prop:conv_galerkin}
For fixed $N$, as $k \to \infty$, the sequence $\tilde{\phi}_k^{(N)}$ converges in probability in $L^2(0, T; H_N^1)$ to a unique strong solution $\phi^{(N)}$ of the cut-off equation \eqref{eq:cutoff_coeffs}.
\end{prp}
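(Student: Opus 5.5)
The plan is to follow the Gyöngy--Krylov route: combine the tightness of Lemma \ref{lem:tightness_k} with pathwise uniqueness for the cut-off equation \eqref{d1.2}. Gyöngy--Krylov says that a tight sequence converges in probability on the original space provided every pair of subsequences has joint limit laws concentrated on the diagonal.

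\textbf{Joint tightness and Skorokhod representation.} Fix two subsequences $k_m, k'_m$. Consider the joint laws of
\[
\bigl(\tilde\phi^{(N)}_{k_m},\tilde\phi^{(N)}_{k'_m},(B_i)_{i\le N}\bigr)
\]
on $\mathcal X_N\times\mathcal X_N\times C([0,T];\R^N)$. These are tight by Lemma \ref{lem:tightness_k}. By Prokhorov and Skorokhod (or Jakubowski, if one wants to include a weak topology on $L^2(0,T;H^2_N)$), we get a new probability space with copies $(\hat\phi_m,\hat\phi'_m,\hat B_m)$. These converge a.s. to $(\hat\phi,\hat\phi',\hat B)$ in $L^2(0,T;H^1_N)$. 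The uniform bounds from Lemmas \ref{lm1.2} and \ref{truncated-derivative-estimate} transfer by equality of laws, so the limits lie in $L^2(\hat\Omega;L^2(0,T;H^2_N))$.

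\textbf{Identifying the limits.} We show that $\hat\phi$ and $\hat\phi'$ are martingale solutions of \eqref{d1.2} driven by $\hat B$ with respect to a common filtration. For a fixed basis element $e_j$, the drift pairings $\langle\tilde A_{k,N}(v)+\tilde B_{k,N}(v),e_j\rangle$ converge. Indeed, $P^N_k e_j=e_j$ for $k\ge j$, the coefficients $\tilde b_N,\tilde\sigma_{i,N},f_N$ are Lipschitz and bounded thanks to $\theta_R$, and $\hat\phi_m\to\hat\phi$ strongly in $L^2(0,T;H^1_N)$. Strong $H^1$ convergence is exactly what handles the quasilinear product $\tilde b_N(\hat\phi_m)\partial_x\hat\phi_m$. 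Uniform integrability comes from the uniform $L^2(\hat\Omega;L^2H^2)$ bounds. For the stochastic integrals, the integrands $\langle\partial_x\tilde\sigma_{i,N}(\hat\phi_m+\bar u),e_j\rangle$ converge in $L^2(0,T)$ in probability, and $\hat B_m\to\hat B$ uniformly. The standard lemma on convergence of stochastic integrals (Debussche--Glatt-Holtz--Temam) then gives the weak formulation of Definition \ref{def:weak_solution} for \eqref{d1.2} on the domain $(-N,N)$. Continuity in time in $L^2_N$ follows from the Itô formula in the Gelfand triple $H^2_N\subset H^1_N\subset L^2_N$.

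\textbf{Pathwise uniqueness.} This is the main obstacle. The $L^2$ difference estimate \eqref{monott} fails for nonlinear $\sigma$, so I would instead reproduce the Kružkov doubling argument of Lemma \ref{lockruz} for the truncated equation on $(-N,N)$. This is actually simpler here. The domain is bounded with Dirichlet data, so we may take $\psi\equiv1$ on $(-N,N)$, and global $L^1$ is available because $\hat\phi,\hat\phi'\in L^2_N\subset L^1_N$. The terms $I_3+I_7$ and $I_6+I_8$ are treated exactly as in the lemmas of Section 3. The composite coefficients $\tilde b_N\circ\theta_R$ and $\tilde\sigma_{i,N}\circ\theta_R$ satisfy the required regularity, namely $C^1$ with $(\tilde b_N^{1/2})'$ bounded and $\sigma''$ bounded in $l^2$, and the dependence on $x$ through $\bar u$ produces only additional $\bar u_x$-commutator terms of order $\delta$. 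The boundary terms vanish because both solutions equal zero at $\pm N$ in $H^1_0$. The outcome is $\hat{\mathbb E}\|\hat\phi(t)-\hat\phi'(t)\|_{L^1_N}\le\|\phi^{(N)}_0-\phi^{(N)}_0\|_{L^1_N}=0$, hence $\hat\phi=\hat\phi'$ a.s.

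\textbf{Conclusion.} The joint limit law is therefore supported on the diagonal. Gyöngy--Krylov then yields convergence in probability of the full sequence $\tilde\phi^{(N)}_k$ in $L^2(0,T;H^1_N)$ on the original space. Passing to the limit as in the identification step shows that the limit $\phi^{(N)}$ is an $\mathcal F_t$-adapted strong solution, and it is unique by the same pathwise uniqueness argument.
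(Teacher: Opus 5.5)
\textbf{The uniqueness step has a gap.} Your overall architecture matches the paper's proof: tightness from Lemma \ref{lem:tightness_k}, Skorokhod representation of pairs, identification of the quasilinear drift through strong $L^2(0,T;H^1_N)$ convergence, pathwise uniqueness, then Gy\"ongy--Krylov. Carrying $(B_i)_{i\le N}$ through the Skorokhod step is in fact more careful than the paper.

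The problem is the doubling argument with $\psi\equiv1$ up to $\pm N$. In the doubled formulation the test function is $\rho_{\delta_0}(t-s)\varrho_\delta(x-y)\psi(s,y)$. Integrating by parts in $x$ (at frozen $(s,y)$) and in $y$ then produces boundary terms such as
\[
\beta_\varepsilon'\big(u(t,\pm N)-v(s,y)\big)\,\mathcal A(u)\,u_x(t,\pm N)\,\varrho_\delta(\pm N-y)
\quad\text{and}\quad
[b,\varepsilon](u,v)\,\partial_x\eta\big|_{x=\pm N},
\]
where $\mathcal A$ is the diffusion coefficient of \eqref{d1.2}. These terms do not vanish. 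The Dirichlet condition gives only $u(t,\pm N)=\bar u(t,\pm N)$, while $v$ is evaluated at an interior point $y$, and $u_x(t,\pm N)$ is an uncontrolled boundary flux. The fact that $\hat\phi,\hat\phi'\in H^1_0$ kills boundary terms only when both functions are evaluated at the same point.

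The alternative, $\psi\in C^2_c(-N,N)$ with $\psi\uparrow 1$, does not rescue the Section 3 route either. There the boundary information is carried by the second-order terms that fall on $\psi$, e.g.\ $\mathbb E\int|B(u)-B(v)|\,\partial_{yy}\psi$ with $B'=b$. The Section 3 lemmas discard exactly these terms, which is also why the finite-speed bound of Lemma \ref{lockruz} cannot hold for a parabolic equation. So ``treated exactly as in Section 3'' is not available.

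\textbf{The paper avoids all of this, and your reason for dropping the $L^2$ route is mistaken here.} The obstruction in \eqref{monott} exists only without gradient control, whereas both limits lie in $L^2(0,T;H^2_N)$. Set $u=\hat\phi+\bar u$, $v=\hat\phi'+\bar u$, $w=u-v\in L^2(0,T;H^2_N\cap H^1_0)$ and $s_i:=\sigma_{i,N}\circ\theta_R$. After the Stratonovich cancellation, the only noise-generated term is
\[
\sum_{i\le N}\int u_xv_x\,(s_i'(u)-s_i'(v))^2\,dx\le C\|u_x\|_{L^\infty_N}\|v_x\|_{L^\infty_N}\|w\|^2 ,
\]
so that
\[
d\|w\|^2\le-\mu\|w_x\|^2\,dt+C\big(1+\|u_x\|_{L^\infty_N}^2+\|v_x\|_{L^\infty_N}^2\big)\|w\|^2\,dt+dM_t .
\]
Since $\|\partial_x\hat\phi\|_{L^\infty_N}\lesssim\|\hat\phi\|_{H^2_N}$, the bracket is a.s.\ integrable on $(0,T)$. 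Applying It\^o's formula to $\exp\big\{-\int_0^t C(1+\|u_x\|_{L^\infty_N}^2+\|v_x\|_{L^\infty_N}^2)\,dr\big\}\,\|w(t)\|^2$, with localization, gives $w\equiv0$. This is what the paper means by ``$L^2(0,T;W^{1,\infty})$ control closes the $L^2$ energy estimate.''

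If you prefer an $L^1$ argument, drop the doubling altogether. Since $w$ is a strong solution, It\^o's formula applies directly to $\int_{-N}^N\beta_\varepsilon(w)\,dx$. The boundary terms then vanish because $\beta_\varepsilon'(0)=0$. The noise residual is $\frac12\sum_i\int\beta_\varepsilon''(w)(s_i'(u)-s_i'(v))^2u_xv_x\,dx=O(\varepsilon)$.
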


\begin{proof}
We define the path space $\mathcal{Z}_N = \mathcal{X}_N \times \mathcal{X}_N$. Let $\mu_k$ be the law of $\tilde{\phi}_k^{(N)}$. Consider any pair of indices $(k, m)$. By the tightness established in Lemma \ref{lem:tightness_k}, the collection of joint laws $\mathcal{L}(\tilde{\phi}_k^{(N)}, \tilde{\phi}_m^{(N)})$ is  tight in $\mathcal{Z}_N$.

Let $\big\{(\tilde{\phi}_{k_j}^{(N)}, \tilde{\phi}_{m_j}^{(N)})\big\}$ be a subsequence converging in law to a measure $\nu$ on $\mathcal{Z}_N$. By Skorokhod's representation theorem, there exists a probability space $(\tilde{\Omega}, \tilde{\mathcal{F}}, \tilde{\mathbb{P}})$ and random variables $(u^*, v^*)$ such that the subsequence converges $\tilde{\mathbb{P}}$-almost surely to $(u^*, v^*)$ in the topology of $\mathcal{X}_N \times \mathcal{X}_N$. Due to the almost sure strong convergence in $L^2(0,T; H^1_N)$, we can pass to the limit in the quasilinear terms. Specifically,
$$
\tilde{b}_N(u^*) \partial_x u^* = \lim_{j \to \infty} \tilde{b}_N\big(\tilde{\phi}_{k_j}^{(N)}\big) \partial_x \tilde{\phi}_{k_j}^{(N)},
$$
where the convergence holds in $L^1\big((0,T) \times (-N,N)\big)$.

Regarding the stochastic integral, recall that the cut-off equation \eqref{d1.2} involves a finite sum of $N$ noise terms. However, to identify the limit as a solution to the target problem, we must ensure the convergence of the full stochastic series. Under Hypothesis (H3), the tail of the noise coefficients satisfies $\sum_{i=1}^\infty \|\sigma'_i\|_{H^2_N}^2 < \infty$. Standard martingale convergence arguments ensure that the stochastic integrals converge uniformly in $L^2\big(\tilde{\Omega}; C([0,T]; L^2_N)\big)$. This allows us to identify that both $u^*$ and $v^*$ as martingale solutions to the cut-off equation \eqref{eq:cutoff_coeffs} with the same initial data.

Although the drift is quasilinear, the solutions $u^*$ and $v^*$ inherit the uniform bound from Lemma \ref{truncated-derivative-estimate}, specifically $u^*, v^* \in L^2(0,T; H^2_N)$. By the Sobolev embedding, this implies $u^*, v^* \in L^2\big(0,T; W^{1,\infty}(-N,N)\big)$. This control on the spatial gradient $\partial_x u^*$ is sufficient to close the $L^2$ energy estimate for the difference $u^*-v^*$, thereby establishing pathwise uniqueness. Thus, $u^*$ and $v^*$ must satisfy $u^* = v^*$ $\tilde{\mathbb{P}}$-almost surely. Consequently, the limit measure $\nu$ is supported on the diagonal $\{(u, v) \in \mathcal{Z}_N : u = v\}$.

By the Gy\"ongy-Krylov Lemma \cite{gyongy1996existence}, this implies that the original sequence $\tilde{\phi}_k^{(N)}$ converges in probability in the topology of $\mathcal{X}_N$ to a process $\phi^{(N)}$. Since the approximations $\tilde{\phi}_k^{(N)}$ are adapted to the original filtration, the limit $\phi^{(N)}$ is a strong solution.
\end{proof}
Finally, we present the proof of Theorem \ref{th2.2}.
\begin{proof}[\textbf{Proof of Theorem \ref{th2.2}}]

\

The proof is devided into four steps.

\textbf{Step 1. Uniform Estimates.}
From the a priori estimates in Section 2 (specifically Lemmas 2.1, 2.3, and 2.9) and the corresponding uniform bounds on the Galerkin approximations (Lemmas \ref{Lemma4.3}, \ref{lm1.2}, and \ref{truncated-derivative-estimate}), which were derived independently of the domain size, we have the uniform bound:
\begin{equation} \label{eq:uniform_bound_N}
\sup_{N \ge 1} \mathbb{E} \left[ \sup_{t \in [0,T]} \|\phi^{(N)}(t)\|_{H^1}^2 + \int_0^T \|\phi^{(N)}(t)\|_{H^2}^2 dt \right] \le C.
\end{equation}
Furthermore, the cut-off maximum principle (Lemma \ref{Lemma4.3}) ensures $\|\phi^{(N)}\|_{L^\infty((0,T)\times \mathbb{R})} \le \|\phi_0\|_{\infty}$ almost surely.

\textbf{Step 2. Local Compactness.}
Let $K \subset \mathbb{R}$ be an arbitrary compact subset. Using the same fractional time regularity argument from Lemma \ref{lem:tightness_k} ($W^{\alpha, 2}$-regularity)  with the uniform $H^2$ bound \eqref{eq:uniform_bound_N},  the Aubin-Lions-Simon theorem implies that the sequence of laws of $\{\phi^{(N)}\}_{N\ge 1}$ restricted to $K$ is tight in $L^2(0,T; H^1(K))$. By a diagonalization procedure over a sequence of compact sets $K_m = [-m, m]$, we conclude that the sequence of laws is tight in the Fréchet space $\mathcal{Y} := L_{\rm{{loc}}}^2([0,T]\times \mathbb{R})$.

\textbf{Step 3. Convergence via the Gy\"ongy-Krylov Theory.}
Consider two sequences $\phi^{(N)}$ and $\phi^{(M)}$ with $N, M \to \infty$. Let $(\phi^*, \psi^*)$ be limits of any subsequence in law defined on a new probability space via Skorokhod representation theorem.
Thanks to the strong local convergence provided by the compactness in Step 2, we have that
$$
b(\phi^{(N)} + \bar u) \partial_x \phi^{(N)} \rightarrow b(\phi^* + \bar u) \partial_x \phi^* 
$$
weakly in $L^2(\Omega \times (0,T); L^2(\mathbb{R}))$ and strongly in $\rm{L_{\rm{{loc}}}^{1}}$. Additionally, the condition (H3) ensures the convergence of the infinite stochastic series in $L^2(\Omega; L^2(0,T; L^2_{\rm{loc}}))$. This allows us to identify both $u^*$ and $v^*$ as martingale solutions to  the modified equation (with globally Lipschitz coefficients $\tilde{b}, \tilde{\sigma}$) on $\mathbb{R}$.

Crucially, Theorem 1.3  establishes the $\rm{L_{loc}^{1}}$ contraction principle, which implies pathwise uniqueness for the target equation. Therefore, $\phi^* = \psi^*$ almost surely.
By the Gy\"ongy-Krylov characterization, the entire sequence $\phi^{(N)}$ converges in probability in $L^2_{\rm{{loc}}}\big((0, T); H^1_{\rm{{loc}}}(\mathbb{R})\big)$ to a process $\phi$.

\textbf{Step 4. Identification of the Limit.}
The limit $\phi$ inherits the uniform bounds from Step 1. Thus $\phi \in L^2\Big(\Omega; L^\infty\big(0,T; H^1(\mathbb{R})\big)\Big) \cap L^2\Big(\Omega; L^2\big(0,T; H^2(\mathbb{R})\big)\Big)$.
The convergence in probability allows us to pass to the limit in the stochastic integral term and the quasilinear drift term. Thus $\phi$ is a strong solution to \eqref{perturbation}.

Finally, we justify the removal of the cut-off. By the hypothesis of Theorem \ref{th2.2}, the initial perturbation is sufficiently small. Specifically, we fix the cut-off parameter $R := 2\mathfrak{M}$, where $\mathfrak{M}$ is the bound on the initial data defined in Theorem \ref{th2.2}. The smallness assumption in Theorem \ref{th2.2} translates to the algebraic condition $\mathfrak{M} < C^*(2\mathfrak{M})$. Under this condition, the Maximum Principle \eqref{mp} ensures $\|\phi(t)\|_\infty \le \mathfrak{M} < R$ almost surely, the solution never exits the region $[-R, R]$. Consequently, $\theta_R(\phi+\bar{u}) \equiv \phi+\bar{u}$ for all $t \ge 0$, and the modified coefficients coincide with the original physical coefficients. This confirms that the constructed limit $\phi$ is indeed the global strong solution to the original problem. Recalling the decay estimates from Lemmas \ref{td} and \ref{t2}, the proof is complete.
\end{proof}

\section{Appendix}
\subsection{Supplements  for Decay Estimates}\label{appen area}

\begin{proof}[\textbf{Proof of Lemma {\rm{\ref{dfn area}}}}]
We proceed the proof by contradiction. Set $\rho^*= \frac{\beta}{\alpha}\wedge \frac{1}{\alpha-1}$ and  assume that 
\bae\label{det contra}
X(t)\neq O\big(t^{-\rho^* }\big).\eae
Let $r:=\frac{(\alpha-1)\beta}{\alpha}.$ Then  we divide the decay proof into the following  two cases.

\textbf{Case 1. $r<1.$}
Define $$Y_n(t):= n t^{-\beta/\alpha},t\geq1.$$ 
By \eqref{det contra}, we know that the graph of $X_t$ intersects the family of graphs of $Y_n$ infinitely many times as $t\rightarrow \infty.$ 
 Then we have sequences $t_{n-1}\leq t_n$  increasing to infinity, such that 
 $$X_{t_n} = n t_n^{-\beta / \alpha}.$$
For $n$ large enough, we construct $\tilde X_t$ equals to $X_t$ at $t_n$ and trace $\tilde X_t$ backward from $t_n$ by the following way 
\baee
\frac{d}{d t} \tilde{X}_t=&2 \tilde{X}_t^\alpha, 0\leq t\leq t_n,\\
\tilde{X}_{t_n}  =& n t_n^{-\beta / \alpha}.
\eaee
For $0\leq t\leq t_n,$ one has 
\bae\label{explicit X}
\tilde{X}_t=\left(2(\alpha-1)\left(t_n-t\right)+n^{1-\alpha} t_n^{\frac{(\alpha-1) \beta}{\alpha}}\right)^{-\frac{1}{\alpha-1}}.
\eae
Using \eqref{explicit X}, we choose $0\leq s_n\leq t_n$ such that $\tilde{X}_{s_n}  =s_n^{-\beta / \alpha}$, which yields
\baee
2(\alpha-1)\left(t_n-s_n\right)+n^{1-\alpha} t_n^{\frac{(\alpha-1) \beta}{\alpha}}=\tilde{X}_{s_n}^{1-\alpha}=s_n{ }^{ \frac{(\alpha-1)\beta}{\alpha}}.
\eaee
By the comparison principle of ODEs, one has 
\bae\label{ode comp1}
X_t\geq \tilde X_t, \quad \forall t\in[s_n,t_n],
\eae
\begin{figure}
    \centering
    \includegraphics[width=0.4\linewidth]{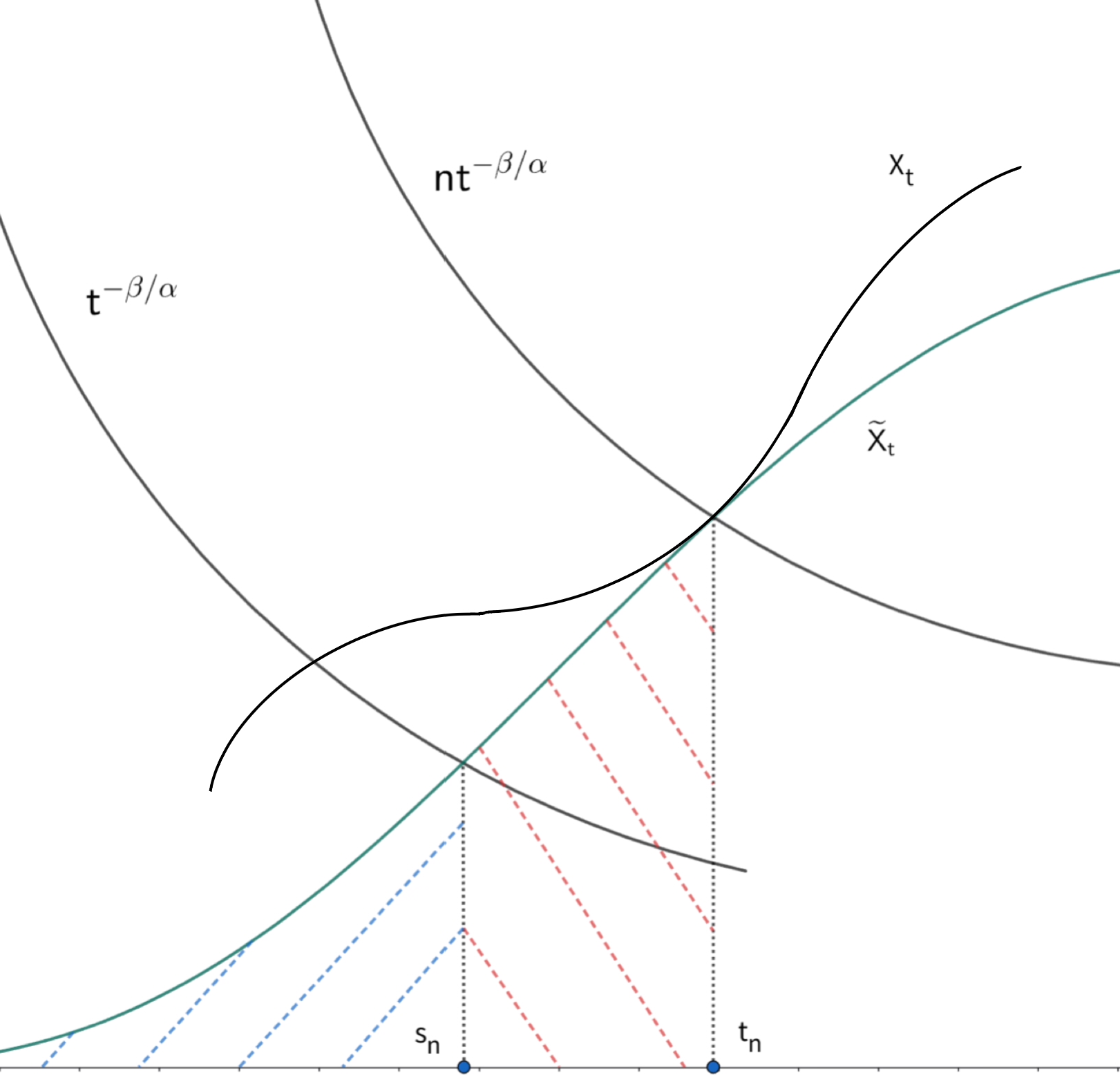}
    \caption{$X_t\geq \tilde X_t $}
    \label{fig:placeholder}
\end{figure}
and
\bae\label{sn formula}
& s_n^r=2(\alpha-1)\left(t_n-s_n\right)+n^{1-\alpha} t_n^r.
\eae
Dividing both sides of \eqref{sn formula} by $t_n$, we obtain
$$\lim_{n\rightarrow \infty} \frac{s_n}{t_n}=1.$$ 
Then we conclude that
\baee
\lim_{n\rightarrow \infty} \frac{t_n-s_n}{t_n^r}=\frac{1}{2(\alpha-1)}\lim_{n\rightarrow \infty}\big[ (s_n/t_n)^r-n^{1-\alpha} \big]=\frac{1}{2(\alpha-1)}.
\eaee
For $n$ large enough, one has
\baee\label{r<1}
 s_n\leq t_n-(2\alpha)^{-1}t_n^r=t_n-(2\alpha)^{-1}t_n^{\frac{\alpha-1}{\alpha}\beta}.
\eaee
By \eqref{ode comp1}, we compute that

\ 

\baee
& \int_{s_n}^{t_n} {X}_\tau d \tau\geq \int_{s_n}^{t_n} \tilde{X}_\tau d \tau \\
& =\int_{t_n-(2\alpha)^{-1} t_n^{\frac{(\alpha-1)\beta}{\alpha}}}^{t_n}\left(2(\alpha-1)\left(t_n-t\right)+n^{1-\alpha} t_n^{\frac{(\alpha-1)\beta}{\alpha}}\right)^{\frac{1}{1-\alpha}} d t \\
& =\frac{1}{2\alpha (\alpha-2)} t_n^{\frac{\alpha-2}{\alpha}\beta} \left[\alpha^{\frac{1}{\alpha-1}}\left(\alpha-1+\alpha n^{1-\alpha}\right) -\alpha n^{2-\alpha}\right] \\
& \geq c_\alpha t_n^{\frac{\alpha-2}{\alpha} \beta}.
\eaee
Also note that, in this case, one has $\frac12 t_n\leq s_n$. Thus
\baee\label{for r>1}
 \int_{\frac12 t_n}^{t_n} \tilde{X}_\tau d \tau \geq  \int_{s_n}^{t_n} \tilde{X}_\tau d \tau\geq c_\alpha t_n^{\frac{\alpha-2}{\alpha} \beta}.
\eaee

\textbf{Case 2: $r\geq1.$}

Recall from Case 1 that the desired estimates hold provided that the ratio $r$ satisfies the condition
\begin{equation*} \label{cond:small_perturbation}
    |r - 1| < \delta
\end{equation*}
for some small constant $\delta > 0$. 

We now observe that the choice of the weight parameter $\beta$ in the previous construction allows for flexibility.  We can choose $\tilde{\beta}<\beta$ such that $|r(\tilde{\beta}) - 1| < \delta$. With this choice of $\tilde{\beta}$, the system falls strictly within the regime of Case 1.

In both cases, we conclude that
\bae\label{final area defi}
\int_{s_n}^{t_n} {X}_\tau d \tau\geq c_\alpha t_n^{({\alpha-2})\rho^*}. 
\eae
However, since $p<({\alpha-2})\rho^*$, the following inequality
$$C (1+t_n)^p\geq \int_{0}^{t_n} {X}_\tau d \tau\geq \int_{s_n}^{t_n} {X}_\tau d \tau \geq c_\alpha t_n^{({\alpha-2})\rho^*}$$
leads to a contradiction for sufficiently large $n$. This completes the proof of the decay rate by contradiction. 

\textbf{Next, we demonstrate that the decay rate obtained in Lemma \ref{dfn area} is \textit{sharp}}. 

Consider the deterministic process $X_t$ satisfying the critical ODE for $t \ge 1$
\begin{equation} \label{eq:sharpness_ode}
    \dot{X}_t = -X_t^\alpha + t^{-\beta}, \quad X_1 > 0,
\end{equation}
we analyze the asymptotic behavior of $X_t$ in the three regimes dictated by the parameter $r := \frac{\alpha-1}{\alpha}\beta$.

\textbf{Case 1. $r < 1$.}
 We claim that the solution decays at the rate of the forcing term, i.e., $X_t\sim t^{-\frac{\beta}{\alpha}}$.
We seek for an asymptotic solution of the form $X_t = C t^{-\frac{\beta}{\alpha}}$. Substituting this ansatz into \eqref{eq:sharpness_ode} yields
\[
    -\frac{\beta}{\alpha} C t^{-\frac{\beta}{\alpha}-1} = -C^\alpha t^{-\beta} + t^{-\beta}.
\]
Multiplying both sides by $t^{\beta}$, we obtain
\[
    -\frac{\beta}{\alpha} C t^{\beta - \frac{\beta}{\alpha} - 1} = -C^\alpha + 1.
\]
Observe that the exponent on the left-hand side satisfies
\[
    \beta - \frac{\beta}{\alpha} - 1 = \beta\left(1 - \frac{1}{\alpha}\right) - 1 = \beta\frac{\alpha-1}{\alpha} - 1 = r - 1.
\]
Since $r < 1$, we have $t^{r-1} \to 0$ as $t \to \infty$. Thus, the differential term becomes negligible compared to the algebraic terms, leading to the asymptotic balance $-C^\alpha + 1 = 0$, which implies $C=1$. This confirms that $X_t$ cannot decay faster than $O(t^{-\frac{\beta}{\alpha}})$.

\textbf{Case 2. $r > 1$.}
 We claim that the solution is limited by the natural decay rate of the nonlinearity, i.e., $X_t \sim t^{-\frac{1}{\alpha-1}}$, regardless of the faster decay of the forcing term.
We employ the ansatz $X_t = C t^{-\frac{1}{\alpha-1}}$. Substituting this into \eqref{eq:sharpness_ode} yields
\[
    -\frac{1}{\alpha-1} C t^{-\frac{1}{\alpha-1}-1} = -C^\alpha t^{-\frac{\alpha}{\alpha-1}} + t^{-\beta}.
\]
Note that the exponents of the differential term and the nonlinear term match: $-\frac{1}{\alpha-1} - 1 = -\frac{\alpha}{\alpha-1}$. Multiplying the equation by $t^{\frac{\alpha}{\alpha-1}}$, we obtain
\[
    -\frac{1}{\alpha-1} C = -C^\alpha + t^{\frac{\alpha}{\alpha-1} - \beta}.
\]
Since $r > 1$ implies $\beta > \frac{\alpha}{\alpha-1}$, the forcing term vanishes asymptotically: $t^{\frac{\alpha}{\alpha-1} - \beta} \to 0$ as $t \to \infty$. The dynamic is dominated by the intrinsic dissipation balance
\[
    -\frac{1}{\alpha-1} C = -C^\alpha \implies C = \left(\frac{1}{\alpha-1}\right)^{\frac{1}{\alpha-1}}.
\]
This confirms that even if the forcing decays very rapidly, the solution $X_t$ cannot decay faster than $O(t^{-\frac{1}{\alpha-1}})$.

\textbf{Case 3.} $r=1$. In this critical regime, the parameter $r = \frac{\alpha-1}{\alpha}\beta = 1$ implies that the decay exponent of the forcing term satisfies $\beta = \frac{\alpha}{\alpha-1}$. We demonstrate that the solution decays at the rate $X_t \sim t^{-\frac{1}{\alpha-1}}$, where the time derivative, the nonlinearity, and the forcing term are all of the same order.

We employ the ansatz $X_t = C t^{-\frac{1}{\alpha-1}}$. Differentiating with respect to time, we have
\begin{equation}
    \dot{X}_t = -\frac{1}{\alpha-1} C t^{-\frac{1}{\alpha-1}-1} = -\frac{C}{\alpha-1} t^{-\frac{\alpha}{\alpha-1}}.
\end{equation}
Substituting this ansatz into the governing ODE \eqref{eq:sharpness_ode}
\begin{equation*}
    -\frac{C}{\alpha-1} t^{-\frac{\alpha}{\alpha-1}} = -(C t^{-\frac{1}{\alpha-1}})^\alpha + t^{-\beta}.
\end{equation*}
Since $r=1$ implies $\beta = \frac{\alpha}{\alpha-1}$, the power of $t$ is identical in all three terms. Dividing by $t^{-\frac{\alpha}{\alpha-1}}$, we obtain the algebraic equation for the constant $C$
\begin{equation*}
    C^\alpha - \frac{1}{\alpha-1}C - 1 = 0.
\end{equation*}
Let $g(C) = C^\alpha - \frac{1}{\alpha-1}C - 1$. Since $\alpha > 1$, we observe that $g(0) = -1$ and by the Intermediate Value Theorem, there exists a unique positive constant $C^*$ satisfying the equation above.

Thus, for $r=1$, the solution decays as $X_t \sim C^* t^{-\frac{1}{\alpha-1}}$. Noting that $\frac{\beta}{\alpha} = \frac{1}{\alpha-1}$ in this regime, this result is consistent with the general sharp decay estimate formula
\begin{equation}
    X_t = O\left(t^{-\left(\frac{\beta}{\alpha} \wedge \frac{1}{\alpha-1}\right)}\right).
\end{equation}
\textbf{Conclusion.}
Combining both cases, the optimal decay rate is exactly in the form
\[
    X_t \sim t^{-(\frac{\beta}{\alpha}\wedge \frac{1}{\alpha-1})}.
\]
This proves that the estimate derived in Lemma 2.4 is sharp.
\end{proof}
In the proof of Lemma \ref{stochastic area}, recall that if $\theta_j^\prime> 0$,   $\psi_j=\gamma_j-1/\theta^\prime_j-1$, $\xi=\max\{\xi_j:\theta^\prime_j=0\}$ and we set
  \bae\label{bar rho 2nd}
  \bar\rho =(1-p)\wedge \big(\beta\wedge({\theta}^{-1}-\zeta)-1\big)\wedge (-\xi)\wedge \min\Big\{\frac{\theta^\prime_j+\xi_j}{\theta^\prime_j \psi_j}:\theta^\prime_j>0\text{ and }\psi_j<0\Big\}.
  \eae
  We have the following reduction of the proper coefficients.
\begin{Lemma}\label{Coe lm}
   If coefficients in Lemma \ref{stochastic area} satisfy the following
    \begin{enumerate}
      \item[\rm{(A1) }]{ $
\xi_j < \theta^\prime_j(\bar\rho\psi_j -1), \text{ if } \theta^\prime_j>0 \text{ and }\psi_j\geq 0;\, j=1,\cdots, \mathfrak n.$}
      \item[\rm{(A2) }]{  $
\kappa<(\alpha-1)\bar\rho-1.$}
      \end{enumerate}
      Then  one can take appropriate $y$, $w\geq0$ and $z>0$, such that 
      \bae\label{ab>1} 
a_j>1,\ \forall j\in \Theta,\quad \mathfrak b>1,\quad \mathfrak c<0,
\eae
 and
 \begin{eqnarray}
-z(y+\rho+\xi)+w&>&0,\label{dissa}\\
     yz-w-1&>&0, \label{sm prob}\\
 w/\theta^\prime_j+\gamma_j+(a_j-1)z&>&1,\ \forall j\in \{j,\theta_j^\prime> 0\},\label{odta1}\\
  (\mathfrak b-1)z&>&1\label{odta2}. 
\end{eqnarray}
\end{Lemma}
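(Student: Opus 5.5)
The plan is to fix $\rho<\bar\rho$ and choose the free parameters in the order $w$, then $y$, then $z$. First, I reduce to $\rho$ close to $\bar\rho$. Decay $X_t=O(t^{-\rho'})$ with $\rho'\in(\rho,\bar\rho)$ implies $X_t=O(t^{-\rho})$. So it suffices to treat $\rho\in(\bar\rho-\eta,\bar\rho)$ for some small $\eta>0$, which is needed because (A1) and (A2) are strict inequalities at $\bar\rho$ that only persist near it. I take $w=0$. The lemma allows $w\geq0$, and with $w=0$ the weight $t^{y}\bar X_t^{w}$ in \eqref{qd weighted} is purely temporal, which removes most of the coupling between the conditions.

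Next I check the conditions not involving $y$ or $z$. For $a_0=(\alpha-1)\rho-\kappa>1$, assumption (A2) gives $\kappa<(\alpha-1)\bar\rho-1$, and continuity in $\rho$ gives $\kappa<(\alpha-1)\rho-1$ for $\rho$ near $\bar\rho$. For $\mathfrak b>1$, the definition \eqref{bar rho 2nd} gives $\bar\rho\le\beta\wedge(\theta^{-1}-\zeta)-1$, hence $\mathfrak b=\beta\wedge(\theta^{-1}-\zeta)-\rho>1$ for every $\rho<\bar\rho$.

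The main step is choosing $y>0$ so that $\mathfrak c<0$ and $a_j>1$ for all $j$ with $\theta'_j>0$. The condition $\mathfrak c<0$ reads $y<-\rho-\xi$, and the right side is positive since $\rho<\bar\rho\le-\xi$. The condition $a_j>1$ reads $y<\theta'_j(\psi_j\rho-1)-\xi_j=:y_j$, and I need every $y_j>0$. If $\psi_j<0$, then $\rho<\bar\rho\le(\theta'_j+\xi_j)/(\theta'_j\psi_j)$, and multiplying by $\theta'_j\psi_j<0$ reverses the inequality to $\theta'_j\psi_j\rho>\theta'_j+\xi_j$, i.e.\ $y_j>0$. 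If $\psi_j\ge0$, (A1) gives $y_j>0$ at $\rho=\bar\rho$, and by continuity also for $\rho$ close to $\bar\rho$. This is the only place the reduction is essential, since $y_j$ decreases as $\rho$ decreases. Having finitely many strictly positive upper bounds, I pick
\[
0<y<\min\Big\{-\rho-\xi,\ \min_{j:\theta'_j>0}y_j\Big\}.
\]
This yields \eqref{ab>1}, and \eqref{dissa} follows at once because $w=0>z\mathfrak c$ for every $z>0$.

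Finally I take $z$ large, which settles the remaining conditions. With $w=0$, condition \eqref{sm prob} becomes $yz>1$, i.e.\ $z>1/y$. Condition \eqref{odta1} becomes $\gamma_j+(a_j-1)z>1$, which holds for $z$ large because $a_j-1>0$. Condition \eqref{odta2} holds for $z>1/(\mathfrak b-1)$. Taking $z$ larger than the maximum of these finitely many thresholds completes the choice. I expect the sign bookkeeping for $y_j$ in the two cases $\psi_j<0$ and $\psi_j\ge0$ to be the only delicate point. If one insisted on $\rho$ arbitrarily far below $\bar\rho$ in the case $\psi_j\ge0$, the condition could fail, so the monotonicity reduction must be stated explicitly at the start of the argument.
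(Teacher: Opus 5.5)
Your proof is correct, and it streamlines the paper's argument rather than reproducing it. The paper keeps $w$ free. It recasts \eqref{dissa}--\eqref{odta1} as nonemptiness of the interval in \eqref{w bound}, converts this into \eqref{trans w bound0}--\eqref{trans w bound2} and then into \eqref{z bound cases}, and only then does the case split on the sign of $\psi_j$ that you also use to get $y_j>0$.

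Your choice $w=0$ shows this interval analysis is unnecessary. With your $y>0$ and $z$ large, $0$ lies in that interval anyway, and each remaining condition becomes a separate threshold on $z$. The paper's route describes all admissible $w$; yours is shorter. As a side benefit, $w=0$ is the choice under which the pathwise comparison in \eqref{delta prob} is valid, namely bounding $t^{y}\bar X_t^{w}$ from below by $T^{y}(2\delta)^{w}$. With $w=0$ this only needs $t^{y}\ge T^{y}$ for $t\ge\tau_T\ge T$. For $w>0$ it would need $\bar X_t\ge 2\delta$ on $[\tau_T,\bar\tau_T)$, which is the opposite of what holds there.

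Your explicit reduction to $\rho$ near $\bar\rho$ also closes a gap the paper leaves open. The paper asserts $a_0>1$ directly from (A2). In its case $\psi_j>0$ it invokes $\bar\rho\theta'_j\psi_j-\xi_j-\theta'_j>0$, whereas the same inequality with $\rho$ in place of $\bar\rho$ is what is needed. Both hold only for $\rho$ in a left neighbourhood of $\bar\rho$. Since $a_0=(\alpha-1)\rho-\kappa$ does not involve $y,w,z$ at all, the conclusion can genuinely fail for $\rho$ far below $\bar\rho$. Your monotonicity remark is exactly what makes this harmless for Lemma \ref{stochastic area}.
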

\begin{proof}
   Recall  \eqref{eq:ab_def} in Lemma \ref{stochastic area},
\baee
\Theta=&\{0\}\cup\{j:\theta_j^\prime> 0\},\\
a_0 =& (\alpha-1)\rho-\kappa,\quad \alpha_0:=\alpha,\\
a_j =& \psi_j\rho-(y+\xi_j)/\theta^\prime_j,\quad \theta_j^\prime> 0,\\
\alpha_j=&w/\theta^\prime_j+\gamma_j,\quad \theta_j^\prime> 0,\\  
\mathfrak b = &\beta \wedge(1/\theta-\zeta)- \rho,\\
\mathfrak c=&y+\rho+\xi.
\eaee
First, recalling the definition of $\bar\rho$ in  \eqref{bar rho 2nd}, one has $\rho <\bar \rho \leq \beta \wedge (\frac{1}{\theta} - \zeta) - 1$.
Rearranging this inequality and using  the definition of $\mathfrak b$, we conclude that \boxed{\mathfrak b > 1}. 

To show that $\mathfrak c<0$ and $a_j>1$ for all $\theta^\prime_j>0,$ we set $y$ small enough, specifically,
\bae\label{y upper bound}
y<(-\rho-\xi) \wedge\min_{j,\theta_j^\prime> 0}\big\{\psi_j \theta^\prime_j \rho-\theta^\prime_j-\xi_j\big\}.
\eae
It is straightforward to verify that, under Assumption {\rm{(A2)}} and with $y$ chosen as in \eqref{y upper bound}, we have \boxed{a_j>1} for all $j\in \Theta$ and \boxed{\mathfrak c<0}. Thus \eqref{ab>1} is guaranteed under \eqref{bar rho 2nd} and \eqref{y upper bound}. 

It remains to establish \eqref{dissa}-\eqref{odta2}. First,  \eqref{odta2} is a direct consequence of  $z>0$ and \eqref{ab>1}. Next,  \eqref{dissa}-\eqref{odta1} are satisfied provided the following bound holds
\bae\label{w bound}
 \big(z(y+\rho+\xi)\big)\vee \max_{j,\theta_j^\prime> 0}\big\{ \theta^\prime_j \big(1-\gamma_j-(a_j-1)z\big)\big\}  <w<yz-1.
\eae
The existence of $w\geq0$ and $y,z>0$  satisfying \eqref{w bound} is equivalent to the following conditions
\begin{eqnarray}
 yz-1&>&0,\label{trans w bound0}\\
    yz-1&>&z(y+\rho+\xi),\label{trans w bound1}\\
    yz-1&>&  \theta^\prime_j \big(1-\gamma_j-(a_j-1)z\big), \quad \text{ if }\theta_j^\prime> 0.\label{trans w bound2}
\end{eqnarray}
First, recalling \eqref{bar rho 2nd} and $\xi=\max\{\xi_j,\theta^\prime_j=0\}$, we have $\rho+\xi<\bar\rho+\max\{\xi_j,\theta^\prime_j=0\}<0.$ Then,  \eqref{y upper bound}  is equivalent to 
\bae\label{exist y}
0<\psi_j \theta^\prime_j \rho-\theta^\prime_j-\xi_j,\text{ if } \theta_j^\prime> 0.
\eae
 Next, \eqref{trans w bound0} and \eqref{trans w bound1} can be rewritten as
\bae\label{z lower bound}
z>\frac{-1}{\rho+\xi}\vee \frac{1}{y}.
\eae
Furthermore, substituting $a_j=\psi_j\rho-(y+\xi_j)/\theta^\prime_j$, \eqref{trans w bound2} becomes
\bae\label{z bound cases}
(\rho\theta^\prime_j \psi_j-\xi_j-\theta^\prime_j )z>1+(1-\gamma_j)\theta^\prime_j=-\theta^\prime_j\psi_j,\quad \text{ if }\theta_j^\prime> 0.
\eae
Finally, to ensure all the above hold, it  suffices to choose appropriate $y,z$ satisfying \eqref{exist y}-\eqref{z bound cases}. For $\theta_j^\prime> 0$, we discuss the following separate cases.
\begin{enumerate}
    \item [Case 1.]{ $\psi_j=\gamma_j-1/\theta^\prime_j-1>0.$ Using \eqref{exist y}, the existence of a suitable $y$ implies 
    \bae\label{psi_j>0 bar rho}
\bar\rho>\frac{\theta^\prime_j+\xi_j}{\theta^\prime_j\psi_j},\forall \theta_j^\prime> 0.
    \eae
In this case, one has $\bar \rho \theta^\prime_j \psi_j-\xi_j-\theta_j^\prime>0.$ Then, by choosing $z$ sufficiently large, we obtain \eqref{z lower bound} and \eqref{z bound cases}.

    }
     \item [Case 2.]{ $\psi_j=\gamma_j-1/\theta^\prime_j-1=0.$ Then \eqref{exist y} reduces to
      \bae\label{psi_j=0 bar rho}
\xi_j<-\theta^\prime_j.
    \eae
     Note that \eqref{z bound cases} becomes
     \baee
-(\xi_j+\theta^\prime_j ) z>-\theta^\prime_j\psi_j=0.
     \eaee
     Using \eqref{psi_j=0 bar rho} and $z>0$, one obtains \eqref{z lower bound} and \eqref{z bound cases}.
     }
    
     Summarizing Cases 1 and 2, we conclude that Assumption {\rm{(A1)}} implies \eqref{psi_j>0 bar rho} and \eqref{psi_j=0 bar rho}, respectively. 
     \item [Case 3.]{ $\psi_j=\gamma_j-1/\theta^\prime_j-1<0.$ Then \eqref{exist y} is equal to
     \bae\label{psi_j<0 bar rho}
\rho<\frac{\theta^\prime_j+\xi_j}{\theta^\prime_j\psi_j},\quad \forall \theta_j^\prime> 0,
    \eae
    which is guaranteed by the definition of $\bar \rho$ in \eqref{bar rho 2nd} and $\rho<\bar \rho.$
    Note that the RHS of \eqref{z bound cases} is positive. Using \eqref{psi_j<0 bar rho}, one has $\rho\theta^\prime_j \psi_j-\xi_j>\theta_j^\prime>0$. In this case, to ensure \eqref{z lower bound} and \eqref{z bound cases}, it suffices to take $z$ large enough.
     }
\end{enumerate}
\end{proof}
\begin{remark}
    We emphasize that the parameters in Assumptions {\rm{(A1) }} and {\rm{(A2) }} are sharp with respect to the Stochastic Area Inequality framework. As demonstrated in the proof, Assumptions {\rm{(A1) }} and {\rm{(A2) }} are equivalent to the necessary and sufficient conditions for the existence of admissible weights $y, w, z > 0$ required to close the weighted energy estimates. Specifically, the boundary in {\rm{(A1) }} represents the critical scaling threshold where the stochastic energy injection scales identically with the viscous dissipation. Any relaxation of these conditions would result in an empty parameter set  in the derivation of \eqref{middle Xt}.
\end{remark}
\subsection{Proofs of Uniqueness Lemmas}\label{app kruz}
In this Appendix, we provide the detailed convergence arguments for the error terms defined in Section \ref{subsec kruz}. While the algebraic structure of these terms mirrors the inviscid stochastic setting analyzed by \cite{biswas2014stochastic}, we include the full proofs here to track the specific dependencies on the mollification parameters $\epsilon$ and $\delta$ in the presence of the viscous background state.

First we establish a weak $L^1$ continuity Lemma for  later use.
\begin{Lemma}\label{weakL^1cont}
    Let $u(t)-\bar u(t)\in X_1(T)$ be the solution of \eqref{perturbation} with the initial data $u_0-\bar u_0$. Then for every non-negative test function $\psi \in C_c^2\left(\mathbb{R}\right)$,
$$
\lim _{h \rightarrow 0} \mathbb E\left[\frac{1}{h} \int_0^h \int_\R\left|u(t, x)-u_0(x)\right| \psi(x) d x d t\right]=0 .
$$
\end{Lemma}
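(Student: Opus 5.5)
We prove the weak $L^1$ continuity at $t=0$ by reducing it to $L^2$ continuity of the perturbation together with continuity of the profile, using only the compact support of $\psi$. Write $u-u_0=(\phi(t)-\phi_0)+(\bar u(t)-\bar u_0)$ with $\phi=u-\bar u$, and set $K:=\operatorname{supp}\psi$. Cauchy--Schwarz on $K$ gives
\begin{equation*}
\int_\R |u(t,x)-u_0(x)|\psi(x)\,dx \le \|\psi\|_{L^\infty}|K|^{1/2}\|\phi(t)-\phi_0\| + \|\psi\|_{L^\infty}\int_K|\bar u(t,x)-\bar u_0(x)|\,dx .
\end{equation*}

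The second term is deterministic. Since $\bar u$ solves \eqref{background} classically with $\bar u_t=-f'(\bar u)\bar u_x$, and $\bar u$ stays in $[u_-,u_+]$ so that $|f'(\bar u)|$ is bounded, Proposition~\ref{lm2.1} gives $\|\bar u_x\|_{L^\infty}\le C$. Hence $\int_K|\bar u(t)-\bar u_0|\,dx\le C|K|\,t$. Averaging over $[0,h]$ gives $O(h)$, which vanishes as $h\to0$.

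The first term is the core of the proof. By Definition~\ref{def:weak_solution}, $\phi$ is a continuous $L^2(\R)$-valued adapted process with $\phi(0)=\phi_0$, so $\|\phi(t)-\phi_0\|\to0$ almost surely as $t\downarrow0$. It follows pathwise that $\frac1h\int_0^h\|\phi(t)-\phi_0\|\,dt\to0$. To pass to expectations we dominate by
\begin{equation*}
\sup_{0\le t\le T}\|\phi(t)-\phi_0\|\le 2\sup_{0\le t\le T}\|\phi(t)\|+2\|\phi_0\|,
\end{equation*}
which is integrable: either by the bound $\mathbb E\sup_t\|\phi\|^p\le C_p\ln^{p/2}(2+t)$ from \eqref{bdg} (with $p=2$ handled by the same Burkholder--Davis--Gundy argument applied to \eqref{real-L^2-as}), or directly from $\phi\in X_1(T)$ combined with \eqref{real-L^2-as} and BDG. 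Dominated convergence then gives $\mathbb E\,\frac1h\int_0^h\|\phi(t)-\phi_0\|\,dt\to0$.

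The main point to verify is this uniform integrability, i.e.\ that $\mathbb E\sup_{[0,T]}\|\phi\|$ is finite. If the estimate \eqref{bdg} were unavailable, we would instead use the explicit rate: apply It\^o's formula to $\|\phi(t)-\phi_0\|^2$, test against $\phi_0\in H^1$ (after mollifying $\phi_0$ if needed), and bound the drift by $\mathbb E\int_0^t(\|\phi_x\|^2+1)\,ds$ using \eqref{real-L^2}. This gives $\mathbb E\|\phi(t)-\phi_0\|^2\to0$ as $t\to0$, which suffices after averaging over $[0,h]$.
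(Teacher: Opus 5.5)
Your proposal is correct and follows essentially the same route as the paper. Like the paper, you combine pathwise continuity of $\phi=u-\bar u$ in $L^2(\R)$ (hence of $u$ in $L^1_{\rm{loc}}$ on $\operatorname{supp}\psi$, with the smooth profile contributing only $O(h)$) with dominated convergence, and you add an explicit integrable majorant via the moment bound \eqref{bdg}. Note that the pathwise maximum principle \eqref{mp} gives a simpler, deterministic majorant $(2\|\phi_0\|_{\infty}+u_+-u_-)\|\psi\|_{L^1(\R)}$, so the fallback It\^o argument in your last paragraph is unnecessary.
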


\begin{proof}
    Since $u-\overline{u} \in C([0,T]; L^2(\mathbb{R}))$ almost surely, the pathwise map $t \mapsto u(t)$ is continuous in $\rm{L_{loc}^{1}}(\mathbb{R})$. The result follows from the dominated convergence theorem and standard density arguments as detailed in \cite[Lemma 2.1]{biswas2014stochastic}.
\end{proof}

\begin{Lemma}\label{lm5.3}
	\baee
	& \lim _{(\delta,\varepsilon)\rightarrow (0,0) }  \lim_{\delta_0 \rightarrow 0 } I_1 
	= & \mathbb{E} \int_{\mathbb{R}}\left|u_0(y)-v_0(y)\right| \psi(0, y) d y .
	\eaee
\end{Lemma}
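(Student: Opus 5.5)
Recall that
\[
I_1=\mathbb{E}\int_{\Pi_T}\int_{\mathbb{R}}\beta_\varepsilon\bigl(u_0(x)-v(s,y)\bigr)\,\psi(s,y)\,\rho_{\delta_0}(-s)\,\varrho_\delta(x-y)\,dx\,dy\,ds .
\]
Since $\operatorname{supp}\rho\subset[-1,0]$, the factor $\rho_{\delta_0}(-s)$ is supported in $s\in[0,\delta_0]$ and has unit mass there. So $I_1$ is a time average of a continuous functional of $v(s,\cdot)$ over the short interval $[0,\delta_0]$. The plan has three steps: first let $\delta_0\to0$ using the weak $L^1$ continuity at $t=0$ (Lemma \ref{weakL^1cont}), then let $\delta\to0$ using standard mollifier convergence, and finally let $\varepsilon\to0$ using $\beta_\varepsilon(r)\to|r|$.

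\textbf{Step 1 ($\delta_0\to0$).} Replace $v(s,y)$ by $v_0(y)$ and $\psi(s,y)$ by $\psi(0,y)$. Since $\beta_\varepsilon$ is $1$-Lipschitz (because $|\beta'|\le1$), the error in the first replacement is bounded by
\[
\mathbb{E}\int_0^{\delta_0}\rho_{\delta_0}(-s)\int_{\mathbb{R}}|v(s,y)-v_0(y)|\,\tilde\psi(y)\,dy\,ds,
\qquad
\tilde\psi(y):=\sup_s\psi(s,y)\int_{\mathbb{R}}\varrho_\delta(x-y)\,dx .
\]
Here $\tilde\psi$ is compactly supported, since $\varrho_\delta$ integrates to one in $x$. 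Because $\rho_{\delta_0}\le C\delta_0^{-1}$, this error is at most $C\delta_0^{-1}\mathbb{E}\int_0^{\delta_0}\int_{\mathbb{R}}|v(s)-v_0|\,\tilde\psi\,dy\,ds$, which tends to $0$ by Lemma \ref{weakL^1cont} applied to $v$. For the replacement of $\psi(s,y)$ by $\psi(0,y)$, the error is bounded by $\|\partial_s\psi\|_\infty\,\delta_0$ times
\[
\mathbb{E}\int\!\!\int\beta_\varepsilon\bigl(u_0(x)-v(s,y)\bigr)\,\varrho_\delta\,\mathbf 1_{K}\,dx\,dy .
\]
This integral is finite because $|u_0|,|v|\le\mathfrak M$ almost surely by \eqref{mp}, and $\psi$ has compact support $K$. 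Hence
\[
\lim_{\delta_0\to0}I_1=\int\!\!\int\beta_\varepsilon\bigl(u_0(x)-v_0(y)\bigr)\,\varrho_\delta(x-y)\,\psi(0,y)\,dx\,dy .
\]
This expression is deterministic, since the initial data are deterministic.

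\textbf{Step 2 ($\delta,\varepsilon\to0$).} Use the bounds $\bigl|\beta_\varepsilon(r)-|r|\bigr|\le C\varepsilon$ and $\bigl|\beta_\varepsilon(a-b)-\beta_\varepsilon(a'-b)\bigr|\le|a-a'|$. These give
\[
\Bigl|\lim_{\delta_0\to0}I_1-\int|u_0-v_0|\,\psi(0,\cdot)\Bigr|
\le C\varepsilon\,|\operatorname{supp}\psi|
+\int\!\!\int|u_0(x)-u_0(y)|\,\varrho_\delta(x-y)\,\psi(0,y)\,dx\,dy .
\]
The last term tends to $0$ as $\delta\to0$ by continuity of translations in $L^1_{\rm loc}$. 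To see this, write $u_0=\bar u_0+(u_0-\bar u_0)$: the function $\bar u_0$ is smooth, and $u_0-\bar u_0\in H^1\subset L^1_{\rm loc}$. The limit is therefore joint in $(\delta,\varepsilon)$, which is exactly what Lemma \ref{lm5.3} requires.

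\textbf{Main obstacle.} The only delicate point is Step 1. There the interchange requires uniform integrability near $s=0$ even though $v$ is not in $L^1(\mathbb{R})$. This is handled by the compact support of the test function, the almost sure $L^\infty$ bound \eqref{mp}, and Lemma \ref{weakL^1cont}.
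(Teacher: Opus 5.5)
Your proposal is correct and follows the paper's proof essentially step for step. The paper also bounds the difference between $I_1$ and $\int\!\!\int\beta_\varepsilon(u_0(x)-v_0(y))\,\psi(0,y)\,\varrho_\delta(x-y)\,dx\,dy$ by two pieces: an $O(\delta_0)$ term coming from $\partial_s\psi$, and a time-averaged $\int_K|v(s,y)-v_0(y)|\,dy$ term that vanishes by Lemma \ref{weakL^1cont}. It then passes $(\delta,\varepsilon)\to(0,0)$ by the standard mollifier argument, which it cites from Biswas et al.\ rather than writing out as you do.

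The differences are cosmetic. You control the $\partial_s\psi$ error with the almost sure $L^\infty$ bound \eqref{mp}, where the paper uses local $L^2$ bounds. Also, to invoke Lemma \ref{weakL^1cont} you should dominate your $\tilde\psi$ by a nonnegative $C_c^2$ function, since $\tilde\psi$ itself need not be $C^2$.
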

	\begin{proof}
Define
		\begin{equation*}
			\begin{aligned}
				\mathcal{A}_1:= & \mathbb E \int_{\Pi_T} \int_{\mathbb{R}} \beta\left(u_0(x)-v(s, y)\right) \psi(s, y) \varrho_\delta(x-y) \rho_{\delta_0}(-s) d x d y d s \\
				& -\mathbb E \int_{\mathbb{R}} \int_{\mathbb{R}} \beta\left(u_0(x)-v_0(y)\right) \psi(0, y) \varrho_\delta(x-y) d x d y.
			\end{aligned}
		\end{equation*}
Suppose that $\operatorname{supp} \psi(s, \cdot) \subset K$ compact. Then following the procedures in \cite[Lemma 3.1]{biswas2014stochastic}, we have
		\begin{equation*}
			\begin{aligned}
				\left|\mathcal{A}_1\right| \leq & \left\|\psi_t\right\|_{\infty}\left\|\beta^{\prime}\right\|_{\infty} \delta_0 \mathbb E \int_{\Pi_T} \int_{\mathbb{R}} \chi_K(y) \chi_{K+\delta}(x) \left|u_0(x)-v(s, y)\right| \rho_{\delta_0}(-s) \varrho_\delta(x-y) d x d y d s \\
				& +\left\|\beta^{\prime}\right\|_{\infty}\mathbb  E \int_0^T \int_K \psi(0, y)\left|v(s, y)-v_0(y)\right| \rho_{\delta_0}(-s) d y d s\\
     \leq & \left\|\psi_t\right\|_{\infty}\left\|\beta^{\prime}\right\|_{\infty} \delta_0 \left[\int_{K+\delta}|u_0(x)|\dif x  + \mathbb E \int_0^T \int_K|v(s, y)| \rho_{\delta_0}(-s) d y d s\right] \\
				& +\left\|\beta^{\prime}\right\|_{\infty}\mathbb  E \int_0^T \int_K \psi(0, y)\left|v(s, y)-v_0(y)\right| \rho_{\delta_0}(-s) d y d s\\
     \leq & \left\|\psi_t\right\|_{\infty}\left\|\beta^{\prime}\right\|_{\infty} \delta_0 \left[\left(|K|+\delta\right)^\frac12\|u_0\| + |K|^\frac12 \sup_{0\leq s\leq T}\mathbb E \|v(s)\|\right] \\
				& +C\left\|\beta^{\prime}\right\|_{\infty} \frac{1}{\delta_0} \int_0^{\delta_0} \mathbb E\left(\int_K \psi(0, y)\left|v(r, y)-v_0(y)\right| d y\right) d r .
			\end{aligned}
		\end{equation*}
		By Lemma \ref{weakL^1cont}, we arrive at
  $$
\lim _{\delta_0 \rightarrow 0} I_1=\mathbb E \int_{\mathbb{R}} \int_{\mathbb{R}} \beta\left(u_0(x)-v_0(y)\right) \psi(0, y) \varrho_\delta(x-y) d x d y.
$$
The passage of the limit $(\varepsilon,\delta)\rightarrow(0,0)$ is followed by the same arguments in the Step 2 of \cite[Lemma 3.1]{biswas2014stochastic}. Thus the proof is completed.
	\end{proof}

\begin{Lemma}
    \baee
&\lim _{(\delta,\varepsilon)\rightarrow (0,0) }  \lim_{\delta_0 \rightarrow 0 } I_2
&=\mathbb E\left[\int_{\Pi_T}|v(s, y)-u(s, y)| \partial_s \psi(s, y) d y d s\right] .
\eaee
\end{Lemma}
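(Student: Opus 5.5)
The plan is to handle $I_2$ in two stages, first removing the time mollification and then the space mollification and the entropy regularization. Recall
\[
I_2=\mathbb E\int_{\Pi_T}\int_{\Pi_T}\beta_\varepsilon(v(s,y)-u(t,x))\,\partial_s\psi(s,y)\,\rho_{\delta_0}(t-s)\varrho_\delta(x-y)\,dy\,ds\,dx\,dt .
\]

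\emph{Stage 1: the limit $\delta_0\to0$.} Define the comparison quantity
\[
\mathcal A_2:=I_2-\mathbb E\int_{\Pi_T}\int_{\mathbb R}\beta_\varepsilon(v(s,y)-u(s,x))\,\partial_s\psi(s,y)\varrho_\delta(x-y)\,dx\,dy\,ds .
\]
Since $\beta_\varepsilon$ is $1$-Lipschitz, we get
\[
|\mathcal A_2|\le \|\partial_s\psi\|_\infty\,\mathbb E\int_{\Pi_T}\int_{\mathbb R}\chi_K(y)\chi_{K+\delta}(x)\,\Big(\int_0^T\rho_{\delta_0}(t-s)|u(t,x)-u(s,x)|\,dt\Big)\,dx\,dy\,ds .
\]
We also need $\int_0^T\rho_{\delta_0}(t-s)\,dt=1$ for $s$ in the support of $\psi$ away from the boundary $T$. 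The support of $\rho$ lies in $[-1,0]$, so $t\in(s-\delta_0,s)$; we take $\psi$ supported in $[0,T)$ in time. Near $s=0$ the window $t\in(s-\delta_0,s)$ falls partly below $0$, but that boundary layer has measure $O(\delta_0)$. It is bounded by $C\delta_0\sup_s\mathbb E\|u(s)\|_{L^2(K+\delta)}$, which is finite because $u-\bar u\in X_1(T)$ and $\bar u$ is bounded.

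The main term is $\mathbb E\int_0^T\int_{K+\delta}\frac1{\delta_0}\int_{s-\delta_0}^s|u(t,x)-u(s,x)|\,dt\,dx\,ds$. We bound it by
\[
|K+\delta|^{1/2}\,\mathbb E\int_0^T\sup_{|h|\le\delta_0}\|u(s-h)-u(s)\|_{L^2(K+\delta)}\,ds .
\]
This tends to $0$ by dominated convergence, for two reasons. First, $u-\bar u\in C([0,T];L^2(\mathbb R))$ almost surely and $\bar u$ is smooth in time, so pathwise continuity in $L^2_{\rm loc}$ holds. Second, the integrand is dominated by $2\sup_t\|u(t)\|_{L^2(K+\delta)}$, which is integrable thanks to the bound $\mathbb E\sup_t\|\phi\|^2<\infty$ from \eqref{bdg} (or Lemma \ref{lmL^p}). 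This is essentially the argument of Lemma \ref{weakL^1cont}, applied at every time rather than only at $0$.

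\emph{Stage 2: the limit $(\delta,\varepsilon)\to(0,0)$.} First use $|\beta_\varepsilon(r)-|r||\le C\varepsilon$ to replace $\beta_\varepsilon$ by $|\cdot|$, at a cost of $C\varepsilon\|\partial_s\psi\|_{L^1}$. Then estimate
\[
\Big|\mathbb E\int\!\!\int|v(s,y)-u(s,x)|\,\partial_s\psi\,\varrho_\delta(x-y)-\mathbb E\int|v(s,y)-u(s,y)|\,\partial_s\psi\Big|
\le\|\partial_s\psi\|_\infty\,\mathbb E\int_0^T\int_K\int\varrho_\delta(z)\,|u(s,y-z)-u(s,y)|\,dz\,dy\,ds .
\]
Write $u=\phi+\bar u$, with $\phi$ the perturbation of $u$. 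The $\bar u$ part is at most $\delta\|\bar u_x\|_\infty|K|T$. For the $\phi$ part, continuity of translations in $L^1(K)$, applied pathwise to $\phi(s)\in L^2$, gives convergence to $0$. Since $\phi$ has an $H^1$-valued version for a.e. $s$, one can even get the explicit bound $\delta\,\mathbb E\int_0^T\|\phi_x\|\,ds\,|K|^{1/2}$, using \eqref{real-L^2}. Passing to the limit yields the claimed expression.

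The only step needing care is Stage 1, specifically the interchange of the time shift with expectation, and this is handled by the almost-sure time continuity of $\phi$ together with the uniform second-moment bound. No non-adaptedness issue arises, because this term has no stochastic integral.
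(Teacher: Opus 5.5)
Your proof is correct and follows the same skeleton as \cite[Lemma 3.2]{biswas2014stochastic}, which is all the paper invokes. In both, $I_2$ is compared with the equal-time quantity via the Lipschitz bound on $\beta_\varepsilon$ and the $O(\delta_0)$ layer near $s=0$ is discarded. Then $\varepsilon$ is removed using $|\beta_\varepsilon(r)-|r||\le\varepsilon$, and $\delta$ using spatial translations.

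The one real difference is how the time-mollification error is killed. You use the pathwise $C([0,T];L^2)$ continuity of $\phi$ from Definition \ref{def:weak_solution}, together with dominated convergence. The cited argument works differently; it is the same mechanism the paper uses for $\mathcal B_1$ in Lemma \ref{lm56}. It bounds the error by $\bigl(\mathbb E\int_0^1\int_0^T\int_{K+\delta}|u(t+\delta_0 r,x)-u(t,x)|^2\rho(-r)\,dx\,dt\,dr\bigr)^{1/2}$ and appeals to continuity of time translations in $L^2(\Omega\times(0,T)\times K)$. That route needs only integrability, not continuity in time, so it also covers solutions not known to be time-continuous. Yours is more self-contained in the present setting, and in Stage 2 it gives an explicit rate $O(\varepsilon+\delta)$ through the $H^1$ bound.

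Two cosmetic points:
\begin{itemize}
\item Your displayed bound on $|\mathcal A_2|$ drops the factor $\varrho_\delta(x-y)$. This is harmless, since you integrate it out in $y$ in the next step.
\item The boundary layer near $s=0$ involves $|v(s,y)-u(s,x)|$, so it needs the moment bound for $v$ as well as for $u$. Also, the caveat about $s$ near $T$ is unnecessary: the window $(s-\delta_0,s)$ lies inside $[0,T]$ whenever $s\ge\delta_0$.
\end{itemize}
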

\begin{proof}
   It follows from the same proof as in \cite[Lemma 3.2]{biswas2014stochastic}. 
\end{proof}
\begin{Lemma}
    \baee
\lim _{\left(\varepsilon, \delta^{-1} \varepsilon, \delta_0\right) \rightarrow(0,0,0)} I_4&=0 .
\eaee
\end{Lemma}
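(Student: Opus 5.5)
The plan is the classical doubling-of-variables treatment of the flux commutator, as in \cite[Lemma 3.3]{biswas2014stochastic}. We use the antisymmetry of the mollifier in $x-y$ and the fact that $f_\varepsilon(u,v)$ and $f_\varepsilon(v,u)$ nearly cancel after $\varepsilon\to0$. Since $\partial_y\varrho_\delta(x-y)=-\partial_x\varrho_\delta(x-y)$, we can rewrite
\[
I_4=-\mathbb E\int_{\Pi_T}\int_{\Pi_T}\big(f_\varepsilon(u,v)-f_\varepsilon(v,u)\big)\,\partial_x\varrho_\delta(x-y)\,\rho_{\delta_0}(t-s)\psi(s,y)\,dx\,dt\,dy\,ds .
\]

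First, we compare $f_\varepsilon(a,b)$ with $\operatorname{sign}(a-b)(f(a)-f(b))$. Because $\beta_\varepsilon'(r)=\operatorname{sign}(r)$ for $|r|\ge\varepsilon$, we have
\[
\Big|f_\varepsilon(a,b)-\operatorname{sign}(a-b)\big(f(a)-f(b)\big)\Big|\le 2\varepsilon\sup_{|z|\le\mathfrak M+1}|f'(z)|,
\]
and the same bound holds for $f_\varepsilon(b,a)$. By the maximum principle \eqref{mp}, $u$ and $v$ are almost surely bounded by $\mathfrak M$ plus the range of $\bar u$, so the constant is deterministic. The leading parts $\operatorname{sign}(u-v)(f(u)-f(v))$ and $\operatorname{sign}(v-u)(f(v)-f(u))$ coincide, so the difference $f_\varepsilon(u,v)-f_\varepsilon(v,u)$ is pointwise $O(\varepsilon)$. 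In addition, it vanishes unless $|u(t,x)-v(s,y)|<\varepsilon$.

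Second, we integrate this error against $|\partial_x\varrho_\delta(x-y)|\rho_{\delta_0}\psi$. The key point is that $\int|\partial_x\varrho_\delta|\,dx\le C\delta^{-1}$, while $\psi$ has compact support $K$ in $y$ and $\rho_{\delta_0}$ integrates to one in $t$. This gives
\[
|I_4|\le C\,\varepsilon\,\delta^{-1}\,T\,|K|\,\|f'\|_{L^\infty(-\mathfrak M-1,\mathfrak M+1)}\,\|\psi\|_\infty .
\]
The bound is uniform in $\delta_0$, which explains why the limit can be taken jointly as $(\varepsilon,\delta^{-1}\varepsilon,\delta_0)\to(0,0,0)$. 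Only compactness of the support of $\psi$ is needed here, not global $L^1$ integrability of $u$ or $v$. This is consistent with the local framework of Lemma \ref{lockruz}.

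The main subtle step is checking that the leading terms cancel exactly. Depending on how $f_\varepsilon(v,u)$ arises from \eqref{kruz2.6} after swapping roles, one might instead obtain a residual such as $\operatorname{sign}(u-v)\big(f(u)-f(v)\big)\big(\partial_x+\partial_y\big)\varrho_\delta$, which vanishes identically. Alternatively, one might obtain a term requiring one more integration by parts in $y$, moving the derivative onto $\psi$; such a piece is exactly what $I_5$ accounts for. If exact cancellation fails, we would fall back on the Lipschitz bound $|F(u(t,x),v(s,y))-F(u(t,x),v(s,x))|\le S|v(s,y)-v(s,x)|$, where $F(a,b)=\operatorname{sign}(a-b)(f(a)-f(b))$. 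This would be combined with the $L^2$ modulus of continuity in space of $v\in L^2(0,T;H^1)$, giving a remainder of order $\delta\cdot\delta^{-1}\|v_x\|$. That remainder is harmless only after the cancellation, so we expect the algebraic identification of the cancelling terms to be the crux.
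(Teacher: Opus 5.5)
Your argument is correct and is essentially the paper's own: you combine the two terms via $\partial_y\varrho_\delta(x-y)=-\partial_x\varrho_\delta(x-y)$, bound $|f_\varepsilon(u,v)-f_\varepsilon(v,u)|\le C\varepsilon$ by comparing both with the symmetric quantity $\operatorname{sign}(u-v)(f(u)-f(v))$ and using \eqref{mp}, and finish with $\|\partial_x\varrho_\delta\|_{L^1}\le C\delta^{-1}$. Your bound is uniform in $\delta_0$, so it gives the joint limit slightly more directly than the paper, which sends $\delta_0\to0$ first.

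For $I_4$ as written in \eqref{kruz} the leading terms cancel exactly, as your own second paragraph shows, so the fallback paragraph is unnecessary. Your aside that the difference vanishes unless $|u-v|<\varepsilon$ is false for non-constant $f'$: for $a\ge b+\varepsilon$ one has $f_\varepsilon(a,b)-f_\varepsilon(b,a)=\int_0^\varepsilon(\beta_\varepsilon'(w)-1)\big(f'(b+w)-f'(a-w)\big)\,dw$. You never use that claim, so the proof is unaffected.
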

\begin{proof}
   Using the same arguments as before, we have 
   \baee
\lim_{\delta_0\rightarrow 0}I_4&=-\mathbb{E} \int_{\R} \int_{\Pi_T}\left(f_\varepsilon\big(u(s), v(s)\big) \partial_x \varrho_\delta(x-y)+f_\varepsilon\big(v(s), u(s)\big) \partial_y \varrho_\delta(x-y)\right)  \psi d y d s d x\\
&=-\mathbb{E} \int_{\R} \int_{\Pi_T}\left[-f_\varepsilon\big(u(s), v(s)\big)+f_\varepsilon\big(v(s), u(s)\big)\right]\partial_y \varrho_\delta(x-y)  \psi d y d s d x.
   \eaee
Note that for any $m,n\in \mathbb R$
\begin{equation}\label{f1}
    \begin{aligned}
        &\left|-f_\varepsilon(m, n)+f_\varepsilon(n, m)\right|\\
        =&\left|-\int_n^m \beta^\prime_\varepsilon(z-n) f^\prime(z)dz +\int_m^n \beta^\prime_\varepsilon(z-m) f^\prime(z)dz\right|\\
=&\left|-f_\varepsilon(m, n)+\operatorname{\rm{sign}}(m-n)\big(f(n)-f(m)\big)-\operatorname{\rm{sign}}(m-n)\big(f(n)-f(m)\big)+f_\varepsilon(n, m)\right| \\
\leq & \left| \int_{n}^m \left(  \beta^\prime_\varepsilon(z-n)-\operatorname{\rm{sign}}(z-n) \right) f^\prime(z) dz\right|+ \left| \int_{m}^n \left(  \beta^\prime_\varepsilon(z-m)-\operatorname{\rm{sign}}(z-m) \right) f^\prime(z) dz\right|\\
\leq &  \int_{n-\varepsilon}^{n+\varepsilon} \left| \left(  \beta^\prime_\varepsilon(z-n)-\operatorname{\rm{sign}}(z-n) \right) f^\prime(z)\right| dz+  \int_{m-\varepsilon}^{m+\varepsilon}\left| \left(  \beta^\prime_\varepsilon(z-m)-\operatorname{\rm{sign}}(z-m) \right) f^\prime(z)\right| dz\\
\leq & \,C \varepsilon \left( \sup_{z\in [n-\varepsilon,n+\varepsilon]} \left| f^\prime(z) \right|+\sup_{z\in [m-\varepsilon,m+\varepsilon]} \left| f^\prime(z) \right| \right).
    \end{aligned}
\end{equation}
Therefore, by the maximal principle \eqref{mp}, we have
\baee
&\left| \mathbb{E} \int_{\R} \int_{\Pi_T}\left[-f_\varepsilon\big(u(s), v(s)\big)+f_\varepsilon\big(v(s), u(s)\big)\right]\partial_y \varrho_\delta(x-y)  \psi d y d s d x \right|\\
\leq & \, C \varepsilon \sup_{ |z|\leq  \|u_0\|_{\infty}\vee \|v_0\|_{\infty} } \left| f^\prime(z) \right| \int_{\R} \int_{\Pi_T} \left| \partial_y \varrho_\delta(x-y) \right|  \psi d y d s d x \leq  C \varepsilon  \delta^{-1}.
\eaee
This proves the Lemma.
\end{proof}
\begin{Lemma}\label{lm56}
    \bae
\lim _{(\delta,\varepsilon)\rightarrow (0,0) }  \lim_{\delta_0 \rightarrow 0 } I_5&=\mathbb{E} \int_{\Pi_T} \operatorname{sign}(u-v)(f(u)-f(v)) \partial_y \psi d y d s .
\eae
\end{Lemma}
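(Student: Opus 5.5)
The plan follows the three-step scheme already used for $I_1$, $I_2$ and $I_4$: pass to the limit $\delta_0\downarrow 0$ to collapse the time mollifier, then let $\delta\to0$ with $\varepsilon$ fixed to collapse the spatial mollifier, and finally let $\varepsilon\to0$ to replace the regularized Kružkov flux $f_\varepsilon$ by the sign-flux.

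Recall
\[
I_5=-\mathbb E\int_{\Pi_T}\int_{\Pi_T} f_\varepsilon\big(v(s,y),u(t,x)\big)\,\varrho_\delta(x-y)\,\rho_{\delta_0}(t-s)\,\partial_y\psi(s,y)\,dy\,ds\,dx\,dt .
\]

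\textbf{Step 1: $\delta_0\downarrow0$.} I would first compare $f_\varepsilon(v(s,y),u(t,x))$ with $f_\varepsilon(v(s,y),u(s,x))$. The map $a\mapsto f_\varepsilon(r,a)$ is Lipschitz with constant bounded by $2\sup_{|z|\le \mathfrak M}|f'(z)|$. This holds because $\partial_a f_\varepsilon(r,a)=-f'(a)\beta'_\varepsilon(0)-\int_a^r\beta''_\varepsilon(s-a)f'(s)\,ds$, and the arguments stay in $[-\mathfrak M,\mathfrak M]$ by the maximum principle \eqref{mp}.

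The resulting error is therefore bounded by
\[
C\,\mathbb E\int\!\!\int |u(t,x)-u(s,x)|\,\rho_{\delta_0}(t-s)\,\varrho_\delta(x-y)\,|\partial_y\psi|.
\]
This term vanishes as $\delta_0\to0$ by continuity of translations in $L^1\big(\Omega\times[0,T];L^1_{\rm loc}\big)$. That continuity comes from $u-\bar u\in C([0,T];L^2)$ together with the local uniform $L^2$ bounds, as in Lemma \ref{weakL^1cont}. Only local integrals appear, since $\psi$ has compact support, so no global $L^1$ is needed.

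Since $\int\rho_{\delta_0}(t-s)\,dt=1$ for $s$ away from $T$, and the boundary strip contributes $O(\delta_0)$, we get
\[
\lim_{\delta_0\to0}I_5=-\mathbb E\int_{\Pi_T}\int_{\mathbb R} f_\varepsilon\big(v(s,y),u(s,x)\big)\,\varrho_\delta(x-y)\,\partial_y\psi(s,y)\,dx\,dy\,ds .
\]

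\textbf{Step 2: $\delta\to0$.} Using the same Lipschitz bound, I would replace $u(s,x)$ by $u(s,y)$. The error is controlled by
\[
C\,\mathbb E\int_0^T\!\!\int_K\!\!\int |u(s,x)-u(s,y)|\,\varrho_\delta(x-y),
\]
which is at most $C\delta\,\mathbb E\int_0^T\|\partial_x u\|_{L^1(K_1)}\,ds\le C\delta$. Here $K_1$ is a neighborhood of the compact support $K$ of $\psi$, and the bound follows from $u_x=\phi_x+\bar u_x\in L^2(\Omega\times[0,T];L^2)$. This estimate is uniform in $\varepsilon$, so the double limit in $(\delta,\varepsilon)$ can be taken jointly.

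\textbf{Step 3: $\varepsilon\to0$.} It remains to show $f_\varepsilon(v,u)\to \operatorname{sign}(v-u)(f(v)-f(u))$ uniformly on $[-\mathfrak M,\mathfrak M]^2$. As in \eqref{f1}, $\beta'_\varepsilon(z-u)$ differs from $\operatorname{sign}(z-u)$ only on $|z-u|<\varepsilon$, so the error is at most $C\varepsilon\sup_{|z|\le\mathfrak M+1}|f'(z)|$.

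Finally, $\operatorname{sign}(v-u)(f(v)-f(u))=\operatorname{sign}(u-v)(f(u)-f(v))$. Combined with the minus sign in $I_5$ and the sign convention of \eqref{kruz}, this yields the stated limit.

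The main obstacle is Step 1, the time-translation continuity. The solution is only in $C([0,T];L^2)$ for the perturbation, not for $u$ itself, and $u(t)$ is not adapted in the doubled variable. I would handle this exactly through the local version of Lemma \ref{weakL^1cont}. Everything there is localized on ${\rm supp}\,\psi$, where $\bar u$ is bounded and Lipschitz in time by Proposition \ref{lm2.1}, so the $\bar u$ part contributes $O(\delta_0)$ trivially.
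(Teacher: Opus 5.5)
Your three-step argument is essentially the paper's. The paper does exactly your Step~1: it bounds $f_\varepsilon(v(s,y),u(t,x))-f_\varepsilon(v(s,y),u(s,x))$ via \eqref{f2}, and then uses $L^2$ continuity of time translations plus an $O(\delta_0)$ boundary term. Your derivative computation gives the same Lipschitz control, even without the $O(\varepsilon)$ remainder in \eqref{f2}. The paper then defers the $\delta$- and $\varepsilon$-limits to Biswas et al.\ via \eqref{f1}--\eqref{f2}. Your Steps~2--3 make those limits explicit, and using $u_x=\phi_x+\bar u_x\in L^2(\Omega\times(0,T);L^2(\mathbb R))$ to get an $O(\delta)$ error uniform in $\varepsilon$ is a clean way to justify the joint limit. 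One small slip: $\operatorname{supp}\rho\subset[-1,0]$ forces $t<s$, so the strip where $\int_0^T\rho_{\delta_0}(t-s)\,dt<1$ is $s\in(0,\delta_0)$, not $s$ near $T$. It is still $O(\delta_0)$, since the integrand is bounded by \eqref{mp}.

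The step that fails as written is your last sentence. You take $I_5=-\mathbb E\int_{\Pi_T}\int_{\Pi_T} f_\varepsilon(v,u)\varrho_\delta\rho_{\delta_0}\partial_y\psi$, which is indeed what \eqref{kruz} says, and $f_\varepsilon(v,u)\to\operatorname{sign}(u-v)(f(u)-f(v))$. Then your Steps~1--3 give $-\mathbb E\int_{\Pi_T}\operatorname{sign}(u-v)(f(u)-f(v))\partial_y\psi$, the negative of the stated limit. No ``sign convention'' turns one into the other.

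The mismatch comes from a slip in the paper's derivation of \eqref{kruz2.5}. There the drift is computed as if the equation contained $+\partial_x f(u)\,dt$, whereas \eqref{conservative} has $-\partial_x f(u)\,dt$. Redoing that step gives $-\int_{\mathbb R}\beta_\varepsilon'(u-v)\,\partial_x f(u)\,\eta_{\delta,\delta_0}\,dx=\int_{\mathbb R} f_\varepsilon(u,v)\,\partial_x\eta_{\delta,\delta_0}\,dx$. So the flux terms in \eqref{kruz2.5}--\eqref{kruz2.6} carry a plus sign, and the correct term is $I_5=+\mathbb E\int_{\Pi_T}\int_{\Pi_T} f_\varepsilon(v,u)\varrho_\delta\rho_{\delta_0}\partial_y\psi$. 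For this $I_5$ your argument gives exactly the stated limit, and the plus sign is also what the Kru\v{z}kov inequality \eqref{test} requires. You should state this correction explicitly rather than asserting that the minus sign produces the stated limit.
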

\begin{proof}
Note that for any $m,n,l\in\mathbb R,$ one has
    \begin{equation}\label{f2}
    \begin{aligned}
        &\left|-f_\varepsilon(m, n)+f_\varepsilon(m, l)\right|\\
= & \left| -\int_{n}^m \beta^\prime_\varepsilon(z-n)  f^\prime(z) dz + \int_{l}^m \beta^\prime_\varepsilon(z-l)  f^\prime(z) dz\right|\\
\leq  & \left| \int_{n}^m \beta^\prime_\varepsilon(z-n)  f^\prime(z) dz -\operatorname{\rm{sign}}(m-n)\big(f(m)-f(n)\big)\right|\\
+&\left|  \int_{l}^m \beta^\prime_\varepsilon(z-l)  f^\prime(z) dz - \operatorname{\rm{sign}}(m-l)\big(f(m)-f(l)\big)\right|\\
+ &\left| \operatorname{\rm{sign}}(m-n)\big(f(m)-f(n)\big) - \operatorname{\rm{sign}}(m-l)\big(f(m)-f(l)\big)\right| \\
\leq & C \varepsilon \left( \sup_{z\in [n-\varepsilon,n+\varepsilon]} \left| f^\prime(z) \right|+\sup_{z\in [l-\varepsilon,l+\varepsilon]} \left| f^\prime(z) \right| \right)+ |n-l| \sup_{z\in [{n\wedge l},{n\vee l}]} \left| f^\prime(z) \right|.\\
    \end{aligned}
\end{equation}
Then following the procedures in \cite[Lemma 3.3]{biswas2014stochastic} and \eqref{mp}, we conclude that
$$
\begin{aligned}
\mathcal{B}_1:= & \left|\mathbb E\int_{\Pi_T} \int_{\Pi_T} f_\varepsilon(v(s, y), u(t, x)) \partial_y \psi(s, y) \rho_{\delta_0}(t-s) \varrho_\delta(x-y) d y d s d x d t \right. \\
& \left. -\mathbb E \int_{\mathbb{R}} \int_{\Pi_T} f_\varepsilon(v(s, y), u(s, x)) \partial_y \psi(s, y) \varrho_\delta(x-y) d y d s d x\right|\\
\leq&\, K\left\|\partial_y \psi(s, y)\right\|_{\infty}\mathbb E\int_{s=\delta_0}^T \int_{\mathbb{R}} \int_{t=0}^T \mid u(s, x)  -u(t, x) \mid \sup_{|z|\leq\|u_0\|_\infty\vee \|v_0\|_\infty} \left| f^\prime(z) \right| \\
& \times \rho_{\delta_0}(t-s) d t d x d s+\mathcal{O}\left(\delta_0\right)+\mathcal{O}\left(\varepsilon\right)\\
\leq& \, C(u_0,v_0)\left\|\nabla_y \psi(s, y)\right\|_{\infty}\left[\mathbb E \int_{r=0}^1 \int_{\mathbb{R}} \int_{t=0}^T\left|u\left(t+\delta_0 r, x\right)-u(t, x)\right|^2 \rho(-r) d t d x d r\right]^{\frac{1}{2}}\\
&+\mathcal{O}\left(\delta_0\right)+\mathcal{O}\left(\varepsilon\right).
\end{aligned}
$$
Using \eqref{mp}, \eqref{f1} and \eqref{f2}, the rest of the proof follows from the same arguments in \cite[Lemma 3.3]{biswas2014stochastic}.
\end{proof}

To address the non-adapted ``stochastic integral'' appearing in the calculation of \eqref{kruz} in the rest of the subsection, we assume that $u\in X_1(T)$ is the solution of equation \eqref{conservative}. We demonstrate that such a solution must be the stochastic strong entropy solution defined in \cite{feng2008stochastic}.  Define
 $$   J[\beta,\eta](s,y,a):=-\sum_k\int_0^T \int_\R\left[\sigma_k^{\prime} \beta(\cdot-a)\right]u(t, x)  \eta(t, x, s, y)\dif x  d B_k(t).$$
 In order to prove Lemma \ref{adaptlm}, we require the following Lemmas.
\begin{Lemma}\label{J}
      Let $\beta \in C^{\infty}(\mathbb{R})$ be a function such that $\beta^{\prime}, \beta^{\prime \prime} \in C_c^{\infty}(\mathbb{R})$. Then there exists a constant $C=C\left(\beta^{\prime}, \psi\right)$ and $p_\gamma>2$ such that
$$
\sup _{0 \leq s \leq T}\left(\mathbb E\left\|J\left[\beta, \eta_{\delta, \delta_0}\right](s , \cdot, \cdot)\right\|_{L^{\infty}(\mathbb{R} \times \mathbb{R})}^2\right) \leq \frac{C(\partial_y\psi,\beta^\prime,T,\sigma)  \sup_{0\leq t \leq T}\left(1+ 
\mathbb  E \|u-\bar u(t,\cdot)\|_{p_\gamma}^{p_\gamma} \right)}{\delta_0^{\frac32}}.
$$
\end{Lemma}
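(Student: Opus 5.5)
The bound on $J[\beta,\eta_{\delta,\delta_0}]$ will follow from a Sobolev embedding in the parameters $(y,a)$, combined with BDG for the stochastic integral in $t$ at each fixed $(s,y,a)$. This is the strategy of \cite[Lemma 4.25]{feng2008stochastic}. Fix $s$. Since $\eta_{\delta,\delta_0}(t,x;s,y)=\rho_{\delta_0}(t-s)\varrho_\delta(x-y)\psi(s,y)$, the map $(y,a)\mapsto J(s,y,a)$ is supported in $y\in\operatorname{supp}\psi(s,\cdot)$, which is compact. In $a$, the relevant range is bounded by the maximum principle \eqref{mp}. Outside a compact set, $\beta'(\cdot-a)$ is constant there, because $\beta'\in C_c^\infty$, so $\beta$ is affine far away. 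That contribution can be treated separately as a linear term, or absorbed, since only $\beta'$ enters after the derivative in $a$.

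\textbf{Step 1 (embedding).} I will use $W^{1,q}(\mathbb R^2)\hookrightarrow L^\infty$ for some $q>2$, or the $H^{s}$, $s>1$, embedding, localized to the compact parameter set. This gives
\[
\|J\|_{L^\infty}^2\le C\Big(\int\!\!\int |J|^q+|\partial_yJ|^q+|\partial_aJ|^q\,dy\,da\Big)^{2/q}.
\]
Then $\mathbb E\|J\|_\infty^2$ is controlled by $(\mathbb E\int\!\!\int(\cdots)^q)^{2/q}$ through Jensen. Derivatives in $y$ and $a$ commute with the It\^o integral, by a stochastic Fubini argument or by working with mollified versions. So $\partial_yJ$ has integrand $[\sigma_k'\beta(\cdot-a)](u)\,\partial_y\eta$, and $\partial_aJ$ has integrand $-[\sigma_k'\beta'(\cdot-a)](u)\,\eta$.

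\textbf{Step 2 (BDG pointwise in $(y,a)$).} For each fixed $(y,a)$,
\[
\mathbb E|J|^q\le C_q\,\mathbb E\Big(\int_0^T\sum_k\Big(\int_\R[\sigma_k'\beta(\cdot-a)](u)\,\eta\,dx\Big)^2dt\Big)^{q/2}.
\]
I will use $|[\sigma_k'\beta(\cdot-a)](u)|\le \lambda_k(1+|u|^{\gamma+1})$, which follows from \eqref{noisepoly} in the general case, or from (H3) together with \eqref{mp} in the viscous case. Cauchy--Schwarz in $x$ with $\int\varrho_\delta\,dx=1$ then bounds the inner square by $\sum_k\lambda_k^2\rho_{\delta_0}(t-s)^2\psi^2\int\varrho_\delta(x-y)(1+|u|^{\gamma+1})^2dx$.

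\textbf{Step 3 (time factor).} Since $\rho_{\delta_0}^2\le \delta_0^{-1}\rho_{\delta_0}$, we have $\int\rho_{\delta_0}^2\,dt\le C\delta_0^{-1}$. Raising to the power $q/2$ and then $2/q$, and choosing $q$ close to $2$, gives a factor of order $\delta_0^{-1}$. The extra power producing $\delta_0^{-3/2}$ is slack I will accept from using H\"older in time to pull the moment of $u$ outside. Writing $u=\bar u+(u-\bar u)$, with $\bar u$ bounded, the remaining moment becomes $\sup_t\mathbb E\|u-\bar u\|_{p_\gamma}^{p_\gamma}$ with $p_\gamma=q(\gamma+1)>2$, after integrating in $y$ over the compact support of $\psi$. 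The $y$-derivative contributes $\|\partial_y\psi\|_\infty$, and this is the source of the dependence $C(\partial_y\psi,\beta',T,\sigma)$.

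\textbf{Main obstacle.} The delicate points are justifying the parameter differentiation and stochastic Fubini for the non-adapted family in $a$, and dealing with the non-integrability in $a$ at infinity. For the latter, I will split $\beta=\beta_{\mathrm{aff}}+\beta_c$, with $\beta_c'$ compactly supported. The affine part $\beta_{\mathrm{aff}}$ gives a term whose $a$-dependence is linear, and the bounded range of $u$ makes it harmless.
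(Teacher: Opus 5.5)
Your proposal is essentially the paper's proof: BDG pointwise in $(y,a)$, followed by a Sobolev embedding $W^{1,q}\hookrightarrow L^\infty$ in the parameters, as in \cite[Lemma 4.25]{feng2008stochastic}. The paper takes $q=4$ and uses Cauchy--Schwarz in $t$, so $\int_0^T\rho_{\delta_0}^4(t-s)\,dt\lesssim\delta_0^{-3}$ gives $\delta_0^{-3/2}$ after the square root. Your choice $q\downarrow 2$ with $\int_0^T\rho_{\delta_0}^2(t-s)\,dt\lesssim\delta_0^{-1}$ gives the stronger $\delta_0^{-1}$, which implies the stated bound for $\delta_0\le 1$. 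There is also no actual loss when you pull out the moment of $u$: use Jensen for the normalized measure $\rho_{\delta_0}^2\,dt$, or simply $|u|\le M$ a.s.\ from \eqref{mp}.

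\textbf{The $a$-direction.} Since $\beta'\in C_c^\infty(\mathbb R)$, $\beta$ equals constants $c_\pm$ outside some $[-K,K]$; there is no linear growth, so no ``affine'' part with nonzero slope ever appears. With $|u|\le M$, the field $a\mapsto J(s,y,a)$ is therefore exactly constant for $|a|\ge M+K$. You can then apply the embedding on the bounded rectangle $\operatorname{supp}\psi(s,\cdot)\times[-M-K,M+K]$, which makes your split $\beta=\beta_{\mathrm{aff}}+\beta_c$ unnecessary. The paper instead asserts that the $a$-integrand vanishes for $|a|>C_\beta+|u|$, which tacitly requires $\beta$ itself to be compactly supported.

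\textbf{The constant.} The derivative $\partial_y$ also falls on $\varrho_\delta(x-y)$, so the constant depends on $\delta$, not only on $\partial_y\psi$. This is harmless, since $\delta$ is fixed while $\delta_0\to 0$, and the paper's constant omits it as well.
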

\begin{proof}
Note that
   \baee
   J[\beta,\eta_{\delta, \delta_0}](s,y,a) &=-\sum_k\int_0^T \int_\R\int_0^{u(t, x)}\sigma_k^{\prime}(r) \beta(r-a)dr  \eta_{\delta, \delta_0}d x  d B_k(t).\\
   \eaee
   Then we have
   \bae\label{adptech}
& \mathbb E\left[\left\|J\left[\beta, \eta_{\delta, \delta_0}\right](s , \cdot, \cdot)\right\|_4^4\right] =\mathbb E\left[\int_\R \int_\R\left|J\left[\beta, \eta_{\delta, \delta_0}\right](s , y, a)\right|^4 d y d a\right] \\
 =&\sum_k\mathbb E\bigg[\int_\R \int_\R\Big|\int_0^T \int_\R \big(\int_0^{u(t, x)}\sigma_k^{\prime}(r) \beta(r-a)dr\big) \rho_{\delta_0}(t-s) \varrho_\delta(x-y)  d x d B_k(t)\Big|^4 \psi^4(s, y) d y d a\bigg]\\
\leq&\, C(\psi) \sum_k\int_a \int_{y\leq C_\psi} \mathbb E\bigg[\Big(\int_0^T\Big|\int_\R\big|\int_0^{u(t, x)}\sigma_k^{\prime}(r) \beta(r-a)dr\big|  \varrho_\delta(x-y) \dif x \Big|^2 \rho^2_{\delta_0}(t-s) d t\Big)^2\bigg] d y d a\\
\leq&\, C(\psi)\sum_k \int_a \int_{y\leq C_\psi} \mathbb E\bigg[\Big(\int_0^T\int_\R\big|\int_0^{u(t, x)}\sigma_k^{\prime}(r) \beta(r-a)dr\big|^2  \varrho_\delta(x-y)  \rho^2_{\delta_0}(t-s) d t\dif x \Big)^2\bigg] d y d a\\
\leq&\, C(\psi,T)\sum_k \int_a \int_{y\leq C_\psi} \mathbb E\Big(\int_0^T\int_\R\big|\int_0^{u(t, x)}\sigma_k^{\prime}(r) \beta(r-a)dr\big|^4  \varrho_\delta(x-y)\rho^4_{\delta_0}(t-s)d t \dif x  \Big) d y d a\\
\leq&\, C(\psi,T)\sum_k\mathbb E   \Big(\int_0^T\int_{|x|\leq C_\psi+\delta}\int_{|a|\leq C_\beta+|u(t,x)|}\big|\int_0^{u(t, x)}\sigma_k^{\prime}(r) \beta(r-a)dr\big|^4   \rho^4_{\delta_0}(t-s)d tda \Big)  d x\\
\leq&\, C(\psi,T) \sum_k\mathbb E  \Big[\int_0^T \int_{|x|\leq C_\psi+\delta} \int_{|a|\leq C_\beta+|u(t,x)|} \|\sigma_k^\prime\|_{L^2_M}^4 \|\beta(u-a)\|_{\infty}^4 u^4(t,x)  \rho^4_{\delta_0}(t-s)d td a \Big]  \dif x ,\\
\eae
where we have used the Burkholder-Davis-Gundy inequality in the first inequality.  Setting $\mathfrak M=\|u_0\|_{\infty}$ and $p_\gamma=13+4\gamma$, we deduce from \eqref{mp} that
 \baee
 &\, \mathbb E\left[\left\|J\left[\beta, \eta_{\delta, \delta_0}\right](s , \cdot, \cdot)\right\|_4^4\right] =\mathbb E\left[\int_\R \int_\R\left|J\left[\beta, \eta_{\delta, \delta_0}\right](s , y, a)\right|^4 d y d a\right] \\
\leq&\, C(\psi,T,\beta,\sigma) \|\rho_{\delta_0}\|^3_{\infty}\mathbb E   \int_0^T \int_{|x|\leq C_\psi+\delta}\int_{|a|\leq C_\beta+|u(t,x)|} (1+|u(t,x)|)^{4\gamma+4+8}  \rho_{\delta_0}(t-s)da  d x  d t\\
\leq& \,\frac{C(\psi,\beta,T,\sigma) \sup_{0\leq t \leq T}\Big(1+ 
\mathbb  E \|u(t,\cdot)\|_{L^{p_\gamma}_{C_\psi}}^{p_\gamma} \Big)}{\delta_0^{3}},  \\
\leq&\, \frac{C(\psi,\beta,T,\sigma) \sup_{0\leq t \leq T}\left(1+ 
\mathbb  E \|u-\bar u(t,\cdot)\|_{{p_\gamma}}^{p_\gamma} \right)}{\delta_0^{3}}. \\
\eaee
 Consequently, \eqref{adptech} is bounded by invoking Lemma \ref{lmL^p}. Similarly, we have
$$
\begin{aligned}
& \mathbb E\left[\left\|\partial_a J\left[\beta, \eta_{\delta, \delta_0}\right](s ; \cdot, \cdot)\right\|_4^4\right] \leq \frac{C(\psi,\beta^\prime,T,\sigma) \|  \sup_{0\leq t \leq T}\left(1+ 
\|u-\bar u(t,\cdot)\|_{{p_\gamma}}^{p_\gamma}\right) }{\delta_0^{3}} , \\
& \mathbb E\left[\left\|\partial_y J\left[\beta, \eta_{\delta, \delta_0}\right](s ; \cdot, \cdot)\right\|_4^4\right] \leq \frac{C(\partial_y\psi,\beta,T,\sigma)   \sup_{0\leq t \leq T}\left(1+ 
\|u-\bar u(t,\cdot)\|_{{p_\gamma}}^{p_\gamma} \right)}{\delta_0^{3}} .
\end{aligned}
$$
Therefore,
$$
\mathbb E\left[\left\|J\left[\beta, \eta_{\delta, \delta_0}\right](s ; \cdot, \cdot)\right\|_{W^{1,4}(\mathbb{R} \times \mathbb{R})}^4\right] \leq \frac{C(\partial_y\psi,\beta^\prime,T,\sigma)  \sup_{0\leq t \leq T}\left(1+ 
\mathbb  E \|u-\bar u(t,\cdot)\|_{p_\gamma}^{p_\gamma} \right)}{\delta_0^{3}}.
$$
Now simply using the Sobolev embedding along with the Cauchy-Schwartz inequality, we conclude that
$$
\sup _{0 \leq s \leq T}\left(\mathbb E\left[\left\|J\left[\beta, \eta_{\delta, \delta_0}\right](s ; \cdot, \cdot)\right\|_{L^{\infty}(\mathbb{R} \times \mathbb{R})}^2\right]\right) \leq \frac{C(\partial_y\psi,\beta^\prime,T,\sigma)  \sup_{0\leq t \leq T}\left(1+ 
\mathbb  E \|u-\bar u(t,\cdot)\|_{p_\gamma}^{p_\gamma} \right)}{\delta_0^{\frac32}}.
$$
\end{proof}
Denote by
\baee
A_1^l(\delta,\delta_0 )&\coloneqq\mathbb E\left[\int _ { \mathbb { R } } \int _ { \Pi _ { T } } J [ \beta ^ { \prime\prime } , \partial_x\eta _ { \delta , \delta _ { 0 } } ] ( s , y , a ) \left(\int_{s-\delta_0}^s \rho_l(v(\tau, y)-a) \partial_y\left(f(v)\right) d \tau\right)dy ds da\right],\\
A_2^l(\delta,\delta_0 )&\coloneqq\mathbb E\left[\int _ { \mathbb { R } } \int _ { \Pi _ { T } } J [ \beta ^ { \prime\prime } , \partial_x\eta _ { \delta , \delta _ { 0 } } ] ( s , y , a ) \left(\int_{s-\delta_0}^s \rho_l(v(\tau, y)-a) \partial_y(b(v)v_y) d \tau\right)   dy ds da\right],  \\
A_3^l(\delta,\delta_0 )&\coloneqq-\frac12\mathbb E\left[\int _ { \mathbb { R } } \int _ { \Pi _ { T } } J [ \beta ^ { \prime\prime } , \partial_{xy}\eta _ { \delta , \delta _ { 0 } } ] ( s , y , a )   \left(\int_{s-\delta_0}^s \rho_l(v-a)\sigma^{\prime 2}(v) v_y  d \tau\right)   dy ds da\right],\\
B^l(\delta,\delta_0 )&\coloneqq-\sum_k\mathbb{E}  \int_{\Pi_T} \int_\R \int_{s-\delta_0}^s \left(\int_\R \left[\sigma_k^{\prime} \beta^{\prime \prime}(\cdot-a)\right](u)    \rho_l(v-a) da\right) \partial_y \sigma_k(v) \partial_x \eta_{\delta , \delta_0} d \tau \dif x  dy ds.
\eaee

\begin{Lemma}\label{A1}
    It holds that
$$
\lim_{\delta_0\rightarrow0}\lim_{l\rightarrow0} A_1^l\left(\delta, \delta_0\right) = 0.
$$
\end{Lemma}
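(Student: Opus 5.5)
The plan is to follow the Feng--Nualart / Biswas--Karlsen--Majee argument for the ``drift part'' of the non-adapted integral, in the order: uniform-in-$l$ bound, limit $l\to0$, Cauchy--Schwarz, scaling in $\delta_0$.

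\textbf{Step 1: integrate out $a$ and bound uniformly in $l$.} For fixed $\delta,\delta_0$, the function $a\mapsto J[\beta'',\partial_x\eta_{\delta,\delta_0}](s,y,a)$ is continuous and bounded. Hence $\int_\R J(s,y,a)\rho_l(v(\tau,y)-a)\,da$ is bounded by $\|J(s,\cdot,\cdot)\|_{L^\infty(\R\times\R)}$. By Lemma \ref{J}, applied with $\beta''$ in place of $\beta$ and $\partial_x\eta_{\delta,\delta_0}$ in place of $\eta_{\delta,\delta_0}$ (the proof only uses the bound on the test function, now carrying an extra factor $\delta^{-1}$), we get
\[
\sup_s\mathbb E\|J\|_{L^\infty}^2\le C(\delta)\,\delta_0^{-3/2}.
\]
This makes the integrand of $A_1^l$ dominated uniformly in $l$. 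We may therefore let $l\to0$ by dominated convergence, obtaining
\[
A_1(\delta,\delta_0)=\mathbb E\int_{\Pi_T}\int_{s-\delta_0}^s J(s,y,v(\tau,y))\,\partial_y f(v(\tau,y))\,d\tau\,dy\,ds.
\]

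\textbf{Step 2: bound the $\tau$-integral.} Since $\psi$ has compact support, only $y$ in a compact set $K$ matters. By \eqref{mp}, $|f'(v)|\le C$, so
\[
|\partial_y f(v)|\le C(|\phi^v_y|+\bar u_y),
\]
where $\phi^v=v-\bar u\in X_1(T)$. Cauchy--Schwarz in $(\omega,s,y)$ then gives
\[
|A_1|\le\Big(\mathbb E\int_0^T\|J(s)\|_{L^\infty}^2ds\Big)^{1/2}\Big(\mathbb E\int_0^T\int_K\Big|\int_{s-\delta_0}^s|\partial_yf(v)|\,d\tau\Big|^2dy\,ds\Big)^{1/2}.
\]
By Jensen in $\tau$, the second factor is at most
\[
\Big(\delta_0\,\mathbb E\int_0^T\int_{s-\delta_0}^s\|\partial_y f(v(\tau))\|_{L^2(K)}^2\,d\tau\,ds\Big)^{1/2}\le C\,\delta_0\Big(\mathbb E\int_0^T\|v_y\|_{L^2(K)}^2\Big)^{1/2},
\]
which is finite by Lemma \ref{lmL^p}. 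Together with Step 1 this yields $|A_1|\le C(\delta)\,\delta_0^{1-3/4}=C(\delta)\,\delta_0^{1/4}\to0$.

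\textbf{Main obstacle.} The difficulty is the exponent balance: Lemma \ref{J} only gives $\delta_0^{-3/4}$ for $\|J\|$, so the $\tau$-window must produce strictly more than $\delta_0^{3/4}$. The Jensen step above gives exactly $\delta_0^{1}$, which suffices. If the constant in Lemma \ref{J} turns out worse once the extra $\partial_x$ is placed on $\eta$, the first thing I would try is to exploit the martingale structure. Namely, $J(s,\cdot)$ is built from $\rho_{\delta_0}(t-s)$ supported on $t\in(s-\delta_0,s)$, which suggests re-running its BDG estimate with $\|\rho_{\delta_0}\|_{L^2}^2\sim\delta_0^{-1}$ instead of $\|\rho_{\delta_0}\|_\infty^3$. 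That would improve the bound to $\delta_0^{-1/2}$ and leave a margin $\delta_0^{1/2}$.
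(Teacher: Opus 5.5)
Your argument is correct and gives the same rate $C(\delta)\,\delta_0^{1/4}$ as the paper, but it takes a different route.

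\textbf{The paper's route.} It never estimates $\partial_y f(v)$ directly. It introduces the $a$-antiderivative field $X[\eta](s,y,a)$, so that $\partial_a X[\partial_x\eta_{\delta,\delta_0}]$ is, up to sign, $f'(a)\,J[\beta'',\partial_x\eta_{\delta,\delta_0}]$. It then integrates by parts first in $a$, producing $\rho_l'(v-a)v_y=\partial_y[\rho_l(v-a)]$, and then in $y$. This moves the $y$-derivative onto the test function and leaves
\[
-\mathbb E\int_{\Pi_T}\int_{s-\delta_0}^s X[\partial_{xy}\eta_{\delta,\delta_0}](s,y,v(\tau,y))\,d\tau\,dy\,ds.
\]
Here the $\tau$-window supplies a bare factor $\delta_0$, set against a Lemma \ref{J}-type bound $\delta_0^{-3/4}$ for $X[\partial_{xy}\eta_{\delta,\delta_0}]$. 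Only the maximum principle \eqref{mp} is used for $v$.

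\textbf{Your route.} You keep $\partial_y f(v)$ and control it by $|f'(v)|\le C$ together with the $L^2(\Omega\times(0,T)\times K)$ gradient bound coming from $v-\bar u\in X_1(T)$. You extract $\delta_0$ by Jensen over the window and Fubini in $s$.

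\textbf{What each buys.} Your argument is shorter: it needs no auxiliary field $X$ and no second integration by parts. It also avoids the paper's replacement of $f'(v(\tau,y))$ by $f'(a)$ under $\rho_l$, which is exact only in the limit $l\to0$. The paper's route, in exchange, needs no gradient information on $v$. This is the Feng--Nualart / Biswas--Karlsen--Majee mechanism, and it is what survives in the inviscid setting of Remark \ref{rm 3.2}. There $v$ is only an entropy solution and no $L^2$ control of $v_y$ is available, so your shortcut is specific to the viscous case.

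Your closing remark is also correct but not needed: running the BDG step with $\|\rho_{\delta_0}\|_{L^2}^2\sim\delta_0^{-1}$ instead of $\|\rho_{\delta_0}\|_\infty^3$ does improve the bound on $J$ to $\delta_0^{-1/2}$.
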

\begin{proof}
Let
$$
X\left[\eta\right](s , y, a):=\sum_k\int_0^T  \int_\R
\left(\int_0^a \left[\sigma_k^{\prime} \beta^{\prime\prime}(\cdot-r)\right](u)  f^\prime(r)dr\right)\eta(t,x,s,y) \dif x  d B_k(t).
$$
It holds that
$$
\begin{aligned}
& \partial_a X\left[\eta\right](s , y, a)=\sum_k\int_0^T  \int_\R
 \left[\sigma_k^{\prime} \beta^{\prime\prime}(\cdot-a)\right](u)  f^\prime(a)\eta(t,x,s,y) \dif x  d B_k(t),  \\
& \partial_y X\left[\eta\right](s , y, a)=X\left[\partial_y \eta\right](s , y, a) .
\end{aligned}
$$
Following arguments similar to those in Lemma \ref{J}, we have
\baee
 \sup _{0 \leq s \leq T}\left(\mathbb E\left[\left\|X\left[\partial_{xy} \eta_{\delta, \delta_0}\right](s ; \cdot, \cdot)\right\|_{L^{\infty}(\mathbb{R} \times \mathbb{R})}^2\right]\right) \leq \frac{C(\partial_{xy}\psi,\beta^{\prime\prime\prime},T,f,\sigma)  \sup_{0\leq t \leq T} 
\left(1+\mathbb  E \|u-\bar u(t,\cdot)\|_{p_\gamma}^{p_\gamma}\right) }{\delta_0^{\frac32}}.
\eaee
Using integration by parts, we have
\baee
 & \int _ { \mathbb { R } } \int _ { \Pi _ { T } } J [ \beta ^ { \prime\prime } , \partial_x\eta _ { \delta , \delta _ { 0 } } ] ( s , y , a ) \left(\int_{s-\delta_0}^s \rho_l(v(\tau, y)-a) f^{\prime}(a) v_y(\tau,y) d \tau\right)dy ds da\\
& =\sum_k\int _ { \mathbb { R } } \int _ { \Pi _ { T } } \int_0^T \int_\R\left[\sigma_k^{\prime} \beta^{\prime\prime}(\cdot-a)\right](u) \partial_x \eta_{\delta, \delta_0}\dif x  d B_k(t) \left(\int_{s-\delta_0}^s \rho_l(v-a) f^{\prime}(a) v_y d \tau\right)dy ds da\\
& =\sum_k\int _ { \mathbb { R } } \int _ { \Pi _ { T } }\int_{s-\delta_0}^s \left(\int_0^T  \int_\R
\left[\sigma_k^{\prime} \beta^{\prime\prime}(\cdot-a)\right](u)  f^\prime(a)\partial_x \eta_{\delta, \delta_0} \dif x  d B_k(t) \right)  \rho_l(v-a) v_y  d \tau dy ds da\\
& =\int _ { \mathbb { R } } \int _ { \Pi _ { T } }\int_{s-\delta_0}^s \partial_a X\left[\partial_x\eta _ { \delta , \delta _ { 0 } } \right](s , y, a)  \rho_l(v-a) v_y(\tau,y)  d \tau dy ds da\\
& =\int _ { \mathbb { R } } \int _ { \Pi _ { T } }\int_{s-\delta_0}^s  X\left[\partial_x\eta _ { \delta , \delta _ { 0 } } \right](s , y, a)  \rho_l^\prime(v-a) v_y(\tau,y)  d \tau dy ds da\\
& =-\int _ { \mathbb { R } } \int _ { \Pi _ { T } }\int_{s-\delta_0}^s  X\left[\partial_{xy}\eta _ { \delta , \delta _ { 0 } } \right](s , y, a)  \rho_l(v-a)   d \tau dy ds da.\\
\eaee
Letting $l\rightarrow0$ on both sides of the equality above leads to
\bae\label{X1}
 &A_1\left(\delta, \delta_0\right)\coloneqq\lim_{l\rightarrow0} A_1^l\left(\delta, \delta_0\right)=  \int _ { \Pi _ { T } } \int_{s-\delta_0}^s J [ \beta ^ { \prime\prime } , \partial_x\eta _ { \delta , \delta _ { 0 } } ] ( s , y , v(\tau, y) )  \partial_y f(v(\tau, y)) d \tau dy ds  \\
 &=-\mathbb E \left[ \int _ { \Pi _ { T } }\int_{s-\delta_0}^s  X[\partial_{xy}\eta _ { \delta , \delta _ { 0 } } ](s , y, v(\tau,y))     d \tau dy ds  \right].
\eae
Then
\bae\label{X2}
\left|A_1\left(\delta, \delta_0\right)\right| & =\left|\mathbb E \left[ \int _ { \Pi _ { T } }\int_{s-\delta_0}^s  X[\partial_{xy}\eta _ { \delta , \delta _ { 0 } } ](s , y, v(\tau,y))     d \tau dy ds  \right]\right| \\
& \leq C \delta_0 \sup _{0 \leq s \leq T} \left(\mathbb E\left[\left\|X\left[\partial_{xy} \eta_{\delta, \delta_0}\right](s ; \cdot, \cdot)\right\|^2_{\infty}\right]\right)^{\frac12} \\
& \leq C \delta_0 \sup _{0 \leq s \leq T} \left(E\left[\left\|X\left[\partial_y \eta_{\delta, \delta_0}\right](s ; \cdot, \cdot)\right\|_{\infty}^2\right]\right)^{\frac{1}{2}} \\
& \leq C \delta_0 \frac{C(\partial_{xy}\psi,\beta^{\prime\prime\prime},T,f,\sigma)  \sup_{0\leq t \leq T} 
\left(1+\mathbb  E \|u-\bar u(t,\cdot)\|_{p_\gamma}^{p_\gamma/2}\right) }{\delta_0^{\frac34}} \\
& \leq C(\partial_{xy}\psi,\beta^{\prime\prime\prime},T,u_0,f,\sigma) \delta_0^{\frac{1}{4}} .
\eae
\end{proof}

\begin{Lemma}\label{A2}
    It holds that
$$
\lim_{\delta_0\rightarrow0}\lim_{l\rightarrow0} A_2^l\left(\delta, \delta_0\right) = 0.
$$
\end{Lemma}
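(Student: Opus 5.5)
We follow the proof of Lemma \ref{A1} step by step, with the drift $\partial_y f(v)$ replaced by the viscous drift $\partial_y(b(v)v_y)$. The idea is to move the spatial derivative off $b(v)v_y$ and onto the smooth test function $\eta_{\delta,\delta_0}$, after absorbing the $a$-dependence into a new stochastic integral.

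\textbf{Step 1: pass $l\to0$.} Expand $\partial_y(b(v)v_y)=b'(v)v_y^2+b(v)v_{yy}$. The term that needs care is the one where the mollifier $\rho_l(v-a)$ sits against $v_{yy}$. Define, analogously to $X$,
$$Z[\eta](s,y,a):=\sum_k\int_0^T\int_\R\Big(\int_0^a[\sigma_k'\beta''(\cdot-r)](u)\,b(r)\,dr\Big)\eta(t,x,s,y)\,dx\,dB_k(t).$$
Then $\partial_a Z[\eta]=b(a)\,J[\beta'',\eta]$ and $\partial_y Z[\eta]=Z[\partial_y\eta]$.

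Fix $\tau$ and $y$, and integrate over $a$ first. By the chain rule in $y$,
$$\int_\R J[\beta'',\partial_x\eta](s,y,a)\,\rho_l(v-a)\,\partial_y\big(b(v)v_y\big)\,da\;\longrightarrow\; J[\beta'',\partial_x\eta](s,y,v)\,\partial_y\big(b(v)v_y\big)\quad (l\to0).$$
The right-hand side can be rewritten using
$$\partial_y\big(Z[\partial_x\eta](s,y,v)\big)=Z[\partial_{xy}\eta](s,y,v)+b(v)\,J[\beta'',\partial_x\eta](s,y,v)\,v_y.$$

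\textbf{Step 2: integrate by parts in $y$.} Write $J\,\partial_y(b v_y)=\partial_y(J\,b v_y)-\partial_y(J)\,b v_y$. Here $\partial_y J(s,y,v)=J[\beta'',\partial_{xy}\eta]+\partial_aJ\cdot v_y$. This splits the limit into three terms:
\begin{itemize}
\item $-\int J[\beta'',\partial_{xy}\eta](s,y,v)\,b(v)v_y$,
\item $-\int J[\beta''',\partial_x\eta](s,y,v)\,b(v)v_y^2$,
\item a boundary term, which vanishes because $\psi$ has compact support.
\end{itemize}
To keep the integration by parts rigorous, I would do it at fixed $l>0$ through the $a$-integral, exactly as in Lemma \ref{A1}, and only then send $l\to0$. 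The regularity needed is available: $v\in L^2(0,T;H^2_{\rm loc})$ from Theorem \ref{th2.2}, and $v\in X_1(T)$ gives $v_y\in L^2$.

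\textbf{Step 3: bound each term.} By Hölder, each term is at most
$$\int_{\Pi_T}\int_{s-\delta_0}^s\Big(\mathbb E\|J[\cdot,\cdot](s)\|_\infty^2\Big)^{1/2}\Big(\mathbb E\,|v_y|^{2}\Big)^{1/2}\ \text{(or with $v_y^2$ in place of $|v_y|$)}\;d\tau\,dy\,ds.$$
Lemma \ref{J}, applied with $\beta''$ or $\beta'''$ and the derivatives of $\eta$, gives $\big(\mathbb E\|J\|_\infty^2\big)^{1/2}\lesssim\delta_0^{-3/4}$. The inner $\tau$-integral over a window of length $\delta_0$ should contribute $\delta_0$. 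One would then get $|A_2|\lesssim\delta_0^{1/4}\to0$.

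\textbf{Main obstacle.} The terms quadratic in $v_y$ are the difficulty. The quantity $\int_{s-\delta_0}^s\|v_y\|^2\,d\tau$ is $o(1)$ after integrating in $s$, but it need not be $O(\delta_0)$ pointwise in $s$. To handle this:
\begin{itemize}
\item Use Fubini to write $\int_0^T\!\int_{s-\delta_0}^s\|v_y(\tau)\|^2\,d\tau\,ds\le\delta_0\int_0^T\|v_y\|^2\,d\tau$.
\item Keep $J$ inside the expectation and use the supremum in $s$ from Lemma \ref{J}.
\item Handle the correlation through Cauchy–Schwarz: $\mathbb E\big[\sup_s\|J\|_\infty\int\|v_y\|^2\big]$ is bounded by a fourth-moment bound on $\int_0^T\|v_y\|^2$, which follows from BDG as in Proposition \ref{t1}, combined with an $L^4$ version of Lemma \ref{J}. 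This costs $\delta_0^{-3/4}$ at worst, so the overall factor $\delta_0^{1/4}$ survives.
\end{itemize}
The remaining point to check is that the $s$-supremum of $\|J\|_\infty$ is controlled. If it is not, I would fall back to interchanging the $s$-integral and the expectation before applying Hölder.
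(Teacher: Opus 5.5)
Your Steps 1--2 match the paper's splitting of $\lim_{l\to0}A_2^l$ into two terms. One is linear in $v_y$, with kernel $J[\beta'',\partial_{xy}\eta_{\delta,\delta_0}]$; the other is quadratic in $v_y$, with kernel $J[\beta''',\partial_x\eta_{\delta,\delta_0}]$. Your H\"older bound does handle the linear term; the paper instead eliminates $v_y$ there with the primitive-in-$a$ trick of Lemma \ref{A1}.

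\textbf{The gap: the quadratic term.} You flag this term as the main obstacle but do not close it. Lemma \ref{J} bounds $\sup_s\mathbb E\|J(s)\|_\infty^2$ by $C\delta_0^{-3/2}$. There the supremum in $s$ sits outside the expectation, and its proof (a Sobolev embedding in $(y,a)$ at frozen $s$) says nothing about $\mathbb E\sup_s\|J(s)\|_\infty$.

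Your main route is Cauchy--Schwarz against $\mathbb E[\sup_s\|J(s)\|_\infty\,\Gamma]$, with $\Gamma:=\int_0^T\|v_y\|^2d\tau$. It needs exactly that sup-inside bound, and you leave it as ``the remaining point to check''.

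The fallback also fails with the inputs you name. Set $G_s:=\int_{s-\delta_0}^s\|v_y\|^2d\tau$. If you use only moments of $\Gamma$ (which is what BDG gives), the best available bound is $\int_0^TG_s^{q'}ds\le\delta_0\Gamma^{q'}$. So H\"older in $(\omega,s)$ extracts only $\delta_0^{1/q'}$ from the window. Against the $\delta_0^{-3/4}$ of Lemma \ref{J} (at most a fourth-moment bound), this gives $\delta_0^{-1/4}$ for $q=2$ and $O(1)$ for $q=4$, not $\delta_0^{1/4}$. Nor do moments of $\Gamma$ control the quantity $\int_0^T\!\int_K(\mathbb E v_y^4)^{1/2}dy\,d\tau$ that your Step 3 H\"older actually needs; Minkowski's inequality goes the wrong way.

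\textbf{The missing idea: time regularity of the kernel.} This is how the paper treats the term. Define
$N_t(y,a):=\sum_k\int_0^t\int_\R[\sigma_k'\beta'''(\cdot-a)](u)\,\psi(s,y)\,\partial_x\varrho_\delta(x-y)\,dx\,dB_k(r)$.
\begin{enumerate}
\item The kernel equals $-\int_{s-\delta_0}^{s}\rho_{\delta_0}(t-s)\,dN_t$.
\item Integrating by parts in $t$ gives, pathwise, $\sup_s\|J(s)\|_\infty\le C\delta_0^{-1}\sup_{|t-t'|<\delta_0}\|N_t-N_{t'}\|_\infty$.
\item The increment bound $\mathbb E\|N_t-N_{t'}\|_p^p\lesssim|t-t'|^{p/2-1}$, together with the modulus-of-continuity estimate \cite[Lemma 4.28]{feng2008stochastic}, bounds this supremum inside the expectation by $\delta_0^{-1}$ times a positive power of $\delta_0$.
\item Fubini then gives the full window factor pathwise, $\int_0^TG_s\,ds\le\delta_0\Gamma$.
\item H\"older against moments of $\Gamma$ leaves a vanishing power of $\delta_0$.
\end{enumerate}
This uses only energy-level information on $v_y$.

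\textbf{An alternative fix, and its cost.} Your pointwise H\"older $\mathbb E[\|J(s)\|_\infty v_y^2(\tau,y)]\le(\mathbb E\|J(s)\|_\infty^2)^{1/2}(\mathbb E v_y^4(\tau,y))^{1/2}$, followed by Fubini on the now deterministic integrand, would give $\delta_0^{1/4}$. It requires $\mathbb E\int_0^T\|v_y\|_{L^4(K)}^4d\tau<\infty$ for a compact $K\supset\operatorname{supp}\psi$. That follows from $\|\phi_y\|_4^4\le C\|\phi\|_\infty^2\|\phi_{yy}\|^2$, the maximum principle, and an $L^2(\Omega\times(0,T);H^2)$ bound. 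But this is regularity beyond the $X_1(T)$ hypothesis of Lemma \ref{lockruz}. You cannot import it from Theorem \ref{th2.2}: that theorem concerns the constructed solution, not an arbitrary $v\in X_1(T)$, and its proof uses the uniqueness you are establishing. Your Step 2 integration by parts does not need $H^2$ in any case, since at fixed $l$ it is an $H^{-1}$--$H^1$ pairing.
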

\begin{proof}
\baee
 A_2\left(\delta, \delta_0\right)\coloneqq& \lim_{l\rightarrow0}A_2^l(\delta,\delta_0 )=\mathbb E\left[\int _ { \Pi _ { T } }\int_{s-\delta_0}^s J [ \beta ^ { \prime\prime } , \partial_x\eta _ { \delta , \delta _ { 0 } } ] ( s , y , v(\tau, y) )  \partial_{y} (b(v)v_y) d \tau  dy ds \right] \\
 =&\mathbb E\left[\int _ { \Pi _ { T } }\int_{s-\delta_0}^s J [ \beta ^ { \prime\prime } , \partial_{xy}\eta _ { \delta , \delta _ { 0 } } ] ( s , y , v(\tau, y) )  b(v) v_y d \tau  dy ds \right]\\
 &-\mathbb E\left[\int _ { \Pi _ { T } }\int_{s-\delta_0}^s J [ \beta ^ { \prime\prime\prime } , \partial_x\eta _ { \delta , \delta _ { 0 } } ] ( s , y , v(\tau, y) )   b{(v)} v_y^2 d \tau  dy ds \right]\\
 \coloneqq& I_1+I_2.
 \eaee
Applying the same arguments as in \eqref{X1} and \eqref{X2}, we have
\baee
|I_1|\leq C(\partial_{xy}\psi,\beta^{\prime\prime\prime},T,u_0,b,\sigma) \delta_0^{\frac{1}{4}}.
\eaee

To estimate the quadratic dissipation term $I_2$, we cannot use integration by parts. Instead, we rely on the time-regularity of the stochastic kernel. Define
$$
N_t\left[\beta^{\prime \prime \prime}, \psi, \delta\right](y, a) \coloneqq \sum_k \int_0^t \int_\R \left[\sigma_k^\prime \beta^{\prime \prime \prime}(\cdot-a)\right](u(r,x)) \psi(s, y) \partial_x \varrho_\delta(x-y) d x d B_k(r),
$$
where $t\geq s-\delta_0$.
Following a similar argument in \cite[Lemma 5.5]{biswas2014stochastic}, we have
\begin{equation*}
\begin{aligned}
& \mathbb E\left[\sup _{0 \leq s \leq T}\left\|J\left[\beta^{\prime \prime \prime}, \partial_x\eta_{\delta, \delta_0}\right](s ; \cdot, \cdot)\right\|_{L^{\infty}(\mathbb{R} \times \mathbb{R})}\right] \\
\leq & \frac{1}{\delta_0} \mathbb E\left[\sup _{0 \leq s \leq T ; s-\delta_0 \leq t<s}\left\|N_t\left[\beta^{\prime \prime \prime}, \psi, \delta\right](\cdot, \cdot)-N_{s-\delta_0}\left[\beta^{\prime \prime \prime}, \psi, \delta\right](\cdot, \cdot)\right\|_{L^{\infty}(\mathbb{R} \times \mathbb{R})}\right].
\end{aligned}
\end{equation*}
Note that
\begin{equation*}
\begin{aligned}
& \mathbb E\left[\|N_t\left[\beta^{\prime \prime \prime}, \psi, \delta\right](\cdot, \cdot)-N_{s}\left[\beta^{\prime \prime \prime}, \psi, \delta\right](\cdot, \cdot)\|_p^p\right] \\
& =\sum_k\int_{\R} \int_{\R} \mathbb E\left[\left|\int_s^t \int_\R \left[\sigma_k^\prime \beta^{\prime \prime \prime}(\cdot-a)\right](u(r,x)) \psi(s, y) \partial_x \varrho_\delta(x-y) d x d B_k(r)\right|^p\right] d y d a \\
& \leq C \int_\R\int_\R \mathbb E\left[\left(\int_s^t \left|\int_\R \left[\sigma^\prime \beta^{\prime \prime \prime}(\cdot-a)\right](u(r,x)) \psi(s, y) \partial_x \varrho_\delta(x-y) d x\right|^2  d r\right)^{p / 2}\right] d y d a \\
& \leq C(\sigma,\beta,\psi,\partial_x \varrho)|t-s|^{\frac p2-1} \mathbb E\left[ \int_{|a|\leq |u(r,x)|+C_\beta} \int_{|y|<C_{\psi}}  \int_0^T \int_{|x|<C_{\psi}+\delta}\left|u(r,x)\right|^p  d x d r\right] d y d a \\
&\leq C(\sigma,\beta,\psi,\partial_x \varrho,u_0,T)|t-s|^{\frac p2-1} .\\
\end{aligned}
\end{equation*}
Invoking a similar modulus of continuity estimate \cite[Lemma 4.28]{feng2008stochastic}, we conclude that
$$
\mathbb E\left[\sup _{s, t \in[0, T] ;|s-t|<\delta_0}\left\|N_t\left[\beta^{\prime \prime \prime}, \psi, \delta\right](\cdot, \cdot)-N_s\left[\beta^{\prime \prime \prime}, \psi, \delta\right](\cdot, \cdot)\right\|_{\infty}^p\right] \leq C \delta_0^\alpha
$$
for some $\alpha> 0$ and $p > 8$. Substituting this uniform bound into the expression for $I_2$, and using the boundedness of $b(v)$, we obtain $$|I_2| \le C \mathbb{E} \left[ \int_{\mathbb{R}} \int_0^T \left( \sup_{y,a} |J| \right) \left( \int_{s-\delta_0}^s |v_y(\tau, y)|^2 d\tau \right) ds dy \right].$$Using the uniform bound for $J$ and applying Fubini's theorem to exchange the time integrals, we get $$|I_2| \le C \delta_0^{\alpha-1} \mathbb{E} \left[ \int_{\mathbb{R}} \int_0^T |v_y(\tau, y)|^2 \left( \int_{\tau}^{(\tau+\delta_0)\wedge T} ds \right) d\tau dy \right].$$The inner integral over $s$ yields a factor of exactly $\delta_0$. Thus, using the global energy bound $\mathbb{E}\|v_y\|_{L^2}^2 < \infty$, we conclude $$|I_2| \le C \delta_0^{\alpha-1} \cdot \delta_0 \cdot \mathbb{E}\|v_y\|_{L^2(\Pi_T)}^2 = C \delta_0^\alpha.$$Since $\alpha > 0$, taking the limit as $\delta_0 \to 0$ yields $\lim I_2 = 0$. Combining this with the estimate for $I_1$ completes the proof.
\end{proof}

\begin{Lemma}\label{A3}
    It holds that
$$
\lim_{\delta_0\rightarrow0}\lim_{l\rightarrow0} A_3^l\left(\delta, \delta_0\right) = 0.
$$
\end{Lemma}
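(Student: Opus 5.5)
The plan is to follow the template of Lemma \ref{A1} and the $I_1$ part of Lemma \ref{A2}: first let $l\to0$, then move all $y$-derivatives off $v$ onto the smooth stochastic kernel, and finally use the $\delta_0^{-3/2}$ bound of Lemma \ref{J} against the $\delta_0$ gained from the length of the time window $[s-\delta_0,s]$.

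\textbf{Step 1 (limit in $l$).} Let $l\to0$. Since $\rho_l(v-a)$ collapses the $a$-integral onto $a=v(\tau,y)$, we get
\[
A_3(\delta,\delta_0)=-\tfrac12\,\mathbb E\int_{\Pi_T}\int_{s-\delta_0}^s J[\beta'',\partial_{xy}\eta_{\delta,\delta_0}](s,y,v(\tau,y))\,\sigma'^2(v)\,v_y\,d\tau\,dy\,ds .
\]
Here $\sigma'^2$ is shorthand for $\sum_k\sigma_k'^2$, which is bounded on $[-\mathfrak M,\mathfrak M]$ by (H3).

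\textbf{Step 2 (remove $v_y$ by an exact derivative).} Set $G(a):=\int_0^a\sigma'^2(r)\,dr$, so that $\sigma'^2(v)v_y=\partial_y G(v)$. Introduce the analogue of $X$ from Lemma \ref{A1}:
\[
Z[\eta](s,y,a):=\sum_k\int_0^T\!\!\int_\R\Big(\int_0^a[\sigma_k'\beta''(\cdot-r)](u)\,\sigma'^2(r)\,dr\Big)\eta\,dx\,dB_k(t).
\]
It satisfies $\partial_aZ[\eta]=J[\beta'',\eta]\,\sigma'^2(a)$ (up to sign conventions) and $\partial_yZ[\eta]=Z[\partial_y\eta]$. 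At the $\rho_l$ level, as in Lemma \ref{A1}, integrate by parts first in $a$ and then in $y$. This gives, before letting $l\to0$,
\[
A_3=\tfrac12\,\mathbb E\int_{\Pi_T}\int_{s-\delta_0}^s Z[\partial_{xyy}\eta_{\delta,\delta_0}](s,y,v(\tau,y))\,d\tau\,dy\,ds .
\]
The $y$-integral stays over a compact set, because $\psi$ has compact support.

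\textbf{Step 3 (Sobolev bound on the kernel).} Repeat the argument of Lemma \ref{J} for $Z$: BDG in $L^4$, bound $\|Z\|_{W^{1,4}(\R\times\R)}$, then apply the Sobolev embedding into $L^\infty$. This yields
\[
\sup_s\big(\mathbb E\|Z[\partial_{xyy}\eta_{\delta,\delta_0}]\|_\infty^2\big)^{1/2}\le C(\delta,\psi,\beta,\sigma,T)\,\delta_0^{-3/4}.
\]
The constant may depend on $\delta$, since $\delta$ is fixed here. The moment factor $\sup_t\mathbb E\|u-\bar u\|_{p_\gamma}^{p_\gamma}$ is controlled by Lemma \ref{lmL^p}, and the polynomial weights in $u$ by the maximum principle \eqref{mp}.

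\textbf{Step 4 (conclusion).} The $\tau$-integral has length $\delta_0$, so $|A_3|\le C\delta_0\cdot\delta_0^{-3/4}=C\delta_0^{1/4}\to0$.

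\textbf{Main obstacle.} The delicate step is Step 2: it requires justifying the integration by parts in $y$ when $v$ is only in $H^1$ in space. A cleaner route may be to skip the $y$-integration and stop one step earlier, estimating $Z[\partial_{xy}\eta]$ directly against $|v_y|$. Then Cauchy–Schwarz and the energy bound $\mathbb E\int_0^T\|v_y\|^2<\infty$ from Lemma \ref{lmL^p} give a factor $\delta_0^{1/2}$ from the $\tau$-window. This makes the exponent $-3/4+1/2<0$, which fails. If that happens, I would fall back on the time-modulus argument used for $I_2$ in Lemma \ref{A2}, which gives a factor $\delta_0^{\alpha-1}$ times $\delta_0$. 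That is why I intend to use the full derivative transfer $\sigma'^2(v)v_y=\partial_yG(v)$, which avoids $v_y$ entirely.
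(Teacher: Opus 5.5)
Your Steps 1--4 are essentially the paper's argument. After remarking that the time-modulus argument of Lemma \ref{A2} would also work for fixed $\delta$, the paper concludes ``following the same arguments in \eqref{X1} and \eqref{X2}'', which is your route: transfer the derivatives onto an $X$-type primitive in $a$ (your $Z$), apply the bound of Lemma \ref{J}, namely $\delta_0^{-3/4}$, and use the window length $\delta_0$, giving $C(\delta)\delta_0^{1/4}$.

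One correction to your closing remark: the direct estimate against $|v_y|$ does not fail. Fubini gives $\mathbb E\int_0^T\int_{s-\delta_0}^s\|v_y(\tau)\|^2\,d\tau\,ds\le\delta_0\,\mathbb E\int_0^T\|v_y\|^2\,d\tau$, so Cauchy--Schwarz in $(\tau,y)$ and then in $(\omega,s)$ yields $\delta_0^{1/2}\cdot\delta_0^{1/2}\cdot\delta_0^{-3/4}=\delta_0^{1/4}$ as well. The $y$-integration by parts is routine anyway, since $\psi$ has compact support and $y\mapsto\rho_l(v(\tau,y)-a)$ is $H^1_{\rm loc}$.
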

\begin{proof}
\baee
A_3^l(\delta,\delta_0 )&\coloneqq-\frac12\mathbb E\left[\int _ { \mathbb { R } } \int _ { \Pi _ { T } } J [ \beta ^ { \prime\prime } , \partial_{xy}\eta _ { \delta , \delta _ { 0 } } ] ( s , y , a )   \left(\int_{s-\delta_0}^s \rho_l(v-a)\partial_y [\sigma^{\prime2}](v(\tau,y))  d \tau\right)   dy ds da\right].
\eaee
The term $A_3$ involves the kernel $J[\beta'', \partial_{xy}\eta_{\delta, \delta_0}]$, which contains a second-order spatial derivative. While this term is more singular in space ($\sim \delta^{-2}$) than the terms in Lemma \ref{A1}, we observe that for any fixed spatial radius $\delta > 0$, the operator $\partial_{xy} \rho_\delta * \cdot$ is bounded. Consequently, we can apply the same time-averaging argument as in Lemma \ref{A2}. The Hölder continuity of the stochastic process $N_t$ yields a factor of $\delta_0^\alpha$ which vanishes as $\delta_0 \to 0$. Thus, for fixed $\delta$, we have$$|A_3(\delta, \delta_0)| \le C(\delta) \delta_0^\alpha \rightarrow 0, \quad \text{as ${\delta_0 \to 0}$}.$$
This allows us to conclude the limit without additional restrictions on the scaling between $\delta$ and $\delta_0$. Specifically, following the same arguments in \eqref{X1} and \eqref{X2}, we have
\baee
\left|\lim_{l\rightarrow0} A_3^l\left(\delta, \delta_0\right)\right|\leq C(\partial_{xy}\psi,\beta^{\prime\prime\prime},T,u_0,b,\sigma) \delta_0^{\frac{1}{4}}.
\eaee
Finally, taking the limit $\delta_0 \to 0$  before the spatial limit $\delta \to 0$ completes the proof.
\end{proof}
\begin{proof}[\textbf{Proof of Lemma \ref{adaptlm}}]
 Let $\left\{\rho_l\right\}_{l>0}$ be the standard sequence of mollifiers in $\mathbb{R}$ and define
$$
Z_{ \delta, \delta_0, l}:=\int_{\mathbb{R}} \int_{\Pi_T} J\left[\beta^{\prime}, \eta_{\delta, \delta_0}\right](s , y, a) \rho_l\left(v(s, y)-a\right) d y d s d a.
$$
Then
$$I_8=\mathbb E \lim_{l\rightarrow 0}Z_{ \delta, \delta_0, l} .$$
Note that
\begin{equation}\label{6..14}
\begin{aligned}
&\mathbb E Z_{ \delta, \delta_0, l}\\
=&\,\mathbb E\left[\int _ { \mathbb { R } } \int _ { \Pi _ { T } } J [ \beta ^ { \prime } , \partial_x\eta _ { \delta , \delta _ { 0 } } ] ( s , y , a ) \rho _ { l } \left(v\left(s, y\right)-a\right)dy ds da\right]\\
=&\,\mathbb E\left[\int _ { \mathbb { R } } \int _ { \Pi _ { T } } J [ \beta ^ { \prime } , \partial_x\eta _ { \delta , \delta _ { 0 } } ] ( s , y , a ) \left(\rho _ { l } \left(v\left(s, y\right)-a\right)-\rho _ { l } \left(v\left(s-\delta_0, y\right)-a\right)\right)dy ds da\right].\\
\end{aligned}
\end{equation}
Then the It\^o formula deduces that
\begin{equation}\label{6..15}
\begin{aligned}
& \rho_{l}(v(s, y)-a)-\rho_l(v(s-\delta_0, y)-a) \\
=& \int_{s-\delta_0}^s \rho_l^{\prime}(v(\tau, y)-a)\left[\partial_y(b(v)v_y)+\partial_y\left(\frac{1}{2} \sigma^{\prime 2}(v) v_y-f(v)\right)\right] d \tau \\
& +\frac{1}{2} \int_{s-\delta_0}^s \rho_l^{\prime \prime}(v(\tau, y)-a)\left|\partial_y \sigma(v)\right|^2 d \tau+\sum_k\int_{s-\delta_0}^s \rho_l^{\prime}(v(\tau, y)-a) \partial_y \sigma_k(v) d B_k(\tau) \\
=& -\sum_k\partial_a\left[\begin{array}{l}
\int_{s-\delta_0}^s \rho_l(v(\tau, y)-a)\left[\partial_y(b(v)v_y)+\partial_y\left(\frac{1}{2} \sigma^{\prime 2}(v) v_y-f(v)\right)\right] d \tau \\
+\int_{s-\delta_0}^s \rho_l(v(\tau, y)-a) \partial_y \sigma_k(v) d B_k(\tau)+\frac{1}{2} \int_{s-\delta_0}^s \rho_l^{\prime}(v(\tau, y)-a)\left|\partial_y \sigma_k(v)\right|^2 d\tau
\end{array}\right]. \\
\end{aligned}
\end{equation}
Substituting \eqref{6..15} into \eqref{6..14}, we have
\begin{equation*}
\begin{aligned}
&\mathbb E Z_{ \delta, \delta_0, l} \\
=&-\mathbb E\left[\int _ { \mathbb { R } } \int _ { \Pi _ { T } } J [ \beta ^ { \prime\prime } , \partial_x\eta _ { \delta , \delta _ { 0 } } ] ( s , y , a ) \left(\int_{s-\delta_0}^s \rho_l(v(\tau, y)-a) \partial_y\left(f(v)\right) d \tau\right)dy ds da\right] \\
&+\mathbb E\left[\int _ { \mathbb { R } } \int _ { \Pi _ { T } } J [ \beta ^ { \prime\prime } , \partial_x\eta _ { \delta , \delta _ { 0 } } ] ( s , y , a ) \left(\int_{s-\delta_0}^s \rho_l(v(\tau, y)-a) \partial_y(b(v)v_y) d \tau\right)   dy ds da\right] \\
&+\sum_k \mathbb{E} \int_\R \int_{\Pi_T}  \int_\R \int_0^T\left[\sigma_k^{\prime} \beta^{\prime \prime}(\cdot-a)\right](u)  \\
&\cdot\partial_x \eta_{\delta , \delta_0} \left(\int_{s-\delta_0}^s \rho_l(v(\tau, y)-a) \partial_y \sigma_k(v) d B(\tau)\right) d B_k(t) \dif x  d y d s d a \\
& + \sum_k\frac12\mathbb E\Big[\int _ { \mathbb { R } } \int _ { \Pi _ { T } } J [ \beta ^ { \prime\prime } , \partial_x\eta _ { \delta , \delta _ { 0 } } ] ( s , y , a )  \\
&\cdot\Big(\int_{s-\delta_0}^s \rho_l^{\prime}(v-a)\left|\partial_y \sigma_k(v)\right|^2+\rho_l(v-a) \partial_y\left(\sigma_k^{\prime 2}(v) v_y\Big) d \tau\right)   dy ds da\Big].
\eaee
In the above, ingteration by parts gives
\baee
&\mathbb E Z_{ \delta, \delta_0, l} \\
=&-\mathbb E\left[\int _ { \mathbb { R } } \int _ { \Pi _ { T } } J [ \beta ^ { \prime\prime } , \partial_x\eta _ { \delta , \delta _ { 0 } } ] ( s , y , a ) \left(\int_{s-\delta_0}^s \rho_l(v(\tau, y)-a) \partial_y\left(f(v)\right) d \tau\right)dy ds da\right] \\
&+\mathbb E\left[\int _ { \mathbb { R } } \int _ { \Pi _ { T } } J [ \beta ^ { \prime\prime } , \partial_x\eta _ { \delta , \delta _ { 0 } } ] ( s , y , a ) \left(\int_{s-\delta_0}^s \rho_l(v(\tau, y)-a) \partial_y(b(v)v_y) d \tau\right)   dy ds da\right] \\
& -\sum_k\mathbb{E}  \int_{\Pi_T} \int_\R \int_{s-\delta_0}^s \left(\int_\R \left[\sigma_k^{\prime} \beta^{\prime \prime}(\cdot-a)\right](u)    \rho_l(v-a) da\right) \partial_y \sigma_k(v) \partial_x \eta_{\delta , \delta_0} d \tau \dif x  dy ds \\
&-\sum_k\frac12\mathbb E\left[\int _ { \mathbb { R } } \int _ { \Pi _ { T } } J [ \beta ^ { \prime\prime } , \partial_{xy}\eta _ { \delta , \delta _ { 0 } } ] ( s , y , a )   \left(\int_{s-\delta_0}^s \rho_l(v-a)\sigma_k^{\prime 2}(v) v_y  d \tau\right)   dy ds da\right] \\
=& A_1^l(\delta,\delta_0 )+A_2^l(\delta,\delta_0)+B^l(\delta,\delta_0)+A_3^l(\delta,\delta_0). \\
\end{aligned}
\end{equation*}
It holds that
    \baee
&\lim_{l\rightarrow0}B^l(\delta,\delta_0 )= \mathbb {E}  \int_{\Pi_T} \int_\R \int_{s-\delta_0}^s  \left[\sigma^{\prime} \beta^{\prime \prime}(\cdot-v(\tau,y))\right](u(\tau,x))  \partial_y \sigma(v(\tau,y)) \partial_x \eta_{\delta , \delta_0} d \tau \dif x  dy ds\\
&=\mathbb {E}  \int_{\Pi_T} \int_{\Pi_T}  \left[\sigma^{\prime} \beta^{\prime \prime}(\cdot-v(t,y))\right](u(t,x))  \partial_y \sigma(v(t,y)) \partial_x \eta_{\delta , \delta_0} d t \dif x  dy ds.
    \eaee
    Combining Lemmas \ref{A1}, \ref{A2} and \ref{A3}, we complete the proof.
\end{proof}

Finally, we turn to the proof of Lemma \ref{molast}.

\begin{proof}[\textbf{Proof of Lemma \ref{molast}}]
Using an integration by parts argument as in \cite[Lemma 4.2, (4.11)-(4.14)]{dareiotis2020nonlinear}, we have
\bae\label{I_6}
I_6= & -\sum_k\mathbb{E} \int_{v \leq u} \partial_{x y} \eta_{\delta , \delta_0} \int_{v}^u \int_{v}^u I_{\tilde{r} \leq r} \beta_\varepsilon^{\prime \prime}(r-\tilde{r}) \sigma_k^{\prime2}(r) d \tilde{r} d r \\
& -\sum_k\mathbb{E} \int_{v \geq u} \partial_{xy} \eta_{\delta , \delta_0} \int_{v}^u \int_{v}^u I_{\tilde{r} \geq r} \beta_\varepsilon^{\prime \prime}(r-\tilde{r}) \sigma_k^{\prime2}(r) d \tilde{r} d r \\
& -\sum_k\mathbb{E} \int_{v \geq u} \partial_{xy} \eta_{\delta , \delta_0} \int_{v}^u \int_{v}^u I_{r \leq \tilde{r}} \beta_\varepsilon^{\prime \prime}(\tilde{r}-r) \sigma_k^{\prime2}(\tilde{r}) d r d \tilde{r} \\
& -\sum_k\mathbb{E} \int_{v \leq u} \partial_{xy} \eta_{\delta , \delta_0} \int_{v}^u \int_{v}^u I_{r \geq \tilde{r}} \beta_\varepsilon^{\prime \prime}(\tilde{r}-r) \sigma_k^{\prime2}(\tilde{r}) d r d \tilde{r}
\eae
and
\bae\label{K}
&\sum_k\mathbb {E}  \int_{\Pi_T} \int_{\Pi_T}  \left[\sigma_k^{\prime} \beta_\varepsilon^{\prime \prime}(\cdot-v)\right](u)  \partial_y \sigma_k(v) \partial_x \eta_{\delta , \delta_0} d t \dif x  dy ds \\
=& \sum_k\mathbb E\int_{\Pi_T} \int_{\Pi_T}\partial_{xy} \eta_{\delta , \delta_0} \int_{v}^u \int_{v}^r \beta_\varepsilon^{\prime \prime}(r-\tilde{r}) \sigma_k^{\prime}( r) \sigma_k^{\prime}( \tilde{r}) d \tilde{r} d r \dif x  dy ds\\
=&\sum_k \mathbb{E} \int_{v \leq u} \partial_{xy} \eta_{\delta , \delta_0} \int_{v}^u \int_{\mathbb R}^v I_{\tilde{r} \leq r} \beta_\varepsilon^{\prime \prime}(r-\tilde{r}) \sigma_k^{\prime}(r)\sigma_k^{\prime}(\tilde r)  d \tilde{r} d r \\
& +\sum_k\mathbb{E} \int_{v \geq u} \partial_{xy} \eta_{\delta , \delta_0} \int_{v}^u \int_{v}^u I_{\tilde{r} \geq r} \beta_\varepsilon^{\prime \prime}(r-\tilde{r}) \sigma_k^{\prime}(r)\sigma_k^{\prime}(\tilde r) d \tilde{r} d r .
\eae
Summing the expressions in \eqref{I_6} and \eqref{K}, we observe that the contributions from the "self" terms $-\sigma_k'^2$ and the "interaction" terms can be grouped. Utilizing the symmetry of the kernel $\beta_\epsilon''(r - \tilde{r})$ and the integration bounds, we obtain
\begin{equation}\label{bye K}
    \begin{aligned}
        &I_6 + I_8 - A(\delta, \delta_0)\\
=& -\sum_{k} \mathbb{E} \int_{\Pi_T} \partial_{xy}\eta_{\delta,\delta_0} \int_v^u \int_v^u  \beta_\epsilon''(r - \tilde{r}) \Big( \sigma_k'^2(r) - 2\sigma_k'(r)\sigma_k'(\tilde{r}) + \sigma_k'^2(\tilde{r}) \Big) d\tilde{r} dr \, dx dy ds dt \\
=& -\sum_{k} \mathbb{E} \int_{\Pi_T} \partial_{xy}\eta_{\delta,\delta_0} \int_v^u \int_v^u  \beta_\epsilon''(r - \tilde{r}) \left| \sigma_k'(r) - \sigma_k'(\tilde{r}) \right|^2 d\tilde{r} dr \, dx dy ds dt.
    \end{aligned}
\end{equation}
By \eqref{adapt11}, \eqref{I_6} and \eqref{bye K}, we have
\baee
&I_6 +I_8\leq \sum_k \mathbb{E} \int_{\Pi_T} \int_{\Pi_T} |\Delta \eta_{\delta, \delta_0}| \int_v^u \int_v^u \beta_{\varepsilon}^{\prime \prime}(r-\tilde r)\left|\sigma_k^{\prime}(r)-\sigma_k^{\prime}(\tilde r)\right|^2 d r d\tilde r  d y d s d x d t +A(\delta,\delta_0)\\
\leq & \,C  \delta^{-2}\sum_k \mathbb{E}\int_{\Pi_T} \int_{\Pi_T} |\Delta \eta_{\delta, \delta_0}|\int_v^u \int_v^u \beta_{\varepsilon}^{\prime\prime}(r-\tilde r)  \left\|\sigma_k^{\prime}\right\|_{C^{1/2}_M}^2 \varepsilon d r d\tilde rd y d s d x d t +A(\delta,\delta_0) \\
\leq &\, C \varepsilon \delta^{-2} \left\|\sigma^{\prime}\right\|_{l^2(C^{1/2}_M)}^2 \mathbb{E}\int_{\Pi_T} \int_{\Pi_T}   \delta^{-1}\left|\varrho^{\prime \prime}\left(\frac{x-y}{\delta}\right) \rho_{\delta_0} \right|\psi \int_v^u \int_v^u \beta_{\varepsilon}^{\prime\prime}(r-\tilde r)    d r d\tilde rd y d s d x d t  \\
 &+  C \varepsilon \left\|\sigma^{\prime}\right\|_{l^2(C^{1/2}_M)}^2 \mathbb{E}\int_{\Pi_T} \int_{\Pi_T}  |\psi^{\prime\prime} \rho_{\delta_0}  \varrho_\delta | \int_v^u \int_v^u \beta_{\varepsilon}^{\prime\prime}(r-\tilde r)    d r d\tilde rd y d s d x d t +A(\delta,\delta_0),
\eaee
where $M=M(u_0,v_0)={\|u_0\|_{\infty}\vee \|v_0\|_{\infty}}$ is due to the maximal principle \eqref{mp}. The last inequality is valid because of {\rm{(H3)}} and Morrey's inequality. 

By \eqref{adapt}, letting $\delta_0\downarrow0,$ we have
\baee
\lim_{\delta_0\rightarrow0}\left(I_6 +I_8\right) \leq& C\, \varepsilon \delta^{-2} \left\|\sigma^{\prime}\right\|_{l^2(C^{1/2}_M)}^2 \mathbb{E}\int_{\R} \int_{\Pi_T}   \delta^{-1} \left| \varrho^{\prime \prime}\left(\frac{x-y}{\delta}\right)\right|\psi \int_v^u \int_v^u \beta_{\varepsilon}^{\prime\prime}(r-\tilde r)    d r d\tilde rd y d s d x  \\
 &+  C \varepsilon \left\|\sigma^{\prime}\right\|_{l^2(C^{1/2}_M)}^2 \mathbb{E}\int_{\R} \int_{\Pi_T}  |\psi^{\prime\prime}  \varrho_\delta| \int_v^u \int_v^u \beta_{\varepsilon}^{\prime\prime}(r-\tilde r)    d r d\tilde rd y d s d x\\
 \leq&\, C \varepsilon \delta^{-2} \left\|\sigma^{\prime}\right\|_{l^2(C^{1/2}_M)}^2 \mathbb{E}\int_{\R} \int_{\Pi_T}  \delta^{-1}\left|\varrho^{\prime \prime}\left(\frac{x-y}{\delta}\right)\right|\psi |u(s,x)-v(s,y)|d y d s d x  \\
 &+  C \varepsilon \left\|\sigma^{\prime}\right\|_{l^2(C^{1/2}_M)}^2 \mathbb{E}\int_{\R} \int_{\Pi_T}  \left|\psi^{\prime\prime}  \varrho_\delta\right| |u(s,x)-v(s,y)| d y d s d x\\
 \leq &\, C\varepsilon(1+\delta^{-2}).
\eaee
Thus the proof is completed.
\end{proof}

\section{Acknowledgements}
 This paper is partially supported by  NSFC No. 12471138 and 12571171. 

\def\refname{ References}
\bibliographystyle{abbrv}
\bibliography{ref}

\end{document}